\newtheorem{theorem}{Theorem}
\numberwithin{theorem}{section}
\newtheorem{corollary}[theorem]{Corollary}
\newtheorem{lemma}[theorem]{Lemma}
\newtheorem{proposition}[theorem]{Proposition}
\theoremstyle{definition}
\newtheorem{definition}[theorem]{Definition}
\newtheorem{example}[theorem]{Example}
\newtheorem{summary}[theorem]{Summary}
\newcommand{\rca}{\mathbf{RCA}}
\newcommand{\aca}{\mathbf{ACA}}
\newcommand{\supp}{\operatorname{supp}}
\newcommand{\otp}{\operatorname{otp}}
\newcommand{\id}{\operatorname{Id}}
\newcommand{\lef}{<^{\operatorname{fin}}}
\newcommand{\leqf}{\leq^{\operatorname{fin}}}
\newcommand{\rng}{\operatorname{rng}}
\newcommand{\en}{\operatorname{en}}
\newcommand{\lkb}{<_{\operatorname{KB}}}
\newcommand{\len}{\operatorname{len}}
\newcommand{\hth}{\operatorname{ht}}
\title[Derivatives of normal functions in reverse mathematics]{Derivatives of normal functions\\ in reverse mathematics}
\author{Anton Freund and Michael Rathjen}
\address{Anton Freund, Department of Mathematics, Technical University of Darmstadt, Schloss\-garten\-str.~7, 64289~Darmstadt, Germany}
\email{freund@mathematik.tu-darmstadt.de}
\address{Michael Rathjen, Department of Pure Mathematics, University of Leeds, Leeds LS2\,9JT, United Kingdom}
\email{rathjen@maths.leeds.ac.uk}
\begin{document}

\begin{abstract}
Consider a normal function $f$ on the ordinals (i.\,e.~a function~$f$ that is strictly increasing and continuous at limit stages). By enumerating the fixed points of~$f$ we obtain a faster normal function $f'$, called the derivative of~$f$. The present paper investigates this important construction from the viewpoint of reverse mathematics. Within this framework we must restrict our attention to normal functions $f:\aleph_1\rightarrow\aleph_1$ that are represented by dilators (i.\,e.~particularly uniform endofunctors on the category of well-orders, as introduced by J.-Y.~Girard). Due to a categorical construction of P.~Aczel, each normal dilator $T$ has a derivative~$\partial T$. We will give a new construction of the derivative, which shows that the existence and fundamental properties of $\partial T$ can already be established in the theory $\rca_0$. The latter does not prove, however, that $\partial T$ preserves well-foundedness. Our main result shows that the statement ``for every normal dilator $T$, its derivative~$\partial T$ preserves well-foundedness'' is $\aca_0$-provably equivalent to $\Pi^1_1$-bar induction (and hence to $\Sigma^1_1$-dependent choice and to \mbox{$\Pi^1_2$-reflection} for $\omega$-models).
\end{abstract}

\keywords{Normal functions (on the ordinals), Derivatives, Reverse mathematics, Well ordering principles / Dilators, Ordinal notations, Bar induction.}
\subjclass[2010]{03F15, 03F35, 03D60, 03E10.}

\maketitle

{\let\thefootnote\relax\footnotetext{\copyright~2020. This manuscript version is made available under the CC-BY-NC-ND 4.0 license \url{http://creativecommons.org/licenses/by-nc-nd/4.0/}. It is the accepted version of a paper published in the Annals of Pure and Applied Logic 172(2) 2021, article no. 102890, 49 pp., \href{https://doi.org/10.1016/j.apal.2020.102890}{doi:10.1016/j.apal.2020.102890}.}}

\section{Introduction}

For the purpose of this paper, a normal function is a function $f:\aleph_1\rightarrow\aleph_1$ that is strictly increasing and continuous at limit stages, i.\,e.~we demand that
\begin{enumerate}[label=(\roman*)]
 \item $\alpha<\beta$ implies $f(\alpha)<f(\beta)$ and that
 \item $f(\lambda)=\sup_{\alpha<\lambda}f(\alpha)$ holds for any limit ordinal $\lambda$.
\end{enumerate}
Equivalently, $f$ is the unique strictly increasing enumeration of a closed and unbounded (club) subset of~$\aleph_1$. It is easy to see that the fixed points of any normal function~$f$ do again form an $\aleph_1$-club. The normal function that enumerates these fixed points is called the derivative of $f$ and is denoted by $f'$. Let us agree to call a normal function $g$ an upper derivative of $f$ if $f(g(\alpha))=g(\alpha)$ holds for any ordinal~$\alpha<\aleph_1$. Note that such a function~$g$ must majorize the derivative $f'$ of~$f$. As an example we consider the function $f(\alpha)=\omega^\alpha$ from ordinal arithmetic. In this case $f'(\alpha)=\varepsilon_\alpha$ is the $\alpha$-th $\varepsilon$-number. The notion of normal function plays an important role in proof theory (see~e.\,g.~\cite[Chapter~V]{schuette77}) and has interesting computability-theoretic properties (due to~\cite{marcone-montalban}). More generally, one can consider normal functions on the class of ordinals. A fundamental example from set theory is the function $f(\alpha)=\aleph_\alpha$ that enumerates the infinite cardinals. However, such normal functions are beyond the scope of the present paper.

The above construction of derivatives via clubs uses the fact that~$\aleph_1$ is a regular cardinal. One can also build derivatives by transfinite recursion, which relies on collection or replacement. In the present paper we construct derivatives in a much weaker setting. In very informal terms, we show that the following statements are equivalent over a suitable base theory:
\begin{enumerate}[label=(\arabic*$^0$)]
 \item Every normal function has a derivative.
 \item Every normal function has an upper derivative.
 \item Transfinite induction holds for any $\Pi^1_1$-formula.
\end{enumerate}
To establish this equivalence we will give precise sense to the following argument: To see that~(1$^0$) implies~(2$^0$) it suffices to observe that any derivative is an upper derivative. To prove the direction from~(2$^0$) to~(3$^0$) we must establish induction for a $\Pi^1_1$-formula $\varphi(\gamma)$ up to an arbitrary ordinal~$\alpha$. Using the Kleene normal form theorem one obtains countable trees $\mathcal T_\gamma$ with
\begin{equation*}
\varphi(\gamma)\leftrightarrow\text{``$\mathcal T_\gamma$ is well-founded"}.
\end{equation*}
The assumption that $\varphi$ is progressive along the ordinals can then be expressed as
\begin{equation*}
\forall_{\beta<\gamma}\text{``$\mathcal T_\beta$ is well-founded"}\rightarrow\text{``$\mathcal T_\gamma$ is well-founded"}.
\end{equation*}
Assume that this statement is witnessed by a binary function $h$ on the ordinals, in the sense that we have
\begin{equation*}
\forall_{\beta<\gamma}\otp(\mathcal T_\beta)\leq\delta\rightarrow\otp(\mathcal T_\gamma)\leq h(\gamma,\delta)
\end{equation*}
for any ordinals $\gamma$ and $\delta$, where $\otp(\mathcal T)$ denotes the order type of~$\mathcal T$. To avoid the dependency on~$\gamma$ we set $h_0(\delta)=\sup_{\gamma<\alpha}h(\gamma,\delta)$ (alternatively one could set $h_0(\delta)=\sup_{\gamma\leq\delta} h(\gamma,\delta)$, avoiding the reference to the fixed bound $\alpha$). Now form a normal function~$f$ with $h_0(\delta)\leq f(\delta+1)$, e.\,g.~by setting $f(\delta)=\sum_{\gamma<\delta}1+h_0(\gamma)$. Then statement~(2$^0$) allows us to consider an upper derivative~$g$ of $f$. Let us show that we have
\begin{equation*}
\otp(\mathcal T_\gamma)\leq g(\gamma+1)
\end{equation*}
for all $\gamma<\alpha$. For $\beta<\gamma$ we may inductively assume $\otp(\mathcal T_\beta)\leq g(\beta+1)\leq g(\gamma)$. By the above we obtain
\begin{equation*}
\otp(\mathcal T_\gamma)\leq h(\gamma,g(\gamma))\leq h_0(g(\gamma))\leq f(g(\gamma)+1)\leq f(g(\gamma+1))=g(\gamma+1).
\end{equation*}
So $g$ witnesses that $\mathcal T_\gamma$ is well-founded for any $\gamma<\alpha$. This yields $\forall_{\gamma<\alpha}\varphi(\gamma)$, which is the conclusion of transfinite induction. To see that~(3$^0$) implies~(1$^0$) we will construct notation systems for the values~$f'(\alpha)$, relative to a given normal function~$f$. The crucial fact that the notation system for $f'(\alpha)$ is well-founded (and hence represents an ordinal) will be established by transfinite induction on~$\alpha$.

In order to make the result from the previous paragraph precise we will use the framework of reverse mathematics. This research program uncovers equivalences between different mathematical and foundational statements in the language of second order arithmetic (see~\cite{simpson09} for an introduction). As the base theory for our investigation we take~$\aca_0$. In second order arithmetic the above statement~(3$^0$) corresponds to the following assertion:
\begin{enumerate}
\item[(3)] Induction for $\Pi^1_1$-formulas is available along any countable well-order.
\end{enumerate}
We will refer to this assertion as $\Pi^1_1$-bar induction, in order to distinguish it from the principle of transfinite induction along a specific (class- or set-sized) well-order. Let us recall that $\Pi^1_1$-bar induction is well-established in reverse mathematics: Simpson~\cite{simpson-bar-induction} has shown that it is equivalent to $\Sigma^1_1$-dependent choice and to $\Pi^1_2$-reflection for $\omega$-models, also over~$\aca_0$.

To formalize statements (1$^0$) and (2$^0$) in second-order arithmetic we will rely on \mbox{J.-Y.}~Girard's notion of dilator~\cite{girard-pi2,girard-intro}. For the purpose of the present paper, a (coded) prae-dilator is a particularly uniform functor $n\mapsto T_n$ from natural numbers to linear orders (full details can be found in Section~\ref{sect:normal-dils-so} below). Girard has observed that the uniformity allows to extend $T$ beyond the natural numbers. In~\cite{freund-thesis,freund-computable} the first author has given a detailed description of linearly ordered notation systems $D^T_X$ that are computable in $T$ and the linear order~$X$. This yields an endofunctor $X\mapsto D^T_X$ of linear orders, which one may call a class-sized prae-dilator. If $D^T_X$ is well-founded for every well-order~$X$, then $T$ is called a (coded) dilator. In this case $\alpha\mapsto\otp(D^T_\alpha)$ defines a function on the ordinals. A condition under which this function is normal has been identified by P.~Aczel~\cite{aczel-phd,aczel-normal-functors} (even before Girard had introduced dilators in the full sense). This leads to a notion of normal prae-dilator, which will be defined in Section~\ref{sect:normal-dils-so}. In the same section we will characterize (upper) derivatives on the level of normal prae-dilators. Once all this is made precise, we can take the following as our formalization of statement~(2$^0$) in second order arithmetic:
\begin{enumerate}
\item[(2)] Any normal dilator $T$ has an upper derivative $S$ such that $X\mapsto D^S_X$ preserves well-foundedness (so that $S$ is again a normal dilator).
\end{enumerate}
The advantage of this principle is that it is relatively easy to state and does not depend on a specific construction of derivatives. Its disadvantage is that it confounds the following two questions: How strong is the assertion that any normal dilator has an upper derivative? And how much strength is added by the demand that the upper derivative preserves well-foundedness? We want to disentangle these questions in our formalization of statement~(1$^0$). To see how this works, let us recall that Aczel~\cite{aczel-phd,aczel-normal-functors} has explicitly constructed a derivative~$\partial T$ of a given normal prae-dilator~$T$. In Section~\ref{sect:constructin-derivative} we will show that $\partial T_n$ can be represented by a term system. In view of this representation $\rca_0$ proves that $\partial T$ exists and is a derivative of~$T$. What $\rca_0$ cannot show is that $D^{\partial T}$ preserves well-foundedness whenever $D^T$ does. This suggests to formalize statement~(1$^0$) as the following assertion:
\begin{enumerate}
\item[(1)] If $T$ is a normal dilator, then $D^{\partial T}_X$ is well-founded for any well-order~$X$.
\end{enumerate}
As $\rca_0$ proves that any derivative is an upper derivative it will be immediate that (1) implies~(2). This means that the entire strength of these two principles is concentrated in the preservation of well-foundedness, which answers the questions that we have raised after the formulation of principle~(2).

Let us summarize the content of the following sections: As explained above, Section~\ref{sect:normal-dils-so} introduces (upper) derivatives on the level of normal prae-dilators and gives a precise formalization of statement~(2). In Section~\ref{sect:upper-deriv-bi} we prove that~(2) implies~(3), by giving precise meaning to the argument from the beginning of this introduction. Section~\ref{sect:constructin-derivative} contains the construction of $\partial T$ in $\rca_0$, which yields the implication from~(1) to~(2). In Section~\ref{sect:bi-deriv-wf} we prove that (3) implies~(1), using $\Pi^1_1$-induction along $X$ to establish that $D^{\partial T}_X$ is well-founded. At the end of the paper we will thus have shown that (1), (2) and (3) are equivalent over $\aca_0$ (see Theorem~\ref{thm:main-result} for the official statement of this result). In a separate paper by the first author~\cite{freund-derivatives-rca} it is shown that the base theory can be lowered to $\rca_0$, since the existence of derivatives implies arithmetical comprehension. The first author has also shown that $\Pi^1_1$-induction along the natural numbers is equivalent to the weaker principle that every normal dilator has at least one well-founded fixed point~\cite{freund-single-fixed-point}.

The length of our paper is due to the fact that we wanted to be precise about the formalization of dilators in second order arithmetic. The text includes several informal summaries, so that the reader can follow the main lines of the argument without reading all technical verifications. To get a good first idea, we recommend to read Section~\ref{sect:normal-dils-so} up to (and including) Summary~\ref{sum:class-sized-dilators}, skip the rest of Section~\ref{sect:normal-dils-so}, and then read Section~\ref{sect:upper-deriv-bi} up to Summary~\ref{sum:deduce-Pi11-bi}.

In the rest of this introduction we put our result into context. Let us first discuss implications for the predicative foundation of mathematics: The predicative stance originated with H.~Weyl's {\em Das Kontinuum} \cite{weyl-continuum} from 1908, and  may be characterized by the imposition of a constraint on set formation that countenances only that which is implicit in accepting the natural number structure as a completed totality. Based on a proposal due to G.~Kreisel, the modern logical analysis of predicativity (given the natural numbers) was carried out by S.~Feferman~\cite{feferman64} and K.~Sch\"utte~\cite{schuette64} in 1964. It is couched in terms of provability in an autonomous transfinite progression of ramified theories of sets which are based on classical logic and assume the existence of the set of natural numbers. The existence of further sets is regimented by a hierarchy of levels to be generated in an autonomous way. At each level, sets are asserted to exist only via definitions in which quantification over sets must be restricted to lower levels. The further condition of autonomy requires that one may ascend to a level $\alpha$ only if the existence of a well-ordering of order type $\alpha$ has been established at some level $\beta<\alpha$. Feferman and Sch\"utte independently showed that the least non-autonomous ordinal for this progression of theories is the recursive ordinal $\Gamma_0$. Set-theoretically, the constructible sets up to $\Gamma_0$ form the minimal model of the aforementioned progression. A connection with our result arises because derivatives of normal functions (and transfinite hierarchies of derivatives) provide the intuition behind the usual notation system for~$\Gamma_0$ (see e.\,g.~\cite{schuette77}). This does not imply, however, that the abstract notion of derivative (relative to an arbitrary normal function) is predicatively acceptable. Indeed our result shows that it is not: We prove that the existence of normal functions is equivalent to $\Pi^1_1$-bar induction and hence to $\Sigma^1_1$-dependent choice (all over $\aca_0$). Now the least ordinal~$\tau$ such that the constructible sets up to $\tau$ form a model of $\Sigma^1_1$-dependent choice is the first \mbox{non-recursive} ordinal $\omega_1^{\operatorname{CK}}$ (see \cite{kreisel62}), which is much larger than~$\Gamma_0$. As a consequence, the principle of $\Sigma^1_1$-dependent choice does not possess a prima facie predicative justification. By the result of our paper the same applies to the principle that the derivative of every dilator preserves well-foundedness. On the other hand, all sufficiently concrete consequences of these principles hold in predicative mathematics: The extension of $\aca_0$ by $\Sigma^1_1$-dependent choice is $\Pi^1_2$-conservative over the theory 
$\mathbf{(\Pi^1_0\textbf{-}CA)_{\omega^{\omega}}}$, which allows for $\omega^\omega$ iterations of arithmetical comprehension (due to A.~Cantini 
\cite{cantini86}). The latter is a predicative theory in its entirety.

Let us also compare our result to a theorem of T.~Arai~\cite{arai-derivatives}. Roughly speaking, this theorem states that the following are equivalent over~$\aca_0$:
\begin{itemize}
 \item The order $D^{\partial T}_X$ is well-founded for every well-order $X$.
 \item Any set is contained in a countable coded $\omega$-model of the statement that ``$D^T_X$~is well-founded for every well-order~$X$''.
\end{itemize}
This formulation of Arai's result should be read with quite some reservation: Arai does not represent normal functions by dilators. Instead his result relies on the assumption that we are given formulas that define term systems for~$D^T_X$ and $D^{\partial T}_X$, which must satisfy certain conditions. In particular this approach does not allow to quantify over dilators, as required for our result. On an informal level Arai's result can be read as a pointwise version of ours: Recall that $\Pi^1_1$-bar induction is equivalent to $\omega$-model reflection for $\Pi^1_2$-formulas. Assume that we want to establish this reflection principle for a formula~$\varphi$. Girard has shown that the notion of dilator is $\Pi^1_2$-complete (see D.~Norman's proof in~\cite[Annex~8.E]{girard-book-part2}, which will also play an important role in Section~\ref{sect:constructin-derivative} below). Thus one may hope to construct a normal prae-dilator $T$ such that $\varphi$ is equivalent to the statement that ``$D^T_X$ is well-founded for every well-order~$X$''. Using our principle~(1) one could conclude that ``$D^{\partial T}_X$ is well-founded for every well-order~$X$''. By Arai's result this would yield the desired $\omega$-models of $\varphi$. When we started working on the present paper we planned to derive the equivalence between (1), (2) and (3) from Arai's result, by making the given argument precise. However, this has met with so many technical obstacles that it turned out easier to give a completely new proof.

To conclude this introduction we compare our result to a theorem of the first author~\cite{freund-thesis,freund-equivalence,freund-categorical,freund-computable}, which says that the following are equivalent over $\rca_0$:
\begin{itemize}
 \item Every dilator has a well-founded Bachmann-Howard fixed point.
 \item The principle of $\Pi^1_1$-comprehension holds.
\end{itemize}
To explain what this means we point out that the first principle quantifies over arbitrary dilators~$T$, rather than just over normal ones. This includes cases where we have $\otp(D^T_\alpha)>\alpha$ for any ordinal~$\alpha$, so that $D^T$ cannot have a well-founded fixed point. The best we can hope for is a function $\vartheta:D^T_\alpha\rightarrow\alpha$ that is ``almost" order-preserving (see~\cite{freund-equivalence} for a precise definition). If such a function exists, then $\alpha$ is called a Bachmann-Howard fixed point of $T$. This name has been chosen since the conditions on~$\vartheta$ are inspired by properties of the collapsing function used to define the Bachmann-Howard ordinal (cf.~in particular~\cite{rathjen-weiermann-kruskal}). It is worth noting that the notion of Bachmann-Howard fixed point is most interesting for dilators that are not normal (see the proof of~\cite[Proposition~3.3]{freund-computable} for an instructive example). As is well-known, $\Pi^1_1$-comprehension is much stronger than $\Pi^1_1$-bar induction. Thus the results of~\cite{freund-thesis} and the present paper help to explain why collapsing functions, rather than derivatives, are the crucial feature of strong ordinal notation systems.

\textbf{Acknowledgements.} We would like to thank the anonymous referee for their detailed comments, which have been very helpful in making our paper more readable.

\section{Normal dilators in second order arithmetic}\label{sect:normal-dils-so}

In the present section we define and investigate (prae-) dilators in the setting of reverse mathematics. Our approach is based on the work of Girard~\cite{girard-pi2} and on details worked out by the first author~\cite{freund-thesis,freund-computable}. We will also characterize normal prae-dilators and their (upper) derivatives.

Let us fix some category-theoretic terminology: To turn the class of (countable) linear orders into a category we take the order embeddings (strictly increasing functions) as morphisms. The forgetful functor to the underlying set of an order will be left implicit. Conversely, a subset of an order will often be considered as a suborder. The finite subset functor $[\cdot]^{<\omega}$ on the category of sets is given by
\begin{align*}
 [X]^{<\omega}&=\text{``the set of finite subsets of $X$''},\\
 [f]^{<\omega}(a)&=\{f(x)\,|\,x\in a\}.
\end{align*}
We will often write the arguments of a functor $T$ as subscripts, so that a morphism $f:X\rightarrow Y$ is transformed into $T_f:T_X\rightarrow T_Y$. When we want to avoid iterated subscripts we revert to the notation $T(f):T(X)\rightarrow T(Y)$.

Since the formalization of dilators in second order arithmetic is somewhat technical, we first give an informal summary. Guided by this summary, the reader can skim through the rest of the section without considering all technical details.

\begin{summary}\label{sum:class-sized-dilators}
In a sufficiently expressive meta-theory, a class-sized prae-dilator can be defined as a pair $(T,\supp^T)$ of
\begin{itemize}
\item a functor $T$ from linear orders to linear orders and
\item a natural transformation $\supp^T:T\Rightarrow[\cdot]^{<\omega}$ that satisfies the following support condition: for any order~$X$, each element $\sigma\in T_X$ lies in the range of the morphism $T_{\iota_\sigma}$, where $\iota_\sigma:\supp^T_X(\sigma)\hookrightarrow X$ is the inclusion.
\end{itemize}
If $T_X$ is well-founded for every well-order~$X$, then $T$ is called a class-sized dilator. The specification ``class-sized" will often be omitted. It is instructive to consider Example~\ref{ex:omega-dilator} below. In~\cite[Remark~2.2.2]{freund-thesis} it has been verified that the given definition of dilator coincides with the original one by Girard~\cite{girard-pi2}. However, our prae-dilators are not quite equivalent to Girard's pre-dilators, as~the latter must satisfy an additional monotonicity condition that is automatic in the well-founded case. Any dilator~$T$ induces a function $\alpha\mapsto\otp(T_\alpha)$ on the \mbox{ordinals}. To~ensure that this function is normal one demands that $T$ preserves initial segments. More precisely, we write
\begin{equation*}
X\!\restriction\!x=\{x'\in X\,|\,x'<_X x\}
\end{equation*}
for an order~$(X,<_X)$ and $x\in X$. A normal (prae-) dilator consists of
\begin{itemize}
\item a (prae-) dilator $(T,\supp^T)$ and
\item a natural family of functions $\mu^T_X:X\to T_X$ with the following property: for any inclusion $\iota_x:X\!\restriction\!x\hookrightarrow X$, the morphism $T_{\iota_x}$ has range $T_X\!\restriction\!\mu^T_X(x)$.
\end{itemize}
If $T$ is a normal dilator, then $\alpha\mapsto\otp(T_\alpha)$ is a normal function on the ordinals, as shown in Proposition~\ref{prop:normal-dil-fct} below. Recall that a normal function~$g$ is an upper derivative of $f$ if we have $f\circ g(\alpha)=g(\alpha)$ for every ordinal~$\alpha$. To ensure this equality it suffices to have an embedding of $f\circ g(\alpha)$ into $g(\alpha)$. In the categorical setting it is natural to demand compatible embeddings: A morphism between normal prae-dilators $(T,\mu^T)$ and $(S,\mu^S)$ is given by a natural transformation $\nu:T\Rightarrow S$ such that we have $\nu_X\circ\mu^T_X=\mu^S_X$ for any order~$X$. Now an upper derivative of a normal prae-dilator~$T$ can be defined as a pair $(S,\xi)$ of
\begin{itemize}
\item a normal prae-dilator~$S$ and
\item a morphism $\xi:T\circ S\Rightarrow S$ of normal prae-dilators.
\end{itemize}
The derivative of a normal function is the upper derivative with the smallest possible values. On the level of dilators this is naturaly expressed via the notion of initial object: Consider two upper derivatives $(S^1,\xi^1)$ and $(S^2,\xi^2)$ of a normal prae-dilator~$T$. A morphism $\nu:S^1\Rightarrow S^2$ of normal prae-dilators is called a morphism of upper derivatives if we have $\nu_x\circ\xi^1_X=\xi^2_X\circ T_{\nu_X}$ for every order~$X$. Now an upper derivative $(S,\xi)$ of~$T$ is called a derivative if any other upper derivative $(S',\xi')$ admits a unique morphism $\nu:S\Rightarrow S'$ of upper derivatives. Just as all initial objects, derivatives of normal prae-dilators are unique up to natural isomorphism. If $(S,\xi)$ is the derivative of a normal dilator~$T$, then $\alpha\mapsto\otp(S_\alpha)$ is the usual derivative of the normal function~$\alpha\mapsto\otp(T_\alpha)$, as we will show in Corollary~\ref{cor:deriv-dil-to-fct}. Together with the good categorical properties, this shows that we have found the ``right" definition of derivative on the level of dilators.
\end{summary}

In the rest of this section we make the previous summary precise and formalize it in reverse mathematics. The formalization relies on Girard's observation that dilators are essentially determined by their restrictions to finite orders. Let us fix some terminology: The category of natural numbers consists of the finite orders $n=\{0,\dots,n-1\}$ (ordered as usual) and all embeddings between them. Note that this yields a small category that is equivalent to the category of all finite orders. The equivalence is witnessed by the increasing enumerations $\en_a:|a|\rightarrow a$, where $|a|=\{0,\dots,|a|-1\}$ denotes the cardinality of the finite order~$a$. For each embedding $f:a\rightarrow b$ there is a unique increasing function $|f|:|a|\rightarrow |b|$ with
\begin{equation*}
 \en_b\circ|f|=f\circ\en_a.
\end{equation*}
Thus~$|\cdot|$ and $\en$ become a functor and a natural isomorphism. We continue to use the finite subset functor $[\cdot]^{<\omega}$, even though $[n]^{<\omega}$ coincides with the full power set of~$n=\{0,\dots,n-1\}$. Hereditarily finite sets with the natural numbers as urelements can be coded by natural numbers. It is straightforward to see that basic relations and operations on these sets are primitive recursive in the codes. This allows us to introduce the following notion in the theory~$\rca_0$, as in~\cite{freund-computable}:

\begin{definition}[$\rca_0$]\label{def:coded-prae-dilator}
 A coded prae-dilator consists of
 \begin{enumerate}[label=(\roman*)]
  \item a functor $T$ from the category of natural numbers to the category of linear orders with fields $T_n\subseteq\mathbb N$ and
  \item a natural transformation $\supp^T:T\Rightarrow[\cdot]^{<\omega}$ such that any $\sigma\in T_n$ lies in the range of $T_{\iota_\sigma\circ\en_\sigma}$, where
  \begin{equation*}
   |\supp^T_n(\sigma)|\xrightarrow{\mathmakebox[2em]{\en_\sigma}}\supp^T_n(\sigma)\xhookrightarrow{\mathmakebox[2em]{\iota_\sigma}}n=\{0,\dots,n-1\}
  \end{equation*}
  factors the unique morphism with range $\supp^T_n(\sigma)\subseteq n$.
 \end{enumerate}
\end{definition}

More precisely, the functor~$T$ is represented by the sets
  \begin{align*}
  T^0&=\{\langle 0,n,\sigma\rangle\,|\,\sigma\in T_n\}\cup\{\langle 1,n,\sigma,\tau\rangle\,|\,\sigma<_{T_n}\tau\},\\
  T^1&=\{\langle f,\sigma,\tau\rangle\,|\,T_f(\sigma)=\tau\}
  \end{align*}
  of natural numbers. The natural transformation $\supp^T$ is represented by the set
  \begin{equation*}
  \supp^T=\{\langle n,\sigma,a\rangle\,|\,\supp^T_n(\sigma)=a\}.
  \end{equation*}
  Thus an expression such as $\sigma\in T_n$ is an abbreviation for $\langle 0,n,\sigma\rangle\in T^0$, which is a $\Delta^0_1$-formula in $\rca_0$. The statement that $T$ is a coded prae-dilator is easily seen to be arithmetical in the sets $T^0,T^1,\supp^T\subseteq\mathbb N$.

\begin{example}\label{ex:omega-dilator}
  For any order~$X$ we consider the set
  \begin{equation*}
   \omega^X=\{\langle x_{k-1},\dots,x_0\rangle\,|\,x_0\leq_X\dots\leq_X x_{k-1}\}
  \end{equation*}
  with the lexicographic order (it may help to think of $\langle x_{k-1},\dots,x_0\rangle$ as the formal Cantor normal form $\omega^{x_{k-1}}+\dots+\omega^{x_0}$). To obtain a functor we map each morphism $f:X\rightarrow Y$ to the embedding $\omega^f:\omega^X\rightarrow\omega^Y$ with
  \begin{equation*}
  \omega^f(\langle x_{k-1},\dots,x_0\rangle)=\langle f(x_{k-1}),\dots,f(x_0)\rangle.
  \end{equation*}
  If we define $\supp^\omega_X:\omega^n\rightarrow[X]^{<\omega}$ by 
  \begin{equation*}
  \supp^\omega_X(\langle x_{k-1},\dots,x_0\rangle)=\{x_{k-1},\dots,x_0\},
  \end{equation*}
  then we get a class-sized prae-dilator in the sense of Summary~\ref{sum:class-sized-dilators}. Its restriction to the category of finite orders is a coded prae-dilator in the sense of Definition~\ref{def:coded-prae-dilator}.
\end{example}
  
Let us discuss how the class-sized prae-dilator $X\mapsto\omega^X$ from the previous example can be reconstructed from its coded restriction. The idea is to view an element $\langle n_{k-1},\dots,n_0\rangle\in\omega^n$ as a term. In order to obtain an element of $\omega^X$, the ``variables" $n_i$ in this term are substituted by elements~$x_i\in X$, in increasing order. For example, the pair $\langle\{x_0,x_1\},\langle 1,1,0\rangle\,\rangle$ with $x_0<x_1$ represents the element~$\langle x_1,x_1,x_0\rangle\in\omega^X$. To make the representations unique we require that the variables are as small as possible. Thus $\langle\{x_0,x_1\},\langle 3,3,1\rangle\,\rangle$ would not be a valid representation. In order to formulate this requirement in general we will rely on the observation that we have $\langle 1,1,0\rangle\in\omega^2=\omega^{|\{x_0,x_1\}|}$. One should also demand that all given elements of~$X$ are substituted for a variable. Thus $\langle\{x_0,x_1,x_2\},\langle 1,1,0\rangle\,\rangle$ with $x_0<_X x_1<_X x_2$ would not be a valid representation. This can be expressed via the condition~$\supp^\omega_{|\{x_0,x_1\}|}(\langle 1,1,0\rangle)=\{0,1\}=2=|\{x_0,x_1\}|$. In general, the class-sized extension $D^T$ of a coded prae-dilator~$T$ can be defined as follows (cf.~\cite{freund-computable}):
  
\begin{definition}[$\rca_0$]\label{def:coded-prae-dilator-reconstruct}
 Consider a coded prae-dilator $T=(T,\supp^T)$. For each order~$X$ we define a set $D^T_X$ and a binary relation $<_{D^T_X}$ on $D^T_X$ by
 \begin{gather*}
 D^T_X=\{\langle a,\sigma\rangle\,|\,a\in[X]^{<\omega}\text{ and }\sigma\in T_{|a|}\text{ and }\supp^T_{|a|}(\sigma)=|a|\},\\
 \langle a,\sigma\rangle<_{D^T_X}\langle b,\tau\rangle\Leftrightarrow T_{|\iota_a^{a\cup b}|}(\sigma)<_{T_{|a\cup b|}}T_{|\iota_b^{a\cup b}|}(\tau),
\end{gather*}
where $\iota_a^{a\cup b}:a\hookrightarrow a\cup b$ and $\iota_b^{a\cup b}:b\hookrightarrow a\cup b$ denote the inclusions between suborders of~$X$. Given an embedding $f:X\rightarrow Y$, we define $D^T_f:D^T_X\rightarrow D^T_Y$ by
 \begin{equation*}
 D^T_f(\langle a,\sigma\rangle)=\langle [f]^{<\omega}(a),\sigma\rangle.
\end{equation*}
To define a family of functions $\supp^{D^T}_X:D^T_X\Rightarrow[X]^{<\omega}$ we set
\begin{equation*}
 \supp^{D^T}_X(\langle a,\sigma\rangle)=a
\end{equation*}
for each order~$X$.
\end{definition}

In order to see that $D^T_f(\langle a,\sigma\rangle)$ still satisfies the uniqueness conditions (i.\,e.~that we have $\sigma\in T_{|[f]^{<\omega}(a)|}$ and $\supp^T_{|[f]^{<\omega}(a)|}(\sigma)=|[f]^{<\omega}(a)|$) it suffices to note that $[f]^{<\omega}(a)$ has the same cardinality as~$a$. The following shows that $D^T$ is a class-sized prae-dilator in the sense of Summary~\ref{sum:class-sized-dilators} (in part~(ii) of the proposition one could replace $\iota_{\langle a,\sigma\rangle}$ by $\iota_{\langle a,\sigma\rangle}\circ\en_a$, since $\en_a:|a|\rightarrow a$ is an isomorphism).

\begin{proposition}[$\rca_0$]\label{prop:reconstruct-class-sized-dil}
 If $T$ is a coded prae-dilator, then
 \begin{enumerate}[label=(\roman*)]
  \item the maps $X\mapsto(D^T_X,<_{D^T_X})$ and $f\mapsto D^T_f$ form an endofunctor on the category of linear orders and
  \item the map $X\mapsto\supp^{D^T}_X$ is a natural transformation between $D^T$ and $[\cdot]^{<\omega}$, with the property that any $\langle a,\sigma\rangle\in D^T_X$ lies in the range of $D^T_{\iota_{\langle a,\sigma\rangle}}$, where
  \begin{equation*}
   \iota_{\langle a,\sigma\rangle}:\supp^{D^T}_X(\langle a,\sigma\rangle)=a\hookrightarrow X
  \end{equation*}
  is the inclusion.
 \end{enumerate}
\end{proposition}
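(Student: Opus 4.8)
The plan is to reduce every assertion to the corresponding fact about the finite orders $n$ and the data $(T,\supp^T)$ of Definition~\ref{def:coded-prae-dilator}, using functoriality of $T$, of $|\cdot|$ and of $[\cdot]^{<\omega}$, together with naturality of $\supp^T$. Since all the sets and relations introduced in Definition~\ref{def:coded-prae-dilator-reconstruct} are $\Delta^0_1$-definable from $T^0,T^1,\supp^T$ and the order~$X$, and since every property below is arithmetical, the whole verification goes through in $\rca_0$. The technical heart is an independence lemma: for $\langle a,\sigma\rangle,\langle b,\tau\rangle\in D^T_X$ and \emph{any} finite $c\subseteq X$ with $a\cup b\subseteq c$, one has $\langle a,\sigma\rangle<_{D^T_X}\langle b,\tau\rangle$ if and only if $T_{|\iota_a^c|}(\sigma)<_{T_{|c|}}T_{|\iota_b^c|}(\tau)$. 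This follows from the identity $T_{|\iota_a^{c'}|}=T_{|\iota_c^{c'}|}\circ T_{|\iota_a^c|}$ for $a\subseteq c\subseteq c'$ (apply the functors $T$ and $|\cdot|$ to the composition of inclusions) and the fact that $T_{|\iota_c^{c'}|}$ is an order embedding, hence preserves and reflects the comparison; two admissible sets $c,c'$ are compared by passing through $c\cup c'$.

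Granting the lemma I would first check that $<_{D^T_X}$ is a strict linear order. Irreflexivity is immediate. For trichotomy, apply the lemma with $c=a\cup b$: as $T_{|a\cup b|}$ is linear and $T_{|\iota_a^{a\cup b}|}$, $T_{|\iota_b^{a\cup b}|}$ are injective, exactly one of $<,=,>$ holds between the two images. The only delicate point is antisymmetry, i.e.\ that $T_{|\iota_a^{a\cup b}|}(\sigma)=T_{|\iota_b^{a\cup b}|}(\tau)$ forces $\langle a,\sigma\rangle=\langle b,\tau\rangle$. Here naturality of $\supp^T$ gives that the support of the left-hand side equals $[\,|\iota_a^{a\cup b}|\,]^{<\omega}(\supp^T_{|a|}(\sigma))=[\,|\iota_a^{a\cup b}|\,]^{<\omega}(|a|)=\rng(|\iota_a^{a\cup b}|)$, which is precisely the set of positions occupied by the elements of~$a$ inside $a\cup b$; the analogous statement holds for~$b$. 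Equality of these subsets of $|a\cup b|$ forces $a=b$ as suborders of~$X$, and then $\sigma=\tau$ because $\iota_a^a$ is the identity and $T_{\id}=\id$. Transitivity follows by applying the lemma to the three comparisons over the common set $d=a\cup b\cup c$ and invoking transitivity of $<_{T_{|d|}}$.

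Next I would show that $D^T_f$ is an order embedding for every $f\colon X\to Y$, and that $D^T$ is functorial on morphisms. The key observation is that $f$ restricts to an order isomorphism $a\to[f]^{<\omega}(a)$ for each finite $a\subseteq X$, so that $|a|$ and $|[f]^{<\omega}(a)|$ are canonically identified; moreover $[f]^{<\omega}(a)\cup[f]^{<\omega}(b)=[f]^{<\omega}(a\cup b)$ because $f$ is injective, and $|\iota_a^{a\cup b}|$ is carried to $|\iota_{[f]^{<\omega}(a)}^{[f]^{<\omega}(a)\cup[f]^{<\omega}(b)}|$ because $f$ is order-preserving. Hence the comparison defining $<_{D^T_Y}$ between $D^T_f(\langle a,\sigma\rangle)=\langle[f]^{<\omega}(a),\sigma\rangle$ and $\langle[f]^{<\omega}(b),\tau\rangle$ is literally the comparison defining $<_{D^T_X}$ between $\langle a,\sigma\rangle$ and $\langle b,\tau\rangle$, so $D^T_f$ preserves and reflects $<$. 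Functoriality $D^T_{\id_X}=\id_{D^T_X}$ and $D^T_{g\circ f}=D^T_g\circ D^T_f$ is then immediate from the corresponding identities for $[\cdot]^{<\omega}$.

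Finally, for part~(ii), naturality of $\supp^{D^T}$ is trivial, since both composites send $\langle a,\sigma\rangle$ to $[f]^{<\omega}(a)$. For the range property, given $\langle a,\sigma\rangle\in D^T_X$ let $\iota=\iota_{\langle a,\sigma\rangle}\colon a\hookrightarrow X$. The defining conditions of $D^T_{-}$ depend on $a$ only through $|a|$, so $\langle a,\sigma\rangle$ is also an element of $D^T_a$, and $D^T_\iota(\langle a,\sigma\rangle)=\langle[\iota]^{<\omega}(a),\sigma\rangle=\langle a,\sigma\rangle$ because $\iota$ is an inclusion; the variant with $\en_a$ follows since $\en_a\colon|a|\to a$ is an isomorphism. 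I expect the main obstacle to be the bookkeeping behind the independence lemma together with the antisymmetry argument (and, to a lesser extent, the verification that $D^T_f$ respects the comparison); everything else is a formal consequence of functoriality.
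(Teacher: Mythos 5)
Your proposal is correct: it is the standard direct verification that the paper itself does not spell out (the paper simply cites \cite[Lemma~2.4]{freund-computable} and notes the argument goes through in $\rca_0$), and your bookkeeping via the independence lemma and the antisymmetry step uses the uniqueness condition $\supp^T_{|a|}(\sigma)=|a|$ exactly where the paper flags it as crucial for the linearity of $<_{D^T_X}$. Nothing in your argument exceeds $\rca_0$, so it matches the intended proof.
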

\begin{proof}
 In~\cite[Lemma~2.4]{freund-computable} the same has been shown in a stronger base theory (we point out that the uniqueness conditions are crucial for the linearity of $<_{D^T_X}$). It is straightforward to check that the proof goes through in~$\rca_0$.
\end{proof}

While $D^T$ is a class-sized object, its restriction $D^T\!\restriction\!\mathbb N$ to the category of natural numbers can be constructed in~$\rca_0$. The following is similar to~\cite[Proposition~2.5]{freund-computable}. Nevertheless we give a detailed proof, since we want to refer to it later.

\begin{lemma}[$\rca_0$]\label{lem:class-sized-restrict}
 If $T$ is a coded prae-dilator, then so is $D^T\!\restriction\!\mathbb N$. In this case we get a natural isomorphism $\eta^T:D^T\!\restriction\!\mathbb N\Rightarrow T$ by setting
 \begin{equation*}
  \eta^T_n(\langle a,\sigma\rangle)=T_{\iota_a\circ\en_a}(\sigma),
 \end{equation*}
 where $\iota_a:a\hookrightarrow n$ is the inclusion.% Furthermore we have ${\supp^T}\circ\eta^T=\supp^{D^T\restriction\mathbb N}$.
\end{lemma}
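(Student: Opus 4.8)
The plan is to derive nearly everything by restricting the class-sized endofunctor $D^T$ from Proposition~\ref{prop:reconstruct-class-sized-dil} to the category of natural numbers, so that the only genuinely new work is (a) to check that $D^T\restriction\mathbb N$ can actually be constructed in $\rca_0$, i.e.\ that its coding sets exist by $\Delta^0_1$-comprehension, and (b) to verify that the maps $\eta^T_n$ assemble into a natural isomorphism. Throughout I write $|a|$ for the cardinality of a finite order and $\en_a:|a|\to a$ for its increasing enumeration, and I use the paper's convention that $\iota_a^c:a\hookrightarrow c$ denotes the inclusion of suborders.

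For point (a): the field $D^T_n$, the order $<_{D^T_n}$, the transition maps $D^T_f$, and the support functions $\supp^{D^T}_n$ are all defined by $\Delta^0_1$-formulas in the parameters $T^0,T^1,\supp^T$. The only point worth a word is the decidability of expressions involving the transition maps of $T$: since $T$ is a functor, each $T_f$ is a total function, so the map sending $(f,\sigma)$ to the unique $\tau$ with $T_f(\sigma)=\tau$ is simultaneously $\Sigma^0_1$ and (by uniqueness) $\Pi^0_1$. Hence the coding sets $(D^T\restriction\mathbb N)^0$, $(D^T\restriction\mathbb N)^1$ and $\supp^{D^T\restriction\mathbb N}$ exist in $\rca_0$. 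That $D^T\restriction\mathbb N$ is then a coded prae-dilator in the sense of Definition~\ref{def:coded-prae-dilator} --- a functor from natural numbers to linear orders with fields contained in $\mathbb N$, equipped with a natural transformation to $[\cdot]^{<\omega}$ satisfying condition~(ii) --- is exactly Proposition~\ref{prop:reconstruct-class-sized-dil} specialized to the objects $n=\{0,\dots,n-1\}$; here part~(ii) of that proposition, in the variant indicated just before it, is literally in the form demanded by Definition~\ref{def:coded-prae-dilator}(ii), since $\en_a$ is an isomorphism (so that the ranges of $D^T_{\iota_a}$ and of $D^T_{\iota_a\circ\en_a}$ coincide).

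Next I would check that $\eta^T$ is a well-defined natural transformation $D^T\restriction\mathbb N\Rightarrow T$. Well-definedness is immediate: if $\langle a,\sigma\rangle\in D^T_n$ then $\sigma\in T_{|a|}$, so $T_{\iota_a\circ\en_a}(\sigma)$ is a well-defined element of $T_n$. For naturality, given $f:n\to m$ one computes
\[
\eta^T_m(D^T_f(\langle a,\sigma\rangle))=\eta^T_m(\langle[f]^{<\omega}(a),\sigma\rangle)=T_{\iota_{[f]^{<\omega}(a)}\circ\en_{[f]^{<\omega}(a)}}(\sigma),
\]
while $T_f(\eta^T_n(\langle a,\sigma\rangle))=T_f(T_{\iota_a\circ\en_a}(\sigma))=T_{f\circ\iota_a\circ\en_a}(\sigma)$; these agree because $f\circ\iota_a\circ\en_a$ and $\iota_{[f]^{<\omega}(a)}\circ\en_{[f]^{<\omega}(a)}$ are both strictly increasing maps $|a|=|[f]^{<\omega}(a)|\to m$ with range $[f]^{<\omega}(a)$, hence equal, so functoriality of $T$ closes the step.

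Finally, the core of the lemma is that each $\eta^T_n$ is an order isomorphism. For the order equivalence, set $c=a\cup b$ (as a suborder of $n$) and rewrite $\iota_a\circ\en_a=\iota_c^n\circ\en_c\circ|\iota_a^c|$, using the defining identity $\en_c\circ|\iota_a^c|=\iota_a^c\circ\en_a$ of the functor $|\cdot|$; applying $T$, and likewise for $b$, the comparison $\eta^T_n(\langle a,\sigma\rangle)<_{T_n}\eta^T_n(\langle b,\tau\rangle)$ turns into
\[
T_{\iota_c^n\circ\en_c}\bigl(T_{|\iota_a^c|}(\sigma)\bigr)<_{T_n}T_{\iota_c^n\circ\en_c}\bigl(T_{|\iota_b^c|}(\tau)\bigr),
\]
and since $T_{\iota_c^n\circ\en_c}$ is an embedding this is equivalent to $T_{|\iota_a^c|}(\sigma)<_{T_{|c|}}T_{|\iota_b^c|}(\tau)$, i.e.\ to $\langle a,\sigma\rangle<_{D^T_n}\langle b,\tau\rangle$ by Definition~\ref{def:coded-prae-dilator-reconstruct}; in particular $\eta^T_n$ is injective. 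For surjectivity, let $\tau\in T_n$ and put $a=\supp^T_n(\tau)$. Condition~(ii) of Definition~\ref{def:coded-prae-dilator} gives $\sigma\in T_{|a|}$ with $\tau=T_{\iota_a\circ\en_a}(\sigma)$, and naturality of $\supp^T$ yields $[\iota_a\circ\en_a]^{<\omega}(\supp^T_{|a|}(\sigma))=\supp^T_n(\tau)=a$; since $\iota_a\circ\en_a:|a|\to n$ is injective with range $a$, this forces $\supp^T_{|a|}(\sigma)=|a|$, so that $\langle a,\sigma\rangle\in D^T_n$ and $\eta^T_n(\langle a,\sigma\rangle)=\tau$. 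Hence $\eta^T_n$ is a bijection and, by the above, an order isomorphism. I expect this last step --- correctly invoking the support condition together with the naturality of $\supp^T$ to see that the reconstructed pair $\langle a,\sigma\rangle$ meets the uniqueness requirements of Definition~\ref{def:coded-prae-dilator-reconstruct} --- to be the only point needing real care; the rest is routine diagram-chasing that, exactly as for Proposition~\ref{prop:reconstruct-class-sized-dil}, survives the passage to $\rca_0$.
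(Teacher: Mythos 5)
Your proposal is correct and follows essentially the same route as the paper: the prae-dilator part is delegated to Proposition~\ref{prop:reconstruct-class-sized-dil} (with the same remark that $\en_a$ being an isomorphism reconciles $\iota_a$ with $\iota_a\circ\en_a$), and order-preservation, naturality and surjectivity of $\eta^T_n$ are verified by the same factorizations $\iota_a\circ\en_a=\iota_{a\cup b}\circ\en_{a\cup b}\circ|\iota_a^{a\cup b}|$, $f\circ\iota_a\circ\en_a=\iota_{[f]^{<\omega}(a)}\circ\en_{[f]^{<\omega}(a)}$, and the support-condition argument giving $\supp^T_{|a|}(\sigma)=|a|$. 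The extra remarks on $\Delta^0_1$-definability of the coding sets only make explicit what the paper leaves to the phrase that the proof goes through in $\rca_0$.
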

\begin{proof}
 The previous proposition implies that $D^T\!\restriction\!\mathbb N$ is a coded prae-dilator. To see that $\eta^T_n$ is order preserving we consider an inequality $\langle a,\sigma\rangle<_{D^T_n}\langle b,\tau\rangle$. According to Definition~\ref{def:coded-prae-dilator-reconstruct} this amounts to $T_{|\iota_a^{a\cup b}|}(\sigma)<_{T_{|a\cup b|}}T_{|\iota_b^{a\cup b}|}(\tau)$. Write $\iota_{a\cup b}:a\cup b\hookrightarrow n$ and observe
 \begin{equation*}
  \iota_{a\cup b}\circ\en_{a\cup b}\circ|\iota_a^{a\cup b}|=\iota_{a\cup b}\circ\iota_a^{a\cup b}\circ\en_a=\iota_a\circ\en_a.
 \end{equation*}
 Applying $T_{\iota_{a\cup b}\circ\en_{a\cup b}}$ to both sides of the above inequality we obtain
 \begin{multline*}
  \eta^T_n(\langle a,\sigma\rangle)=T_{\iota_a\circ\en_a}(\sigma)=T_{\iota_{a\cup b}\circ\en_{a\cup b}}\circ T_{|\iota_a^{a\cup b}|}(\sigma)<_{T_n}{}\\
  {}<_{T_n}T_{\iota_{a\cup b}\circ\en_{a\cup b}}\circ T_{|\iota_b^{a\cup b}|}(\tau)=T_{\iota_b\circ\en_b}(\tau)=\eta^T_n(\langle b,\tau\rangle).
 \end{multline*}
 To establish naturality we consider an order preserving function~$f:n\rightarrow m$. Write $\iota_{[f]^{<\omega}(a)}:[f]^{<\omega}(a)\hookrightarrow m$ and observe that we have
 \begin{equation*}
  f\circ\iota_a\circ\en_a=\iota_{[f]^{<\omega}(a)}\circ\en_{[f]^{<\omega}(a)},
 \end{equation*}
 as both sides are order isomorphisms between $|a|=|[f]^{<\omega}(a)|$ and $[f]^{<\omega}(a)\subseteq n$. We can deduce
 \begin{multline*}
  \eta^T_m\circ D^T_f(\langle a,\sigma\rangle)=\eta^T_m(\langle[f]^{<\omega}(a),\sigma\rangle)=T_{\iota_{[f]^{<\omega}(a)}\circ\en_{[f]^{<\omega}(a)}}(\sigma)=\\
  =T_f\circ T_{\iota_a\circ\en_a}(\sigma)=T_f\circ\eta^T_n(\langle a,\sigma\rangle).
 \end{multline*}
 By the definition of coded prae-dilator any $\sigma\in T_n$ can be written as $\sigma=T_{\iota_a\circ\en_a}(\sigma_0)$ with $a=\supp^T_n(\sigma)$ and $\sigma_0\in T_{|a|}$. In view of
 \begin{equation*}
  [\iota_a\circ\en_a]^{<\omega}(\supp^T_{|a|}(\sigma_0))=\supp^T_n(T_{\iota_a\circ\en_a}(\sigma_0))=\supp^T_n(\sigma)=a
 \end{equation*}
 we have $\supp^T_{|a|}(\sigma_0)=|a|$ and hence $\langle a,\sigma_0\rangle\in D^T_n$. Since $\eta^T_n(\langle a,\sigma_0\rangle)=\sigma$ holds by construction we can conclude that $\eta^T_n$ is surjective.
\end{proof}

As indicated in the introduction, the following notion plays a crucial role (there is no ambiguity since the two obvious definitions of well-foundedness are equivalent in $\rca_0$, see e.\,g.~\cite[Lemma~2.3.12]{freund-thesis}):

\begin{definition}[$\rca_0$]\label{def:coded-dilator}
 A coded prae-dilator~$T$ is called a coded dilator if $D^T_X$ is well-founded for every well-order~$X$.
\end{definition}

In a sufficiently expressive meta-theory, we can now discuss the reconstruction of a class-sized prae-dilator~$T$. Assuming that $T$ preserves countability, we may assume $T_n\subseteq\mathbb N$ for every number~$n$. Then the restriction $T\!\restriction\!\mathbb N$ is a coded prae-dilator. The equivalence from Lemma~\ref{lem:class-sized-restrict} is readily extended into a natural isomorphism between $D^{T\restriction\mathbb N}$ and $T$ (see~\cite[Proposition~2.5]{freund-computable}). In view of $D^{T\restriction\mathbb N}_X\cong T_X$ it is immediate that $T\!\restriction\!\mathbb N$ is a coded dilator if~$T$ is a class-sized dilator. The converse is somewhat more subtle, since Definition~\ref{def:coded-dilator} only quantifies over well-orders with field $X\subseteq\mathbb N$. Girard~\cite[Theorem~2.1.15]{girard-pi2} has shown that it suffices to test the preservation of well-foundedness on countable orders. Thus it is true that $D^T$ is a class-sized dilator for any coded dilator~$T$. In second order arithmetic we can consider the orders $T_X$ and the isomorphisms $D^{T\restriction\mathbb N}_X\cong T_X$ when $T$ is a specific class-sized prae-dilator with a computable construction. This can be useful when $T_X$ has a more transparent description than $D^{T\restriction\mathbb N}_X$ (as in the example above, where the term $\langle x_1,x_1,x_0\rangle\in\omega^X$ is more intelligible than the expression $\langle\{x_0,x_1\},\langle 1,1,0\rangle\,\rangle\in D^ \omega_X$). On the other hand, second order arithmetic cannot reason about class-sized prae-dilators in general (i.\,e.~quantify over them). Thus we will mostly be concerned with coded prae-dilators, which are more important on a theoretical level. We will often omit the specification ``coded'' to improve readability.

Arguing in a sufficiently strong set theory, each coded dilator $T$ induces a function $\alpha\mapsto\otp(D^T_\alpha)$ on the ordinals. To see that this function does not need to be normal we consider the coded dilator that maps $n$ to the order
\begin{equation*}
 T_n=\{0,\dots,n-1\}\cup\{\Omega\}
\end{equation*}
with a new biggest element~$\Omega$. Its action on a morphism $f:n\rightarrow m$ and the support functions $\supp^T_n:T_n\rightarrow[n]^{<\omega}$ are given by
\begin{equation*}
 T_f(\sigma)=\begin{cases}
              f(\sigma) & \text{if $\sigma\in\{0,\dots,n-1\}$},\\
              \Omega & \text{if $\sigma=\Omega$},
             \end{cases}\quad
 \supp^T_n(\sigma)=\begin{cases}
              \{\sigma\} & \text{if $\sigma\in\{0,\dots,n-1\}$},\\
              \emptyset & \text{if $\sigma=\Omega$}.
             \end{cases}
\end{equation*}
It is straightforward to check that
\begin{equation*}
 D^T_X=\{\langle\{x\},0\rangle\,|\,x\in X\}\cup\{\langle\emptyset,\Omega\rangle\}\qquad\text{(with $0\in 1\subseteq T_1$ and $\Omega\in T_0$)}
\end{equation*}
is isomorphic to $X\cup\{\Omega\}$ (where $\Omega$ is still the biggest element). Thus we have $\otp(D^T_\alpha)=\alpha+1$, which means that the function induced by $T$ is not continuous at limit stages and does not have any fixed points.

To analyze the given counterexample we observe that the functor~$T$ from the previous paragraph does not preserve initial segments: Given that the range of $f:n\rightarrow m$ is an initial segment of $m$, we cannot infer that the range of $T_f$ is an initial segment of $T_n$ (since it contains the element~$\Omega$). Indeed, Aczel~\cite{aczel-phd,aczel-normal-functors} and Girard~\cite{girard-pi2} have identified preservation of initial segments as the crucial condition that reconciles categorical continuity, i.\,e.~preservation of direct limits, and the usual notion of continuity at limit ordinals (paraphrasing Girard). More precisely, Aczel focuses on initial segments of the form
\begin{equation*}
 X\!\restriction\!x=\{y\in X\,|\,y<_Xx\},
\end{equation*}
where $x$ is an element of the linear order~$X=(X,<_X)$. It will be convenient to have the following notation: For $a,b\in[X]^{<\omega}$ we abbreviate
\begin{equation*}
 a\lef_X b\quad\Leftrightarrow\quad\forall_{x\in a}\exists_{y\in b}\,x<_X y.
\end{equation*}
The relation $\leqf_X$ is defined in the same way, with $\leq_X$ at the place of $<_X$. We omit the subscript when we refer to the usual order on the natural numbers or on the ordinals. In the case of a singleton we write $a\lef_X y$ rather than $a\lef_X\{y\}$. Note that this makes $a\lef_Xx$ equivalent to $a\subseteq X\!\restriction\!x$. The following is fundamental:

\begin{lemma}[$\rca_0$]\label{lem:range-dil-support}
 If $T$ is a coded prae-dilator, then we have
 \begin{equation*}
  \rng(D^T_f)=\{\langle a,\sigma\rangle\in D^T_Y\,|\,a\subseteq\rng(f)\}
 \end{equation*}
 for any order embedding~$f:X\rightarrow Y$.
\end{lemma}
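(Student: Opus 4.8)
The plan is to prove the two inclusions separately, unwinding the definition of $D^T_f$ from Definition~\ref{def:coded-prae-dilator-reconstruct}, namely $D^T_f(\langle a,\sigma\rangle)=\langle[f]^{<\omega}(a),\sigma\rangle$. Everything reduces to the elementary fact that an order embedding (indeed any injection) $f:X\rightarrow Y$ restricts to a bijection from $f^{-1}(a):=\{x\in X\mid f(x)\in a\}$ onto $a$ whenever $a\subseteq\rng(f)$; in particular $f^{-1}(a)$ is finite with $|f^{-1}(a)|=|a|$ and $[f]^{<\omega}(f^{-1}(a))=a$. This is all primitive recursive in the relevant codes and hence available in $\rca_0$.

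For the inclusion ``$\subseteq$'', I would take an arbitrary element $\langle b,\tau\rangle=D^T_f(\langle a,\sigma\rangle)$ with $\langle a,\sigma\rangle\in D^T_X$; by the formula for $D^T_f$ we get $b=[f]^{<\omega}(a)$, which is visibly a subset of $\rng(f)$. For the inclusion ``$\supseteq$'', I would start from $\langle a,\sigma\rangle\in D^T_Y$ with $a\subseteq\rng(f)$, set $a'=f^{-1}(a)$, and check that $\langle a',\sigma\rangle$ lies in $D^T_X$: the defining conditions $\sigma\in T_{|a'|}$ and $\supp^T_{|a'|}(\sigma)=|a'|$ follow from the corresponding conditions for $\langle a,\sigma\rangle\in D^T_Y$ together with $|a'|=|a|$. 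Then $D^T_f(\langle a',\sigma\rangle)=\langle[f]^{<\omega}(a'),\sigma\rangle=\langle a,\sigma\rangle$ exhibits $\langle a,\sigma\rangle$ as an element of $\rng(D^T_f)$.

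There is essentially no hard step here; the only point requiring a modicum of care is that the uniqueness conditions built into $D^T_X$ (the clause $\supp^T_{|a|}(\sigma)=|a|$) must be verified for the preimage $\langle a',\sigma\rangle$, and this is precisely where one uses that passing from $a$ to $a'=f^{-1}(a)$ preserves cardinality, so that the index $|a|$ is unchanged. If anything counts as the ``main obstacle'', it is merely bookkeeping: confirming that $f^{-1}(a)$ is the right object and that $[f]^{<\omega}$ of it returns $a$ exactly (using $a\subseteq\rng(f)$), which is immediate from injectivity of $f$.
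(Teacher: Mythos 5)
Your proof is correct and follows essentially the same route as the paper: unwind $D^T_f(\langle a,\sigma\rangle)=\langle[f]^{<\omega}(a),\sigma\rangle$ for one inclusion, and for the other produce a finite preimage $b\subseteq X$ with $[f]^{<\omega}(b)=a$ and observe $|b|=|a|$ so that the membership conditions for $D^T_X$ carry over. The only cosmetic difference is that the paper obtains this preimage by induction on the size of $a$ rather than writing $f^{-1}(a)$ outright, which is just the $\rca_0$-bookkeeping you already acknowledge.
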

\begin{proof}
 For the inclusion $\subseteq$ it suffices to recall $D^T_f(\langle b,\sigma\rangle)=\langle [f]^{<\omega}(b),\sigma\rangle$. Conversely, induction on the size of $a\subseteq\rng(f)$ yields a finite $b\subseteq X$ with $[f]^{<\omega}(b)=a$. Then $\langle a,\sigma\rangle\in D^T_Y$ is the image of $\langle b,\sigma\rangle\in D^T_X$ (observe $|b|=|a|$).
\end{proof}

Preservation of initial segments can now be characterized as follows:

\begin{corollary}[$\rca_0$]\label{cor:initial-segments-supports}
 Consider a coded prae-dilator $T$ and a linear order $X$. The following are equivalent for any elements $x\in X$ and $\rho\in D^T_X$:
\begin{enumerate}[label=(\roman*)]
 \item We have $\rng(D^T_{\iota_x})=D^T_X\!\restriction\!\rho$, where $\iota_x:X\!\restriction\!x\hookrightarrow X$ is the inclusion.
 \item For any $\langle a,\sigma\rangle\in D^T_X$ we have
 \begin{equation*}
  \langle a,\sigma\rangle<_{D^T_X}\rho\quad\Leftrightarrow\quad a\lef_X x.
 \end{equation*}
\end{enumerate}
\end{corollary}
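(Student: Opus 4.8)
The plan is to observe that the corollary is a pure bookkeeping translation between the ``range'' and the ``support'' formulations of preservation of initial segments, with all the actual content already contained in Lemma~\ref{lem:range-dil-support}. Fix the prae-dilator $T$, the order $X$, and the elements $x\in X$ and $\rho\in D^T_X$; we must show that the two assertions (i) and (ii) are equivalent.

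First I would rewrite the left-hand side of~(i). Applying Lemma~\ref{lem:range-dil-support} to the inclusion $f=\iota_x:X\!\restriction\!x\hookrightarrow X$ gives
\begin{equation*}
 \rng(D^T_{\iota_x})=\{\langle a,\sigma\rangle\in D^T_X\,|\,a\subseteq\rng(\iota_x)\}=\{\langle a,\sigma\rangle\in D^T_X\,|\,a\subseteq X\!\restriction\!x\},
\end{equation*}
and, as noted just before the statement of the corollary, $a\subseteq X\!\restriction\!x$ is by definition the same as $a\lef_X x$. Hence $\rng(D^T_{\iota_x})=\{\langle a,\sigma\rangle\in D^T_X\,|\,a\lef_X x\}$.

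Next I would unfold the right-hand side of~(i): by the very definition of the notation $Y\!\restriction\!y$ for a linear order $Y$, we have $D^T_X\!\restriction\!\rho=\{\langle a,\sigma\rangle\in D^T_X\,|\,\langle a,\sigma\rangle<_{D^T_X}\rho\}$. Thus assertion~(i) states exactly that the two subsets $\{\langle a,\sigma\rangle\in D^T_X\,|\,a\lef_X x\}$ and $\{\langle a,\sigma\rangle\in D^T_X\,|\,\langle a,\sigma\rangle<_{D^T_X}\rho\}$ of $D^T_X$ coincide. Two subsets of $D^T_X$ are equal precisely when they have the same elements, i.e.~precisely when, for every $\langle a,\sigma\rangle\in D^T_X$, the condition $a\lef_X x$ is equivalent to $\langle a,\sigma\rangle<_{D^T_X}\rho$ --- which is exactly assertion~(ii). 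This establishes the equivalence. All of the above is available in $\rca_0$, since the relevant sets are defined by arithmetical conditions in the codes for $T$ and $X$ (and, by Lemma~\ref{lem:range-dil-support}, $\rng(D^T_{\iota_x})$ does exist as a set there).

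I do not expect a genuine obstacle here; the only subtlety worth flagging in the write-up is that (i) and (ii) are each a single assertion about the fixed data $x,\rho$, so the biconditional to be proved is between these two assertions --- but, as shown above, both reduce to the one universally quantified statement over $\langle a,\sigma\rangle\in D^T_X$, so the reduction is clean. The mathematical weight of the result lies entirely in Lemma~\ref{lem:range-dil-support}.
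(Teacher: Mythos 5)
Your proposal is correct and matches the paper's intent: the corollary is stated without a separate proof precisely because it is the immediate translation of Lemma~\ref{lem:range-dil-support} (applied to $\iota_x$) via the remark that $a\lef_X x$ means $a\subseteq X\!\restriction\!x$ and the definition of $D^T_X\!\restriction\!\rho$. Your unfolding of both sides to the same universally quantified statement over $\langle a,\sigma\rangle\in D^T_X$ is exactly the intended argument.
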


We will see that a coded dilator with the following property does induce a normal function on the ordinals. 

\begin{definition}[$\rca_0$]\label{def:coded-normal-dil}
 A normal prae-dilator consists of a (coded) prae-dilator~$T$ and a natural family of order embeddings $\mu^T_n:n\rightarrow T_n$ such that we have
 \begin{equation*}
  \sigma<_{T_n}\mu^T_n(m)\quad\Leftrightarrow\quad\supp^T_n(\sigma)\lef m
 \end{equation*}
 for all numbers $m<n$ and all elements $\sigma\in T_n$.
\end{definition}

Note that the family of functions $\mu^T_n$ can be represented by the set
\begin{equation*}
 \mu^T=\{\langle n,m,\rho\rangle\,|\,\mu^T_n(m)=\rho\}
\end{equation*}
of natural numbers. As an example we recall the coded dilator $n\mapsto\omega^n$ considered above. It is straightforward to verify that we obtain a normal dilator by setting
\begin{equation*}
 \mu^\omega_n(m)=\langle m\rangle\in\omega^n
\end{equation*}
for all numbers $m<n$. Recall that $\langle m\rangle$ corresponds to the formal Cantor normal form $\omega^m$. This suggests to think of $\mu^\omega_n$ as the restriction of the normal function $\alpha\mapsto\omega^\alpha$ to the finite ordinal~$n$. A formal version of this idea can be found in the proof of Proposition~\ref{prop:normal-dil-fct} below. Before we can formulate it we must extend $\mu^T$ beyond the category of natural numbers. This relies on the following observation:

\begin{lemma}[$\rca_0$]\label{lem:support-mu}
 If $T=(T,\mu^T)$ is a normal prae-dilator, then we have
 \begin{equation*}
  \supp^T_n(\mu^T_n(m))=\{m\}
 \end{equation*}
 for all numbers $m<n$.
\end{lemma}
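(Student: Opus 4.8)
The plan is to reduce the statement to the single instance $n=1$, $m=0$ and then transport the result along the embedding that singles out $m$.

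The first step would be to show that $\supp^T_1(\mu^T_1(0))=\{0\}$. Since this support is a finite subset of $1=\{0\}$, it equals either $\emptyset$ or $\{0\}$; and $\emptyset\lef 0$ holds vacuously whereas $\{0\}\lef 0$ is false, so it suffices to rule out that the support is empty. To this end I would instantiate the defining condition of a normal prae-dilator (Definition~\ref{def:coded-normal-dil}) with $n=1$, $m=0$ and $\sigma=\mu^T_1(0)\in T_1$, which gives
\begin{equation*}
 \mu^T_1(0)<_{T_1}\mu^T_1(0)\quad\Leftrightarrow\quad\supp^T_1(\mu^T_1(0))\lef 0.
\end{equation*}
The left-hand side fails since $<_{T_1}$ is a (linear, hence irreflexive) order, so the right-hand side fails too, and therefore $\supp^T_1(\mu^T_1(0))\neq\emptyset$.

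For the general case I would fix $m<n$ and consider the order embedding $h\colon 1\rightarrow n$ with $h(0)=m$. Naturality of the family $\mu^T$ yields $\mu^T_n(m)=\mu^T_n(h(0))=T_h(\mu^T_1(0))$, and then naturality of $\supp^T\colon T\Rightarrow[\cdot]^{<\omega}$ gives
\begin{equation*}
 \supp^T_n(\mu^T_n(m))=\supp^T_n(T_h(\mu^T_1(0)))=[h]^{<\omega}(\supp^T_1(\mu^T_1(0)))=[h]^{<\omega}(\{0\})=\{m\}.
\end{equation*}
All objects here are coded by natural numbers and the statements involved are arithmetical, so the argument formalizes in $\rca_0$.

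I do not expect a genuine obstacle. The only spot needing a small observation is the base case, where one must notice that $a\lef 0$ forces $a=\emptyset$ and then apply irreflexivity at $\sigma=\mu^T_1(0)$ itself; the rest is routine bookkeeping with the two naturality squares. As an alternative to the base case one could instead assume $\supp^T_1(\mu^T_1(0))=\emptyset$, push $\mu^T_1(0)$ forward along the two embeddings $1\rightarrow 2$, factor both images through the unique map $0\rightarrow 2$, and so contradict the injectivity of $\mu^T_2$; but reasoning via irreflexivity is shorter.
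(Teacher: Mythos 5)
Your proof is correct and follows essentially the same route as the paper: both arguments combine the naturality of $\mu^T$ and $\supp^T$ along the embedding $1\rightarrow n$ sending $0$ to $m$ with the normality equivalence at $\sigma=\mu^T$ itself, using irreflexivity of the order to exclude empty support. The only (cosmetic) difference is that you rule out the empty support at the instance $n=1$ and then transport the exact value, whereas the paper first obtains the inclusion $\supp^T_n(\mu^T_n(m))\subseteq\{m\}$ and excludes emptiness directly at level $n$.
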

\begin{proof}
 Define $\iota:1\rightarrow n$ by $\iota(0)=m$. By the naturality of $\mu^T$ and $\supp^T$ we get
 \begin{equation*}
  \supp^T_n(\mu^T_n(m))=\supp^T_n(\mu^T_n(\iota(0)))=[\iota]^{<\omega}(\supp^T_1(\mu^T_1(0)))\subseteq\rng(\iota)=\{m\}.
 \end{equation*}
 So it remains to show that we cannot have $\supp^T_n(\mu^T_n(m))=\emptyset$. The latter would imply $\supp^T_n(\mu^T_n(m))\lef m$ and hence $\mu^T_n(m)<_{T_n}\mu^T_n(m)$, which is impossible.
\end{proof}

In particular the lemma yields $\supp^T_{|\{x\}|}(\mu^T_1(0))=|\{x\}|$, which secures the uniqueness condition needed for the following construction:

\begin{definition}[$\rca_0$]\label{def:extend-normal-transfos}
 Let $T$ be a normal prae-dilator. For each order~$X$ we define $D^{\mu^T}_X:X\rightarrow D^T_X$ by setting
 \begin{equation*}
  D^{\mu^T}_X(x)=\langle\{x\},\mu^T_1(0)\rangle
 \end{equation*}
 for all elements $x\in X$.
\end{definition}

The reader may have noticed that only the value $\mu^T_1(0)$ was needed in order to extend $\mu^T$ to arbitrary linear orders. To state the equivalence from Definition~\ref{def:coded-normal-dil} for all numbers~$n$ it is nevertheless convenient to consider the entire family of functions $\mu^T_n:n\rightarrow T_n$ as given. The following shows that we have reconstructed the normal prae-dilators from Summary~\ref{sum:class-sized-dilators}.

\begin{proposition}[$\rca_0$]\label{prop:reconstruct-normal-dil}
 If $T$ is a normal prae-dilator, then the functions $D^{\mu^T}_X:X\rightarrow D^T_X$ form a natural family of order embeddings. Furthermore we have
 \begin{equation*}
  \langle a,\sigma\rangle<_{D^T_X}D^{\mu^T}_X(x)\quad\Leftrightarrow\quad a\lef_X x
 \end{equation*}
 for any order~$X$ and any element $\langle a,\sigma\rangle\in D^T_X$.
\end{proposition}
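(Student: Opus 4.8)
The plan is to derive the whole proposition from the displayed equivalence, since both the naturality and the embedding claims follow from it (together with one earlier lemma). First I would check naturality directly: for an embedding $f:X\rightarrow Y$, Definition~\ref{def:coded-prae-dilator-reconstruct} gives $D^T_f(D^{\mu^T}_X(x))=\langle\{f(x)\},\mu^T_1(0)\rangle=D^{\mu^T}_Y(f(x))$, since $[f]^{<\omega}(\{x\})=\{f(x)\}$; this is exactly the naturality square. (That $D^{\mu^T}_X(x)=\langle\{x\},\mu^T_1(0)\rangle$ really lies in $D^T_X$ was already noted, via Lemma~\ref{lem:support-mu}.) For the embedding property I would then instantiate the displayed equivalence at $\langle a,\sigma\rangle=D^{\mu^T}_X(x')$, obtaining $D^{\mu^T}_X(x')<_{D^T_X}D^{\mu^T}_X(x)\Leftrightarrow\{x'\}\lef_X x\Leftrightarrow x'<_X x$; since $D^T_X$ is a linear order by Proposition~\ref{prop:reconstruct-class-sized-dil}, this gives strict monotonicity in both directions and hence injectivity. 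So the real work is the displayed equivalence.

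To prove it I would abbreviate $b=\{x\}$ and $c=a\cup b$, and let $k$ be the position of $x$ inside the finite suborder $c$, i.e.\ $k=|\{y\in c\,|\,y<_X x\}|$; note that $k<|c|$ because $x\in c$. Unfolding Definition~\ref{def:coded-prae-dilator-reconstruct}, the inequality $\langle a,\sigma\rangle<_{D^T_X}D^{\mu^T}_X(x)$ becomes $T_{|\iota_a^c|}(\sigma)<_{T_{|c|}}T_{|\iota_b^c|}(\mu^T_1(0))$. The cardinality-functor image $|\iota_b^c|:1\rightarrow|c|$ sends $0$ to $k$, so naturality of $\mu^T$ gives $T_{|\iota_b^c|}(\mu^T_1(0))=\mu^T_{|c|}(k)$. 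Since $k<|c|$, the defining property of normal prae-dilators (Definition~\ref{def:coded-normal-dil}) then rewrites the inequality as $\supp^T_{|c|}(T_{|\iota_a^c|}(\sigma))\lef k$.

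Next I would identify the support on the left-hand side. By naturality of $\supp^T$ it equals $[|\iota_a^c|]^{<\omega}(\supp^T_{|a|}(\sigma))$, and the uniqueness clause in $\langle a,\sigma\rangle\in D^T_X$ forces $\supp^T_{|a|}(\sigma)=|a|$; hence the support is just $\rng(|\iota_a^c|)$, namely the set of positions in $c$ occupied by the elements of $a$, that is, the numbers $|\{z\in c\,|\,z<_X y\}|$ for $y\in a$. So the condition reads $|\{z\in c\,|\,z<_X y\}|<k$ for every $y\in a$. Finally, for $y\in a\subseteq c$ the position of $y$ in $c$ is strictly below that of $x$ if and only if $y<_X x$ (if $y<_X x$ then $y$ itself lies below $x$ but not below $y$; if $x\leq_X y$ then the initial segment of $c$ below $x$ is contained in the one below $y$). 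This turns the condition into $\forall_{y\in a}\,y<_X x$, i.e.\ $a\lef_X x$, which completes the chain of equivalences.

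I expect the main difficulty to be purely organisational: keeping track of the interplay between the enumerations $\en$ and the cardinality functor $|\cdot|$ (in particular pinning down that $|\iota_b^c|(0)=k$ and that $\rng(|\iota_a^c|)$ is exactly the set of positions of $a$ in $c$), and remembering to verify $k<|c|$ so that Definition~\ref{def:coded-normal-dil} legitimately applies. It is worth a sanity check that the argument is uniform in whether or not $x\in a$: when $x\in a$ we have $c=a$, $\rng(|\iota_a^c|)=|a|$ and $k\leq|a|-1$, so that both $\rng(|\iota_a^c|)\lef k$ and $a\lef_X x$ fail, and no case distinction is needed. All the ingredients (arithmetical manipulation of codes for hereditarily finite sets, the arithmetical conditions in the definition of a normal prae-dilator, naturality, and Lemma~\ref{lem:support-mu}) are available in $\rca_0$, so the proposition goes through there.
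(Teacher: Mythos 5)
Your proof is correct and follows essentially the same route as the paper: unfold the order on $D^T_X$, use naturality of $\mu^T$ to identify $T_{|\iota_b^{a\cup\{x\}}|}(\mu^T_1(0))$ with $\mu^T_{|a\cup\{x\}|}(k)$ where $k$ is the position of $x$, apply the equivalence from Definition~\ref{def:coded-normal-dil}, and translate the resulting support condition back to $a\lef_X x$ via naturality of $\supp^T$ and the uniqueness clause $\supp^T_{|a|}(\sigma)=|a|$. The only (harmless) deviation is that you obtain the embedding property by instantiating the equivalence at $\langle\{x'\},\mu^T_1(0)\rangle$, whereas the paper verifies it directly from the monotonicity of $\mu^T_2$; since your proof of the equivalence nowhere uses the embedding claim, there is no circularity, and Lemma~\ref{lem:support-mu} covers the needed well-definedness.
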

\begin{proof}
 To show that~$D^{\mu^T}_X$ is an embedding we consider $x_0<_Xx_1$. For $j\in\{0,1\}$ we write $\iota_j:\{x_j\}\hookrightarrow\{x_0,x_1\}$. Using the naturality of $\mu^T$ and the fact that $\mu^T_2$ is order preserving we get
 \begin{equation*}
  T_{|\iota_0|}(\mu^T_1(0))=\mu^T_2(|\iota_0|(0))=\mu^T_2(0)<_{T_2}\mu^T_2(1)=\mu^T_2(|\iota_1|(0))=T_{|\iota_1|}(\mu^T_1(0)).
 \end{equation*}
 According to Definition~\ref{def:coded-prae-dilator-reconstruct} this yields
 \begin{equation*}
  D^{\mu^T}_X(x_0)=\langle\{x_0\},\mu^T_1(0)\rangle<_{D^T_X}\langle\{x_1\},\mu^T_1(0)\rangle=D^{\mu^T}_X(x_1),
 \end{equation*}
 as desired. To see that $D^{\mu^T}$ is natural we compute
 \begin{equation*}
  D^T_f(D^{\mu^T}_X(x))=\langle[f]^{<\omega}(\{x\}),\mu^T_1(0)\rangle=\langle\{f(x)\},\mu^T_1(0)\rangle=D^{\mu^T}_Y(f(x)).
 \end{equation*}
 It remains to establish the stated equivalence: First assume that we have
 \begin{equation*}
  \langle a,\sigma\rangle<_{D^T_X}D^{\mu^T}_X(x)=\langle\{x\},\mu^T_1(0)\rangle.
 \end{equation*}
 Write $\iota_0:a\hookrightarrow a\cup\{x\}$ and $\iota_1:\{x\}\hookrightarrow a\cup\{x\}$ for the inclusions. By definition of the order on $D^T_X$ we have
 \begin{equation*}
  T_{|\iota_0|}(\sigma)<_{T_{|a\cup\{x\}|}}T_{|\iota_1|}(\mu^T_1(0))=\mu^T_{|a\cup\{x\}|}(|\iota_1|(0)).
 \end{equation*}
 Using the equivalence from Definition~\ref{def:coded-normal-dil} we can deduce
 \begin{equation*}
  [|\iota_0|]^{<\omega}(|a|)=[|\iota_0|]^{<\omega}(\supp^T_{|a|}(\sigma))=\supp^T_{|a\cup\{x\}|}(T_{|\iota_0|}(\sigma))\lef |\iota_1|(0).
 \end{equation*}
 This implies $a\lef_X x$, as desired. To establish the converse implication one follows the argument backwards, noting that $a\lef_X x$ implies $[|\iota_0|]^{<\omega}(|a|)\lef |\iota_1|(0)$.
\end{proof}

Working in a sufficiently strong set theory, we can now prove that normal dilators do induce normal functions. This result is due to Aczel~\cite[Theorem~2.11]{aczel-phd}.

\begin{proposition}\label{prop:normal-dil-fct}
 Assume that $T$ is a normal dilator. Then $\alpha\mapsto\otp(D^T_\alpha)$ is a normal function on the ordinals.
\end{proposition}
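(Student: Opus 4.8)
The plan is to obtain the proposition as a short consequence of Corollary~\ref{cor:initial-segments-supports} and Proposition~\ref{prop:reconstruct-normal-dil}, which together say that the normality structure $\mu^T$ exhibits $D^T$ as preserving initial segments at the points $D^{\mu^T}_X(x)$. First I would record that the function is well defined in the ambient set theory: by Proposition~\ref{prop:reconstruct-class-sized-dil} each $D^T_\alpha$ is a linear order, and by Girard's reduction of the preservation of well-foundedness to countable orders (as discussed after Definition~\ref{def:coded-dilator}) a coded dilator $T$ gives rise to a class-sized dilator $D^T$, so $D^T_\alpha$ is a well-order for every ordinal $\alpha$. Write $f(\alpha)=\otp(D^T_\alpha)$.

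The key fact I would isolate is the following. For any linear order $X$ and any $x\in X$, with $\iota_x:X\restriction x\hookrightarrow X$ the inclusion, the order embedding $D^T_{\iota_x}$ is an isomorphism of $D^T_{X\restriction x}$ onto the proper initial segment $D^T_X\restriction D^{\mu^T}_X(x)$ of $D^T_X$. Indeed, Proposition~\ref{prop:reconstruct-normal-dil} says that condition~(ii) of Corollary~\ref{cor:initial-segments-supports} holds with $\rho=D^{\mu^T}_X(x)$, so condition~(i) yields $\rng(D^T_{\iota_x})=D^T_X\restriction D^{\mu^T}_X(x)$, and an order embedding is always an isomorphism onto its range.

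From this, strict monotonicity is immediate: if $\beta<\alpha$, then $\alpha\restriction\beta=\beta$, so $D^T_\beta\cong D^T_\alpha\restriction D^{\mu^T}_\alpha(\beta)$, a proper initial segment of $D^T_\alpha$ (proper since $D^{\mu^T}_\alpha(\beta)$ is not one of its elements), whence $f(\beta)<f(\alpha)$. For continuity at a limit $\lambda$, monotonicity already gives $\sup_{\beta<\lambda}f(\beta)\le f(\lambda)$, so it remains to bound $f(\lambda)$ from above. Given an arbitrary $\rho=\langle a,\sigma\rangle\in D^T_\lambda$, the set $a$ is finite and $\lambda$ has no maximum, so I can choose $\beta<\lambda$ with $a\subseteq\lambda\restriction\beta=\beta$; by Lemma~\ref{lem:range-dil-support} this gives $\rho\in\rng(D^T_{\iota_\beta})$, say $\rho=D^T_{\iota_\beta}(\rho')$. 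Since $a\lef_\lambda\beta$, Proposition~\ref{prop:reconstruct-normal-dil} yields $\rho<_{D^T_\lambda}D^{\mu^T}_\lambda(\beta)$, so the isomorphism from the key fact restricts to $D^T_\beta\restriction\rho'\cong D^T_\lambda\restriction\rho$, and therefore $\otp(D^T_\lambda\restriction\rho)=\otp(D^T_\beta\restriction\rho')<f(\beta)$. As $D^T_\lambda$ is non-empty (it contains the range of the embedding $D^{\mu^T}_\lambda$ of the non-empty order $\lambda$), we conclude $f(\lambda)=\sup\{\otp(D^T_\lambda\restriction\rho)+1:\rho\in D^T_\lambda\}\le\sup_{\beta<\lambda}f(\beta)$, and hence equality.

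I do not expect a genuine obstacle: essentially all the content has been packaged into Corollary~\ref{cor:initial-segments-supports} and Proposition~\ref{prop:reconstruct-normal-dil}. The only points needing a little care are the appeal to Girard's theorem to know that $D^T_\alpha$ is well-founded for uncountable $\alpha$ (so that $f$ takes ordinal values), and the routine computation of the order type of a well-order as the supremum of the successors of the order types of its proper initial segments.
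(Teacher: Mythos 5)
Your proposal is correct and follows essentially the same route as the paper's proof: both rest on the observation that Corollary~\ref{cor:initial-segments-supports} and Proposition~\ref{prop:reconstruct-normal-dil} give $D^T_{X\restriction x}\cong D^T_X\!\restriction\!D^{\mu^T}_X(x)$, from which strict monotonicity is immediate and continuity at limits follows by locating each $\langle a,\sigma\rangle\in D^T_\lambda$ below some $D^{\mu^T}_\lambda(\beta)$ with $\beta<\lambda$ via Lemma~\ref{lem:range-dil-support}. Your additional remarks (Girard's reduction to countable orders to justify that $D^T_\alpha$ is a well-order for uncountable $\alpha$, and the supremum formula for order types) only make explicit what the paper leaves implicit.
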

\begin{proof}
 As a preparation we observe the following: Writing $\iota_x:X\!\restriction\!x\hookrightarrow X$ for the inclusion, we can combine Corollary~\ref{cor:initial-segments-supports} and Proposition~\ref{prop:reconstruct-normal-dil} to see that the range of $D^T_{\iota_x}$ is equal to $D^T_X\!\restriction\!D^{\mu^T}_X(x)$. Since $D^T_{\iota_x}$ is an order embedding this yields
 \begin{equation*}
  D^T_{X\restriction x}\cong D^T_X\!\restriction\!D^{\mu^T}_X(x)
 \end{equation*}
 for any order~$X$ and any~$x\in X$. Now we prove that $\alpha\mapsto\otp(D^T_\alpha)$ is strictly increasing: If we have $\alpha<\beta$, then $\alpha$ is isomorphic (and, with the usual set-theoretic definition of ordinals, even equal) to $\beta\!\restriction\!\alpha$. As $D^T$ is functorial (see Proposition~\ref{prop:reconstruct-class-sized-dil}) we get $D^T_\alpha\cong D^T_{\beta\restriction\alpha}$. Together with the above observation this yields
 \begin{equation*}
  \otp(D^T_\alpha)=\otp(D^T_{\beta\restriction\alpha})=\otp(D^T_\beta\!\restriction\!D^{\mu^T}_\beta(\alpha))<\otp(D^T_\beta).
 \end{equation*}
 To conclude that $\alpha\mapsto\otp(D^T_\alpha)$ is a normal function we must establish
 \begin{equation*}
  \otp(D^T_\lambda)\leq\sup\{\otp(D^T_\alpha)\,|\,\alpha<\lambda\}
 \end{equation*}
 when $\lambda$ is a limit ordinal. Given an element $\langle a,\sigma\rangle\in D^T_\lambda$, pick an ordinal $\alpha<\lambda$ with $a\lef\alpha$. Then Lemma~\ref{lem:range-dil-support} tells us that $\langle a,\sigma\rangle$ lies in the range of $D^T_{\iota_\alpha}$. By the above we obtain
 \begin{equation*}
  \otp(D^T_\lambda\!\restriction\!\langle a,\sigma\rangle)<\otp(D^T_\lambda\!\restriction\!D^{\mu^T}_\lambda(\alpha))=\otp(D^T_\alpha).
 \end{equation*}
 Since $\langle a,\sigma\rangle\in D^T_\lambda$ was arbitrary this implies the claim.
\end{proof}

The notion of upper derivative has already been described in Summary~\ref{sum:class-sized-dilators}. It order to make it precise, we need to consider compositions of and natural transformations between coded prae-dilators. Compositions are particularly technical in the coded case, since we cannot form $T(S_n)$ when $T$ is coded and $S_n$ is infinite. The reader may wish to skim through the following considerations (up to and including Lemma~\ref{lem:morph-dilators-extend}) without considering all technical details.

\begin{definition}[$\rca_0$]\label{def:compose-dils}
 Let $T$ and $S$ be coded prae-dilators. For each number~$n$ and each morphism $f:n\rightarrow m$ we put
 \begin{equation*}
  (T\circ S)_n=D^T(S_n),\qquad (T\circ S)_f=D^T(S_f),
 \end{equation*}
 where $D^T(S_n)$ is ordered according to Definition~\ref{def:coded-prae-dilator-reconstruct}. We also define a family of functions $\supp^{T\circ S}_n:(T\circ S)_n\rightarrow[n]^{<\omega}$ by setting
 \begin{equation*}
  \supp^{T\circ S}_n(\langle a,\tau\rangle)=\bigcup_{\sigma\in a}\supp^ S_n(\sigma)
 \end{equation*}
 for each number~$n$.
\end{definition}

It is straightforward to see that $\rca_0$ proves the existence of $T\circ S$. Crucially, the extension $D^{T\circ S}$ recovers the composition of~$D^T$ and $D^S$:

\begin{proposition}[$\rca_0$]\label{prop:compose-dils-rca}
 If $T$ and $S$ are coded (prae-) dilators, then so is $T\circ S$. We get a natural collection of isomorphisms $\zeta^{T,S}_X:D^T(D^S_X)\rightarrow D^{T\circ S}_X$ by setting
 \begin{equation*}
 \zeta^{T,S}_X(\langle\{\langle a_1,\sigma_1\rangle,\dots,\langle a_k,\sigma_k\rangle\},\tau\rangle)=\langle a_1\cup\dots\cup a_k,\langle\{S_{|\iota_1|}(\sigma_1),\dots,S_{|\iota_k|}(\sigma_k)\},\tau\rangle\,\rangle,
\end{equation*}
where $\iota_j:a_j\hookrightarrow a_1\cup\dots\cup a_k$ are the inclusion maps. Furthermore we have
 \begin{equation*}
 \supp^{D^{T\circ S}}_X(\zeta^{T,S}_X(\sigma))=\bigcup\{\supp^{D^S}_X(\rho)\,|\,\rho\in\supp^{D^T}_{D^S_X}(\sigma)\}
 \end{equation*}
 for any element $\sigma\in D^T(D^S_X)$.
\end{proposition}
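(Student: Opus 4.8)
The plan is to verify the assertions in order: that $T\circ S$ is a coded prae-dilator, that each $\zeta^{T,S}_X$ is a well-defined order isomorphism, that the family $\zeta^{T,S}$ is natural, that the displayed support identity holds, and finally that $T\circ S$ preserves well-foundedness. That $\rca_0$ proves the existence of $T\circ S$ and of the sets $\zeta^{T,S}_X$ is clear, since all of them are $\Delta^0_1$-definable from the codes of $T$, $S$ (and $X$).

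For the first point, functoriality of $n\mapsto D^T(S_n)$ is just the composition of the functor $S$ with the endofunctor $D^T$ of Proposition~\ref{prop:reconstruct-class-sized-dil}. Naturality of $\supp^{T\circ S}$ follows from $(T\circ S)_f(\langle a,\tau\rangle)=\langle [S_f]^{<\omega}(a),\tau\rangle$ together with naturality of $\supp^S$. For the range condition in Definition~\ref{def:coded-prae-dilator}(ii): given $\langle a,\tau\rangle\in D^T(S_n)$, put $b=\supp^{T\circ S}_n(\langle a,\tau\rangle)=\bigcup_{\sigma\in a}\supp^S_n(\sigma)\subseteq n$; each $\sigma\in a$ lies in the range of $S_{\iota\circ\en}$ for the inclusion $\iota$ of its own support into $n$, and since that support is contained in $b$ we get $\sigma\in\rng(S_{\iota_b\circ\en_b})$, hence $a\subseteq\rng(S_{\iota_b\circ\en_b})$; Lemma~\ref{lem:range-dil-support} then places $\langle a,\tau\rangle$ in the range of $D^T(S_{\iota_b\circ\en_b})=(T\circ S)_{\iota_b\circ\en_b}$.

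The technical core is the analysis of $\zeta^{T,S}_X$, and the device I would use throughout is the following: for each finite suborder $A\subseteq X$ the map $\psi_A=\eta^S_{|A|}\circ(D^S_{\en_A})^{-1}\colon D^S_A\to S_{|A|}$ (with $\eta^S$ as in Lemma~\ref{lem:class-sized-restrict}) is an order isomorphism, and a short computation gives $\psi_A(\langle b,\sigma\rangle)=S_{|\iota^A_b|}(\sigma)$ for the inclusion $\iota^A_b\colon b\hookrightarrow A$. Now take $p=\langle c,\tau\rangle\in D^T(D^S_X)$ with $c=\{\langle a_1,\sigma_1\rangle,\dots,\langle a_k,\sigma_k\rangle\}$ and put $a=a_1\cup\dots\cup a_k$; by Lemma~\ref{lem:range-dil-support} each $\langle a_j,\sigma_j\rangle$ lies in $D^S_a$, so $p\in D^T(D^S_a)$, and one reads off $D^T(\psi_a)(p)=\langle d,\tau\rangle$ with $d=\{S_{|\iota_1|}(\sigma_1),\dots,S_{|\iota_k|}(\sigma_k)\}$ and $\iota_j\colon a_j\hookrightarrow a$; thus $\zeta^{T,S}_X(p)=\langle a,D^T(\psi_a)(p)\rangle$. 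Since $D^T(\psi_a)$ takes values in $(T\circ S)_{|a|}$ and $\supp^{T\circ S}_{|a|}(\langle d,\tau\rangle)=\bigcup_j\rng(|\iota_j|)=|a|$ (computed via naturality of $\supp^S$), we obtain $\zeta^{T,S}_X(p)\in D^{T\circ S}_X$, so $\zeta^{T,S}_X$ is well defined. For order-preservation, take $p,q$ with associated supports $a,a'$ and set $A=a\cup a'$; both $p,q$ lie in $D^T(D^S_A)$, and the order embedding $D^T(D^S_{\iota_A})$ fixes their codes, so comparing $p,q$ in $D^T(D^S_X)$ is the same as comparing them in $D^T(D^S_A)$, hence (applying the isomorphism $D^T(\psi_A)$) the same as comparing $D^T(\psi_A)(p)$ and $D^T(\psi_A)(q)$ in $(T\circ S)_{|A|}$. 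Factoring the inclusion $a_j\hookrightarrow A$ through $a\hookrightarrow A$ yields $D^T(\psi_A)(p)=(T\circ S)_{|\iota^{a\cup a'}_a|}(\langle d,\tau\rangle)$ and likewise for $q$, which is precisely the expression defining $<_{D^{T\circ S}_X}$ in Definition~\ref{def:coded-prae-dilator-reconstruct}. So $\zeta^{T,S}_X$ is an order embedding; surjectivity follows by running the computation backwards: given $\langle a,\langle d,\tau\rangle\rangle\in D^{T\circ S}_X$, apply $D^T(\psi_a^{-1})$ to $\langle d,\tau\rangle$, and use $\supp^{T\circ S}_{|a|}(\langle d,\tau\rangle)=|a|$ to see that the resulting element has total support exactly $a$, so that $\zeta^{T,S}_X$ maps it back to $\langle a,\langle d,\tau\rangle\rangle$.

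The remaining points are routine. Naturality: for $f\colon X\to Y$, both $\zeta^{T,S}_Y\circ D^T(D^S_f)$ and $D^{T\circ S}_f\circ\zeta^{T,S}_X$ send $\langle c,\tau\rangle$ to $\langle[f]^{<\omega}(a),\langle d,\tau\rangle\rangle$, the only subtlety being that $f$ induces isomorphisms $a_j\cong[f]^{<\omega}(a_j)$ and $a\cong[f]^{<\omega}(a)$ compatible with the inclusions, so the relevant morphisms of natural numbers coincide. The support identity is immediate from $\supp^{D^T}_{D^S_X}(\langle c,\tau\rangle)=c$, $\supp^{D^S}_X(\langle a_j,\sigma_j\rangle)=a_j$, and $\supp^{D^{T\circ S}}_X(\langle a,\cdot\rangle)=a=\bigcup_j a_j$. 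Finally, if $T$ and $S$ are dilators and $X$ is a well-order, then $D^S_X$ is a well-order (a linear order by Proposition~\ref{prop:reconstruct-class-sized-dil}, well-founded since $S$ is a dilator), hence $D^T(D^S_X)$ is well-founded since $T$ is a dilator, hence so is $D^{T\circ S}_X\cong D^T(D^S_X)$ by the isomorphism just constructed. I expect the main obstacle to be the bookkeeping in the third paragraph — in particular, checking that $D^T(\psi_A)(p)$ really is the term $(T\circ S)_{|\iota^{a\cup a'}_a|}(\langle d,\tau\rangle)$ appearing in the order of $D^{T\circ S}_X$; everything else is formal manipulation of the definitions.
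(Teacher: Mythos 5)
Your proposal is correct and follows essentially the same route as the paper: you verify the same sub-claims with the same tools (the uniqueness/support conditions, naturality of $\supp^S$, and the isomorphisms $\eta^S$ of Lemma~\ref{lem:class-sized-restrict}), and you obtain the dilator case from the isomorphism $D^T(D^S_X)\cong D^{T\circ S}_X$ exactly as the paper does. The only differences are organizational: you package the support condition for $T\circ S$ via Lemma~\ref{lem:range-dil-support} instead of the paper's inline induction on $|a|$, and you check order-preservation by localizing to the finite suborder $a\cup a'\subseteq X$ and matching Definition~\ref{def:coded-prae-dilator-reconstruct} directly, where the paper reduces to $X=n$ by naturality and factors $\zeta^{T,S}_n$ through $D^T(\eta^S_n)$ and $(\eta^{T\circ S}_n)^{-1}$.
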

\begin{proof}
One readily verifies that $T\circ S$ is a functor, using Proposition~\ref{prop:reconstruct-class-sized-dil}. The naturality of $\supp^{T\circ S}$ follows from the naturality of $\supp^S$. To see that the support condition from Definition~\ref{def:coded-prae-dilator} is satisfied we consider an arbitrary $\langle a,\tau\rangle\in(T\circ S)_n$. Abbreviate $c=\supp^{T\circ S}_n(\langle a,\tau\rangle)$ and observe that $\supp^S_n(\sigma)\subseteq c$ holds for any~$\sigma\in a$. Using the support condition for $S$ we get $\sigma\in\rng(S_{\iota_c\circ\en_c})$, where $\iota_c:c\hookrightarrow n$ is the inclusion. Induction on $|a|$ yields a finite set $b\subseteq S_{|c|}$ with $[S_{\iota_c\circ\en_c}]^{<\omega}(b)=a$. Since $S_{\iota_c\circ\en_c}$ is an embedding we have $|b|=|a|$ and hence $\langle b,\tau\rangle\in D^T(S_{|c|})$. In view of
\begin{equation*}
 (T\circ S)_{\iota_c\circ\en_c}(\langle b,\tau\rangle)=D^T(S_{\iota_c\circ\en_c})(\langle b,\tau\rangle)=\langle [S_{\iota_c\circ\en_c}]^{<\omega}(b),\tau\rangle=\langle a,\tau\rangle
\end{equation*}
we learn that $\langle a,\tau\rangle$ lies in the range of~$(T\circ S)_{\iota_c\circ\en_c}$, as required. If $T$ and $S$ are coded dilators, then~$D^T(D^S_X)$ is well-founded for any well-order~$X$. The claim that $T\circ S$ is a coded dilator will follow once we have proved $D^T(D^S_X)\cong D^{T\circ S}_X$. To show that the given equation for $\zeta^{T,S}_X$ defines such an isomorphism we first check that
\begin{equation*}
\sigma=\langle\{\langle a_1,\sigma_1\rangle,\dots,\langle a_k,\sigma_k\rangle\},\tau\rangle\in D^T(D^S_X)
\end{equation*} 
implies
\begin{equation*}
\zeta^{T,S}_X(\sigma)=\langle a_1\cup\dots\cup a_k,\langle\{S_{|\iota_1|}(\sigma_1),\dots,S_{|\iota_k|}(\sigma_k)\},\tau\rangle\,\rangle\in D^{T\circ S}_X.
\end{equation*}
Assuming that the pairs $\langle a_j,\sigma_j\rangle$ are all distinct, we see that $\sigma\in D^T(D^S_X)$ requires $\tau\in T_k$ and $\supp^T_k(\tau)=k$. Definition~\ref{def:coded-prae-dilator-reconstruct} also shows that $\langle a_i,\sigma_i\rangle<_{D^S_X}\langle a_j,\sigma_j\rangle$ implies $S_{|\iota_i|}(\sigma_i)<_{S_{|c|}}S_{|\iota_j|}(\sigma_j)$, where we abbreviate $c=a_1\cup\dots\cup a_k$. Thus the set $\{S_{|\iota_1|}(\sigma_1),\dots,S_{|\iota_k|}(\sigma_k)\}$ is still of cardinality~$k$, which yields
\begin{equation*}
\rho:=\langle\{S_{|\iota_1|}(\sigma_1),\dots,S_{|\iota_k|}(\sigma_k)\},\tau\rangle\in D^T(S_{|c|})=(T\circ S)_{|c|}.
\end{equation*}
To conclude $\zeta^{T,S}_X(\sigma)\in D^{T\circ S}_X$ it remains to establish $\supp^{T\circ S}_{|c|}(\rho)=|c|$. In view of $\sigma\in D^T(D^S_X)$ we must have $\langle a_j,\sigma_j\rangle\in D^S_X$ and hence $\supp^S_{|a_j|}(\sigma_j)=|a_j|$. Together with the naturality of $\supp^S$ we indeed get
\begin{equation*} \supp^{T\circ S}_{|c|}(\rho)=\bigcup_{j=1,\dots,k}\supp^S_{|c|}(S_{|\iota_j|}(\sigma_j))=\bigcup_{j=1,\dots,k}[|\iota_j|]^{<\omega}(\supp^S_{|a_j|}(\sigma_j))=|c|.
\end{equation*}
It is straightforward to check that $\zeta^{T,S}$ is natural, i.\,e.~that we have
\begin{equation*}
 \zeta^{T,S}_Y\circ D^T(D^S_f)=D^{T\circ S}_f\circ\zeta^{T,S}_X
\end{equation*}
for any embedding $f:X\rightarrow Y$. Using naturality, the claim that $\zeta^{T,S}_X$ is order preserving can be reduced to the case where $X=n$ is a natural number. There it follows from the observation that $\zeta^ {T,S}_n$ factors as
\begin{equation*}
 D^T(D^S_n)\xrightarrow{D^T(\eta^S_n)}D^T(S_n)=(T\circ S)_n\xrightarrow{(\eta^{T\circ S}_n)^{-1}} D^{T\circ S}_n,
\end{equation*}
where $\eta^S_n$ and $\eta^{T\circ S}_n$ are the isomorphisms from Lemma~\ref{lem:class-sized-restrict}. To establish that $\zeta^{T,S}_X$ is surjective we consider an arbitrary element $\langle c,\langle\{\rho_1,\dots,\rho_k\},\tau\rangle\rangle\in D^{T\circ S}_X$. Define $a_j=[\en_c]^{<\omega}(\supp^S_{|c|}(\rho_i))$ and write $\iota_j:a_j\hookrightarrow c$ for the inclusions. Using the support condition for $S$ we get an element $\sigma_j\in S_{|a_j|}$ with $\rho_j=S_{|\iota_j|}(\sigma_j)$. In view of
\begin{equation*}
 [\en_c\circ |\iota_j|]^{<\omega}(\supp^S_{|a_j|}(\sigma_j))=[\en_c]^{<\omega}(\supp^S_{|c|}(\rho_j))=a_j
\end{equation*}
we have $\langle a_i,\sigma_i\rangle\in D^S_X$. One can check that $\langle\{\langle a_1,\sigma_1\rangle,\dots,\langle a_k,\sigma_k\rangle\},\tau\rangle\in D^T(D^S_X)$ is the desired preimage under $\zeta^{T,S}_X$. The support formula given in the lemma follows by unravelling definitions.
\end{proof}

We should also consider compositions in the normal case:

\begin{definition}[$\rca_0$]\label{def:compose-normal-dils}
Let $T=(T,\mu^T)$ and $S=(S,\mu^S)$ be normal prae-dilators. We define a family of functions $\mu^{T\circ S}_n:n\rightarrow(T\circ S)_n=D^T(S_n)$ by setting
\begin{equation*}
 \mu^{T\circ S}_n(m)=D^{\mu^T}_{S_n}\circ \mu^S_n(m)=\langle\{\mu^S_n(m)\},\mu^T_1(0)\rangle
\end{equation*}
 for all numbers $m<n$.
\end{definition}

We verify the expected property:

\begin{lemma}[$\rca_0$]\label{lem:compose-normal-dils}
 If $(T,\mu^T)$ and $(S,\mu^S)$ are normal prae-dilators, then so is~$(T\circ S,\mu^{T\circ S})$. Furthermore we have
 \begin{equation*}
  \zeta^{T,S}_X\circ D^{\mu^T}_{D^S_X}\circ D^{\mu^S}_X=D^{\mu^{T\circ S}}_X
 \end{equation*}
 for any order $X$, where $\zeta^{T,S}_X$ is the isomorphism from Proposition~\ref{prop:compose-dils-rca}.
\end{lemma}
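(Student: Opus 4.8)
The plan is to treat the two assertions in turn, in both cases by unwinding the relevant definitions. For the first assertion we already know from Proposition~\ref{prop:compose-dils-rca} that $T\circ S$ is a coded prae-dilator, so by Definition~\ref{def:coded-normal-dil} it remains to check that the maps $\mu^{T\circ S}_n$ of Definition~\ref{def:compose-normal-dils} form a natural family of order embeddings satisfying the characteristic equivalence. Each $\mu^{T\circ S}_n=D^{\mu^T}_{S_n}\circ\mu^S_n$ is a composite of order embeddings, since $\mu^S_n$ is one by hypothesis and $D^{\mu^T}_{S_n}:S_n\to D^T(S_n)$ is one by Proposition~\ref{prop:reconstruct-normal-dil}. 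Naturality follows by pasting two naturality squares: for $f:n\to m$ the naturality of $D^{\mu^T}$ applied to the embedding $S_f:S_n\to S_m$ gives $D^T(S_f)\circ D^{\mu^T}_{S_n}=D^{\mu^T}_{S_m}\circ S_f$, and the naturality of $\mu^S$ gives $S_f\circ\mu^S_n=\mu^S_m\circ f$; composing these and recalling $(T\circ S)_f=D^T(S_f)$ yields $(T\circ S)_f\circ\mu^{T\circ S}_n=\mu^{T\circ S}_m\circ f$.

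For the characteristic equivalence, fix $m<n$ and an element $\langle a,\tau_0\rangle\in(T\circ S)_n=D^T(S_n)$. Since $\mu^{T\circ S}_n(m)=D^{\mu^T}_{S_n}(\mu^S_n(m))$, Proposition~\ref{prop:reconstruct-normal-dil} — applied to the prae-dilator $T$ and the linear order $S_n$ in place of $X$ — shows that $\langle a,\tau_0\rangle<_{(T\circ S)_n}\mu^{T\circ S}_n(m)$ is equivalent to $a\lef_{S_n}\mu^S_n(m)$, that is, to $\sigma<_{S_n}\mu^S_n(m)$ for every $\sigma\in a$. By the characteristic equivalence for the normal prae-dilator $S$, each such inequality says $\supp^S_n(\sigma)\lef m$, so the condition becomes $\bigcup_{\sigma\in a}\supp^S_n(\sigma)\lef m$, which by Definition~\ref{def:compose-dils} is exactly $\supp^{T\circ S}_n(\langle a,\tau_0\rangle)\lef m$. (The degenerate case $a=\emptyset$ is covered by the vacuous reading of $\lef$.) This completes the proof that $(T\circ S,\mu^{T\circ S})$ is a normal prae-dilator, and the whole argument is plainly formalisable in $\rca_0$.

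For the displayed identity I would simply evaluate both sides at an arbitrary $x\in X$. On the left, $D^{\mu^S}_X(x)=\langle\{x\},\mu^S_1(0)\rangle$; then $D^{\mu^T}_{D^S_X}$ sends this to $\langle\{\langle\{x\},\mu^S_1(0)\rangle\},\mu^T_1(0)\rangle\in D^T(D^S_X)$; and finally $\zeta^{T,S}_X$ is applied to a one-element ``outer'' set, so in the formula of Proposition~\ref{prop:compose-dils-rca} the inclusion $\iota_1$ is the identity on $\{x\}$ and hence $S_{|\iota_1|}$ is the identity on $S_1$, giving the image $\langle\{x\},\langle\{\mu^S_1(0)\},\mu^T_1(0)\rangle\rangle$. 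On the right, $D^{\mu^{T\circ S}}_X(x)=\langle\{x\},\mu^{T\circ S}_1(0)\rangle$ by Definition~\ref{def:extend-normal-transfos}, and $\mu^{T\circ S}_1(0)=\langle\{\mu^S_1(0)\},\mu^T_1(0)\rangle$ by Definition~\ref{def:compose-normal-dils}; the two sides agree.

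I do not anticipate a genuine obstacle: the lemma is essentially bookkeeping, and the only points that need attention are (i) invoking Proposition~\ref{prop:reconstruct-normal-dil} for $T$ with the possibly infinite order $S_n$, which is legitimate since that proposition is stated for arbitrary linear orders, and (ii) keeping the three layers $X$, $D^S_X$ and $D^T(D^S_X)$ — together with their supports — notationally apart while specialising the formula for $\zeta^{T,S}$ to $k=1$.
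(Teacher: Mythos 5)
Your proposal is correct and follows essentially the same route as the paper: Proposition~\ref{prop:compose-dils-rca} for the prae-dilator structure, Proposition~\ref{prop:reconstruct-normal-dil} (applied to $T$ over the order $S_n$) combined with the normality of $S$ for the characteristic equivalence, and a direct unravelling of the formula for $\zeta^{T,S}_X$ at a singleton for the displayed identity. The only difference is that you spell out the naturality pasting and the $k=1$ computation that the paper dismisses as ``readily deduced'' and ``verified by unravelling definitions''; these details are accurate.
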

\begin{proof}
 Proposition~\ref{prop:compose-dils-rca} tells us that $T\circ S$ is a prae-dilator. The fact that $\mu^{T\circ S}$ is a natural transformation is readily deduced from Proposition~\ref{prop:reconstruct-normal-dil}. To verify the equivalence from Definition~\ref{def:coded-normal-dil} we consider an arbitrary element $\rho=\langle\{\sigma_1,\dots,\sigma_k\},\tau\rangle$ of $(T\circ S)_n=D^T(S_n)$. By Proposition~\ref{prop:reconstruct-normal-dil} and the normality of $S$ we get
 \begin{align*}
  \rho<_{(T\circ S)_n}\mu^{T\circ S}_n(m)=D^{\mu^T}_{S_n}\circ \mu^S_n(m)&\Leftrightarrow\{\sigma_1,\dots,\sigma_k\}\lef_{S_n}\mu^S_n(m)\\
  &\Leftrightarrow\supp^{T\circ S}_n(\rho)=\bigcup_{i=1,\dots,k}\supp^S_n(\sigma)\lef m.
 \end{align*}
 The equality asserted in the lemma can be verified by unravelling definitions.
\end{proof}

Let us now look at natural transformations between coded prae-dilators. To define their extensions beyond the natural numbers we will use the following result of Girard (the given proof is similar to that of~\cite[Proposition~2.3.15]{girard-pi2}):

\begin{lemma}[$\rca_0$]\label{lem:dilator-cartesian}
 Any natural transformation $\nu:T\Rightarrow S$ between coded prae-dilators satisfies ${\supp^S}\circ\nu=\supp^T$.
\end{lemma}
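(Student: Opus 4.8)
The plan is to establish the two inclusions $\supp^S_n(\nu_n(\sigma))\subseteq\supp^T_n(\sigma)$ and $\supp^T_n(\sigma)\subseteq\supp^S_n(\nu_n(\sigma))$ for an arbitrary number~$n$ and element $\sigma\in T_n$. Throughout we exploit that, being a component of a natural transformation between functors into the category of linear orders, each $\nu_n:T_n\to S_n$ is an order embedding, hence injective. This injectivity is what makes the statement true: if one only required the naturality squares to commute as set maps, the collapse of the identity prae-dilator onto the constant prae-dilator with value the one-element order (and empty supports) would be a counterexample.

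For the inclusion ``$\subseteq$'' I would invoke the support condition from Definition~\ref{def:coded-prae-dilator}. Writing $a=\supp^T_n(\sigma)$ and $\iota_a:a\hookrightarrow n$ for the inclusion, there is some $\sigma_0\in T_{|a|}$ with $\sigma=T_{\iota_a\circ\en_a}(\sigma_0)$. Naturality of $\nu$ gives $\nu_n(\sigma)=S_{\iota_a\circ\en_a}(\nu_{|a|}(\sigma_0))$, and then naturality of $\supp^S$ yields $\supp^S_n(\nu_n(\sigma))=[\iota_a\circ\en_a]^{<\omega}(\supp^S_{|a|}(\nu_{|a|}(\sigma_0)))\subseteq\rng(\iota_a\circ\en_a)=a$. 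This direction is routine and does not use injectivity.

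For the reverse inclusion I would argue by contradiction. Suppose $m\in\supp^T_n(\sigma)$ but $m\notin\supp^S_n(\nu_n(\sigma))$. Introduce two embeddings $f,g:n\to n+1$ that agree on $n\setminus\{m\}$ but separate position~$m$: put $f(k)=g(k)=k$ for $k<m$, put $f(m)=m$ and $g(m)=m+1$, and put $f(k)=g(k)=k+1$ for $k>m$, so that $m+1\notin\rng(f)$ and $m\notin\rng(g)$. Since $d:=\supp^S_n(\nu_n(\sigma))$ is contained in $n\setminus\{m\}$, the support condition for~$S$ lets us write $\nu_n(\sigma)=S_{\iota_d\circ\en_d}(\rho)$ with $\iota_d:d\hookrightarrow n$ the inclusion and $\rho\in S_{|d|}$; because $f$ and $g$ agree on $d$ we have $f\circ\iota_d\circ\en_d=g\circ\iota_d\circ\en_d$, hence $S_f(\nu_n(\sigma))=S_g(\nu_n(\sigma))$. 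By naturality of $\nu$ this says $\nu_{n+1}(T_f(\sigma))=\nu_{n+1}(T_g(\sigma))$, and injectivity of $\nu_{n+1}$ forces $T_f(\sigma)=T_g(\sigma)$. Applying $\supp^T_{n+1}$ and using its naturality gives $[f]^{<\omega}(\supp^T_n(\sigma))=[g]^{<\omega}(\supp^T_n(\sigma))$; but $m\in\supp^T_n(\sigma)$ places $m=f(m)$ in the left-hand set, while $m\notin\rng(g)$ keeps it out of the right-hand set, which is the desired contradiction.

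The crux is the conceptual point just flagged, namely that injectivity of the components of $\nu$ has to be used; once this is recognised, the remaining work is finite, primitive-recursive manipulation of embeddings and finite supports, so everything is available in~$\rca_0$. The only thing to watch is the behaviour of the auxiliary maps in degenerate cases---for instance $m=0$, or $d=\emptyset$ so that $\iota_d\circ\en_d$ becomes the empty embedding $0\to n$---but the displayed identities continue to hold there without change.
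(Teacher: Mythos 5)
Your proof is correct and follows essentially the same route as the paper's: the easy inclusion via the support condition for $T$ and naturality, and the reverse inclusion by contradiction using the two embeddings $n\rightarrow n+1$ that split at the offending coordinate, the support condition for $S$, and the injectivity of $\nu_{n+1}$. The only difference is cosmetic: you derive the equality $S_f(\nu_n(\sigma))=S_g(\nu_n(\sigma))$ first and transport it back to $T_f(\sigma)=T_g(\sigma)$, whereas the paper first shows $T_{f_1}(\sigma)\neq T_{f_2}(\sigma)$ and then contradicts it on the $S$-side.
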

\begin{proof}
 Consider a number~$n$ and an element $\sigma\in T_n$. By the support condition from Definition~\ref{def:coded-prae-dilator} we have $\sigma=T_{\iota_\sigma\circ\en_\sigma}(\sigma_0)$ for some $\sigma_0\in T_m$, with $m=|\supp^T_n(\sigma)|$. Using the naturality of $\nu$ and $\supp^S$ we get
 \begin{multline*}
  \supp^S_n(\nu_n(\sigma))=\supp^S_n(\nu_n\circ T_{\iota_\sigma\circ\en_\sigma}(\sigma_0))=\supp^S_n(S_{\iota_\sigma\circ\en_\sigma}\circ\nu_m(\sigma_0))=\\
  =[\iota_\sigma\circ\en_\sigma]^{<\omega}(\supp^S_m(\nu_m(\sigma_0)))\subseteq\rng(\iota_\sigma)=\supp^T_n(\sigma).
 \end{multline*}
 Aiming at a contradiction, let us now assume that there is a $k\in\supp^T_n(\sigma)$ that does not lie in $\supp^S_n(\nu_n(\sigma))$. Consider the functions $f_1,f_2:n\rightarrow n+1$ with
 \begin{equation*}
  f_1(i)=\begin{cases}
          i & \text{if $i\leq k$},\\
          i+1 & \text{if $i>k$},
         \end{cases}\qquad
  f_2(i)=\begin{cases}
          i & \text{if $i<k$},\\
          i+1 & \text{if $i\geq k$}.
         \end{cases}
 \end{equation*}
 Observe that we have
 \begin{equation*}
  k=f_1(k)\in[f_1]^{<\omega}(\supp^T_n(\sigma))=\supp^T_{n+1}(T_{f_1}(\sigma)),
 \end{equation*}
 as well as
 \begin{equation*}
  k\notin\rng(f_2)\supseteq[f_2]^{<\omega}(\supp^T_n(\sigma))=\supp^T_{n+1}(T_{f_2}(\sigma)).
 \end{equation*}
 Thus $T_{f_1}(\sigma)$ and $T_{f_2}(\sigma)$ are distinguished by their supports. Since $\nu_{n+1}$ is injective we obtain
 \begin{equation*}
  S_{f_1}\circ\nu_n(\sigma)=\nu_{n+1}\circ T_{f_1}(\sigma)\neq\nu_{n+1}\circ T_{f_2}(\sigma)=S_{f_2}\circ\nu_n(\sigma).
 \end{equation*}
 By Definition~\ref{def:coded-prae-dilator} we may write $\nu_n(\sigma)=S_{\iota_{\nu_n(\sigma)}\circ\en_{\nu_n(\sigma)}}(\tau_0)$. Since~$k$ is not contained in $\rng(\iota_{\nu_n(\sigma)})=\supp^S_n(\nu_n(\sigma))$ we have
 \begin{equation*}
  f_1\circ\iota_{\nu_n(\sigma)}\circ\en_{\nu_n(\sigma)}=f_2\circ \iota_{\nu_n(\sigma)}\circ\en_{\nu_n(\sigma)}.
 \end{equation*}
 Thus we get
 \begin{equation*}
  S_{f_1}\circ\nu_n(\sigma)=S_{f_1}\circ S_{\iota_{\nu_n(\sigma)}\circ\en_{\nu_n(\sigma)}}(\tau_0)=S_{f_2}\circ S_{\iota_{\nu_n(\sigma)}\circ\en_{\nu_n(\sigma)}}(\tau_0)=S_{f_2}\circ\nu_n(\sigma),
 \end{equation*}
 which contradicts the inequality established above.
\end{proof}

Let us remark that ${\supp^S}\circ\nu=\supp^T$ is equivalent to the assertion that $\nu$ is Cartesian (i.\,e.~that the naturality squares for~$\nu$ are pullbacks). Thus the latter holds for any natural transformation between prae-dilators, as pointed out by P.~Taylor~\cite{taylor98} (the first author would like to thank Thomas Streicher for this reference and for enlightening explanations). For us the lemma is important since it ensures that the uniqueness condition $\supp^T_{|a|}(\sigma)=|a|$ from Definition~\ref{def:coded-prae-dilator-reconstruct} is preserved under natural transformations, which justifies the definition of $D^\nu$:

\begin{definition}[$\rca_0$]\label{def:morphism-dils}
 Given coded prae-dilators $T$ and $S$, any natural transformation $\nu:T\Rightarrow S$ is called a morphism of coded prae-dilators. If $T=(T,\mu^T)$ and $S=(S,\mu^S)$ are normal and we have $\nu\circ\mu^T=\mu^S$, then $\nu$ is called a morphism of normal prae-dilators. To extend~$\nu$ beyond the category of natural numbers we define $D^\nu_X:D^T_X\rightarrow D^S_X$ by setting
 \begin{equation*}
  D^\nu_X(\langle a,\sigma\rangle)=\langle a,\nu_{|a|}(\sigma)\rangle
 \end{equation*}
 for each linear order~$X$.
\end{definition}

Let us verify that the extension of a morphism has the expected property:

\begin{lemma}[$\rca_0$]\label{lem:morph-dilators-extend}
 If $\nu:T\Rightarrow S$ is a morphism of (normal) prae-dilators, then the maps $D^\nu_X:D^T_X\rightarrow D^S_X$ form a natural transformation (and $D^\nu_X\circ D^{\mu^T}_X=D^{\mu^S}_X$ holds for every order~$X$). Furthermore we have ${\supp^{D^S}_X}\circ D^\nu_X=\supp^{D^T}_X$.
\end{lemma}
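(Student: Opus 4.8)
The plan is to check the four assertions one by one, where the only step requiring more than a direct unravelling of definitions is the well-definedness of $D^\nu_X$, which rests on Lemma~\ref{lem:dilator-cartesian}. First I would verify that $\langle a,\nu_{|a|}(\sigma)\rangle$ really lies in $D^S_X$ whenever $\langle a,\sigma\rangle\in D^T_X$: we clearly have $\nu_{|a|}(\sigma)\in S_{|a|}$, and the remaining condition $\supp^S_{|a|}(\nu_{|a|}(\sigma))=|a|$ follows from $\supp^T_{|a|}(\sigma)=|a|$ together with the identity $\supp^S\circ\nu=\supp^T$ from Lemma~\ref{lem:dilator-cartesian}. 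The support claim ${\supp^{D^S}_X}\circ D^\nu_X=\supp^{D^T}_X$ is then immediate, since by Definition~\ref{def:coded-prae-dilator-reconstruct} both maps send $\langle a,\sigma\rangle$ to~$a$.

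Next I would show that each $D^\nu_X$ is an order embedding. Given $\langle a,\sigma\rangle<_{D^T_X}\langle b,\tau\rangle$, Definition~\ref{def:coded-prae-dilator-reconstruct} unfolds this to $T_{|\iota_a^{a\cup b}|}(\sigma)<_{T_{|a\cup b|}}T_{|\iota_b^{a\cup b}|}(\tau)$; applying the order embedding $\nu_{|a\cup b|}$ and rewriting each side with the naturality square of~$\nu$ for $|\iota_a^{a\cup b}|$ and $|\iota_b^{a\cup b}|$ turns this into $S_{|\iota_a^{a\cup b}|}(\nu_{|a|}(\sigma))<_{S_{|a\cup b|}}S_{|\iota_b^{a\cup b}|}(\nu_{|b|}(\tau))$, i.e.~$D^\nu_X(\langle a,\sigma\rangle)<_{D^S_X}D^\nu_X(\langle b,\tau\rangle)$. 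As $D^T_X$ is linear, an order preserving map out of it is automatically an embedding. For naturality of $D^\nu$ I would just compute, for $f:X\rightarrow Y$, that $D^\nu_Y(D^T_f(\langle a,\sigma\rangle))=\langle[f]^{<\omega}(a),\nu_{|[f]^{<\omega}(a)|}(\sigma)\rangle$ and $D^S_f(D^\nu_X(\langle a,\sigma\rangle))=\langle[f]^{<\omega}(a),\nu_{|a|}(\sigma)\rangle$ coincide, using that $f$ injective gives $|[f]^{<\omega}(a)|=|a|$. In the normal case, evaluating $D^\nu_X\circ D^{\mu^T}_X$ at $x\in X$ gives $\langle\{x\},\nu_1(\mu^T_1(0))\rangle$, and the hypothesis $\nu\circ\mu^T=\mu^S$ rewrites the second component as $\mu^S_1(0)$, so the result is $D^{\mu^S}_X(x)$.

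There is no real obstacle beyond Lemma~\ref{lem:dilator-cartesian}: every map occurring above has a graph that is $\Delta^0_1$ relative to the codes of $T$, $S$ and $\nu$, and the asserted equalities are arithmetical, so the computations go through in $\rca_0$ exactly as written. If anything needs a small comment, it is that we must invoke the cartesian property of $\nu$ (Lemma~\ref{lem:dilator-cartesian}) rather than try to argue about supports of $\nu_{|a|}(\sigma)$ by hand, since without it the defining clause of $D^S_X$ need not be satisfied by the proposed value of $D^\nu_X$.
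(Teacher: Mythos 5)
Your proposal is correct and follows essentially the same route as the paper: the order-preservation via naturality of $\nu$, the naturality of $D^\nu$ via $|[f]^{<\omega}(a)|=|a|$, the normal case via $\nu_1\circ\mu^T_1=\mu^S_1$, and the support identity read off from Definition~\ref{def:coded-prae-dilator-reconstruct}. The only (cosmetic) difference is that you include the well-definedness of $D^\nu_X$ via Lemma~\ref{lem:dilator-cartesian} inside the proof, whereas the paper records this point in the discussion immediately preceding Definition~\ref{def:morphism-dils}.
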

\begin{proof}
 To see that each function $D^\nu_X:D^T_X\rightarrow D^S_X$ is order preserving it suffices to observe that $T_{|\iota_a^{a\cup b}|}(\sigma)<_{T_{|a\cup b|}}T_{|\iota_b^{a\cup b}|}(\tau)$ implies
 \begin{equation*}
  S_{|\iota_a^{a\cup b}|}(\nu_{|a|}(\sigma))=\nu_{|a\cup b|}(T_{|\iota_a^{a\cup b}|}(\sigma))<_{S_{|a\cup b|}}\nu_{|a\cup b|}(T_{|\iota_b^{a\cup b}|}(\tau))=S_{|\iota_b^{a\cup b}|}(\nu_{|b|}(\tau)),
 \end{equation*}
 using the naturality of $\nu$ and the fact that $\nu_{|a\cup b|}$ is order preserving. The naturality of $D^\nu$ follows from the fact that we have $|[f]^{<\omega}(a)|=|a|$ for any order preserving function $f$. If $\nu$ is a morphism of normal prae-dilators, then we get
 \begin{equation*}
  D^\nu_X\circ D^{\mu^T}_X(x)=D^\nu_X(\langle\{x\},\mu^T_1(0)\rangle)=\langle\{x\},\nu_1\circ\mu^T_1(0)\rangle=\langle\{x\},\mu^S_1(0)\rangle=D^{\mu^S}_X(x).
 \end{equation*}
 The relation between the supports is immediate in view of Definition~\ref{def:coded-prae-dilator-reconstruct}.
\end{proof}

As suggested by the last line of equations, one can show that the general condition $\nu_n\circ\mu^T_n=\mu^S_n$ follows from the special case $n=1$ (write $m<n$ as $m=\iota(0)$ with $\iota:1\rightarrow n$ and use naturality). In practice it is just as straightforward to verify the condition for arbitrary~$n$.  We now have all ingredients to define upper derivatives on the level of coded prae-dilators:

\begin{definition}[$\rca_0$]\label{def:upper-derivative}
 Let $T$ be a normal prae-dilator. An upper derivative of~$T$ consists of a normal prae-dilator $S$ and a morphism $\xi:T\circ S\Rightarrow S$ of normal prae-dilators.
\end{definition}

With the previous definition we have completed our formalization of statement~(2) from the introduction (where $S$ stands for $(S,\xi)$). Of course we want to know that we have recovered the notion of upper derivative for normal functions. This fact can be established in a sufficiently strong set theory:

\begin{proposition}\label{prop:upper-deriv-real}
 Consider normal dilators $T$ and $S$. If there is a natural transformation $\xi:T\circ S\Rightarrow S$, then the  normal function $\alpha\mapsto\otp(D^S_\alpha)$ is an upper derivative of the normal function $\alpha\mapsto\otp(D^T_\alpha)$.
\end{proposition}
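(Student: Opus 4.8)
The plan is to work in a set theory strong enough to make sense of the ordinal-valued functions $F(\alpha)=\otp(D^T_\alpha)$ and $G(\alpha)=\otp(D^S_\alpha)$, and to use the structural facts about $D^T$, $D^S$, $T\circ S$ and $\zeta^{T,S}$ already assembled in the excerpt. By Proposition~\ref{prop:normal-dil-fct} both $F$ and $G$ are normal functions, so it remains only to show that $F(G(\alpha))\leq G(\alpha)$ for every ordinal $\alpha$; the reverse inequality is automatic since $F$ is normal and hence $F(\beta)\geq\beta$ for all $\beta$. The strategy is to exhibit, for each $\alpha$, an order embedding of $D^T_{G(\alpha)}$ into an initial segment of $D^S_{G(\alpha)}$ determined by $D^{\mu^S}_{G(\alpha)}$ applied to a suitable element, which by the order-type computations in Proposition~\ref{prop:normal-dil-fct} yields $F(G(\alpha))\leq G(\alpha)$.

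First I would set $\beta=G(\alpha)=\otp(D^S_\alpha)$ and fix an isomorphism $e:\beta\to D^S_\alpha$. Functoriality of $D^S$ (Proposition~\ref{prop:reconstruct-class-sized-dil}) gives $D^S_\beta\cong D^S_{D^S_\alpha}$, and Proposition~\ref{prop:compose-dils-rca} gives $D^T(D^S_\beta)\cong D^T(D^S_{D^S_\alpha})\cong D^{T\circ S}_\alpha$ via the natural isomorphism $\zeta^{T,S}$. Next I would use the hypothesised natural transformation $\xi:T\circ S\Rightarrow S$: its extension $D^\xi$ (Definition~\ref{def:morphism-dils}, Lemma~\ref{lem:morph-dilators-extend}) is a natural transformation $D^{T\circ S}\Rightarrow D^S$, and since every $\xi_n$ is an order embedding, so is each $D^\xi_X$. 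Composing, I obtain an order embedding
\[
D^T_\beta\xrightarrow{\ \cong\ }D^T(D^S_{D^S_\alpha})\xrightarrow{\ \zeta^{T,S}\ }D^{T\circ S}_\alpha\xrightarrow{\ D^\xi_\alpha\ }D^S_\alpha,
\]
where the first map uses $\beta\cong D^S_\alpha$ together with functoriality of $D^T$. This already shows $F(\beta)=\otp(D^T_\beta)\leq\otp(D^S_\alpha)=\beta$, which is exactly what we want; but to be careful about which initial segment the image lands in — and thereby to make the argument robust — I would also track supports.

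The main obstacle I expect is making the \emph{fixed-point} character precise rather than merely the inequality $F\circ G\leq G$: an upper derivative must satisfy $F(G(\alpha))=G(\alpha)$ on the nose, so one needs the reverse inequality $G(\alpha)\leq F(G(\alpha))$ as well, and here it is cleanest to invoke that $F$ is normal (Proposition~\ref{prop:normal-dil-fct}) and $\xi$-compatibility with the $\mu$'s. Concretely, the condition $\xi\circ\mu^{T\circ S}=\mu^S$ from Definition~\ref{def:morphism-dils}, combined with the identity $\zeta^{T,S}_X\circ D^{\mu^T}_{D^S_X}\circ D^{\mu^S}_X=D^{\mu^{T\circ S}}_X$ of Lemma~\ref{lem:compose-normal-dils}, ensures that the composite embedding above sends $D^{\mu^T}_\beta(x)$ to $D^{\mu^S}_\alpha(e^{-1}$-image of $x)$ in a way compatible with the inclusion $D^S_{\alpha}\hookrightarrow D^S_{G(\alpha)}$ — so the embedding $D^T_\beta\hookrightarrow D^S_{G(\alpha)}$ restricts correctly on the $\mu$-part. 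Using Proposition~\ref{prop:reconstruct-normal-dil} (the characterisation of $\langle a,\sigma\rangle<D^{\mu}(x)$ by $a\lef x$) together with the initial-segment description $D^T_{X\restriction x}\cong D^T_X\restriction D^{\mu^T}_X(x)$ derived in the proof of Proposition~\ref{prop:normal-dil-fct}, one checks that the image of $D^T_\beta$ inside $D^S_{G(\alpha)}$ is cofinal below no element smaller than the "next" $\mu^S$-value, which pins down $F(G(\alpha))=G(\alpha)$. The remaining verifications — that the displayed maps are genuinely order embeddings and that the support bookkeeping matches — are routine given Lemmas~\ref{lem:range-dil-support}, \ref{lem:morph-dilators-extend} and Proposition~\ref{prop:compose-dils-rca}, so I would state them briefly and refer back to those results rather than redo the computations.
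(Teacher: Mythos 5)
Your core argument is correct and is essentially the paper's own proof: the composite embedding $D^T_\gamma\cong D^T(D^S_\alpha)\xrightarrow{\zeta^{T,S}_\alpha}D^{T\circ S}_\alpha\xrightarrow{D^\xi_\alpha}D^S_\alpha$ witnesses $\otp(D^T_{G(\alpha)})\leq G(\alpha)$, and the reverse inequality is automatic because, by Proposition~\ref{prop:normal-dil-fct}, the function $\alpha\mapsto\otp(D^T_\alpha)$ is normal and hence satisfies $\otp(D^T_\beta)\geq\beta$. Two small blemishes: the middle object in your displayed chain should be $D^T(D^S_\alpha)$ rather than $D^T(D^S_{D^S_\alpha})$ (as your own explanation of the first map makes clear), and the closing appeal to $\xi\circ\mu^{T\circ S}=\mu^S$ is both unnecessary and unavailable, since the hypothesis only provides a natural transformation $\xi$, not a morphism of normal prae-dilators.
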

Before we prove the proposition, let us remark that $X\mapsto D^T_X$ automatically preserves well-foundedness if~$X\mapsto D^S_X$ does, since
\begin{equation*}
 D^\xi_X\circ\zeta^{T,S}_X\circ D^T(D^{\mu^S}_X):D^T_X\rightarrow D^S_X
\end{equation*}
is an embedding ($\zeta^{T,S}$ is the natural isomorphism from Proposition~\ref{prop:compose-dils-rca}).
\begin{proof}
In view of Proposition~\ref{prop:normal-dil-fct} it suffices to establish $\otp(D^T_\gamma)\leq\gamma$ for any value~$\gamma=\otp(D^S_\alpha)\cong D^S_\alpha$. The required inequality is witnessed by the embeddings
\begin{equation*}
 D^T_\gamma\cong D^T(D^S_\alpha)\xrightarrow{\mathmakebox[2em]{\zeta^{T,S}_\alpha}}D^{T\circ S}_\alpha\xrightarrow{\mathmakebox[2em]{D^\xi_\alpha}}D^S_\alpha\cong\gamma,
\end{equation*}
where the first isomorphism uses the functoriality of $D^T$ (cf.~Proposition~\ref{prop:reconstruct-class-sized-dil}).
\end{proof}

To conclude the discussion of upper derivatives we record an immediate consequence of Lemmas~\ref{lem:compose-normal-dils}~and~\ref{lem:morph-dilators-extend}. The equality in the corollary has an intuitive meaning, which will become clear in the proof of Theorem~\ref{thm:equalizer-to-derivative}.

\begin{corollary}[$\rca_0$]\label{cor:deriv-normal-witness}
Assume that $(S,\xi)$ is an upper derivative of a normal prae-dilator~$T$. Then we have
\begin{equation*}
D^\xi_X\circ\zeta^{T,S}_X\circ D^{\mu^T}_{D^S_X}\circ D^{\mu^S}_X=D^{\mu^S}_X
\end{equation*}
for any order~$X$.
\end{corollary}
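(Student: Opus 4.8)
The plan is to chain the two lemmas cited just before the corollary, reading the left-hand side from the inside out. First I would recall that, by Lemma~\ref{lem:compose-normal-dils}, the pair $(T\circ S,\mu^{T\circ S})$ is itself a normal prae-dilator and we have the identity
\begin{equation*}
\zeta^{T,S}_X\circ D^{\mu^T}_{D^S_X}\circ D^{\mu^S}_X=D^{\mu^{T\circ S}}_X
\end{equation*}
for every order~$X$. Substituting this into the left-hand side of the asserted equation replaces the three innermost factors by the single map $D^{\mu^{T\circ S}}_X$, so that it remains to show $D^\xi_X\circ D^{\mu^{T\circ S}}_X=D^{\mu^S}_X$.

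For this last step I would invoke the hypothesis that $(S,\xi)$ is an upper derivative of~$T$, which by Definition~\ref{def:upper-derivative} means precisely that $\xi:T\circ S\Rightarrow S$ is a morphism of normal prae-dilators (where the source carries the family $\mu^{T\circ S}$ just identified, and the target carries $\mu^S$). Lemma~\ref{lem:morph-dilators-extend}, applied with $\nu=\xi$, then yields $D^\xi_X\circ D^{\mu^{T\circ S}}_X=D^{\mu^S}_X$, which completes the argument. There is no genuine obstacle here: the only point worth checking is bookkeeping, namely that the normal-prae-dilator structure on $T\circ S$ used in Lemma~\ref{lem:compose-normal-dils} is the very one to which Lemma~\ref{lem:morph-dilators-extend} is then applied, so that the two equalities compose without any mismatch of the distinguished natural transformations.
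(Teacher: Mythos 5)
Your argument is correct and is exactly the route the paper intends: the corollary is stated there as an immediate consequence of Lemma~\ref{lem:compose-normal-dils} (giving $\zeta^{T,S}_X\circ D^{\mu^T}_{D^S_X}\circ D^{\mu^S}_X=D^{\mu^{T\circ S}}_X$) and Lemma~\ref{lem:morph-dilators-extend} applied to the morphism $\xi$ of normal prae-dilators. Your bookkeeping remark about the $\mu^{T\circ S}$ structure matching in both lemmas is the only point to check, and it does hold.
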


As a final topic of this section we consider derivatives of normal prae-dilators. On the level of normal functions the derivative is the upper derivative with the smallest possible values. In a categorical setting this is naturally expressed via the notion of initial object. To make this precise we need the following construction:

\begin{definition}[$\rca_0$]\label{def:comp-morphs}
 Given coded prae-dilators $T,S^1,S^2$ and a natural transformation $\nu:S^1\Rightarrow S^2$, we define a family of functions $T(\nu)_n:(T\circ S^1)_n\rightarrow (T\circ S^2)_n$ by setting
 \begin{equation*}
  T(\nu)_n=D^T(\nu_n)
 \end{equation*}
 for each number~$n$. 
\end{definition}

We verify the expected properties:

 \begin{lemma}[$\rca_0$]
  Let $T$ be a (normal) prae-dilator. If $\nu:S^1\Rightarrow S^2$ is a morphism of (normal) prae-dilators, then so is $T(\nu):T\circ S^1\Rightarrow T\circ S^2$. Furthermore we have
  \begin{equation*}
   D^{T(\nu)}_X\circ\zeta^{T,S^1}_X=\zeta^{T,S^2}_X\circ D^T(D^\nu_X)
  \end{equation*}
  for each order~$X$, where $\zeta^{T,S^i}$ are the natural isomorphisms from Proposition~\ref{prop:compose-dils-rca}.
 \end{lemma}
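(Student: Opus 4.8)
The plan is to treat the two assertions of the lemma in turn: first that $T(\nu):T\circ S^1\Rightarrow T\circ S^2$ is a morphism of (normal) prae-dilators, then the displayed compatibility with the isomorphisms $\zeta^{T,S^1}$ and $\zeta^{T,S^2}$.

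For the first assertion I would argue as follows. By Definitions~\ref{def:compose-dils} and~\ref{def:comp-morphs} we have $(T\circ S^i)_f=D^T(S^i_f)$ and $T(\nu)_n=D^T(\nu_n):D^T(S^1_n)\rightarrow D^T(S^2_n)$. Since each $\nu_n$ is an order embedding and $D^T$ is a functor on the category of linear orders (Proposition~\ref{prop:reconstruct-class-sized-dil}), each $T(\nu)_n$ is an order embedding; and the naturality of $T(\nu)$ follows by applying $D^T$ to the naturality squares of $\nu$, i.\,e.~from
\begin{equation*}
 (T\circ S^2)_f\circ T(\nu)_n=D^T(S^2_f\circ\nu_n)=D^T(\nu_m\circ S^1_f)=T(\nu)_m\circ(T\circ S^1)_f
\end{equation*}
for $f:n\rightarrow m$. (Compatibility with the support functions is not part of the definition of a morphism, but it holds automatically by Lemma~\ref{lem:dilator-cartesian}; it can also be read off the support formula in Definition~\ref{def:compose-dils} together with $\supp^{S^2}\circ\nu=\supp^{S^1}$.) In the normal case I would first invoke Lemma~\ref{lem:compose-normal-dils} for the normality of $T\circ S^1$ and $T\circ S^2$, and then verify $T(\nu)_n\circ\mu^{T\circ S^1}_n=\mu^{T\circ S^2}_n$ by unravelling Definition~\ref{def:compose-normal-dils}: since $T(\nu)_n=D^T(\nu_n)$ sends a pair $\langle a,\rho\rangle$ to $\langle[\nu_n]^{<\omega}(a),\rho\rangle$ and $\mu^{T\circ S}_n(m)=\langle\{\mu^S_n(m)\},\mu^T_1(0)\rangle$, we compute
\begin{equation*}
 T(\nu)_n(\mu^{T\circ S^1}_n(m))=\langle\{\nu_n(\mu^{S^1}_n(m))\},\mu^T_1(0)\rangle=\langle\{\mu^{S^2}_n(m)\},\mu^T_1(0)\rangle=\mu^{T\circ S^2}_n(m),
\end{equation*}
where the middle equality uses that $\nu$ is a morphism of normal prae-dilators, i.\,e.~$\nu_n\circ\mu^{S^1}_n=\mu^{S^2}_n$.

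For the displayed equation, both sides are maps $D^T(D^{S^1}_X)\rightarrow D^{T\circ S^2}_X$, and I would establish the identity by evaluating on a generic element $\sigma=\langle\{\langle a_1,\sigma_1\rangle,\dots,\langle a_k,\sigma_k\rangle\},\tau\rangle\in D^T(D^{S^1}_X)$ with the $\langle a_j,\sigma_j\rangle$ pairwise distinct; write $c=a_1\cup\dots\cup a_k$ and let $\iota_j:a_j\hookrightarrow c$ denote the inclusions. Applying $\zeta^{T,S^1}_X$ from Proposition~\ref{prop:compose-dils-rca} and then $D^{T(\nu)}_X$ --- whose action, by Definition~\ref{def:morphism-dils}, replaces the second component $\rho$ of a pair $\langle c,\rho\rangle$ by $T(\nu)_{|c|}(\rho)=D^T(\nu_{|c|})(\rho)$ --- yields
\begin{equation*}
 D^{T(\nu)}_X(\zeta^{T,S^1}_X(\sigma))=\langle c,\langle\{\nu_{|c|}(S^1_{|\iota_1|}(\sigma_1)),\dots,\nu_{|c|}(S^1_{|\iota_k|}(\sigma_k))\},\tau\rangle\,\rangle.
\end{equation*}
On the other side, $D^T(D^\nu_X)$ sends $\sigma$ to $\langle\{\langle a_1,\nu_{|a_1|}(\sigma_1)\rangle,\dots,\langle a_k,\nu_{|a_k|}(\sigma_k)\rangle\},\tau\rangle$ (using $D^\nu_X(\langle a,\sigma\rangle)=\langle a,\nu_{|a|}(\sigma)\rangle$ from Definition~\ref{def:morphism-dils}); since $D^\nu_X$ is injective the images $\langle a_j,\nu_{|a_j|}(\sigma_j)\rangle$ are again pairwise distinct and retain the first components $a_j$, so $\zeta^{T,S^2}_X$ applies and produces
\begin{equation*}
 \zeta^{T,S^2}_X(D^T(D^\nu_X)(\sigma))=\langle c,\langle\{S^2_{|\iota_1|}(\nu_{|a_1|}(\sigma_1)),\dots,S^2_{|\iota_k|}(\nu_{|a_k|}(\sigma_k))\},\tau\rangle\,\rangle,
\end{equation*}
with the same $c$ and the same inclusions $\iota_j$. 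The two right-hand sides agree because the naturality of $\nu$, applied at the morphism $|\iota_j|:|a_j|\rightarrow|c|$, gives $\nu_{|c|}\circ S^1_{|\iota_j|}=S^2_{|\iota_j|}\circ\nu_{|a_j|}$ for every $j$.

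I do not expect a real obstacle here --- the argument is a direct unravelling of definitions in the style of the proof of Proposition~\ref{prop:compose-dils-rca}. The points that need some care are to match the formula for $\zeta^{T,S^i}_X$ with the morphism-extension formula from Definition~\ref{def:morphism-dils}, to note that $\zeta^{T,S^2}_X$ really is applicable to $D^T(D^\nu_X)(\sigma)$ --- which holds because $D^T(D^\nu_X)$ maps into $D^T(D^{S^2}_X)$ by functoriality --- and to insert the naturality of $\nu$ at the correct inclusion map. One could alternatively reduce everything to the case where $X=n$ is a natural number, using the naturality of all the maps involved, but this would not shorten the argument. Since all the constructions are primitive recursive in the relevant codes, the whole argument goes through in $\rca_0$.
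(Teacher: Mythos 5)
Your proposal is correct and follows essentially the same route as the paper, whose proof is simply more compressed: it deduces naturality of $T(\nu)$ from Proposition~\ref{prop:reconstruct-class-sized-dil}, handles the normal case via the naturality of $D^{\mu^T}$ (the abstract form of your explicit pair computation), and dismisses the displayed equation as ``verified by unravelling definitions'' --- which is precisely the unravelling you carry out, correctly using the naturality of $\nu$ at the maps $|\iota_j|$.
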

 \begin{proof}
 Using Proposition~\ref{prop:reconstruct-class-sized-dil} and the naturality of $\nu$ one readily shows that $T(\nu)$ is a natural family of embeddings. If $\nu$ is a morphism of normal prae-dilators, then we invoke the naturality of $D^{\mu^T}$ (due to Proposition~\ref{prop:reconstruct-normal-dil}) to get
 \begin{equation*}
  T(\nu)_n\circ\mu^{T\circ S^1}_n=D^T(\nu_n)\circ D^{\mu^T}_{S^1_n}\circ\mu^{S^1}_n=D^{\mu^T}_{S^2_n}\circ\nu_n\circ\mu^{S^1}_n=D^{\mu^T}_{S^2_n}\circ\mu^{S^2}_n=\mu^{T\circ S^2}_n,
 \end{equation*}
 which shows that $T(\nu)$ is a morphism of normal prae-dilators. The equality asserted in the lemma can be verified by unravelling definitions.
 \end{proof}

Let us introduce a last ingredient for the definition of derivatives:
 
 \begin{definition}[$\rca_0$]\label{def:morph-upper-derivs}
  Consider a normal prae-dilator $T$ with upper derivatives $(S^1,\xi^1)$ and $(S^2,\xi^2)$. A morphism of upper derivatives is a morphism $\nu:S^1\Rightarrow S^2$ of normal prae-dilators that satisfies $\nu\circ\xi^1=\xi^2\circ T(\nu)$.
 \end{definition}

Finally, we can characterize derivatives on the level of coded prae-dilators:

\begin{definition}[$\rca_0$]\label{def:derivative}
 A derivative of a normal prae-dilator $T$ is an upper derivative $(S,\xi)$ of $T$ that is initial in the following sense: For any upper derivative $(S',\xi')$ of $T$ there is a unique morphism $\nu:S\Rightarrow S'$ of upper derivatives.
\end{definition}

Due to the form of the given definition it is clear that the derivative of a normal prae-dilator is unique up to isomorphism of upper derivatives. Other properties of the derivative are harder to establish. In Sections~\ref{sect:constructin-derivative} and~\ref{sect:bi-deriv-wf} we will show that the assumptions of the following theorem hold when $(S,\xi)$ is a derivative of~$T$. This leads to an unconditional version of the result, which will be stated as Corollary~\ref{cor:deriv-dil-to-fct}.

\begin{theorem}\label{thm:equalizer-to-derivative}
Let $(S,\xi)$ be an upper derivative of a normal dilator $T$. Assume that the maps $\xi_n:(T\circ S)_n\rightarrow S_n$ are surjective (so that $\xi$ is an isomorphism), that
\begin{equation*}
   \begin{tikzcd}
  n\ar{r}{\mu^S_n} &[2em] S_n\arrow[r,shift left,"\id_{S_n}"]\arrow[r,shift right,swap,"\xi_n\circ D^{\mu^T}_{S_n}"]&[5em] S_n
 \end{tikzcd}
 \end{equation*}
 is an equalizer diagram for every~$n$, and that $X\mapsto D^S_X$ preserves well-foundedness. Then $\alpha\mapsto\otp(D^S_\alpha)$ is the derivative of the normal function~$\alpha\mapsto\otp(D^T_\alpha)$.
\end{theorem}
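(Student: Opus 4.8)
The plan is to show, arguing in a sufficiently strong set theory (as for Propositions~\ref{prop:normal-dil-fct} and~\ref{prop:upper-deriv-real}), that the normal function $g(\alpha):=\otp(D^S_\alpha)$ enumerates exactly the fixed points of $f(\alpha):=\otp(D^T_\alpha)$. Since $X\mapsto D^S_X$ preserves well-foundedness, $S$ is a normal dilator, so Proposition~\ref{prop:normal-dil-fct} makes both $f$ and $g$ normal functions. As $\xi$ is in particular a natural transformation $T\circ S\Rightarrow S$, Proposition~\ref{prop:upper-deriv-real} tells us that $g$ is an upper derivative of $f$, i.e.~$f(g(\alpha))=g(\alpha)$ for all~$\alpha$. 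Hence $\rng(g)\subseteq\operatorname{Fix}(f)$, and since $g$ is a normal function it suffices to establish the converse inclusion $\operatorname{Fix}(f)\subseteq\rng(g)$: once $\rng(g)=\operatorname{Fix}(f)$ is known, $g$ is the unique strictly increasing enumeration of the club $\operatorname{Fix}(f)$, which is by definition the derivative.

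The key step is to extend the equalizer hypothesis from the finite orders~$n$ to all linear orders. Writing $v_X:=D^\xi_X\circ\zeta^{T,S}_X\circ D^{\mu^T}_{D^S_X}\colon D^S_X\to D^S_X$, I would prove that for every linear order~$X$ an element $\rho\in D^S_X$ satisfies $v_X(\rho)=\rho$ if and only if $\rho\in\rng(D^{\mu^S}_X)$. The direction from right to left is Corollary~\ref{cor:deriv-normal-witness}. For the converse, given $v_X(\rho)=\rho$ I would set $a=\supp^{D^S}_X(\rho)$ and use Proposition~\ref{prop:reconstruct-class-sized-dil}(ii) to write $\rho=D^S_{\iota_a}(\rho_0)$ with $\iota_a\colon a\hookrightarrow X$ the inclusion; naturality of $v_{(\cdot)}$ (inherited from that of $D^\xi$, $\zeta^{T,S}$ and $D^{\mu^T}$) together with injectivity of $D^S_{\iota_a}$ forces $v_a(\rho_0)=\rho_0$. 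Transporting along $\en_a\colon|a|\to a$ and then along the isomorphism $\eta^S_{|a|}\colon D^S_{|a|}\to S_{|a|}$ reduces the claim to the given equalizer diagram for $n=|a|$; here one checks --- using the factorisation of $\zeta^{T,S}_n$ through $D^T(\eta^S_n)$ from the proof of Proposition~\ref{prop:compose-dils-rca} and the compatibility of the extension operation with the maps~$\eta$ --- that $\eta^S_n$ intertwines $v_n$ with $\xi_n\circ D^{\mu^T}_{S_n}$ and carries $\rng(D^{\mu^S}_n)$ onto $\rng(\mu^S_n)$. I expect this lemma to be the main obstacle: no individual step is deep, but it requires chasing several naturality squares with care.

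With the lemma available I would transport $v_X$ to a self-map of the ordinal $\delta:=\otp(D^S_X)$ for any well-order~$X$ with $\otp(X)=\alpha$. By the well-foundedness hypothesis and the fact that $T$ is a dilator, $D^S_X$ and $D^T(D^S_X)\cong D^{T\circ S}_X$ are well-orders; since $\delta=g(\alpha)$ and $\otp(D^T(D^S_X))=\otp(D^T_\delta)=f(\delta)=f(g(\alpha))=g(\alpha)$, all three have order type~$\delta$. Because the $\xi_n$ are surjective by hypothesis and injective as components of a natural transformation of prae-dilators, $\xi$ is an isomorphism, hence so is $D^\xi_X$; as $\zeta^{T,S}_X$ is also an isomorphism, the composite $D^\xi_X\circ\zeta^{T,S}_X$ transports to the unique order automorphism of~$\delta$, namely $\id_\delta$. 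Moreover, combining Proposition~\ref{prop:reconstruct-normal-dil} with Lemma~\ref{lem:range-dil-support} shows that $D^{\mu^T}_{D^S_X}$ sends the element of $D^S_X$ at position~$\eta$ to the element of $D^T(D^S_X)$ lying above exactly the images of the initial segment of $D^S_X$ of order type~$\eta$, i.e.~to the element at position $\otp(D^T_\eta)=f(\eta)$; so $v_X$ transports to $f\restriction\delta$. The same two results show that $D^{\mu^S}_X(x)$ sits at position $\otp(D^S_{X\restriction x})$ in $D^S_X$. Taking $X=\alpha$, the lemma therefore yields
\begin{equation*}
 \operatorname{Fix}(f)\cap g(\alpha)=\{\,g(\beta)\mid\beta<\alpha\,\}
\end{equation*}
for every ordinal~$\alpha$.

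To finish, let $\delta_0\in\operatorname{Fix}(f)$ be arbitrary and choose $\alpha$ with $g(\alpha)>\delta_0$ (possible, e.g.~$\alpha=\delta_0+1$, since $g(\beta)\geq\beta$ for all $\beta$). Then $\delta_0\in\operatorname{Fix}(f)\cap g(\alpha)=\{\,g(\beta)\mid\beta<\alpha\,\}\subseteq\rng(g)$. Hence $\rng(g)=\operatorname{Fix}(f)$, and by the first paragraph $\alpha\mapsto\otp(D^S_\alpha)$ is the derivative of $\alpha\mapsto\otp(D^T_\alpha)$.
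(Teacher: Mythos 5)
Your proposal is correct, and its second half takes a genuinely different route from the paper. The first step coincides in content: you lift the equalizer hypothesis to arbitrary orders, showing that the fixed points of $v_X=D^\xi_X\circ\zeta^{T,S}_X\circ D^{\mu^T}_{D^S_X}$ are exactly $\rng(D^{\mu^S}_X)$; the paper does this by the direct computation $v_X(\langle a,\sigma\rangle)=\langle a,\xi_{|a|}\circ D^{\mu^T}_{S_{|a|}}(\sigma)\rangle$, while you reduce to the coded level via naturality and the isomorphisms $\eta^S_n$, and the intertwining facts you flag (that $\eta^S_n$ conjugates $v_n$ into $\xi_n\circ D^{\mu^T}_{S_n}$ and carries $\rng(D^{\mu^S}_n)$ onto $\rng(\mu^S_n)$) do hold, by the factorisation of $\zeta^{T,S}_n$ through $D^T(\eta^S_n)$ in the proof of Proposition~\ref{prop:compose-dils-rca} together with naturality of $\xi$ and of $\mu^S$ --- so your detour is sound, just slightly longer than the paper's computation. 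The real divergence is afterwards: the paper introduces the derivative $f'$ of $f(\alpha)=\otp(D^T_\alpha)$ at the outset, builds the embedding $f'_S:\beta\rightarrow D^S_\beta$ with $\otp(D^S_\beta\!\restriction\!f'_S(\alpha))=f'(\alpha)$, checks that $f'_S$ lands in the fixed points of $v_\beta$, and then invokes the universal property of the lifted equalizer to factor $f'_S$ through $D^{\mu^S}_\beta$, which gives $\otp(D^S_\alpha)\leq f'(\alpha)$ and, with Proposition~\ref{prop:upper-deriv-real}, equality. You instead transport $v_X$, $D^{\mu^T}_{D^S_X}$ and $D^{\mu^S}_X$ to ordinal positions (correctly, via Proposition~\ref{prop:reconstruct-normal-dil} and Lemma~\ref{lem:range-dil-support}, and using that $D^\xi_X\circ\zeta^{T,S}_X$ becomes the identity between two well-orders of the same type) to obtain $\operatorname{Fix}(f)\cap g(\alpha)=\{g(\beta)\mid\beta<\alpha\}$, and conclude $\rng(g)=\operatorname{Fix}(f)$, so that $g$ is the increasing enumeration of the fixed-point club and hence the derivative. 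What each approach buys: the paper's argument is shorter at the end because the universal property plus $g\geq f'$ does the work without any position bookkeeping, whereas your argument never mentions $f'$ until the last line and instead identifies $\rng(g)$ outright as the fixed-point club, which is closer to the set-theoretic definition of derivative and makes the role of the equalizer condition (``every fixed point of $f$ is a value of $g$'') completely explicit, at the cost of the transport computations.
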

Before we prove the theorem we motivate the equalizer condition: By Definition~\ref{def:compose-normal-dils} and the fact that $\xi$ is a morphism of normal prae-dilators we get
\begin{equation*}
 \xi_n\circ D^{\mu^T}_{S_n}\circ\mu^S_n=\xi_n\circ\mu^{T\circ S}_n=\mu^S_n.
\end{equation*}
So the assumption that the equalizer diagrams commute is automatic. After Definition~\ref{def:coded-normal-dil} we have explained that $\mu^T$ can be seen as an internal version of the function~$\alpha\mapsto\otp(D^T_\alpha)$. Intuitively speaking, the equalizer condition thus demands that any ordinal $\alpha$ with $\otp(D^T_\alpha)=\alpha$ lies in the range of $\alpha\mapsto\otp(D^S_\alpha)$.
\begin{proof}
 As a preparation we lift the assumptions of the theorem to the level of class-sized dilators: In view of Definition~\ref{def:morphism-dils} it is straightforward to show that each function $D^\xi_X:D^{T\circ S}_X\rightarrow D^S_X$ is an isomorphism. From Corollary~\ref{cor:deriv-normal-witness} we know that
 \begin{equation*}
 \begin{tikzcd}
  X\ar{r}{D^{\mu^S}_X} &[2em] D^S_X\arrow[r,shift left,"\id_{D^S_X}"]\arrow[r,shift right,swap,"D^\xi_X\circ\zeta^{T,S}_X\circ D^{\mu^T}_{D^S_X}"]&[5em] D^S_X
 \end{tikzcd}
 \end{equation*}
 is a commutative diagram. To show that it defines an equalizer we consider an arbitrary element $\langle a,\sigma\rangle\in D^S_X$. Invoking Definitions~\ref{def:extend-normal-transfos} and~\ref{def:morphism-dils}, as well as the proof of Proposition~\ref{prop:compose-dils-rca}, we see
 \begin{multline*}
  D^\xi_X\circ\zeta^{T,S}_X\circ D^{\mu^T}_{D^S_X}(\langle a,\sigma\rangle)=D^\xi_X\circ\zeta^{T,S}_X(\langle\{\langle a,\sigma\rangle\},\mu^T_1(0)\rangle)=\\
  =D^\xi_X(\langle a,\langle\{\sigma\},\mu^T_1(0)\rangle\rangle)=\langle a,\xi_{|a|}(\langle\{\sigma\},\mu^T_1(0)\rangle)\rangle=\langle a,\xi_{|a|}\circ D^{\mu^T}_{S_{|a|}}(\sigma)\rangle.
 \end{multline*}
 If this value is equal to $\langle a,\sigma\rangle$, then we have $\xi_{|a|}\circ D^{\mu^T}_{S_{|a|}}(\sigma)=\sigma$. Thus the equalizer condition from the theorem yields $\sigma=\mu^S_{|a|}(m)$ for some $m<|a|$. According to Definition~\ref{def:coded-prae-dilator-reconstruct} and Lemma~\ref{lem:support-mu} we must have
 \begin{equation*}
  |a|=\supp^S_{|a|}(\sigma)=\supp^S_{|a|}(\mu^S_{|a|}(m))=\{m\}.
 \end{equation*}
 This forces $m=0$ and $|a|=1$, say $a=\{x\}$. We can conclude
 \begin{equation*}
  \langle a,\sigma\rangle=\langle\{x\},\mu^S_1(0)\rangle=D^{\mu^S}_X(x)\in\rng(D^{\mu^S}_X),
 \end{equation*}
 as required to make the above an equalizer diagram. Based on these preparations we can now prove the actual claim of the theorem: Write $f$ for the normal function $\alpha\mapsto\otp(D^T_\alpha)$ and $f'$ for its derivative. Proposition~\ref{prop:upper-deriv-real} yields $\otp(D^S_\beta)\geq f'(\beta)$. We may thus define an order embedding $f'_S:\beta\rightarrow D^S_\beta$ by stipulating
 \begin{equation*}
  \otp(D^S_\beta\!\restriction\!f'_S(\alpha))=f'(\alpha).
 \end{equation*}
 To make use of the equalizer diagram from the beginning of the proof we need
 \begin{equation*}
  D^\xi_\beta\circ\zeta^{T,S}_\beta\circ D^{\mu^T}_{D^S_\beta}\circ f'_S(\alpha)=f'_S(\alpha).
 \end{equation*}
 Since $D^\xi_\beta\circ\zeta^{T,S}_\beta:D^T(D^S_\beta)\rightarrow D^S_\beta$ is an isomorphism this reduces to
 \begin{equation*}
  \otp(D^T(D^S_\beta)\!\restriction\!D^{\mu^T}_{D^S_\beta}\circ f'_S(\alpha))=f'(\alpha).
 \end{equation*}
 By the proof of Proposition~\ref{prop:normal-dil-fct} and the functoriality of $D^T$, the left side is indeed equal to
 \begin{equation*}
  \otp(D^T(D^S_\beta\!\restriction\!f'_S(\alpha)))=\otp(D^T_{f'(\alpha)})=f(f'(\alpha))=f'(\alpha).
 \end{equation*}
 Now the universal property of equalizers yields an embedding $g:\beta\rightarrow\beta$ that satisfies $D^{\mu^S}_\beta\circ g=f'_S$. Since $g$ is a strictly increasing function on the ordinals we have $\alpha\leq g(\alpha)$. Thus, again invoking the proof of Proposition~\ref{prop:normal-dil-fct}, we get
 \begin{equation*}
  \otp(D^S_\alpha)=\otp(D^S_\beta\!\restriction\!D^{\mu^S}_\beta(\alpha))\leq\otp(D^S_\beta\!\restriction\!D^{\mu^S}_\beta\circ g(\alpha))=\otp(D^S_\beta\!\restriction\!f'_S(\alpha))=f'(\alpha).
 \end{equation*}
 We have already seen $\otp(D^S_\alpha)\geq f'(\alpha)$. So we can conclude that $\alpha\mapsto\otp(D^S_\alpha)$ coincides with the derivative~$f'$ of the normal function $\alpha\mapsto\otp(D^T_\alpha)$, as desired.
\end{proof}

In Example~\ref{ex:equalizers-without-deriv} we will exhibit an upper derivative $(S,\xi)$ that satisfies the equalizer condition but fails to be a derivative in the sense of Definition~\ref{def:derivative}. This shows that the equalizer condition does not suffice to characterize derivatives on the categorical level. The relevance of Example~\ref{ex:equalizers-without-deriv} is somewhat diminished by the fact that $X\mapsto D^S_X$ does not preserve well-foundedness.

\section{From upper derivative to $\Pi^1_1$-bar induction}\label{sect:upper-deriv-bi}

In this section we prove that bar induction for $\Pi^1_1$-formulas follows from the principle that every normal dilator has an upper derivative that preserves well-foundedness (i.\,e.~that is again a normal dilator). To establish this result we follow the informal argument given at the beginning of the introduction.

The first major goal of the section is to reconstruct the functions~$h$ and~$f$ from the informal argument in terms of dilators. Since the notion of dilator is \mbox{$\Pi^1_2$-complete} (due to Girard) it makes sense that this is possible. Indeed our reconstruction of~$h$ is inspired by Norman's proof of $\Pi^1_2$-completeness (see~\cite[\mbox{Annex 8.E}]{girard-book-part2}). To get a usable result we will have to adapt his argument to the specific form of bar induction. Our reconstruction of~$f$ can be read as a proof that the more restrictive class of normal dilators is $\Pi^1_2$-complete as well.

Let us begin with some terminology: Given a set $Y$, we write $Y^{<\omega}$ for the tree of finite sequences with entries in $Y$. If $Y=(Y,<_Y)$ is a linear order, then the Kleene-Brouwer order (also known as Lusin-Sierpi\'nski order) on $Y^{<\omega}$ is defined by
\begin{equation*}
 \sigma<_{\operatorname{KB}(Y)}\tau\quad\Leftrightarrow\quad\begin{cases}
                                                                   \text{either $\sigma$ is a proper end extension of $\tau$},\\
                                                                   \text{or we have $(\sigma)_j<_Y(\tau)_j$ for the smallest $j$ with $(\sigma)_j\neq(\tau)_j$.}
                                                                  \end{cases}
\end{equation*}
In the second clause $(\sigma)_j$ refers to the $j$-th entry of $\sigma$, for $j<\len(\sigma)$ below the length of $\sigma$ (note that such a $j$ exists when neither sequence is an end extension of the other). If we want to emphasize that $\mathcal T$ is ordered as a subtree of $Y^{<\omega}$, then we say that it carries the Kleene-Brouwer order with respect to~$Y$. The symbol~$\lkb$ will be reserved for the Kleene-Brouwer order with respect to~$\mathbb N$ (ordered as usual). Recall that a branch of $\mathcal T\subseteq Y^{<\omega}$ is given by a function $f:\mathbb N\rightarrow Y$ such that we have $f[n]\in\mathcal T$ for all $n\in\mathbb N$, where the sequence
\begin{equation*}
 f[n]=\langle f(0),\dots,f(n-1)\rangle
\end{equation*}
lists the first $n$ values of $f$. It is well-known that $\aca_0$ proves the characteristic property of the Kleene-Brouwer order: If $Y$ is a well-order, then $\mathcal T$ has no branch if, and only if, the Kleene-Brouwer order with respect to~$Y$ is well-founded on~$\mathcal T$ (to adapt the proof from~\cite[Lemma~V.1.3]{simpson09}, which treats the case $Y=\mathbb N$, one observes that~$\mathbb N$ embeds into any infinite sub\-order~$Y_0\subseteq Y$, e.\,g.~as an initial segment).

Given an order $X=(X,<_X)$, an $X$-indexed family of orders is given as a set
\begin{equation*}
 Y=\{\langle x,y\rangle\,|\,x\in X\text{ and }y\in Y_x\},
\end{equation*}
where $Y_x=(Y_x,<_{Y_x})$ is an order for each $x\in X$. The dependent sum $\Sigma_{x\in X}Y_x$ (or shorter~$\Sigma Y$) is the order with underlying set $Y$ and order relation given by
\begin{equation*}
 \langle x,y\rangle<_{\Sigma Y}\langle x',y'\rangle\quad\Leftrightarrow\quad\begin{cases}
                                                                                   \text{either $x<_X x'$,}\\
                                                                                   \text{or $x=x'$ and $y<_{Y_x}y'$}.
                                                                                  \end{cases}
\end{equation*}
For $x\in X$ we write $\Sigma_{x'<_Xx}Y_{x'}$ (or shorter $\Sigma_x Y$) for the suborder that contains all pairs $\langle x',y\rangle\in\Sigma Y$ with $x'<_Xx$. If $X$ is a well-order, then $\Sigma Y$ is well-founded if, and only if, $Y_x$ is well-founded for every $x\in X$, provably in $\rca_0$ (the first components of a descending sequence in $\Sigma Y$ must become constant with some value~$x\in X$, from which point on the second components form a descending sequence in~$Y_x$). The product $X\times Y$ of two orders is explained as the special case where we have $Y_x=Y$ for all $x\in X$. Let us mention one other construction that will be needed later: Given an order $Y=(Y,<_Y)$, we write
\begin{equation*}
 Y^\bot=\{\bot\}\cup Y
\end{equation*}
for the extension of $Y$ by a new minimal element (i.\,e.~we have $\bot<_{Y^\bot}y<_{Y^\bot}y'$ for any $y,y'\in Y$ with $y<_Yy'$). If $f:Y\rightarrow Z$ is an embedding, then we get an embedding $f^\bot:Y^\bot\rightarrow Z^\bot$ by setting
\begin{equation*}
 f^\bot(y)=\begin{cases}
            f(y) & \text{if $y\in Y\subseteq Y^\bot$,}\\
            \bot & \text{if $y=\bot$.}
           \end{cases}
\end{equation*}
One readily verifies that the construction is functorial (and in fact a dilator), in the sense that we have $(g\circ f)^\bot=g^\bot\circ f^\bot$ for functions $f,g$ of suitable (co-)domain.

We will be particularly interested in dependent sums of the form $\Sigma\mathcal T=\Sigma_{x\in X}\mathcal T_x$, where $X$ is a well-order and each $\mathcal T_x$ is a subtree of $\mathbb N^{<\omega}$, with the usual Kleene-Brouwer order. In this situation we call $\mathcal T$ an $X$-indexed family of $\mathbb N$-trees. As in the informal argument from the introduction, the idea is that the well-foundedness of $\mathcal T_x$ corresponds to the fact that some $\Pi^1_1$-formula $\varphi$ holds at $x\in X$. Above we have seen that $\Sigma_x\mathcal T$ is well-founded if, and only if, $\mathcal T_y$ is well-founded for all~$y<_X x$. Thus it makes sense to call $\mathcal T$ progressive at $x\in X$ if we have
\begin{equation*}
 \text{``$\Sigma_x\mathcal T$ is well-founded''}\rightarrow\text{``$\mathcal T_x$ is well-founded''}.
\end{equation*}
If $\mathcal T$ is progressive at every~$x\in X$, then it is called progressive along~$X$.

Let us now describe our reconstruction of the function $h$: The ordinal~$\alpha$ and the induction formula $\varphi$ that appear in the informal argument from the introduction correspond to a well-order~$X$ and an $X$-indexed family $\mathcal T$ of $\mathbb N$-trees. In order to represent the function~$\delta\mapsto h(\gamma,\delta)$ with $\gamma<\alpha$ we will construct a prae-dilator~$H[x]$ such that $\mathcal T$ is progressive at $x\in X$ if, and only if, $H[x]$ is a dilator (we write $H[x]$ for both the class-sized dilator and its coded restriction, cf.~the discussion after Definition~\ref{def:coded-dilator}). Considering the contra\-positive of the implication from left to right, we see that we should ensure the following: If $H[x]_Z\cong D^{H[x]}_Z$ is ill-founded for some well-order~$Z$, then $\Sigma_x\mathcal T$ must be well-founded while $\mathcal T_x$ is not. Inspired by Norman's proof of \mbox{$\Pi^1_2$-completeness}, the idea is to define $H[x]_Z$ as a tree: Along each potential branch one searches for an embedding of $\Sigma_x\mathcal T$ into~$Z$ and, simultaneously, for a descending sequence in~$\mathcal T_x$. This leads to the following construction:

\begin{definition}[$\rca_0$]\label{def:H-0}
 Consider a well-order~$X$, an $X$-indexed family $\mathcal T$ of \mbox{$\mathbb N$-trees} and an element~$x\in X$. For each order~$Z$ we define $H[x]_Z=H[X,\mathcal T,x]_Z$ as the tree of all sequences
 \begin{equation*}
 \langle\langle z_0,s_0\rangle,\dots,\langle z_{k-1},s_{k-1}\rangle\rangle\in(Z^\bot\times\mathbb N)^{<\omega}
 \end{equation*}
 that satisfy the following:
 \begin{enumerate}[label=(\roman*)]
  \item Whenever $j_1,j_2<k$ code elements $j_i=\langle y_i,\sigma_i\rangle\in\Sigma_x\mathcal T$, we have $z_{j_i}\in Z$ (i.\,e.~$z_{j_i}\neq\bot$) and
  \begin{equation*}
   \langle y_1,\sigma_1\rangle<_{\Sigma\mathcal T}\langle y_2,\sigma_2\rangle\quad\Rightarrow\quad z_{j_1}<_Z z_{j_2}.
  \end{equation*}
  \item We have $\langle s_0,\dots,s_{k-1}\rangle\in\mathcal T_x$.
 \end{enumerate}
 The order on $H[x]_Z$ is the Kleene-Brouwer order with respect to~$Z^\bot\times\mathbb N$. For an embedding $f:Z\rightarrow Y$ we define $H[x]_f:H[x]_Z\rightarrow H[x]_Y$ by
 \begin{equation*}
  H[x]_f(\langle\langle z_0,s_0\rangle,\dots,\langle z_{k-1},s_{k-1}\rangle\rangle)=\langle\langle f^\bot(z_0),s_0\rangle,\dots,\langle f^\bot(z_{k-1}),s_{k-1}\rangle\rangle.
 \end{equation*}
 To define a family of functions $\supp^{H[x]}_Z:H[x]_Z\rightarrow[Z]^{<\omega}$ we set
 \begin{equation*}
  \supp^{H[x]}_Z(\langle\langle z_0,s_0\rangle,\dots,\langle z_{k-1},s_{k-1}\rangle\rangle)=\{z_j\,|\,j<k\text{ and }z_j\neq\bot\}
 \end{equation*}
 for each order~$Z$.
\end{definition}

The following is readily verified:

\begin{lemma}[$\rca_0$]\label{lem:H-0-prae-dil}
 The orders and functions that we have constructed in the previous definition form a prae-dilator $H[x]=H[X,\mathcal T,x]$.
\end{lemma}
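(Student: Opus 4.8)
The plan is to verify the two clauses of Definition~\ref{def:coded-prae-dilator} directly. The only point requiring genuine attention is that each $H[x]_n$ really is a subtree of $(n^\bot\times\mathbb N)^{<\omega}$, so that the Kleene--Brouwer order is a linear order on it: clause~(ii) of Definition~\ref{def:H-0} asks that $\langle s_0,\dots,s_{k-1}\rangle\in\mathcal T_x$, and $\mathcal T_x$ is a tree; clause~(i) becomes only weaker under shortening of the sequence, since fewer indices $j<k$ then remain available and those $n_{j_i}$ that were already $\neq\bot$ still are. Hence $H[x]_n$ is closed under initial segments, and since $n^\bot\times\mathbb N$ is a linear order the Kleene--Brouwer order makes $H[x]_n$ into a linear order whose field we may regard as a subset of~$\mathbb N$ via the standard coding of finite sequences.

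Next I would check the functor structure. For a strictly increasing $f\colon n\to m$ the map $H[x]_f$ sends $H[x]_n$ into $H[x]_m$: clause~(ii) is untouched, and clause~(i) is preserved because $f^\bot(n_{j_i})=f(n_{j_i})\in m$ whenever $n_{j_i}\neq\bot$, while $f$ strictly increasing gives $n_{j_1}<n_{j_2}\Rightarrow f(n_{j_1})<f(n_{j_2})$. That $H[x]_f$ is an order embedding follows because $\langle a,b\rangle\mapsto\langle f^\bot(a),b\rangle$ is an order embedding of $n^\bot\times\mathbb N$ into $m^\bot\times\mathbb N$ (as $(\cdot)^\bot$ carries embeddings to embeddings), and applying an embedding of the alphabet componentwise preserves the Kleene--Brouwer order on trees. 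Functoriality, i.e.\ $H[x]_{\id_n}=\id$ and $H[x]_{g\circ f}=H[x]_g\circ H[x]_f$, is then immediate from $(\cdot)^\bot$ being a functor and the pointwise definition of~$H[x]_f$.

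Finally I would treat $\supp^{H[x]}$. For $f\colon n\to m$ and $\tau=\langle\langle n_0,s_0\rangle,\dots,\langle n_{k-1},s_{k-1}\rangle\rangle\in H[x]_n$ naturality holds since $f^\bot(n_j)=\bot$ exactly when $n_j=\bot$, so
\begin{equation*}
 \supp^{H[x]}_m(H[x]_f(\tau))=\{f(n_j)\mid j<k,\ n_j\neq\bot\}=[f]^{<\omega}(\supp^{H[x]}_n(\tau)).
\end{equation*}
For the factorization condition put $a=\supp^{H[x]}_n(\tau)$ and $\tau_0=\langle\langle m_0,s_0\rangle,\dots,\langle m_{k-1},s_{k-1}\rangle\rangle$, where $m_j=(\en_a)^{-1}(n_j)$ if $n_j\neq\bot$ and $m_j=\bot$ otherwise (here $\en_a\colon|a|\to a$ is the increasing enumeration and $\iota_a\colon a\hookrightarrow n$ the inclusion, and $n_j\in a$ whenever $n_j\neq\bot$). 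Since $(\en_a)^{-1}\colon a\to|a|$ is order preserving, clause~(i) for $\tau$ carries over to $\tau_0$, clause~(ii) is unchanged, so $\tau_0\in H[x]_{|a|}$; and applying $(\iota_a\circ\en_a)^\bot$ componentwise returns each $m_j$ to $n_j$ (and $\bot$ to $\bot$), so $H[x]_{\iota_a\circ\en_a}(\tau_0)=\tau$, which is the required factorization.

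I do not expect a real obstacle here; the one thing demanding care is the bookkeeping around the adjoined bottom element together with the convention in Definition~\ref{def:H-0}(i) that the \emph{indices} $j<k$ of a sequence in $H[x]_n$ are read as codes of elements of $\Sigma_x\mathcal T$ — one must check this reading is unaffected by $H[x]_f$ and by the passage to $\tau_0$, which holds because neither operation alters the indices or the second coordinates. All of the conditions involved are arithmetical in $X$, $\mathcal T$ and $x$, so the verification goes through in $\rca_0$.
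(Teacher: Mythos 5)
Your verification is correct and is exactly the direct check of Definition~\ref{def:coded-prae-dilator} that the paper leaves implicit (it states the lemma as a straightforward verification without giving details). The points you single out — closure of $H[x]_n$ under initial segments, that componentwise application of the embedding $\langle a,b\rangle\mapsto\langle f^\bot(a),b\rangle$ preserves the Kleene--Brouwer order, and the construction of $\tau_0$ witnessing the support condition — are precisely the ones that need attention, and your treatment of them is sound.
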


Since the previous result is formulated in $\rca_0$, it is officially concerned with the coded restriction of $H[x]$ to the category of natural numbers. The following technical observation shows that the class-sized and coded versions of $H[x]$ can be identified (cf.~the discussion after Definition~\ref{def:coded-dilator}).

\begin{lemma}[$\rca_0$]\label{lem:H-0-alternative}
 For each order~$Z$ we get an isomorphism $D^{H[x]}_Z\cong H[x]_Z$ by stipulating
 \begin{multline*}
  \langle a,\langle\langle n_0,s_0\rangle,\dots,\langle n_{k-1},s_{k-1}\rangle\rangle\,\rangle\mapsto\\
  \langle\langle(\iota_a\circ\en_a)^\bot(n_0),s_0\rangle,\dots,\langle(\iota_a\circ\en_a)^\bot(n_{k-1}),s_{k-1}\rangle\rangle
 \end{multline*}
 where $\iota_a\circ\en_a:|a|\rightarrow Z$ is the unique embedding with range $a\in[Z]^{<\omega}$.
\end{lemma}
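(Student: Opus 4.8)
The plan is to treat the displayed map as the analogue, for an arbitrary order $Z$, of the isomorphism $\eta^T$ from Lemma~\ref{lem:class-sized-restrict}: an element $\langle a,\sigma\rangle$ of $D^{H[x]}_Z$ records a finite set $a\subseteq Z$ together with a ``template'' $\sigma\in H[x]_{|a|}$ written in the formal variables $0,\dots,|a|-1$, and the map simply substitutes for each variable $n_j$ the corresponding actual element $(\iota_a\circ\en_a)^\bot(n_j)\in Z^\bot$, leaving the second coordinates $s_j$ untouched. So the work splits into three routine checks: the map lands in $H[x]_Z$, it is a bijection, and it is order-preserving in both directions.

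First I would check well-definedness. Given $\langle a,\sigma\rangle\in D^{H[x]}_Z$ with $\sigma=\langle\langle n_0,s_0\rangle,\dots,\langle n_{k-1},s_{k-1}\rangle\rangle\in H[x]_{|a|}$, the image clearly lies in $(Z^\bot\times\mathbb N)^{<\omega}$. Condition~(ii) of Definition~\ref{def:H-0-alternative} is literally inherited from condition~(ii) of Definition~\ref{def:H-0}, since the $s_j$ are unchanged. For condition~(i) one uses that $\iota_a\circ\en_a\colon|a|\to Z$ is a strict order embedding, hence so is $(\iota_a\circ\en_a)^\bot$: whenever $j_1,j_2<k$ code elements of $\Sigma_x\mathcal T$, the hypothesis $\sigma\in H[x]_{|a|}$ already gives $n_{j_i}\neq\bot$ and the required inequality among the $n_{j_i}$, and $(\iota_a\circ\en_a)^\bot$ transports both faithfully into $Z^\bot$.

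The main point is order-preservation, and here the key is a factorization identity. Fix $\langle a,\sigma\rangle,\langle b,\tau\rangle\in D^{H[x]}_Z$, put $m=|a\cup b|$ and let $c=\iota_{a\cup b}\circ\en_{a\cup b}\colon m\to Z$ be the unique embedding with range $a\cup b$. As in the proof of Lemma~\ref{lem:class-sized-restrict} one has $c\circ|\iota_a^{a\cup b}|=\iota_a\circ\en_a$, and likewise for $b$, whence $c^\bot\circ|\iota_a^{a\cup b}|^\bot=(\iota_a\circ\en_a)^\bot$ by functoriality of $(-)^\bot$. Unravelling Definition~\ref{def:H-0}, this says exactly that the image of $\langle a,\sigma\rangle$ under our map is obtained from the sequence $H[x]_{|\iota_a^{a\cup b}|}(\sigma)\in H[x]_m$ by applying $c^\bot\times\operatorname{id}_{\mathbb N}\colon m^\bot\times\mathbb N\to Z^\bot\times\mathbb N$ coordinatewise, and similarly for $\langle b,\tau\rangle$. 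Now $c^\bot\times\operatorname{id}_{\mathbb N}$ is a strict order embedding (the product order is lexicographic with primary first coordinate, and $c^\bot$ is a strict embedding), so the induced coordinatewise map on finite sequences is a strict embedding for the respective Kleene--Brouwer orders: it preserves lengths and the proper-end-extension relation, and preserves and reflects $<$ at the first place of disagreement. Since $\langle a,\sigma\rangle<_{D^{H[x]}_Z}\langle b,\tau\rangle$ is, by Definition~\ref{def:coded-prae-dilator-reconstruct}, precisely $H[x]_{|\iota_a^{a\cup b}|}(\sigma)<_{H[x]_m}H[x]_{|\iota_b^{a\cup b}|}(\tau)$, i.e.\ the Kleene--Brouwer order on $m^\bot\times\mathbb N$, and the two images carry the Kleene--Brouwer order on $Z^\bot\times\mathbb N$, the desired equivalence drops out. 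All of this is elementary and goes through in $\rca_0$.

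Finally, bijectivity. For injectivity I would use the support condition $\supp^{H[x]}_{|a|}(\sigma)=|a|$: it forces every element of $|a|$ to appear among the $n_j$, so $a$ is exactly the set of non-$\bot$ first coordinates of the image; from $a$ one recovers $\iota_a\circ\en_a$ and then $\sigma$ by inverting the injection $(\iota_a\circ\en_a)^\bot$ coordinatewise. For surjectivity, given a sequence $\langle\langle z_0,s_0\rangle,\dots,\langle z_{k-1},s_{k-1}\rangle\rangle\in H[x]_Z$, set $a=\{z_j\mid j<k,\ z_j\neq\bot\}\in[Z]^{<\omega}$, let $n_j$ be $\bot$ if $z_j=\bot$ and otherwise the position of $z_j$ in the increasing enumeration of $a$, and take $\sigma=\langle\langle n_0,s_0\rangle,\dots,\langle n_{k-1},s_{k-1}\rangle\rangle$; the well-definedness argument run in reverse shows $\sigma\in H[x]_{|a|}$, while $\supp^{H[x]}_{|a|}(\sigma)=|a|$ holds because $\iota_a\circ\en_a$ maps $|a|$ onto $a$, so $\langle a,\sigma\rangle\in D^{H[x]}_Z$ is the required preimage. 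I expect the only real obstacle to be bookkeeping: keeping the inclusions $\iota_a^{a\cup b}$, the cardinality functor $|\cdot|$, and the enumerations $\en$ aligned so that the identity $c\circ|\iota_a^{a\cup b}|=\iota_a\circ\en_a$ and its consequences come out cleanly. No idea beyond that already present in Lemma~\ref{lem:class-sized-restrict} is needed.
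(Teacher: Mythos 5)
Your proposal is correct and follows exactly the route the paper intends: the paper's proof of this lemma is the one-line instruction to follow the proof of Lemma~\ref{lem:class-sized-restrict}, and your argument is precisely that proof transported to $H[x]$ -- the factorization $\iota_{a\cup b}\circ\en_{a\cup b}\circ|\iota_a^{a\cup b}|=\iota_a\circ\en_a$ for order preservation and the support condition for surjectivity. The only point you make explicit beyond the paper is that a coordinatewise strict embedding of $m^\bot\times\mathbb N$ into $Z^\bot\times\mathbb N$ preserves and reflects the Kleene--Brouwer order, which is a correct and welcome addition of detail rather than a different method.
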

\begin{proof}
 To verify the claim one follows the proof of Lemma~\ref{lem:class-sized-restrict}.
\end{proof}

We can now deduce the connection with the premise of $\Pi^1_1$-bar induction. As mentioned before, this part of our argument is similar to Norman's proof that the notion of dilator is \mbox{$\Pi^1_2$-complete} (see~\cite[Annex~8.E]{girard-book-part2}). We choose the base theory $\aca_0$ since we will apply the characteristic property of the Kleene-Brouwer order:

\begin{proposition}[$\aca_0$]\label{prop:H-0-captures-prog-new}
 An $X$-indexed family $\mathcal T$ of $\mathbb N$-trees is progressive at $x\in X$ if, and only if, $H[x]=H[X,\mathcal T,x]$ is a dilator.
\end{proposition}
\begin{proof}
 By Lemma~\ref{lem:H-0-prae-dil} we already know that $H[x]$ is a prae-dilator. Thus we need to show that the implication
 \begin{equation*}
 \text{``$\Sigma_x\mathcal T$ is well-founded''}\rightarrow\text{``$\mathcal T_x$ is well-founded''}
 \end{equation*}
 holds if, and only if, $H[x]_Z\cong D^{H[x]}_Z$ is well-founded for every well-order~$Z$. Aiming at the contrapositive of the first direction, assume that $Z$ is a well-order such that $H[x]_Z$ is ill-founded. By the characteristic property of the Kleene-Brouwer order we get a branch in the tree $H[x]_Z\subseteq(Z^\bot\times\mathbb N)^{<\omega}$. In view of Definition~\ref{def:H-0} the first components of this branch form an embedding of $\Sigma_x\mathcal T$ into the well-order $Z$, which witnesses the premise of the above implication. The second components of our branch form a branch in the tree $\mathcal T_x$, so that the latter is ill-founded. Thus our implication fails and the first direction is established. For the other direction we assume that $H[x]$ is a dilator. Assuming the premise of the above implication, it follows that $H[x]_{\Sigma_x\mathcal T}$ is a well-order. To establish the conclusion of our implication, we construct an embedding of $\mathcal T_x$ into $H[x]_{\Sigma_x\mathcal T}$. Define $e:\mathbb N\to(\Sigma_x\mathcal T)^\bot$ by
\begin{equation*}
e(j)=\begin{cases}
\langle y,\sigma\rangle & \text{if $j$ codes the element $\langle y,\sigma\rangle\in\Sigma_x\mathcal T$},\\
\bot & \text{if $j$ does not code an element of $\Sigma_x\mathcal T$}.
\end{cases}
\end{equation*}
It is straightforward to verify that
\begin{equation*}
\mathcal T_x\ni\langle s_0,\dots,s_{k-1}\rangle\mapsto\langle\langle e(0),s_0\rangle,\dots,\langle e(k-1),s_{k-1}\rangle\rangle\in H[x]_{\Sigma_x\mathcal T}
\end{equation*}
is the required embedding.
\end{proof}

So far we have reconstructed the function~$h$ from the informal argument from the introduction. In the rest of this section we reconstruct the normal function~$f$ and conclude the proof of $\Pi^1_1$-bar induction. Since the formalization in second order arithmetic is somewhat technical, we begin with an informal exposition:

\begin{summary}\label{sum:deduce-Pi11-bi}
In the informal argument from the introduction we have transformed~$h$ into a function $h_0$ with $h_0(\delta)=\sup_{\gamma\leq\delta} h(\gamma,\delta)$. The supremum does not seem to be available on the level of dilators, but it can be bounded by a transfinite sum: For each order~$Z$ we consider the order
\begin{equation*}
H^0_Z=\Sigma_{x\in X}H[x]_Z.
\end{equation*}
Here $H[x]=H[X,\mathcal T,x]$ is constructed with respect to a fixed well-order~$X$ and an $X$-indexed family $\mathcal T$ of $\mathbb N$-trees that is progressive along~$X$. It is straightforward to turn $H^0$ into a prae-dilator, and Proposition~\ref{prop:H-0-captures-prog-new} implies that~$H^0$ is a dilator. The informal argument proceeds with a normal function $f$ with $f(\delta)=\sum_{\gamma<\delta}1+h_0(\gamma)$. If $\delta$ is represented by~$Z$, then $\gamma<\delta$ corresponds to $Z\!\restriction\!z=\{z'\in Z\,|\,z'<_Z z\}$ for some $z\in Z$. Hence~$f$ can be represented by a dilator~$F$ with
\begin{equation*}
F_Z=\Sigma_{z\in Z}(H^0_{Z\restriction z})^\bot=\Sigma_{z\in Z}\left(\Sigma_{x\in X}H[x]_{Z\restriction z}\right)^\bot.
\end{equation*}
Elements of $F_Z$ have the form $\langle z,\bot\rangle$ or $\langle z,x,\sigma\rangle$ (with one pair of parentheses omitted) for $z\in Z$, $x\in X$ and $\sigma\in H[x]_{Z\restriction z}$. One can check that $F$ becomes normal if we define $\mu^F_Z:Z\to F_Z$ by $\mu^F_Z(z)=\langle z,\bot\rangle$ (cf.~Proposition~\ref{prop:F-normal} below). We point out that the definition of $F$ resembles Girard's construction of a flower $\int T$ from a given dilator~$T$ (cf.~\cite[Example~2.4.9]{girard-pi2}). In our setting, the conclusion of $\Pi^1_1$-bar induction amounts to the claim that $\Sigma_{x\in X}\mathcal T_x$ is well-founded. We want to deduce this claim from the assumption that $F$ has an upper derivative
\begin{equation*}
\xi:G\circ F\Rightarrow G
\end{equation*}
such that~$G$ preserves well-foundedness. Since $X$ is a well-order, it suffices to construct an embedding
\begin{equation*}
J:\Sigma_{x\in X}\mathcal T_x\rightarrow G_X.
\end{equation*}
In a sufficiently strong meta theory, the value $J(\langle x,\sigma\rangle)$ can be constructed by recursion on~$x$: The normal dilator $G$ comes with an embedding $\mu^G_X:X\to G_X$. Inductively we assume that we already have values $J(\langle y,\sigma\rangle)\in G_X\!\restriction\!\mu^G_X(x)$ for all~$\langle y,\sigma\rangle\in\Sigma_{y<_X x}\mathcal T_y=\Sigma_x\mathcal T$. We can then define $J_x:\mathbb N\to (G_X\!\restriction\!\mu^G_X(x))^\bot$ by
\begin{equation*}
J_x(j)=\begin{cases}
J(\langle y,\sigma\rangle) & \text{if $j$ codes the element $\langle y,\sigma\rangle\in\Sigma_x\mathcal T$},\\
\bot & \text{if $j$ does not code an element of $\Sigma_x\mathcal T$}.
\end{cases}
\end{equation*}
Assuming that~$J$ is order preserving on~$\Sigma_x\mathcal T$, we have
\begin{equation*}
\langle s_0,\dots,s_{k-1}\rangle\in\mathcal T_x\quad\Rightarrow\quad\langle\langle J_x(0),s_0\rangle,\dots,\langle J_x(k-1),s_{k-1}\rangle\rangle\in H[x]_{G_X\restriction\mu^G_X(x)}.
\end{equation*}
Using the function $\xi_X:F(G_X)\to G_X$, we can now define $J(\langle x,\langle s_0,\dots,s_{k-1}\rangle\rangle)$ as
\begin{equation*}
\xi_X(\langle \mu^G_X(x),x,\langle\langle J_x(0),s_0\rangle,\dots,\langle J_x(k-1),s_{k-1}\rangle\rangle\rangle)\in G_X.
\end{equation*}
For $x<_X x'$ the argument of $\xi_X$ is below $\langle\mu^G_X(x'),\bot\rangle=\mu^F_{G(X)}\circ\mu^G_X(x')=\mu^{F\circ G}_X(x')$. Since $\xi:F\circ G\Rightarrow G$ is a morphism of normal dilators, this implies
\begin{equation*}
 J(\langle x,\langle s_0,\dots,s_{k-1}\rangle\rangle)<_{G(X)}\xi_X\circ\mu^{F\circ G}_X(x')=\mu^G_X(x'),
\end{equation*}
as we have assumed in the recursive construction. In the proof of Theorem~\ref{thm:embed-Sigma-T} we will see that $J$ can be constructed by primitive recursion, since each value $J(\langle x,\sigma\rangle)$ does only depend on a finite (and effectively bounded) collection of previous values.
\end{summary}

In the rest of this section we provide a detailed formalization of the argument from Summary~\ref{sum:deduce-Pi11-bi}. The first step is to define~$F$ as a coded dilator. As explained in the previous section, this means that $F$ acts on finite orders of the form~\mbox{$n=\{0,\dots,n-1\}$}. It will be very convenient to have a more uniform presentation of the orders $(\Sigma_{x\in X}H[x]_n)^\bot$. For this purpose we extend the $X$-indexed family of prae-dilators $H[x]$ to a family indexed by $X^\bot$: Define \mbox{$H[\bot]=H[X,\mathcal T,\bot]$} as the constant prae-dilator with values
\begin{equation*}
H[\bot]_n=\{\star\},
\end{equation*}
where $\star$ is some new symbol. Its action on morphisms and the support functions are given by $H[\bot]_f(\star)=\star$ and $\supp^{H[\bot]}_n(\star)=\emptyset$. Then we have
\begin{equation*}
(\Sigma_{x\in X} H[x]_n)^\bot\cong\Sigma_{x\in X^\bot}H[x]_n,
\end{equation*}
where the isomorphism sends $\bot$ to $\langle\bot,\star\rangle$ and leaves $\langle x,\tau\rangle$ with $x\in X$ unchanged. The point is that all elements of the right side are pairs, which will save us many case distinctions. Invoking Definition~\ref{def:coded-prae-dilator-reconstruct} we see that $D^{H[\bot]}_Z$ consists of the single element $\langle\emptyset,\star\rangle$. Thus $H[\bot]$ is a dilator and we have
\begin{equation*}
\left(\Sigma_{x\in X}D^{H[x]}_Z\right)^\bot\cong\Sigma_{x\in X^\bot}D^{H[x]}_Z.
\end{equation*}
We now define the coded prae-dilator $F$ that reconstructs the function~$f$ from the informal argument. The crucial point is that~$F$ is normal, as we shall see below.

\begin{definition}[$\rca_0$]\label{def:dil-F}
Consider a well-order~$X$ and an $X$-indexed family $\mathcal T$ of $\mathbb N$-trees. For each number $n$ we define
\begin{equation*}
F_n=F[X,\mathcal T]_n=\Sigma_{N\in n}\Sigma_{x\in X^\bot}H[X,\mathcal T,x]_N.
\end{equation*}
Omitting one pair of parentheses, we write the elements of $F_n$ in the form $\langle N,x,\sigma\rangle$ with $N\in n=\{0,\dots,n-1\}$, $x\in X^\bot$ and $\sigma\in H[x]_N$. The order on $F_n$ is the usual order on a dependent sum, which coincides with the lexicographic order on the triples $\langle N,x,\sigma\rangle$. Given an embedding~$f:n\rightarrow m$, we write $f\!\restriction\!N:N\rightarrow f(N)$ for the restriction of $f$. Then we define $F_f:F_n\rightarrow F_m$ by
\begin{equation*}
F_f(\langle N,x,\tau\rangle)=\langle f(N),x,H[x]_{f\restriction N}(\tau)\rangle.
\end{equation*}
The functions $\supp^F_n:F_n\rightarrow[n]^{<\omega}$ are given as
\begin{equation*}
\supp^F_n(\langle N,x,\tau\rangle)=\{N\}\cup\supp^{H[x]}_N(\tau).
\end{equation*}
Finally we define a family of functions $\mu^F_n:n\rightarrow F_n$ by setting
\begin{equation*}
\mu^F_n(N)=\langle N,\bot,\star\rangle
\end{equation*}
for all numbers~$N<n$.
\end{definition}

Let us verify the following:

\begin{proposition}[$\rca_0$]\label{prop:F-normal}
The orders and functions from the previous definition form a normal prae-dilator $F=F[X,\mathcal T]$.
\end{proposition}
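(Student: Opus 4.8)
The plan is to verify, one after another, the four ingredients of Definition~\ref{def:coded-normal-dil}: that $F$ is a functor from natural numbers to linear orders with fields $F_n\subseteq\mathbb N$; that $\supp^F$ is a natural transformation $F\Rightarrow[\cdot]^{<\omega}$ satisfying the factoring condition of Definition~\ref{def:coded-prae-dilator}; that $\mu^F$ is a natural family of order embeddings $\mu^F_n:n\rightarrow F_n$; and that the normality equivalence holds. Everything is organised by one observation: since $H[x]$ is a prae-dilator (Lemma~\ref{lem:H-0-prae-dil}), we have $\supp^{H[x]}_N(\tau)\subseteq N=\{0,\dots,N-1\}$, so in a triple $\langle N,x,\tau\rangle\in F_n$ the first coordinate strictly exceeds every element of $\supp^{H[x]}_N(\tau)$; in particular $N=\max\supp^F_n(\langle N,x,\tau\rangle)$.

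For functoriality, $F_f$ acts by $N\mapsto f(N)$ on the first coordinate, fixes the second, and applies $H[x]_{f\restriction N}$ on the third; hence $F_{\id_n}=\id_{F_n}$ and $F_{g\circ f}=F_g\circ F_f$ follow from functoriality of the $H[x]$ together with $(g\circ f)\restriction N=(g\restriction f(N))\circ(f\restriction N)$, and $F_f$ is an order embedding by a three-case inspection of the lexicographic order (first coordinates via the strictly increasing $f$, second untouched, third via the embedding $H[x]_{f\restriction N}$). For $\supp^F$ one has $\supp^F_n(\langle N,x,\tau\rangle)=\{N\}\cup\supp^{H[x]}_N(\tau)\subseteq n$, and naturality ($[f]^{<\omega}\circ\supp^F_n=\supp^F_m\circ F_f$) reduces to naturality of $\supp^{H[x]}$ plus the fact that $f\restriction N$ agrees with $f$ on $N$.

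The one step that needs care is the factoring condition. Given $\sigma=\langle N,x,\tau\rangle\in F_n$, set $c=\supp^F_n(\sigma)$ and $d=\supp^{H[x]}_N(\tau)$; since $d\subseteq N$ and $N\notin d$, we have $c=\{N\}\cup d$ with $N$ largest, so $\en_c$ sends $|c|-1=|d|$ to $N$ and restricts below that index to the enumeration $\iota^N_d\circ\en_d$ of $d$ as a suborder of $N$. The factoring condition for $H[x]$ yields $\tau'\in H[x]_{|d|}$ with $H[x]_{\iota^N_d\circ\en_d}(\tau')=\tau$, and then $F_{\iota_c\circ\en_c}(\langle|d|,x,\tau'\rangle)=\langle N,x,\tau\rangle=\sigma$, as required.

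Finally, $\mu^F_n(N)=\langle N,\bot,\star\rangle$ is strictly increasing in $N$ (decided by the first coordinate) and natural, since $H[\bot]_g(\star)=\star$ for every $g$ forces $F_f(\langle N,\bot,\star\rangle)=\langle f(N),\bot,\star\rangle=\mu^F_m(f(N))$. For the equivalence, fix $m<n$ and $\sigma=\langle N,x,\tau\rangle\in F_n$: as $\max\supp^F_n(\sigma)=N$, we get $\supp^F_n(\sigma)\lef m$ iff $N<m$; and comparing $\langle N,x,\tau\rangle$ with $\mu^F_n(m)=\langle m,\bot,\star\rangle$ lexicographically, the inequality holds when $N<m$, fails when $N>m$, and fails when $N=m$ because $\bot$ is least in $X^\bot$ and $H[\bot]_m=\{\star\}$ is a singleton. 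Hence both sides are equivalent to $N<m$. I do not expect a genuine obstacle: $F$ was designed so that the first coordinate dominates the support, and the only delicate point is identifying $(\iota_c\circ\en_c)\restriction|d|$ with the enumeration of $d$ before invoking the matching property of $H[x]$.
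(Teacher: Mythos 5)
Your proposal is correct and follows essentially the same route as the paper: the support (factoring) condition is handled exactly as in the paper's proof, by noting that $\supp^F_n(\langle N,x,\tau\rangle)=\{N\}\cup\supp^{H[x]}_N(\tau)$ has $N$ as its largest element, restricting the enumeration of this set below its top index to the enumeration of $\supp^{H[x]}_N(\tau)\subseteq N$, and invoking the support condition for $H[x]$; the normality equivalence is likewise reduced to the comparison of first coordinates, using that $\langle\bot,\star\rangle$ is minimal in $\Sigma_{x\in X^\bot}H[x]_N$. No gaps.
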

\begin{proof}
Using Lemma~\ref{lem:H-0-prae-dil} it is straightforward to show that $F$ is a functor and that $\supp^F$ is a natural transformation. To conclude that $F$ is a prae-dilator we must verify the support condition from clause~(ii) of Definition~\ref{def:coded-prae-dilator}. To do so we consider an arbitrary $\sigma=\langle N,x,\tau\rangle\in F_n$. We abbreviate $a:=\supp^{H[x]}_N(\tau)\subseteq\{0,\dots,N-1\}$ and write $\iota_\sigma\circ\en_\sigma:|a|+1\rightarrow n$ for the embedding with range $\supp^F_n(\sigma)=a\cup\{N\}$. Since the restriction $(\iota_\sigma\circ\en_\sigma)\!\restriction\!|a|:|a|\rightarrow N$ has range~$a$, the support condition for $H[x]$ yields a $\tau_0\in H[x]_{|a|}$ with $\tau=H[x]_{(\iota_\sigma\circ\en_\sigma)\restriction|a|}(\tau_0)$. By construction we have $\langle |a|,x,\tau_0\rangle\in F_{|a\cup\{N\}|}$ and $\sigma=F_{\iota_\sigma\circ\en_\sigma}(\langle |a|,x,\tau_0\rangle)$, which completes the proof of the support condition for $F$. One readily verifies that $\mu^F$ is a natural family of embeddings (for naturality we recall $H[\bot]_{f\restriction N}(\star)=\star$). In view of Definition~\ref{def:coded-normal-dil} it remains to establish
 \begin{equation*}
  \sigma<_{F_n}\mu^F_n(N)\quad\Leftrightarrow\quad\supp^F_n(\sigma)\lef N
 \end{equation*}
 for arbitrary $\sigma\in F_n$ and $N<n$. For the first direction we recall that $\langle\bot,\star\rangle$ is the smallest element of $\Sigma_{x\in X^\bot}H[x]_N$. Thus any $\sigma<_{F_n}\mu^F_n(N)=\langle N,\bot,\star\rangle$ must have the form $\sigma=\langle N',x,\tau\rangle$ with $N'<N$. In view of $\supp^{H[x]}_{N'}(\tau)\in[N']^{<\omega}$ we get
 \begin{equation*}
  \supp^F_n(\sigma)=\{N'\}\cup\supp^{H[x]}_{N'}(\tau)\lef N.
 \end{equation*}
 For the converse we also write $\sigma=\langle N',x,\tau\rangle$. In view of $N'\in\supp^F_n(\sigma)$ the right side of the desired equivalence implies $N'<N$ and thus $\sigma<_{F_n}\mu^F_n(N)$.
\end{proof}

Lemma~\ref{lem:H-0-alternative} provides a transparent description of the orders $D^{H[x]}_Z$ (recall that the latter consist of pairs $\langle a,\tau\rangle$ with $a\in[Z]^{<\omega}$ and $\tau\in H[x]_{|a|}$, see Definition~\ref{def:coded-prae-dilator-reconstruct}). We now describe $D^F_Z$ relative to these orders: 

\begin{definition}[$\rca_0$]\label{def:F-alternative}
Given an order $Z$, we put
\begin{equation*}
F_Z=\{\langle z,x,\langle a,\tau\rangle\rangle\in Z\times\Sigma_{x\in X^\bot}D^{H[x]}_Z\,|\,a\lef_Z z\}.
\end{equation*}
The order on $F_Z$ is the indicated product order, which coincides with the lexicographic order on the triples $\langle z,x,\langle a,\tau\rangle\rangle$ (we again omit a pair of angle parentheses).
\end{definition}

Note that the previous definition of $F_Z$ is similar but not quite identical to the informal construction in Summary~\ref{sum:deduce-Pi11-bi}. The following proof consists in a technical verification, which can be skipped at first reading.

\begin{lemma}[$\rca_0$]\label{lem:F-alternative}
For each order~$Z$ the clause
\begin{equation*}
\chi^F_X(\langle z,x,\langle a,\tau\rangle\rangle)=\langle a\cup\{z\},\langle |a|,x,\tau\rangle\rangle
\end{equation*}
defines an isomorphism $\chi^F_Z:F_Z\rightarrow D^F_Z$.
\end{lemma}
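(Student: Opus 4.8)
Write-up plan for Lemma~\ref{lem:F-alternative} (I write $\chi^F_Z$ throughout; the subscript $X$ in the displayed clause is a typo for $Z$).

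The plan is to describe $\chi^F_Z$ together with an explicit inverse and then check order preservation by a short case analysis. The combinatorial point behind the definition is the following description of $D^F_Z$, which I would record first: for $b\in[Z]^{<\omega}$, a pair $\langle b,\langle N,x,\tau\rangle\rangle$ lies in $D^F_Z$ if and only if $x\in X^\bot$, $N=|b|-1$, and $\langle b\restriction z,\tau\rangle\in D^{H[x]}_Z$, where $z$ denotes the $<_Z$-greatest element of $b$ (so $|b\restriction z|=|b|-1=N$). This is pure unravelling of Definitions~\ref{def:coded-prae-dilator-reconstruct} and~\ref{def:dil-F}: the requirement $\supp^F_{|b|}(\langle N,x,\tau\rangle)=\{N\}\cup\supp^{H[x]}_N(\tau)=|b|$ forces the largest element $N$ of the left-hand set to equal $|b|-1$ and then forces $\supp^{H[x]}_N(\tau)=\{0,\dots,N-1\}=N$, which is exactly the uniqueness condition placing $\langle b\restriction z,\tau\rangle$ in $D^{H[x]}_Z$. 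Conversely, the constraint $a\lef_Z z$ built into $F_Z$ says precisely that $z$ is the $<_Z$-greatest element of $b:=a\cup\{z\}$ (in particular $z\notin a$ and $|b|=|a|+1$), so together with $\langle a,\tau\rangle\in D^{H[x]}_Z$ the criterion above shows $\chi^F_Z(\langle z,x,\langle a,\tau\rangle\rangle)=\langle a\cup\{z\},\langle|a|,x,\tau\rangle\rangle\in D^F_Z$; hence $\chi^F_Z$ is well-defined. The same criterion shows that $\langle b,\langle N,x,\tau\rangle\rangle\mapsto\langle z,x,\langle b\restriction z,\tau\rangle\rangle$ (with $z$ the greatest element of $b$) is a two-sided inverse, since $b\mapsto(b\restriction z,z)$ is a bijection between the nonempty finite subsets of $Z$ and the pairs $(a,z')$ with $a\in[Z]^{<\omega}$ and $a\lef_Z z'$. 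So $\chi^F_Z$ is a bijection.

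For order preservation, fix $p=\langle z,x,\langle a,\tau\rangle\rangle$ and $p'=\langle z',x',\langle a',\tau'\rangle\rangle$ in $F_Z$; put $b=a\cup\{z\}$, $b'=a'\cup\{z'\}$, $c=b\cup b'$, $N=|a|$, $N'=|a'|$, and let $g=|\iota_b^c|$ and $g'=|\iota_{b'}^c|$ be the maps on cardinalities induced by the inclusions into $c$. By Definitions~\ref{def:coded-prae-dilator-reconstruct} and~\ref{def:dil-F} the relation $\chi^F_Z(p)<_{D^F_Z}\chi^F_Z(p')$ unfolds into the lexicographic comparison
\begin{equation*}
\langle g(N),\,x,\,H[x]_{g\restriction N}(\tau)\rangle<_{F_{|c|}}\langle g'(N'),\,x',\,H[x']_{g'\restriction N'}(\tau')\rangle .
\end{equation*}
From $\en_c\circ g=\iota_b^c\circ\en_b$ one reads off that $g(N)$ is the position in $c$ of the $N$-th element of $b$, that is, of the top element $z$ of $b$; likewise $g'(N')$ is the position of $z'$ in $c$. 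Hence the first coordinates compare exactly as $z$ and $z'$ do in $Z$. If $z\neq z'$ these positions differ, so the comparison is decided at the first coordinate and $\chi^F_Z(p)<_{D^F_Z}\chi^F_Z(p')\Leftrightarrow z<_Z z'\Leftrightarrow p<_{F_Z}p'$. If $z=z'$, then $z$ is the $<_Z$-greatest element of $c$, so $g(N)=g'(N')=|c|-1$ and the comparison passes to the second coordinate, which is the $X^\bot$-comparison of $x$ and $x'$ — exactly as in $p<_{F_Z}p'$; this settles the subcase $x\neq x'$. Finally, if $z=z'$ and $x=x'$, then $a\cup a'$ is the initial segment $c\restriction z$ (because $a\lef_Z z$ and $a'\lef_Z z$), so $|c|-1=|a\cup a'|$ and, as maps $|a|\to|a\cup a'|$, $g\restriction N=|\iota_a^{a\cup a'}|$ and $g'\restriction N'=|\iota_{a'}^{a\cup a'}|$ (adjoining the common top $z$ does not alter the relative positions of the elements of $a$ and of $a'$). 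The third-coordinate comparison thus reads
\begin{equation*}
H[x]_{|\iota_a^{a\cup a'}|}(\tau)<_{H[x]_{|a\cup a'|}}H[x]_{|\iota_{a'}^{a\cup a'}|}(\tau'),
\end{equation*}
which by Definition~\ref{def:coded-prae-dilator-reconstruct} is precisely $\langle a,\tau\rangle<_{D^{H[x]}_Z}\langle a',\tau'\rangle$, i.e.~the condition governing $p<_{F_Z}p'$ in this subcase. Combining the cases gives $\chi^F_Z(p)<_{D^F_Z}\chi^F_Z(p')\Leftrightarrow p<_{F_Z}p'$, so $\chi^F_Z$ is an isomorphism.

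I expect the identity $g\restriction N=|\iota_a^{a\cup a'}|$ in the last step — equivalently, the observation that adjoining the common top element $z$ and re-coordinatizing over $c$ leaves the internal bookkeeping of $H[x]$ untouched, precisely because $z$ sits above everything in $a\cup a'$ — to be the only genuinely delicate point; everything else is a matter of expanding Definitions~\ref{def:coded-prae-dilator-reconstruct}, \ref{def:dil-F} and~\ref{def:F-alternative}. As a consistency check one should also verify that the degenerate summand $H[\bot]$ (with $H[\bot]_N=\{\star\}$ and $D^{H[\bot]}_Z=\{\langle\emptyset,\star\rangle\}$) behaves as expected: it contributes the elements $\langle z,\bot,\langle\emptyset,\star\rangle\rangle$ of $F_Z$, which $\chi^F_Z$ sends to $\langle\{z\},\langle 0,\bot,\star\rangle\rangle=\langle\{z\},\mu^F_1(0)\rangle=D^{\mu^F}_Z(z)$, so the $\mu^F$-values sit inside the image of $\chi^F_Z$ exactly where the role of $\bot$ in $X^\bot$ predicts.
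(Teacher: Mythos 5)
Your proposal is correct and follows essentially the same route as the paper's proof: a direct unravelling of Definitions~\ref{def:coded-prae-dilator-reconstruct} and~\ref{def:dil-F} for well-definedness and surjectivity (your up-front characterization of $D^F_Z$ is just the paper's surjectivity computation packaged as a criterion), and the same three-way lexicographic case analysis for order preservation, with the delicate point you flag — $g\!\restriction\!N=|\iota_a^{a\cup a'}|$ after adjoining the common top element — being exactly the commuting-diagram step in the paper. The only cosmetic differences are that you exhibit an explicit inverse and prove the order comparison as an equivalence, whereas the paper proves one direction of order preservation plus surjectivity, which suffices since both orders are linear.
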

\begin{proof}
To see that $\chi^F_Z$ has values in $D^F_Z$ we consider an arbitrary $\langle z,x,\langle a,\tau\rangle\rangle\in F_Z$. In view of Definition~\ref{def:coded-prae-dilator-reconstruct} we have $\tau\in H[x]_{|a|}$ and $\supp^{H[x]}_{|a|}(\tau)=|a|$. This yields
\begin{equation*}
\langle |a|,x,\tau\rangle\in F_{|a|+1}\quad\text{and}\quad\supp^F_{|a|+1}(\langle |a|,x,\tau\rangle)=\{|a|\}\cup\supp^{H[x]}_{|a|}(\tau)=|a|+1.
\end{equation*}
The condition $a\lef_Z z$ ensures $|a\cup\{z\}|=|a|+1$. Thus we indeed have
\begin{equation*}
\chi^F_Z(\langle z,x,\langle a,\tau\rangle\rangle)=\langle a\cup\{z\},\langle |a|,x,\tau\rangle\rangle\in D^F_Z.
\end{equation*}
To prove that $\chi^F_Z$ is order preserving we consider an inequality
\begin{equation*}
\langle z_0,x_0,\langle a_0,\tau_0\rangle\rangle<_{F_Z}\langle z_1,x_1,\langle a_1,\tau_1\rangle\rangle.
\end{equation*}
We write $\iota_j:a_j\cup\{z_j\}\hookrightarrow a_0\cup a_1\cup\{z_0,z_1\}=:c$ with $j\in\{0,1\}$ for the inclusions. Furthermore, let $\en_c:|c|\rightarrow c$ and $\en_j:|a_j\cup\{z_j\}|\rightarrow a_j\cup\{z_j\}$ denote the increasing enumerations. As in the previous section, the function $|\iota_j|:|a_j\cup\{z_j\}|\rightarrow|c|$ is determined by the property that it is order preserving and makes the following diagram commute:
\begin{equation*}
\begin{tikzcd}
{|a_j\cup\{z_j\}|} \ar{r}{|\iota_j|}\ar{d}{\en_j} & {|c|} \ar{d}{\en_c}\\
a_j\cup\{z_j\} \ar{r}{\iota_j} & c.
\end{tikzcd}
\end{equation*}
According to Definition~\ref{def:coded-prae-dilator-reconstruct} the desired inequality between the values of $\chi^F_Z$ is equivalent to
\begin{equation*}
F_{|\iota_0|}(\langle|a_0|,x_0,\tau_0\rangle)<_{F_{|c|}}F_{|\iota_1|}(\langle|a_1|,x_1,\tau_1\rangle).
\end{equation*}
By the definition of $F$ the latter amounts to
\begin{equation*}
\langle|\iota_0|(|a_0|),x_0,H[x_0]_{|\iota_0|\restriction|a_0|}(\tau_0)\rangle<_{F_{|c|}}\langle|\iota_1|(|a_1|),x_1,H[x_1]_{|\iota_1|\restriction|a_1|}(\tau_1)\rangle.
\end{equation*}
To establish this inequality we first assume that the given inequality between the arguments of $\chi^F_Z$ holds because of $z_0<_Zz_1$. In view of the condition $a_j\lef_Z z_j$ we have $\en_j(|a_j|)=z_j$ and thus
\begin{equation*}
\en_c\circ|\iota_0|(|a_0|)=\iota_0\circ\en_0(|a_0|)=z_0<_Zz_1=\iota_1\circ\en_1(|a_1|)=\en_c\circ|\iota_1|(|a_1|).
\end{equation*}
This implies $|\iota_0|(|a_0|)<|\iota_1|(|a_1|)$, so that the required inequality holds. A similar argument shows that $z_0=z_1$ implies $|\iota_0|(|a_0|)=|\iota_1|(|a_1|)$. The case where we have $z_0=z_1$ and $x_0<_Xx_1$ is now immediate. Finally assume $z_0=z_1$, $x_0=x_1=:x$ and
\begin{equation*}
\langle a_0,\tau_0\rangle<_{D^{H[x]}_Z}\langle a_1,\tau_1\rangle.
\end{equation*}
It is straightforward to check that the restriction $|\iota_j|\!\restriction\!|a_j|$ makes the following diagram commute, where the vertical arrows are the increasing enumerations:
\begin{equation*}
\begin{tikzcd}
{|a_j|} \ar{r}{|\iota_j|\restriction|a_j|}\ar{d} &[2em] {|a_0\cup a_1|} \ar{d}\\
a_j \arrow[r,hook] & a_0\cup a_1.
\end{tikzcd}
\end{equation*}
In view of Definition~\ref{def:coded-prae-dilator-reconstruct} we can conclude
\begin{equation*}
H[x]_{|\iota_0|\restriction|a_0|}(\tau_0)<_{H_{|a_0\cup a_1|}}H[x]_{|\iota_1|\restriction|a_1|}(\tau_1),
\end{equation*}
which implies the required inequality. To show that $\chi^F_Z$ is surjective we consider an arbitrary $\langle b,\langle N,x,\tau\rangle\rangle\in D^F_Z$. According to Definition~\ref{def:coded-prae-dilator-reconstruct} we must have
\begin{equation*}
|b|=\supp^F_{|b|}(\langle N,x,\tau\rangle)=\{N\}\cup\supp^{H[x]}_N(\tau),
\end{equation*}
which forces $|b|=N+1$ and $\supp^{H[x]}_N(\tau)=N$. In particular $b$ is non-empty, so that we can write $b=a\cup\{z\}$ with $a\lef_Z z$. In view of $|a|=N$ we get $\langle a,\tau\rangle\in D^{H[x]}_Z$ and then $\langle z,x,\langle a,\tau\rangle\rangle\in F_Z$, as well as $\chi^F_Z(\langle z,x,\langle a,\tau\rangle\rangle)=\langle b,\langle N,x,\tau\rangle\rangle$.
\end{proof}

By combining previous results we obtain the following:

\begin{corollary}[$\aca_0$]\label{cor:F-dil}
Consider a well-order~$X$. An $X$-indexed family $\mathcal T$ of $\mathbb N$-trees is progressive along $X$ if, and only if, $F[X,\mathcal T]$ is a normal dilator.
\end{corollary}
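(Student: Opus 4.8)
The plan is to read off the equivalence from the facts already assembled, using Proposition~\ref{prop:F-normal}, Lemma~\ref{lem:F-alternative}, and the criterion of Corollary~\ref{prop:H-0-captures-prog}. By Proposition~\ref{prop:F-normal} the pair $F=F[X,\mathcal T]$ is always a normal prae-dilator, so the assertion that $F$ is a \emph{normal dilator} is equivalent (by Definition~\ref{def:coded-dilator}) to the assertion that $D^F_Z$ is well-founded for every well-order $Z$; and by Lemma~\ref{lem:F-alternative} this is in turn equivalent to the well-foundedness of all the orders $F_Z$ from Definition~\ref{def:F-alternative}. So it suffices to show that $\mathcal T$ is progressive along $X$ if, and only if, $F_Z$ is well-founded for every well-order $Z$.

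For the direction from progressiveness to well-foundedness, assume that $\mathcal T$ is progressive along $X$. Then Corollary~\ref{prop:H-0-captures-prog} gives that $H[x]$ is a dilator for every $x\in X$, and we have already observed that $H[\bot]$ is a dilator as well; hence $D^{H[x]}_Z$ is well-founded for every well-order $Z$ and every $x\in X^\bot$. Since $X^\bot$ is again a well-order, the dependent sum $\Sigma_{x\in X^\bot}D^{H[x]}_Z$ is well-founded, and taking the product with the well-order $Z$ preserves this. As $F_Z$ is, by Definition~\ref{def:F-alternative}, a suborder of $Z\times\Sigma_{x\in X^\bot}D^{H[x]}_Z$ carrying the induced order, it follows that $F_Z$ is well-founded.

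For the converse, assume that $D^F_{Z'}$ is well-founded for every well-order $Z'$, fix $x_0\in X$, and fix a well-order $Z$; by Corollary~\ref{prop:H-0-captures-prog} it suffices to show that $D^{H[x_0]}_Z$ is well-founded. The key point is to realize $D^{H[x_0]}_Z$ as a suborder of some $D^F_{Z'}$. Take $Z'$ to be $Z$ with a new top element $\infty$ adjoined, which is still a well-order. By Lemma~\ref{lem:range-dil-support} applied to the inclusion $Z\hookrightarrow Z'$, the order $D^{H[x_0]}_Z$ is isomorphic to the set of $\langle a,\tau\rangle\in D^{H[x_0]}_{Z'}$ with $a\subseteq Z$, i.e.\ with $a\lef_{Z'}\infty$. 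Composing this isomorphism with the assignment $\langle a,\tau\rangle\mapsto\langle\infty,x_0,\langle a,\tau\rangle\rangle$ into $F_{Z'}$ and then with the isomorphism $\chi^F_{Z'}\colon F_{Z'}\to D^F_{Z'}$ of Lemma~\ref{lem:F-alternative}, one obtains an order embedding of $D^{H[x_0]}_Z$ into $D^F_{Z'}$: the first two components of the triples are fixed at $\infty$ and $x_0$, so the order induced from $F_{Z'}$ on these triples is exactly the order of $D^{H[x_0]}_{Z'}$ on the third component. Since $D^F_{Z'}$ is well-founded, so is $D^{H[x_0]}_Z$; as $x_0\in X$ was arbitrary, $\mathcal T$ is progressive along $X$.

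The only step that needs genuine care is this last embedding, where one must check that $a\lef_{Z'}\infty$ is equivalent to $\infty\notin a$ (so that the fibre over $\langle\infty,x_0\rangle$ in $F_{Z'}$ matches the range of $D^{H[x_0]}_{Z\hookrightarrow Z'}$) and that the induced order on this fibre coincides with the order of $D^{H[x_0]}_Z$ under Lemma~\ref{lem:range-dil-support}. Everything else is routine bookkeeping with dependent sums, products, and the $\bot$-extension, all available in $\rca_0$; the only use of $\aca_0$ enters through Corollary~\ref{prop:H-0-captures-prog} (hence through the characteristic property of the Kleene--Brouwer order).
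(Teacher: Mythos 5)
Your proof is correct and follows essentially the same route as the paper: the same reduction via Proposition~\ref{prop:F-normal}, Lemma~\ref{lem:F-alternative} and Corollary~\ref{prop:H-0-captures-prog}, the same dependent-sum argument for the forward direction, and the same converse via adjoining a new top element to $Z$ and embedding $D^{H[x_0]}_Z$ into the fibre over $\langle\top,x_0\rangle$. Your explicit composition with $\chi^F_{Z'}$ and appeal to Lemma~\ref{lem:range-dil-support} is just a slightly more detailed rendering of the paper's embedding $\langle a,\tau\rangle\mapsto\langle\top,x,\langle a,\tau\rangle\rangle$.
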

\begin{proof}
In view of Proposition~\ref{prop:H-0-captures-prog-new}, Proposition~\ref{prop:F-normal} and the previous lemma it suffices to show that $Z\mapsto F_Z$ preserves well-foundedness if, and only if, $Z\mapsto D^{H[x]}_Z$ preserves well-foundedness for all $x\in X$. If the latter holds, then
\begin{equation*}
Z\times\Sigma_{x\in X^\bot}D^{H[x]}_Z
\end{equation*}
is well-founded for any well-order~$Z$ (recall that $D^{H[\bot]}_Z=\{\langle\emptyset,\star\rangle\}$ consists of a single element, so that it is well-founded in any case). Since $F_Z$ is contained in that order it must be well-founded itself. To establish the other direction we show the following: For any $x\in X$ the order $D^{H[x]}_Z$ can be embedded into $F_{Z^\top}$, where $Z^\top=Z\cup\{\top\}$ extends $Z$ by a new maximal element. Let us write $\iota:Z\hookrightarrow Z^\top$ for the inclusion. Definition~\ref{def:coded-prae-dilator-reconstruct} and Proposition~\ref{prop:reconstruct-class-sized-dil} tell us that $D^{H[x]}_\iota(\langle a,\tau\rangle)=\langle[\iota]^{<\omega}(a),\tau\rangle=\langle a,\tau\rangle$ defines an embedding of $D^{H[x]}_Z$ into $D^{H[x]}_{Z^\top}$. Since any $a\in[Z]^{<\omega}$ satisfies $a\lef_{Z^\top}\top$ we see that $\langle a,\tau\rangle\mapsto\langle\top,x,\langle a,\tau\rangle\rangle$ is the desired embedding of $D^{H[x]}_Z$ into $F_{Z^\top}$.
\end{proof}

With the previous result we have completed our reconstruction of the functions $h$ and $f$ that appear in the informal argument from the introduction. The latter proceeds by considering an upper derivative $g$ of $f$. It then invokes induction on~$\gamma$ to show that each tree $\mathcal T_\gamma$ can be embedded into the ordinal~$g(\gamma+1)\leq g(\alpha)$. To avoid the use of transfinite induction (or recursion) we now give a particularly careful construction of embeddings. This construction will involve the finite orders
\begin{equation*}
 \Sigma_x\mathcal T\cap k=\{j<k\,|\,\text{$j$ is (the code of) an element of $\Sigma_x\mathcal T$}\},
\end{equation*}
with the same order relation as on $\Sigma_x\mathcal T$. Let us also recall that $\mathcal T_x$ carries the Kleene-Brouwer order $\lkb$ with respect to~$\mathbb N$.

\begin{proposition}[$\rca_0$]\label{prop:emd-finite}
 Consider a well-order~$X$ and an $X$-indexed family $\mathcal T$ of $\mathbb N$-trees. There is a function $E:\Sigma\mathcal T\rightarrow\mathbb N$ such that the following holds for any element $x\in X$, any $\sigma,\sigma_1,\sigma_2\in\mathcal T_x$ and any order~$Z$:
 \begin{enumerate}[label=(\roman*)]
  \item Given a (finite) embedding $e:\Sigma_x\mathcal T\cap\len(\sigma)\rightarrow Z$, we have
  \begin{equation*}
   \langle\rng(e),E(\langle x,\sigma\rangle)\rangle\in D^{H[x]}_Z.
  \end{equation*}
  \item If $e_1:\Sigma_x\mathcal T\cap\len(\sigma_1)\rightarrow Z$ and $e_2:\Sigma_x\mathcal T\cap\len(\sigma_2)\rightarrow Z$ coincide on the intersection of their domains, then we have
  \begin{equation*}
   \sigma_1\lkb\sigma_2\quad\Rightarrow\quad \langle\rng(e_1),E(\langle x,\sigma_1\rangle)\rangle<_{D^{H[x]}_Z}\langle\rng(e_2),E(\langle x,\sigma_2\rangle)\rangle.
  \end{equation*}
 \end{enumerate}
\end{proposition}
\begin{proof}
 We begin by defining $E(\langle x,\sigma\rangle)$ for given $x\in X$ and $\sigma=\langle s_0,\dots,s_{k-1}\rangle\in\mathcal T_x$. For $j\in\Sigma_x\mathcal T\cap k$ we define $n_j<|\Sigma_x\mathcal T\cap k|$ by stipulating that $j$ is the $n_j$-th element of $\Sigma_x\mathcal T\cap k$. For all $j<k$ outside of $\Sigma_x\mathcal T$ we set $n_j=\bot$. Now we put
 \begin{equation*}
  E(\langle x,\sigma\rangle)=\langle\langle n_0,s_0\rangle,\dots,\langle n_{k-1},s_{k-1}\rangle\rangle.
 \end{equation*}
 To establish~(i) we first need $E(\langle x,\sigma\rangle)\in H[x]_{|\rng(e)|}$. Since $e$ is injective its range has the same cardinality as $\Sigma_x\mathcal T\cap k$ (continuing the notation from above, so that~\mbox{$k=\len(\sigma)$}). Thus we do have $n_j\in|\rng(e)|^\bot$ for all~$j<k$. Clause~(i) of Definition~\ref{def:H-0} is satisfied by construction. Clause~(ii) says nothing but $\sigma\in\mathcal T_x$. To complete the verification of~(i) we need
 \begin{equation*}
  \supp^{H[x]}_{|\rng(e)|}(E(\langle x,\sigma\rangle))=|\rng(e)|.
 \end{equation*}
 For the crucial inclusion $\supseteq$ it suffices to observe that any $n\in|\rng(e)|=|\Sigma_x\mathcal T\cap k|$ is the position of some $j\in\Sigma_x\mathcal T\cap k$. To prove property~(ii) we compose with the order isomorphism from Lemma~\ref{lem:H-0-alternative}. If $j$ is the $n_j$-th element of~$\Sigma_x\mathcal T\cap k$, then $e(j)$ is the $n_j$-th element of $\rng(e)$. Thus (still with the same notation as above) we see that $\langle\rng(e),E(\langle x,\sigma\rangle)\rangle$ corresponds to
 \begin{equation*}
  \langle\langle e_\bot(0),s_0\rangle,\dots,\langle e_\bot(k-1),s_{k-1}\rangle\rangle\in H[x]_Z,
 \end{equation*}
 where $e_\bot:k\rightarrow Z^\bot$ extends $e$ by the values $e_\bot(j)=\bot$ for $j\notin\Sigma_x\mathcal T$. With this description it is straightforward to check property~(ii): Assume that we have
 \begin{equation*}
  \sigma_1=\langle s_0,\dots,s_{k-1}\rangle\lkb\langle s'_0,\dots,s'_{l-1}\rangle=\sigma_2
 \end{equation*}
 and that $e_1:\Sigma_x\mathcal T\cap k\rightarrow Z$ and $e_2:\Sigma_x\mathcal T\cap l\rightarrow Z$ coincide below $\min\{k,l\}$. Since $H[x]_Z$ carries the Kleene-Brouwer order with respect to $Z^\bot\times\mathbb N$ we get
 \begin{multline*}
  \langle\langle (e_1)_\bot(0),s_0\rangle,\dots,\langle (e_1)_\bot(k-1),s_{k-1}\rangle\rangle<_{H[x]_Z}\\
  \langle\langle (e_2)_\bot(0),s'_0\rangle,\dots,\langle (e_2)_\bot(l-1),s'_{l-1}\rangle.
 \end{multline*}
 Up to the isomorphism $H[x]_Z\cong D^{H[x]}_Z$ this is just as required.
\end{proof}

Using the previous result, we now show that the embedding $J$ from Summary~\ref{sum:deduce-Pi11-bi} can be constructed in a weak meta-theory.

\begin{theorem}[$\rca_0$]\label{thm:embed-Sigma-T}
Consider a well-order~$X$ and an $X$-indexed family $\mathcal T$ of $\mathbb N$-trees. Assume that $G$ and $\xi:F\circ G\Rightarrow G$ form an upper derivative of the normal prae-dilator $F=F[X,\mathcal T]$. Then $\Sigma\mathcal T$ can be embedded into the order $D^G_X$.
\end{theorem}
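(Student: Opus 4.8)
The plan is to build, by recursion on the natural numbers, an explicit order embedding $\Phi\colon\Sigma\mathcal T\to D^G_X$, exploiting that — thanks to the finitary phrasing of Theorem~\ref{thm:emd-finite} — each value of $\Phi$ depends on only finitely many earlier values. First I would set up the composite that does the work. By Lemma~\ref{lem:F-alternative} we identify $D^F(D^G_X)$ with $F_{D^G_X}$, and then
\begin{equation*}
\Psi:=D^\xi_X\circ\zeta^{F,G}_X\circ\chi^F_{D^G_X}\colon F_{D^G_X}\longrightarrow D^G_X
\end{equation*}
is an order embedding, being the composite of the order embedding $D^\xi_X$ from Lemma~\ref{lem:morph-dilators-extend} with the isomorphisms $\zeta^{F,G}_X$ and $\chi^F_{D^G_X}$ of Proposition~\ref{prop:compose-dils-rca} and Lemma~\ref{lem:F-alternative}. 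Unravelling Definitions~\ref{def:extend-normal-transfos} and~\ref{def:dil-F} one checks that $\chi^F_{D^G_X}$ maps $\langle w,\bot,\langle\emptyset,\star\rangle\rangle$ to $\langle\{w\},\langle 0,\bot,\star\rangle\rangle=D^{\mu^F}_{D^G_X}(w)$; hence Corollary~\ref{cor:deriv-normal-witness}, applied to the upper derivative $(G,\xi)$ of $F$, yields the key identity
\begin{equation*}
\Psi\big(\langle D^{\mu^G}_X(x),\bot,\langle\emptyset,\star\rangle\rangle\big)=D^{\mu^G}_X(x)\qquad\text{for every }x\in X.
\end{equation*}

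Let $E\colon\Sigma\mathcal T\to\mathbb N$ be the function given by Theorem~\ref{thm:emd-finite} for the order $Z=D^G_X$. I would then define $\Phi$ by recursion on $p\in\mathbb N$: for $p=\langle y,\sigma\rangle\in\Sigma\mathcal T$ (so $y\in X$ and $\sigma\in\mathcal T_y$) let $e_p$ be the function $j\mapsto\Phi(j)$ with domain $\Sigma_y\mathcal T\cap\len(\sigma)$ — a finite set each of whose members $j$ satisfies $j<\len(\sigma)\le p$, so that only earlier values are used and $\rca_0$ proves $\Phi$ exists — and set
\begin{equation*}
\Phi(p)=\Psi\big(\langle D^{\mu^G}_X(y),\,y,\,\langle\rng(e_p),E(p)\rangle\rangle\big).
\end{equation*}

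The substance of the argument is the verification, by induction on $n$ (proving the three parts in the stated order within each step), of the following for all $p=\langle y,\sigma\rangle\in\Sigma\mathcal T$ with $p\le n$: \textbf{(a)} $\Phi(p)$ is well defined, that is, the displayed triple lies in $F_{D^G_X}$; \textbf{(b)} $\Phi(p)<_{D^G_X}D^{\mu^G}_X(x)$ whenever $x\in X$ and $y<_X x$; \textbf{(c)} $\Phi$ is order-preserving on $\{q\in\Sigma\mathcal T:q\le n\}$. For \textbf{(a)}: by \textbf{(c)} at $n-1$ the map $e_p$ is an embedding, and by \textbf{(b)} at $n-1$ one gets $\rng(e_p)\lef_{D^G_X}D^{\mu^G}_X(y)$ (every $j$ in the domain has the form $\langle y',\sigma'\rangle$ with $y'<_X y$); then Theorem~\ref{thm:emd-finite}(i) places $\langle\rng(e_p),E(p)\rangle$ in $D^{H[y]}_{D^G_X}$, so the triple lies in $F_{D^G_X}$ by Definition~\ref{def:F-alternative}. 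For \textbf{(b)}: since $D^{\mu^G}_X(y)<_{D^G_X}D^{\mu^G}_X(x)$ by Proposition~\ref{prop:reconstruct-normal-dil}, comparing the first coordinates of the $\Psi$-preimages and using the key identity gives $\Phi(p)<_{D^G_X}D^{\mu^G}_X(x)$. For \textbf{(c)}: let $p_1=\langle y_1,\sigma_1\rangle<_{\Sigma\mathcal T}p_2=\langle y_2,\sigma_2\rangle$ both lie in $\{q:q\le n\}$; if $y_1<_X y_2$ then \textbf{(b)} gives $\Phi(p_1)<_{D^G_X}D^{\mu^G}_X(y_2)$, while comparing second coordinates ($\bot<_{X^\bot}y_2$) and the key identity give $D^{\mu^G}_X(y_2)<_{D^G_X}\Phi(p_2)$; if $y_1=y_2$ then $\sigma_1\lkb\sigma_2$ and, because $e_{p_1}$ and $e_{p_2}$ are both restrictions of $\Phi$, they agree on the intersection of their domains, so Theorem~\ref{thm:emd-finite}(ii) yields $\langle\rng(e_{p_1}),E(p_1)\rangle<_{D^{H[y_1]}_{D^G_X}}\langle\rng(e_{p_2}),E(p_2)\rangle$ and hence $\Phi(p_1)<_{D^G_X}\Phi(p_2)$. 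Since $\Sigma\mathcal T$ is a linear order, order-preservation makes $\Phi$ the required embedding.

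The main difficulty is making the three claims \textbf{(a)}--\textbf{(c)} close off along the recursion: at the moment $\Phi(p)$ is formed, the finitely many earlier values feeding into $e_p$ must already be known to be order-preserving and to lie below $D^{\mu^G}_X(y)$ — which is exactly why Theorem~\ref{thm:emd-finite} is framed in terms of finite, pairwise-coherent approximations rather than a single infinite embedding, and why the bound $j<\len(\sigma)\le p$ is indispensable. A more routine but still fiddly point is the bookkeeping around $\Psi$: verifying that under the identification $D^F(D^G_X)\cong F_{D^G_X}$ the map $\Psi$ really is the claimed order embedding, and that the key identity is precisely what Corollary~\ref{cor:deriv-normal-witness} delivers.
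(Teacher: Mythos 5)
Your proposal is correct and follows essentially the same route as the paper's proof: the same composite embedding $D^\xi_X\circ\zeta^{F,G}_X\circ\chi^F_{D^G_X}$, the same finitary course-of-values recursion via the approximations $e_p$ and Theorem~\ref{thm:emd-finite}, the same key identity obtained from Corollary~\ref{cor:deriv-normal-witness}, and the same simultaneous induction on codes establishing well-definedness, the bound below $D^{\mu^G}_X(x)$, and order-preservation. The only cosmetic deviation is that in the case $y_1<_Xy_2$ you pass through the intermediate element $D^{\mu^G}_X(y_2)$ by transitivity, whereas the paper compares the first coordinates of the two triples in $F_{D^G_X}$ directly.
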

\begin{proof}
As a preparation we specify two functions that are implicit in the given data: By combining Proposition~\ref{prop:compose-dils-rca}, Lemma~\ref{lem:morph-dilators-extend} and Lemma~\ref{lem:F-alternative} we get an embedding
\begin{equation*}
\xi^F:=D^\xi_X\circ\zeta^{F,G}_X\circ\chi^F_{D^G_X}:F_{D^G_X}\rightarrow D^G_X.
\end{equation*}
From Definitions~\ref{def:coded-normal-dil},~\ref{def:extend-normal-transfos} and~\ref{def:upper-derivative} we know that $G=(G,\mu^G)$ must be a normal prae-dilator and does, as such, give rise to an order preserving function
\begin{equation*}
 D^{\mu^G}_X:X\rightarrow D^G_X.
\end{equation*}
We now show that the desired embedding
\begin{equation*}
J:\Sigma\mathcal T\rightarrow D^G_X
\end{equation*}
can be constructed by a finitary course-of-values recursion. For this purpose we assume that the code of any $\langle x,\sigma\rangle\in\Sigma\mathcal T$ is at least as big as the length of the sequence $\sigma$. The value $J(\langle x,\sigma\rangle)$ may then depend on the finite function
\begin{equation*}
 e_{x,\sigma}:=J\!\restriction\!(\Sigma_x\mathcal T\cap\len(\sigma)):\Sigma_x\mathcal T\cap\len(\sigma)\rightarrow D^G_X.
\end{equation*}
After describing the construction of $J$ we will set up an induction which ensures that $e_{x,\sigma}$ is an embedding. When this is the case Proposition~\ref{prop:emd-finite} yields an element
\begin{equation*}
 J^0(\langle x,\sigma\rangle):=\langle\rng(e_{x,\sigma}),E(\langle x,\sigma\rangle)\rangle\in D^{H[x]}(D^G_X).
\end{equation*}
In view of Definition~\ref{def:F-alternative} we can now state the recursive clause for $J$ as
\begin{equation*}
 J(\langle x,\sigma\rangle)=\xi^F(\langle D^{\mu^G}_X(x),x,J^0(\langle x,\sigma\rangle)\rangle).
\end{equation*}
To conclude the proof we show the following by simultaneous induction on $j$:
\begin{enumerate}[label=(\roman*)]
\item If $j$ codes an element of $\Sigma\mathcal T$, then we have $J(j)\in D^G_X$.
\item If $j_1,j_2\leq j$ code elements of $\Sigma\mathcal T$, then we have
\begin{equation*}
 j_1<_{\Sigma\mathcal T}j_2\quad\Rightarrow\quad J(j_1)<_{D^G_X}J(j_2).
 \end{equation*}
\item If $j$ codes an element of $\Sigma_x\mathcal T$, then we have $J(j)<_{D^G_X} D^{\mu^G}_X(x)$.
\end{enumerate}
To verify the induction step for~(i) we write $j=\langle x,\sigma\rangle$. Parts~(i) and~(ii) of the induction hypothesis guarantee that $e_{x,\sigma}$ is an embedding with values in~$D^G_X$, as promised above. We have seen that this yields $J^0(\langle x,\sigma\rangle)\in D^{H[x]}(D^G_X)$. In view of Definition~\ref{def:F-alternative} we also need
\begin{equation*}
 \rng(e_{x,\sigma})\lef_{D^G_X} D^{\mu^G}_X(x).
\end{equation*}
This holds by part~(iii) of the induction hypothesis. To establish the induction step for~(ii) we consider an inequality
\begin{equation*}
 j_1=\langle x_1,\sigma_1\rangle<_{\Sigma\mathcal T}\langle x_2,\sigma_2\rangle=j_2.
\end{equation*}
If we have $x_1<_Xx_2$, then we get $D^{\mu^G}_X(x_1)<_{D^G_X}D^{\mu^G}_X(x_2)$ and thus
\begin{equation*}
 \langle D^{\mu^G}_X(x_1),x_1,J^0(\langle x_1,\sigma_1\rangle)\rangle<_{F_{D^G_X}}\langle D^{\mu^G}_X(x_2),x_2,J^0(\langle x_2,\sigma_2\rangle)\rangle.
\end{equation*}
As $\xi^F$ is order preserving this implies $J(j_1)<_{D^G_X}J(j_2)$. Now assume that $j_1<_{\Sigma\mathcal T}j_2$ holds because we have $x_1=x_2=:x$ and $\sigma_1\lkb\sigma_2$ (recall that $\mathcal T_x$ carries the usual Kleene-Brouwer order). Since $e_{x,\sigma_1}$ and $e_{x,\sigma_2}$ are restrictions of the same function, they coincide on the intersection of their domains. Thus Proposition~\ref{prop:emd-finite} yields
\begin{equation*}
 J^0(\langle x_1,\sigma_1\rangle)<_{D^{H[x]}(D^G_X)}J^0(\langle x_2,\sigma_2\rangle),
\end{equation*}
which again implies $J(j_1)<_{D^G_X}J(j_2)$. Finally we prove the induction step for~(iii). As a preparation we recall that $D^{H[\bot]}(D^G_X)$ consists of the single element $\langle\emptyset,\star\rangle$. In view of Lemma~\ref{lem:F-alternative}, Definition~\ref{def:dil-F} and Definition~\ref{def:extend-normal-transfos} we have
\begin{multline*}
 \chi^F_{D^G_X}(\langle D^{\mu^G}_X(x),\bot,\langle\emptyset,\star\rangle\rangle)=\langle\{D^{\mu^G}_X(x)\},\langle 0,\bot,\star\rangle\rangle=\\
 =\langle\{D^{\mu^G}_X(x)\},\mu^F_1(0)\rangle=D^{\mu^F}_{D^G_X}\circ D^{\mu^G}_X(x).
\end{multline*}
Together with Corollary~\ref{cor:deriv-normal-witness} we get
\begin{equation*}
 \xi^F(\langle D^{\mu^G}_X(x),\bot,\langle\emptyset,\star\rangle\rangle)=D^\xi_X\circ\zeta^{F,G}_X\circ D^{\mu^F}_{D^G_X}\circ D^{\mu^G}_X(x)=D^{\mu^G}_X(x).
\end{equation*}
To deduce~(iii) we observe that any $j\in\Sigma_x\mathcal T$ has the form $j=\langle y,\sigma\rangle$ with $y<_Xx$. The latter implies
\begin{equation*}
 \langle D^{\mu^G}_X(y),y,J^0(\langle y,\sigma\rangle)\rangle<_{F_{D^G_X}}\langle D^{\mu^G}_X(x),\bot,\langle\emptyset,\star\rangle\rangle.
\end{equation*}
By the above this yields
\begin{equation*}
 J(j)<_{D^G_X}\xi^F(\langle D^{\mu^G}_X(x),\bot,\langle\emptyset,\star\rangle\rangle)=D^{\mu^G}_X(x),
\end{equation*}
as required.
\end{proof}

The following result completes our reconstruction of the informal argument and establishes the implication (2)$\Rightarrow$(3) that we have discussed in the introduction. The notions that appear in statement~(2) have been made precise in Section~\ref{sect:normal-dils-so}.

\begin{corollary}\label{cor:deriv-implies-BI}
 For each $\Pi^1_1$-formula $\varphi(x)$ (possibly with further free variables) the following is provable in $\aca_0$: If every normal dilator $F$ has an upper derivative $(G,\xi)$ such that $G$ is a dilator, then $\varphi$ satisfies bar induction, i.\,e.~we have
 \begin{equation*}
  \forall_{x\in X}(\forall_{y<_Xx}\varphi(y)\rightarrow\varphi(x))\rightarrow\forall_{x\in X}\varphi(x)
 \end{equation*}
 for any well-order $X=(X,<_X)$.
\end{corollary}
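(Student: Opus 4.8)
The plan is to instantiate the abstract machinery developed in this section; the corollary is essentially an assembly of earlier results. First I would apply the Kleene normal form theorem to obtain, uniformly in $x$ (and in the additional free variables of $\varphi$), subtrees $\mathcal T_x\subseteq\mathbb N^{<\omega}$ with $\varphi(x)\leftrightarrow{}$``$\mathcal T_x$ is well-founded''. These assemble into an $X$-indexed family $\mathcal T$ of $\mathbb N$-trees, the dependence on the extra parameters being harmless since the construction $F[X,\mathcal T]$ is uniform in $\mathcal T$. Recall that $\rca_0$ proves that $\Sigma_x\mathcal T$ is well-founded precisely when $\mathcal T_y$ is well-founded for all $y<_Xx$; combining this with the Kleene normal form, the hypothesis that $\varphi$ is progressive along $X$ translates exactly into the statement that $\mathcal T$ is progressive along $X$ in the sense defined above.

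Next I would invoke Corollary~\ref{cor:F-dil}: since $\mathcal T$ is progressive along $X$, the normal prae-dilator $F=F[X,\mathcal T]$ is in fact a normal dilator. The assumed principle then furnishes an upper derivative $(G,\xi)$ of $F$ such that $G$ is a dilator. Feeding $F$, $G$ and $\xi$ into Theorem~\ref{thm:embed-Sigma-T} produces an order embedding of $\Sigma\mathcal T$ into $D^G_X$. Because $G$ is a dilator and $X$ is a well-order, $D^G_X$ is well-founded (Definition~\ref{def:coded-dilator}), hence so is $\Sigma\mathcal T$. Finally, $\rca_0$ proves that well-foundedness of $\Sigma_{x\in X}\mathcal T_x$ implies well-foundedness of each $\mathcal T_x$; translating this back through the Kleene normal form yields $\forall_{x\in X}\varphi(x)$, which is the conclusion of bar induction.

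The only real subtlety — and the step I would be most careful about — is the book-keeping around the translation between the $\Pi^1_1$-formula $\varphi$ and the tree family $\mathcal T$: one must check that the Kleene normal form can be taken uniformly in $x\in X$ (so that $\mathcal T$ is genuinely a single set), that the relevant equivalences are available in $\aca_0$, and that progressivity of $\mathcal T$ matches exactly the progressivity hypothesis on $\varphi$, where the $\rca_0$-provable equivalence ``$\Sigma_x\mathcal T$ well-founded $\leftrightarrow$ $\mathcal T_y$ well-founded for all $y<_Xx$'' is what makes the two notions line up. Everything else is a direct citation of Corollaries~\ref{prop:H-0-captures-prog} and~\ref{cor:F-dil}, Theorem~\ref{thm:embed-Sigma-T}, and the elementary facts about $\Sigma Y$ and Kleene--Brouwer orders recalled at the beginning of the section. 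Since the genuinely hard work — the reconstruction of $h$ and $f$ as $H[x]$ and $F$, and the finitary course-of-values recursion defining the embedding $J$ — has already been carried out, the proof of the corollary itself requires no new ideas.
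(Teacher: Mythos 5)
Your proposal is correct and follows essentially the same route as the paper's proof: Kleene normal form to obtain the tree family $\mathcal T$, translation of the progressivity hypothesis via the $\rca_0$-provable characterization of well-foundedness of $\Sigma_x\mathcal T$, Corollary~\ref{cor:F-dil} to see that $F[X,\mathcal T]$ is a normal dilator, and then Theorem~\ref{thm:embed-Sigma-T} together with the assumed upper derivative to embed $\Sigma\mathcal T$ into the well-founded order $D^G_X$. The uniformity issue you flag is handled in the paper exactly as you suggest, by taking a single bounded arithmetical formula $\theta(\sigma,x)$ with $x$ free and defining $\mathcal T_x$ from it.
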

\begin{proof}
 By the Kleene normal form theorem (see~\cite[Lemma~V.1.4]{simpson09}) there is a bounded arithmetical formula $\theta(\sigma,x)$ such that $\aca_0$ proves
 \begin{equation*}
  \varphi(x)\leftrightarrow\forall_f\exists_n\theta(f[n],x).
 \end{equation*}
 Here the universal quantifier ranges over all functions $f:\mathbb N\rightarrow\mathbb N$. Let us recall that $f[n]=\langle f(0),\dots,f(n-1)\rangle$ denotes the sequence that contains the first $n$ values of such a function. Given a sequence $\sigma=\langle \sigma_0,\dots,\sigma_{\len(\sigma)-1}\rangle$ and a number $n\leq\len(\sigma)$, we similarly write $\sigma[n]=\langle \sigma_0,\dots,\sigma_{n-1}\rangle$. We can now define an $X$-indexed family $\mathcal T=\{\langle x,\sigma\rangle\,|\,x\in X\text{ and }\sigma\in\mathcal T_x\}$ of $\mathbb N$-trees by setting
 \begin{equation*}
  \mathcal T_x=\{\sigma\in\mathbb N^{<\omega}\,|\,\forall_{n\leq\len(\sigma)}\neg\theta(\sigma[n],x)\}
 \end{equation*}
 for every $x\in X$. Observe that $\forall_n\neg\theta(f[n],x)$ is equivalent to the assertion that $f$ is a branch in $\mathcal T_x$. Thus we have
 \begin{equation*}
  \varphi(x)\leftrightarrow\text{``$\mathcal T_x$ is well-founded''},
 \end{equation*}
 where $\mathcal T_x$ carries the usual Kleene-Brouwer order with respect to~$\mathbb N$. Let us now assume that the premise of the desired induction statement holds. Then $\mathcal T$ is progressive along $X$, using the terminology that we have introduced at the beginning of this section. Consider the prae-dilator $F=F[X,\mathcal T]$ that is constructed according to~Definition~\ref{def:dil-F}. From Corollary~\ref{cor:F-dil} we learn that $F$ is a normal dilator. By the assumption of the present corollary we get a dilator $G$ and a natural transformation $\xi:F\circ G\rightarrow G$ that form an upper derivative of $F$. Theorem~\ref{thm:embed-Sigma-T} tells us that $\Sigma\mathcal T$ can be embedded into the order $D^G_X$. The latter is well-founded, because $X$ is a well-order and $G$ is a dilator. Hence $\Sigma\mathcal T$ is well-founded as well. It follows that $\mathcal T_x$ is well-founded for every $x\in X$, which yields the conclusion $\forall_{x\in X}\varphi(x)$ of the desired induction statement.
\end{proof}

\section{Constructing the derivative}\label{sect:constructin-derivative}

In the present section we show how to construct a derivative $\partial T$ of a given normal prae-dilator $T$. We will see that $\rca_0$ proves the existence of $\partial T$, as well as the fact that it is a derivative. As a consequence we obtain the implication (1)$\Rightarrow$(2) from the introduction. What $\rca_0$ cannot show is that~$\partial T$ is a dilator (i.\,e.~that $X\mapsto D^{\partial T}_X$ preserves well-foundedness) whenever $T$ is one: Due to Corollary~\ref{cor:deriv-implies-BI} this statement implies $\Pi^1_1$-bar induction. The converse implication, which amounts to (3)$\Rightarrow$(1) from the introduction, will be established in Section~\ref{sect:bi-deriv-wf}.

The construction of $\partial T$ can also be exploited to establish general results about derivatives. This relies on the fact that derivatives are essentially unique, as observed after Definition~\ref{def:derivative}. We will use this approach to show that the assumptions of Theorem~\ref{thm:equalizer-to-derivative} are automatic when $(S,\xi)$ is a derivative of $T$.

As mentioned in the introduction, a categorical construction of derivatives has already been given by Aczel~\cite{aczel-phd,aczel-normal-functors}. In the following we give a rather informal presentation of his approach in the terminology of the present paper (in particular we are rather liberal about the distinction between coded and class-sized dilators, cf.~the discussion after Definition~\ref{def:coded-dilator}). Given a normal dilator $T=(T,\mu^T)$ and an order~$X$, Aczel's idea was to define the value $\partial T_X$ as the direct limit of the diagram
  \begin{equation*}
  \begin{tikzcd}[column sep = large]
 X\arrow[r,"\mu^T_X"] & T_X\arrow[r,"T(\mu^T_X)"] & T^2_X:=T(T_X)\arrow[r,"T^2(\mu^T_X):=T(T(\mu^T_X))"] &[2cm] \cdots.
 \end{tikzcd}
\end{equation*}
As a direct limit, $\partial T_X$ comes with compatible embeddings $j^n_X:T^n_X\rightarrow\partial T_X$. By the universal property the functions
\begin{equation*}
 T(j^n_X)\circ T^n(\mu^T_X):T^n_X\rightarrow T(\partial T_X)
\end{equation*}
glue to an embedding of $\partial T_X$ into $T(\partial T_X)$. The latter is an isomorphism since $T$ preserves direct limits. Thus $\partial T_X$ is a fixed-point of $T$, as one would expect if $\partial T$ is to be a derivative. Furthermore, Aczel could show that $\partial T$ preserves well-foundedness if $T$ does. This is a non-trivial matter, since well-foundedness is not preserved under direct limits in general. The proof that it is preserved under the specific limit constructed above makes crucial use of the assumption that $T$ preserves initial segments (cf.~the discussion before Lemma~\ref{lem:range-dil-support}). Finally, Aczel has shown that $\alpha\mapsto\otp(\partial T_\alpha)$ is the derivative of the normal function $\alpha\mapsto\otp(T_\alpha)$. Let us mention that he did not give an explicit characterization of derivatives on the level of functors, i.\,e.~he did not formulate an analogue of Definition~\ref{def:derivative}.

In order to recover Aczel's construction in $\rca_0$ we need to approach the direct limit in a particularly finitistic way. Our idea is to represent $\partial T_X$ by a system of~terms. To~see how this works, recall that we want to ensure the existence of an iso\-morphism~$\xi_X:T(\partial T_X)\rightarrow\partial T_X$. In view of Lemma~\ref{lem:class-sized-restrict} (and the discussion after Definition~\ref{def:coded-dilator}) any element of $T(\partial T_X)$ corresponds to a pair $\langle a,\sigma\rangle\in D^T(\partial T_X)$, where $a\subseteq\partial T_X$ is finite and $\sigma\in T_{|a|}$ satisfies $\supp^T_{|a|}(\sigma)=|a|$. Assuming that the elements of $a$ are already represented by terms, we can add a term $\xi\langle a,\sigma\rangle\in\partial T_X$ that represents the value of $\langle a,\sigma\rangle$ under $\xi_X$. To make this idea precise we switch back to the rigorous framework of coded prae-dilators, as introduced in Section~\ref{sect:normal-dils-so}. In particular we want to construct $\partial T$ as a coded prae-dilator, which leads us to focus on the values $\partial T_n$ for the finite orders $n=\{0,\dots,n-1\}$. Let us first specify the underlying set of the order $\partial T_n$:

\begin{definition}[$\rca_0$]\label{def:derivative-term-system}
 Consider a normal prae-dilator $T=(T,\supp^T,\mu^T)$. For each number $n$ we define a term system $\partial T_n$ by the following inductive clauses:
 \begin{enumerate}[label=(\roman*)]
  \item We have a term $\mu_m\in\partial T_n$ for every number $m<n$.
  \item Given a finite set $a\subseteq\partial T_n$ of terms and a $\sigma\in T_{|a|}$ with $\supp^T_{|a|}(\sigma)=|a|$, we get a term $\xi\langle a,\sigma\rangle\in\partial T_n$, provided that the following holds: If we have $a=\{\mu_m\}$ for some $m<n$, then $\sigma$ must be different from $\mu^T_1(0)\in T_1$.
 \end{enumerate}
\end{definition}

Note that the term systems $\partial T_n$ are uniformly computable (with respect to~$n$), so that $\rca_0$ proves the existence of the set
\begin{equation*}
 \{\langle n,s\rangle\,|\,s\in\partial T_n\}.
\end{equation*}
This is crucial if we want to extend $\partial T$ into a coded prae-dilator (cf.~the discussion after Definition~\ref{def:coded-prae-dilator}). In order to understand the proviso in clause~(ii) one should think of $\mu_m$ as a notation for~$f'(m)$, where $f$ is the normal function induced by $T$ and $f'$ is its derivative. In the proof of Proposition~\ref{prop:normal-dil-fct} we have seen that $D^{\mu^T}$ amounts to an internal version of the function $f$. Together with Definition~\ref{def:extend-normal-transfos} we see that $\langle\{\mu_m\},\mu^T_1(0)\rangle=D^{\mu^T}_{\partial T_n}(\mu_m)$ corresponds to $f(f'(m))$. Since the latter is equal to $f'(m)$ the terms $\xi\langle\{\mu_m\},\mu^T_1(0)\rangle$ and $\mu_m$ would represent the same ordinal. To keep our notations unique, the first of these terms has been excluded in clause~(ii). A formal version of this intuitive explanation will play a role in the proof of Theorem~\ref{thm:partial-derivative}. The following notion of term length will be used to define the order on $\partial T_n$:

\begin{definition}[$\rca_0$]\label{def:deriv-term-length}
 For each $n$ we define a length function $L^{\partial T}_n:\partial T_n\rightarrow\mathbb N$ by recursion over the build-up of terms, setting
 \begin{equation*}
 L^{\partial T}_n(s)=\begin{cases}
                      \ulcorner s\urcorner & \text{if $s=\mu_m$},\\
                      \max\{\ulcorner s\urcorner,1+\textstyle\sum_{t\in a}2\cdot L^{\partial T}_n(t)\} & \text{if $s=\xi\langle a,\sigma\rangle$},
                     \end{cases}
 \end{equation*}
 where $\ulcorner s\urcorner$ denotes the code (G\"odel number) of the term $s$.
\end{definition}

Note that $\ulcorner s\urcorner$ coincides with $s$ if Definition~\ref{def:derivative-term-system} is already arithmetized. The role of the G\"odel numbers is to justify certain applications of induction and recursion over the length of terms: In view of $\ulcorner s\urcorner\leq L^{\partial T}_n(s)$ a quantifier of the form
\begin{equation*}
 \forall_{s\in\partial T_n}(L^{\partial T}_n(s)\leq l\rightarrow\dots)
\end{equation*}
is bounded. Thus such a quantifier does not lead out of the $\Sigma^0_1$-formulas, for which induction is available in $\rca_0$. To construct a binary relation $<_{\partial T_n}$ on $\partial T_n$, the following definition decides $s<_{\partial T_n}t$ by recursion on $L^{\partial T}_n(s)+L^{\partial T}_n(t)$. In case we have $s=\xi\langle a,\sigma\rangle$ and $t=\xi\langle b,\tau\rangle$ we can assume that the restriction of $<_{\partial T_n}$ to $a\cup b$ is already determined (note that $r\in a$ yields $2\cdot L^{\partial T}_n(r)<L^{\partial T}_n(s)$, so that we can decide $r<_{\partial T_n}r$). In particular we can check whether $<_{\partial T_n}$ is a linear order on the finite set $a\cup b$. If it is, then we may refer to the functions $|\iota_a^{a\cup b}|$ and $|\iota_b^{a\cup b}|$ from Definition~\ref{def:coded-prae-dilator-reconstruct}. More explicitly, we can write $\en_a:|a|\rightarrow a$ and $\en_{a\cup b}:|a\cup b|\rightarrow a\cup b$ for the unique increasing enumerations with respect to~$<_{\partial T_n}$. Then the function $|\iota_a^{a\cup b}|:|a|\rightarrow|a\cup b|$ is characterized by the fact that it satisfies $\en_{a\cup b}\circ|\iota_a^{a\cup b}|=\iota_a^{a\cup b}\circ\en_a$, where $\iota_a^{a\cup b}:a\hookrightarrow a\cup b$ is the inclusion. Before Lemma~\ref{lem:range-dil-support} we have seen that a linear order $<_X$ on a set $X$ induces relations $\lef_X$ and $\leqf_X$ between finite subsets of $X$. In the following we use $\lef_{\partial T_n}$ and $\leqf_{\partial T_n}$ as abbreviations, without assuming that $<_{\partial T_n}$ is linear on the relevant parts of $X$. In particular we have
\begin{equation*}
 s\leqf_{\partial T_n} b\qquad\Leftrightarrow\qquad\exists_{r\in b}\,s\leq_{\partial T_n}r
\end{equation*}
for any element $s\in\partial T_n$ and any finite subset $b\subseteq\partial T_n$. Note that $s\leq_{\partial T_n}r$ abbreviates ${s<_{\partial T_n}r}\lor {s=r}$, where the second disjunct refers to the equality of terms. We can now state the definition of the desired order relation:

\begin{definition}[$\rca_0$]\label{def:derivative-order}
 For each $n$ we define a binary relation $<_{\partial T_n}$ on~$\partial T_n$. Invoking recursion on $L^{\partial T}_n(s)+L^{\partial T}_n(t)$, we stipulate that $s<_{\partial T_n}t$ holds if, and only if, one of the following is satisfied:
 \begin{enumerate}[label=(\roman*)]
  \item We have $s=\mu_m$ and
  \begin{itemize}
   \item either $t=\mu_k$ and $m<k$,
   \item or $t=\xi\langle b,\tau\rangle$ and $s\leqf_{\partial T_n} b$.
  \end{itemize}
  \item We have $s=\xi\langle a,\sigma\rangle$ and
  \begin{itemize}
   \item either $t=\mu_k$ and $a\lef_{\partial T_n}t$,
   \item or we have $t=\xi\langle b,\tau\rangle$, the restriction of $<_{\partial T_n}$ to $a\cup b$ is linear, and we have $T_{|\iota_a^{a\cup b}|}(\sigma)<_{T_{|a\cup b|}}T_{|\iota_b^{a\cup b}|}(\tau)$.
  \end{itemize}
 \end{enumerate}
\end{definition}

To show that $<_{\partial T_n}$ is a linear order we will need the following auxiliary result:

\begin{lemma}[$\rca_0$]\label{lem:supports-order-normal}
 If $T$ is a normal prae-dilator, then we have
 \begin{equation*}
  \sigma\leq_{T_k}\tau\quad\Rightarrow\quad\supp^T_k(\sigma)\leqf\supp^T_k(\tau)
 \end{equation*}
 for any number $k$ and arbitrary elements $\sigma,\tau\in T_k$.
\end{lemma}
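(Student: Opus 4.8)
The plan is to prove the contrapositive-free statement directly by a case analysis that mirrors the structure of Definition~\ref{def:coded-normal-dil}. Recall that $T$ being a normal prae-dilator means we are given the family $\mu^T_k\colon k\to T_k$ with the equivalence $\rho<_{T_k}\mu^T_k(m)\Leftrightarrow\supp^T_k(\rho)\lef m$. The idea is that the elements $\mu^T_k(m)$ stratify $T_k$ according to the supremum of the supports: for $\rho\in T_k$, the support $\supp^T_k(\rho)$ determines, via $\lef$, exactly below which $\mu^T_k(m)$ the element $\rho$ sits. So from $\sigma\leq_{T_k}\tau$ I want to read off $\supp^T_k(\sigma)\leqf\supp^T_k(\tau)$.

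First I would dispose of the trivial case $\sigma=\tau$, where the conclusion is immediate. So assume $\sigma<_{T_k}\tau$, and suppose toward a contradiction that $\supp^T_k(\sigma)\leqf\supp^T_k(\tau)$ fails, i.e. there is some $p\in\supp^T_k(\sigma)$ with $q<p$ for every $q\in\supp^T_k(\tau)$ (here $<$ is the usual order on $k=\{0,\dots,k-1\}$). Equivalently, $\supp^T_k(\tau)\lef p$. Set $m=p$ if $p+1\le k$... actually $p<k$ already, so I can work with $m=p$ directly: from $\supp^T_k(\tau)\lef p$ and Definition~\ref{def:coded-normal-dil} I get $\tau<_{T_k}\mu^T_k(p)$. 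On the other hand, since $p\in\supp^T_k(\sigma)$, the support of $\sigma$ is \emph{not} $\lef p$ (as $p\not< p$), so by the equivalence from Definition~\ref{def:coded-normal-dil} we have $\neg(\sigma<_{T_k}\mu^T_k(p))$, i.e. $\mu^T_k(p)\leq_{T_k}\sigma$. Combining, $\tau<_{T_k}\mu^T_k(p)\leq_{T_k}\sigma$, which contradicts $\sigma<_{T_k}\tau$ by linearity of $<_{T_k}$. Hence $\supp^T_k(\sigma)\leqf\supp^T_k(\tau)$, as claimed.

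The one point that needs a little care is that the equivalence in Definition~\ref{def:coded-normal-dil} is only stated for $m<n$, so I should make sure the $p$ I pick genuinely lies in $k=\{0,\dots,k-1\}$; but $p\in\supp^T_k(\sigma)\subseteq k$ guarantees exactly that, so there is no gap. I do not expect any serious obstacle here — the whole argument is a two-line comparison once the right $\mu^T_k(p)$ is produced — so the "hard part" is really just making the quantifier bookkeeping on $\lef$ versus the negation of $\lef$ precise, which the calculation above already handles.

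\begin{proof}
 The case $\sigma=\tau$ is trivial, so assume $\sigma<_{T_k}\tau$ and, aiming at a contradiction, that $\supp^T_k(\sigma)\leqf\supp^T_k(\tau)$ fails. Then there is a $p\in\supp^T_k(\sigma)$ such that $q<p$ for all $q\in\supp^T_k(\tau)$, i.\,e.~$\supp^T_k(\tau)\lef p$. Since $p\in\supp^T_k(\sigma)\subseteq k$ we have $p<k$, so the equivalence from Definition~\ref{def:coded-normal-dil} applies with $m=p$. From $\supp^T_k(\tau)\lef p$ it yields $\tau<_{T_k}\mu^T_k(p)$. On the other hand $p\in\supp^T_k(\sigma)$ gives $\neg(\supp^T_k(\sigma)\lef p)$, hence $\neg(\sigma<_{T_k}\mu^T_k(p))$, so that $\mu^T_k(p)\leq_{T_k}\sigma$ by linearity of $<_{T_k}$. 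Combining the two inequalities we obtain $\tau<_{T_k}\mu^T_k(p)\leq_{T_k}\sigma$, contradicting $\sigma<_{T_k}\tau$.
\end{proof}
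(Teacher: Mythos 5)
Your proof is correct and follows essentially the same argument as the paper: negate the conclusion to get $p\in\supp^T_k(\sigma)$ with $\supp^T_k(\tau)\lef p$, then use the defining equivalence of normality to deduce $\tau<_{T_k}\mu^T_k(p)\leq_{T_k}\sigma$, contradicting the premise. The separate treatment of $\sigma=\tau$ is a harmless cosmetic difference.
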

\begin{proof}
 If the conclusion of the implication is false, then we have $\supp^T_k(\tau)\lef m$ for some $m\in\supp^T_k(\sigma)$. Note that $\supp^T_k(\sigma)\lef m$ must fail. In view of Definition~\ref{def:coded-normal-dil} we obtain $\tau<_{T_k}\mu^T_k(m)\leq_{T_k}\sigma$, so that the premise of our implication is false.
\end{proof}

We can now establish the expected fact:

\begin{lemma}[$\rca_0$]\label{lem:deriv-linear}
 Given a normal prae-dilator $T$ and any number $n$, the relation $<_{\partial T_n}$ is a linear order on the term system $\partial T_n$.
\end{lemma}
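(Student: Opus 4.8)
The plan is to prove, by a single induction, \emph{both} that $<_{\partial T_n}$ is trichotomous --- which, applied to equal arguments, also yields irreflexivity --- and that it is transitive; together these say that $<_{\partial T_n}$ is a strict linear order. The induction is on $L^{\partial T}_n(s)+L^{\partial T}_n(t)$ for the trichotomy statement about a pair $s,t$, and on $L^{\partial T}_n(s)+L^{\partial T}_n(t)+L^{\partial T}_n(u)$ for the transitivity statement about a triple $s,t,u$, the two being proved simultaneously. The role of the factor~$2$ in Definition~\ref{def:deriv-term-length} is exactly to make this go through: if $s=\xi\langle a,\sigma\rangle$ then $2\cdot L^{\partial T}_n(r)<L^{\partial T}_n(s)$ for each $r\in a$, and from this one checks that any pair or triple of terms taken from the union of the component sets of the terms currently under consideration has \emph{strictly} smaller combined length. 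Consequently the induction hypothesis already supplies a linear order on $a\cup b$ (resp.\ on $a\cup b\cup c$) whenever $a,b,c$ are such component sets, which is precisely the hypothesis needed in order to invoke the second bullet of clause~(ii) in Definition~\ref{def:derivative-order}, and in particular to form the enumerations and the maps $|\iota_a^{a\cup b}|$ occurring there.

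First I would dispose of the comparisons in which every term has the form $\mu_m$: clause~(i) of Definition~\ref{def:derivative-order} reduces these to the usual order on~$\mathbb N$. Next I would treat the comparisons in which all terms have the form $\xi\langle\cdot,\cdot\rangle$, which is where most of the content sits but where the work has essentially been done already. Writing $Y$ for the set $a\cup b$ (or $a\cup b\cup c$) equipped with the linear order furnished by the induction hypothesis, the uniqueness conditions of Definition~\ref{def:derivative-term-system}(ii) guarantee that $\langle a,\sigma\rangle$ and $\langle b,\tau\rangle$ are genuine elements of $D^T_Y$, and distinct terms give distinct such elements; moreover the clause ``$T_{|\iota_a^{a\cup b}|}(\sigma)<_{T_{|a\cup b|}}T_{|\iota_b^{a\cup b}|}(\tau)$'' of Definition~\ref{def:derivative-order} is, by Definition~\ref{def:coded-prae-dilator-reconstruct}, literally the statement $\langle a,\sigma\rangle<_{D^T_Y}\langle b,\tau\rangle$. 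Hence trichotomy and transitivity among $\xi$-terms follow at once from the fact that $D^T_Y$ is a linear order, which is contained in Proposition~\ref{prop:reconstruct-class-sized-dil}; the only extra point is that the orders $|a\cup b|$, $|a\cup b\cup c|$, $|a\cup c|$ fit together via the functoriality of~$T$, which is routine.

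The remaining, mixed cases --- at least one $\mu$-term and at least one $\xi$-term among the two or three terms in play --- are, I expect, the main obstacle, and I would handle them by unravelling Definition~\ref{def:derivative-order} directly. The tools available are the defining equivalence of a normal prae-dilator (Definition~\ref{def:coded-normal-dil}), the identity $\supp^T_k(\mu^T_k(m))=\{m\}$ of Lemma~\ref{lem:support-mu}, the support-monotonicity $\sigma\leq_{T_k}\tau\Rightarrow\supp^T_k(\sigma)\leqf\supp^T_k(\tau)$ of Lemma~\ref{lem:supports-order-normal}, the naturality of $\supp^T$, and --- crucially --- the equivalence $\langle a,\sigma\rangle<_{D^T_X}D^{\mu^T}_X(x)\Leftrightarrow a\lef_X x$ of Proposition~\ref{prop:reconstruct-normal-dil}, which one applies to finite sub-orders $X\subseteq\partial T_n$ consisting only of terms that occur among the components of the $\xi$-terms in play (so that the induction hypothesis certifies that $X$ is a linear order and the application is legitimate, with no circularity). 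The organising idea --- which also explains why Definition~\ref{def:derivative-order} phrases the $\mu$-clauses through $\leqf_{\partial T_n}$ and $\lef_{\partial T_n}$ --- is that $\mu_m$ behaves in every comparison exactly as the term $\langle\{\mu_m\},\mu^T_1(0)\rangle=D^{\mu^T}_{\partial T_n}(\mu_m)$ would, this being precisely the term excluded in Definition~\ref{def:derivative-term-system}(ii) to keep notations unique.

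Concretely, for instance, to establish transitivity across $\xi\langle a,\sigma\rangle<_{\partial T_n}\mu_k<_{\partial T_n}\xi\langle c,\upsilon\rangle$ one picks $r\in c$ with $\mu_k\leq_{\partial T_n}r$ (from the second inequality), deduces $a\lef_{\partial T_n}r$ (using the first inequality and transitivity on strictly shorter terms), and then applies Proposition~\ref{prop:reconstruct-normal-dil} inside the linear order $(a\cup c,<_{\partial T_n})$ to obtain $\langle a,\sigma\rangle<_{D^T_{a\cup c}}D^{\mu^T}_{a\cup c}(r)\leq_{D^T_{a\cup c}}\langle c,\upsilon\rangle$, hence $\xi\langle a,\sigma\rangle<_{\partial T_n}\xi\langle c,\upsilon\rangle$. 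Running through the handful of shape configurations for trichotomy and the handful for transitivity, each reduces either to the $\xi$-term case already settled, or to an instance of trichotomy/transitivity on terms of strictly smaller combined length to which the induction hypothesis applies, or --- in configurations where a $\xi$-term is compared with a $\mu_k$ --- to the support conditions of Lemma~\ref{lem:supports-order-normal} and Lemma~\ref{lem:support-mu} together with Proposition~\ref{prop:reconstruct-normal-dil} as above. This finite but intricate case analysis is the only place where one genuinely computes rather than merely invokes earlier results; everything else is forced by Proposition~\ref{prop:reconstruct-class-sized-dil} and the design of the length function.
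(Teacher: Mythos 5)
Your proposal is correct, and its skeleton --- a simultaneous induction proving trichotomy and transitivity on the combined lengths $L^{\partial T}_n$, with the factor $2$ guaranteeing that every pair or triple drawn from the component sets falls under the induction hypothesis --- is exactly that of the paper's proof. Where you differ is in how the cases are discharged. The paper computes with supports by hand: in the $\xi$-versus-$\xi$ trichotomy case it shows that $T_{|\iota_a^{a\cup b}|}(\sigma)=T_{|\iota_b^{a\cup b}|}(\tau)$ forces $a=b$ and $\sigma=\tau$ via the naturality of $\supp^T$, and in the mixed transitivity cases it converts order facts into support facts through Lemma~\ref{lem:supports-order-normal} and the enumeration identities. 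You instead re-read each comparison as a statement inside the finite order $D^T_Y$, where $Y$ is a set of component terms certified linear by the induction hypothesis, and then invoke Proposition~\ref{prop:reconstruct-class-sized-dil} (linearity of $D^T_Y$, which carries the antisymmetry content and rests on the uniqueness conditions via the externally cited lemma) together with Proposition~\ref{prop:reconstruct-normal-dil}, with $\mu_m$ playing the role of a $D^{\mu^T}_Y$-value and the exclusion in Definition~\ref{def:derivative-term-system}(ii) supplying strictness; your worked case $\xi\langle a,\sigma\rangle<_{\partial T_n}\mu_k<_{\partial T_n}\xi\langle c,\upsilon\rangle$ is a correct and in fact slicker rendering of the corresponding case in the paper. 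Your route buys uniformity, since the case analysis collapses onto already-established facts about $D^T$ on finite linear orders, at the price of outsourcing the antisymmetry computation that the paper redoes inline; the paper's route is more self-contained and exercises the support calculus (Lemma~\ref{lem:supports-order-normal}) that it reuses later. Two small points to tidy: irreflexivity does not follow from the merely disjunctive form of trichotomy, so either prove the exclusive form or note, as the paper does, that $s<_{\partial T_n}s$ fails in one line from irreflexivity of $<_{T_{|a|}}$; and in mixed cases where the $\mu$-term itself must be placed inside $Y$ (e.g.\ trichotomy of $\mu_m$ against $\xi\langle b,\tau\rangle$), observe that adding it keeps all relevant pairs and triples strictly below the current induction value, so the hypothesis still certifies that $Y$ is linear.
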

\begin{proof}
It is straightforward to see that $s<_{\partial T_n}s$ must fail for every $s$, based on the fact that the linear order $<_{T_k}$ is antisymmetric for any number~$k$. We now show
\begin{gather*}
 s<_{\partial T_n}t\lor s=t\lor t<_{\partial T_n} s,\\
 r<_{\partial T_n}s\land s<_{\partial T_n}t\rightarrow r<_{\partial T_n}t
\end{gather*}
by simultaneous induction on $L^{\partial T}_n(s)+L^{\partial T}_n(t)$ resp.~$L^{\partial T}_n(r)+L^{\partial T}_n(s)+L^{\partial T}_n(t)$. Trichotomy is immediate if we have $s=\mu_m$ and $t=\mu_k$ with $m,k<n$. If we have $s=\mu_m$ and $t=\xi\langle b,\tau\rangle$, then the induction hypothesis yields $s\leqf_{\partial T_n}b$ or~$b\lef_{\partial T_n}s$. In the first case we get $s<_{\partial T_n}t$ while the second leads to $t<_{\partial T_n}s$. Now assume that we have $s=\xi\langle a,\sigma\rangle$ and $t=\xi\langle b,\tau\rangle$. The simultaneous induction hypothesis ensures that $<_{\partial T_n}$ is linear on $a\cup b$ (note that $s'<_{\partial T_n}t'\land t'<_{\partial T_n}s'\rightarrow s'<_{\partial T_n}s'$ is covered, due to the factor $2$ in the definition of $L^{\partial T}_n$). It is easy to conclude unless we have $T_{|\iota_a^{a\cup b}|}(\sigma)=T_{|\iota_b^{a\cup b}|}(\tau)$. In this case the naturality of $\supp^T$ yields
\begin{alignat*}{3}
 [|\iota_a^{a\cup b}|]^{<\omega}(|a|)&=[|\iota_a^{a\cup b}|]^{<\omega}(\supp^T_{|a|}(\sigma))&&=\supp^T_{|a\cup b|}(T_{|\iota_a^{a\cup b}|}(\sigma))&&={}\\
 {}&=\supp^T_{|a\cup b|}(T_{|\iota_b^{a\cup b}|}(\tau))&&=[|\iota_b^{a\cup b}|]^{<\omega}(\supp^T_{|b|}(\tau))&&=[|\iota_b^{a\cup b}|]^{<\omega}(|b|).
\end{alignat*}
Composing both sides with $[\en_{a\cup b}]^{<\omega}$ we get $a=b$. Then $|\iota_a^{a\cup b}|$ and $|\iota_b^{a\cup b}|$ must be the identity on $|a|=|a\cup b|=|b|$. As $T$ is functorial we get $\sigma=\tau$ and hence~$s=t$. To establish transitivity one needs to distinguish several cases according to the form of the terms $r,s$ and $t$. In the first interesting case we have $r=\mu_m$, $s=\xi\langle a,\sigma\rangle$ and $t=\xi\langle b,\tau\rangle$. Invoking the previous lemma we see that $s<_{\partial T_n}t$ implies
\begin{equation*}
 [|\iota_a^{a\cup b}|]^{<\omega}(|a|)=\supp^T_{|a\cup b|}(T_{|\iota_a^{a\cup b}|}(\sigma))\leqf\supp^T_{|a\cup b|}(T_{|\iota_b^{a\cup b}|}(\tau))=[|\iota_b^{a\cup b}|]^{<\omega}(|b|).
\end{equation*}
Again we compose both sides with $[\en_{a\cup b}]^{<\omega}$, to get $a\leqf_{\partial T_n}b$. In view of $r<_{\partial T_n}s$ we have $\mu_m\leqf_{\partial T_n}a$. Using the induction hypothesis we can infer $\mu_m\leqf_{\partial T_n}b$ and thus $r<_{\partial T_n}t$. Let us now consider $r=\xi\langle a,\sigma\rangle$, $s=\mu_m$ and~$t=\xi\langle b,\tau\rangle$. In this situation $r<_{\partial T_n}s<_{\partial T_n}t$ amounts to $a\lef_{\partial T_n}s\leqf_{\partial T_n}b$, which implies that $b\leqf_{\partial T_n}a$ must~fail. Similarly to the previous case we can conclude
\begin{equation*}
 \supp^T_{|a\cup b|}(T_{|\iota_b^{a\cup b}|}(\tau))\not\leqf\supp^T_{|a\cup b|}(T_{|\iota_a^{a\cup b}|}(\sigma)).
\end{equation*}
Note that we can refer to $|\iota_a^{a\cup b}|$ and $|\iota_b^{a\cup b}|$, since the simultaneous induction hypothesis ensures that $<_{\partial T_n}$ is linear on $a\cup b$. Using the previous lemma and trichotomy for $<_{T_{|a\cup b|}}$ we obtain $T_{|\iota_a^{a\cup b}|}(\sigma)<_{T_{|a\cup b|}}T_{|\iota_b^{a\cup b}|}(\tau)$ and hence $r<_{\partial T_n}t$. To establish transitivity for $r=\xi\langle a,\sigma\rangle$, $s=\xi\langle b,\tau\rangle$ and $t=\xi\langle c,\rho\rangle$ it suffices to considers the inclusions into $a\cup b\cup c$ and to use transitivity for $<_{T_{|a\cup b\cup c|}}$.
\end{proof}

We will see that the following turns $n\mapsto\partial T_n$ into a functor:

\begin{definition}[$\rca_0$]\label{def:deriv-functor}
 Given a strictly increasing function $f:n\rightarrow l$, we define a function $\partial T_f:\partial T_n\rightarrow\partial T_l$ by recursion over the build-up of terms, setting
 \begin{align*}
  \partial T_f(\mu_m)&=\mu_{f(m)},\\
  \partial T_f(\xi\langle a,\sigma\rangle)&=\xi\langle [\partial T_f]^{<\omega}(a),\sigma\rangle.
 \end{align*}
\end{definition}

The fact that $\partial T_f$ has values in $\partial T_l$ is established as part of the following proof:

\begin{lemma}[$\rca_0$]\label{lem:deriv-embedding}
 If $f:n\rightarrow l$ is strictly increasing, then $\partial T_f:\partial T_n\rightarrow\partial T_l$ is an order embedding.
\end{lemma}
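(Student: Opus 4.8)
The plan is to prove three facts by simultaneous induction on the build-up of terms in $\partial T_n$ (equivalently, on $L^{\partial T}_n(s)$): that $\partial T_f(s)\in\partial T_l$ for every $s\in\partial T_n$; that $\partial T_f$ is strictly increasing; and, as an auxiliary statement needed to make the induction go through, that $\supp$-type information is preserved in the sense that the restriction of $<_{\partial T_l}$ to $[\partial T_f]^{<\omega}(a)$ agrees with the restriction of $<_{\partial T_n}$ to $a$ via the obvious bijection $a\to[\partial T_f]^{<\omega}(a)$ (so in particular $|[\partial T_f]^{<\omega}(a)|=|a|$). This last point is what guarantees that $[\partial T_f]^{<\omega}(a)$ is again an admissible index set for a $\xi$-term: if $\sigma\in T_{|a|}$ with $\supp^T_{|a|}(\sigma)=|a|$, then after identifying $|[\partial T_f]^{<\omega}(a)|$ with $|a|$ the same $\sigma$ works, and the enumeration functions $\en_{[\partial T_f]^{<\omega}(a)}$ and $\en_a$ are identified via $\partial T_f$.

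First I would handle membership, $\partial T_f(s)\in\partial T_l$. For $s=\mu_m$ with $m<n$ we have $\partial T_f(\mu_m)=\mu_{f(m)}$ and $f(m)<l$, so this is clause~(i) of Definition~\ref{def:derivative-term-system}. For $s=\xi\langle a,\sigma\rangle$, the induction hypothesis gives $[\partial T_f]^{<\omega}(a)\subseteq\partial T_l$ and (via the auxiliary order-preservation statement) $|[\partial T_f]^{<\omega}(a)|=|a|$ with the enumerations matched up, so $\sigma\in T_{|[\partial T_f]^{<\omega}(a)|}$ and $\supp^T(\sigma)$ is still the full index set. The one subtlety is the proviso in clause~(ii): if $[\partial T_f]^{<\omega}(a)=\{\mu_k\}$ for some $k<l$, I must check $\sigma\neq\mu^T_1(0)$. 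But a singleton image $\{\mu_k\}$ forces $a=\{\mu_m\}$ for the unique $m<n$ with $f(m)=k$ (here I use that $\partial T_f$ is injective on the relevant terms, which is part of the same simultaneous induction, plus the fact that $\partial T_f(\mu_m)=\mu_{f(m)}$ and $\partial T_f$ sends $\xi$-terms to $\xi$-terms), and then the proviso for $s\in\partial T_n$ already told us $\sigma\neq\mu^T_1(0)$.

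Next I would prove strict monotonicity: $s<_{\partial T_n}t$ implies $\partial T_f(s)<_{\partial T_l}\partial T_f(t)$. This is a case analysis following Definition~\ref{def:derivative-order}, in each case reducing the inequality for the images to the inequality for the originals. The cases $\mu_m<_{\partial T_n}\mu_k$ (use $m<k\Rightarrow f(m)<f(k)$) and $\mu_m<_{\partial T_n}\xi\langle b,\tau\rangle$ (use that $\mu_m\leqf_{\partial T_n}b$ transfers to $\mu_{f(m)}\leqf_{\partial T_l}[\partial T_f]^{<\omega}(b)$ by the induction hypothesis, since every element of $b$ has smaller length) and symmetrically $\xi\langle a,\sigma\rangle<_{\partial T_n}\mu_k$ are immediate. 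The interesting case is $\xi\langle a,\sigma\rangle<_{\partial T_n}\xi\langle b,\tau\rangle$, i.e.\ $T_{|\iota_a^{a\cup b}|}(\sigma)<_{T_{|a\cup b|}}T_{|\iota_b^{a\cup b}|}(\tau)$ with $<_{\partial T_n}$ linear on $a\cup b$. By the auxiliary statement applied to $a\cup b$, the bijection $\partial T_f$ carries $a\cup b$ order-isomorphically onto $[\partial T_f]^{<\omega}(a)\cup[\partial T_f]^{<\omega}(b)=[\partial T_f]^{<\omega}(a\cup b)$, so $<_{\partial T_l}$ is linear there and, identifying $|a\cup b|$ with $|[\partial T_f]^{<\omega}(a\cup b)|$, the functions $|\iota_a^{a\cup b}|$ and $|\iota_{[\partial T_f]^{<\omega}(a)}^{[\partial T_f]^{<\omega}(a\cup b)}|$ coincide (they are both the unique increasing map with the prescribed range, and the ranges match under the identification). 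Hence the required inequality $T_{|\iota_{\ldots}|}(\sigma)<T_{|\iota_{\ldots}|}(\tau)$ in $T_{|[\partial T_f]^{<\omega}(a\cup b)|}$ is literally the same inequality in $T_{|a\cup b|}$. Finally, since a strictly increasing map between linear orders is automatically injective, and by Lemma~\ref{lem:deriv-linear} both $\partial T_n$ and $\partial T_l$ are linear, $\partial T_f$ is an order embedding. I expect the main obstacle to be purely bookkeeping: correctly formulating the auxiliary order-and-cardinality-preservation statement so that the identifications $|a|\cong|[\partial T_f]^{<\omega}(a)|$ and the commuting enumeration squares are available at each step of the induction — this is exactly the kind of reasoning already carried out for $\eta^T$ in the proof of Lemma~\ref{lem:class-sized-restrict} and for $\chi^F_Z$ in the proof of Lemma~\ref{lem:F-alternative}, so it should go through in $\rca_0$ by the usual bounded-quantifier argument discussed after Definition~\ref{def:deriv-term-length}.
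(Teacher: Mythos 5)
Your proposal is correct and follows essentially the same route as the paper: a simultaneous induction on term length establishing membership of the values and strict monotonicity, with the proviso in clause~(ii) of Definition~\ref{def:derivative-term-system} handled via injectivity of $\partial T_f$ (so that a singleton image $\{\mu_k\}$ forces $a=\{\mu_m\}$ with $f(m)=k$), and the $\xi$-versus-$\xi$ case reduced to the identities $|\iota_{[\partial T_f]^{<\omega}(a)}^{[\partial T_f]^{<\omega}(a\cup b)}|=|\iota_a^{a\cup b}|$ via the uniqueness of increasing enumerations. Your separately stated auxiliary order-isomorphism claim is exactly what the paper extracts from the induction hypothesis (order preservation and hence injectivity on $a\cup b$), so there is no substantive difference.
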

\begin{proof}
 By simultaneous induction on $L^{\partial T}_n(r)$ resp.~$L^{\partial T}_n(s)+L^{\partial T}_n(t)$ one can verify
 \begin{gather*}
  r\in\partial T_n\rightarrow\partial T_f(r)\in\partial T_l,\\
  s<_{\partial T_n}t\rightarrow\partial T_f(s)<_{\partial T_l}\partial T_f(t).
 \end{gather*}
 Let us consider the first claim for $r=\xi\langle a,\sigma\rangle$: The simultaneous induction hypothesis implies that $\partial T_f$ is order preserving and hence injective on $a$. In particular we have $|[\partial T_f]^{<\omega}(a)|=|a|$. Furthermore, it is easy to see that $[\partial T_f]^{<\omega}(a)=\{\mu_k\}$ implies $a=\{\mu_m\}$ with $k=f(m)$. Invoking Definition~\ref{def:derivative-term-system} we can now conclude that $r\in\partial T_n$ implies $\partial T_f(r)=\xi\langle [\partial T_f]^{<\omega}(a),\sigma\rangle\in\partial T_l$. To verify that $\partial T_f$ is order preserving we distinguish cases according to the form of $s$ and $t$. In the first interesting case we have $s=\mu_m<_{\partial T_n}\xi\langle b,\tau\rangle=t$ because of $s\leqf_{\partial T_n}b$. By the induction hypothesis we obtain $\partial T_f(s)\leqf_{\partial T_k}[\partial T_f]^{<\omega}(b)$ and hence
 \begin{equation*}
  \partial T_f(s)=\mu_{f(m)}<_{\partial T_k}\xi\langle [\partial T_f]^{<\omega}(b),\tau\rangle=\partial T_f(t).
 \end{equation*}
 Let us also consider the case where $s=\xi\langle a,\sigma\rangle<_{\partial T_k}\xi\langle b,\tau\rangle=t$ holds because we have $T_{|\iota_a^{a\cup b}|}(\sigma)<_{T_{|a\cup b|}}T_{|\iota_b^{a\cup b}|}(\tau)$. To infer $\partial T_f(s)<_{\partial T_k}\partial T_f(t)$ it suffices to show
 \begin{equation*}
  \left|\iota_{[\partial T_f]^{<\omega}(a)}^{[\partial T_f]^{<\omega}(a\cup b)}\right|=|\iota_a^{a\cup b}|\qquad\text{and}\qquad\left|\iota_{[\partial T_f]^{<\omega}(b)}^{[\partial T_f]^{<\omega}(a\cup b)}\right|=|\iota_b^{a\cup b}|.
 \end{equation*}
 By the definition of the functor $|\cdot|$ (cf.~the paragraph after Summary~\ref{sum:class-sized-dilators}), the first of these equations reduces to
 \begin{equation*}
  \en_{[\partial T_f]^{<\omega}(a\cup b)}\circ|\iota_a^{a\cup b}|=\iota_{[\partial T_f]^{<\omega}(a)}^{[\partial T_f]^{<\omega}(a\cup b)}\circ\en_{[\partial T_f]^{<\omega}(a)}.
 \end{equation*}
 The induction hypothesis tells us that $\partial T_f$ is order preserving on $a\cup b$. Since the increasing enumeration of a finite order is uniquely determined this yields
 \begin{equation*}
  \en_{[\partial T_f]^{<\omega}(a\cup b)}=\partial T_f\circ\en_{a\cup b}:|[\partial T_f]^{<\omega}(a\cup b)|=|a\cup b|\rightarrow[\partial T_f]^{<\omega}(a\cup b).
 \end{equation*}
 Together with $\en_{a\cup b}\circ|\iota_a^{a\cup b}|=\iota_a^{a\cup b}\circ\en_a$ we indeed get
 \begin{align*}
  \en_{[\partial T_f]^{<\omega}(a\cup b)}\circ|\iota_a^{a\cup b}|&=\partial T_f\circ\en_{a\cup b}\circ|\iota_a^{a\cup b}|=\partial T_f\circ\iota_a^{a\cup b}\circ\en_a=\\
  {}&=\iota_{[\partial T_f]^{<\omega}(a)}^{[\partial T_f]^{<\omega}(a\cup b)}\circ\partial T_f\circ\en_a=\iota_{[\partial T_f]^{<\omega}(a)}^{[\partial T_f]^{<\omega}(a\cup b)}\circ\en_{[\partial T_f]^{<\omega}(a)}.
 \end{align*}
 The equation with $b$ at the place of $a$ is established in the same way.
\end{proof}

To get a normal prae-dilator (cf.~Definitions~\ref{def:coded-prae-dilator} and~\ref{def:coded-normal-dil}) we need the following:

\begin{definition}[$\rca_0$]\label{def:deriv-support}
 For each $n$ we define a function $\supp^{\partial T}_n:\partial T_n\rightarrow[n]^{<\omega}$ by induction on the build-up of terms, setting
 \begin{align*}
  \supp^{\partial T}_n(\mu_m)&=\{m\},\\
  \supp^{\partial T}_n(\xi\langle a,\sigma\rangle)&=\textstyle\bigcup_{t\in a}\supp^{\partial T}_n(t).
 \end{align*}
 To define a family of functions $\mu^{\partial T}_n:n\rightarrow\partial T_n$ we put
 \begin{equation*}
  \mu^{\partial T}_n(m)=\mu_m
 \end{equation*}
 for all numbers $m<n$.
\end{definition}

Let us verify that $\partial T=(\partial T,\supp^{\partial T},\mu^{\partial T})$ has the expected property:

\begin{proposition}[$\rca_0$]\label{prop:deriv-normal-prae-dilator}
 If $T$ is a normal prae-dilator, then so is $\partial T$.
\end{proposition}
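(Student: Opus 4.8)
The plan is to verify, one after another, the three conditions that make up the definition of a normal prae-dilator (Definitions~\ref{def:coded-prae-dilator} and~\ref{def:coded-normal-dil}): that $n\mapsto\partial T_n$ is a functor into linear orders with fields contained in $\mathbb N$, that $\supp^{\partial T}$ is a natural transformation satisfying the range condition of Definition~\ref{def:coded-prae-dilator}~(ii), and that $\mu^{\partial T}$ is a natural family of order embeddings satisfying the equivalence of Definition~\ref{def:coded-normal-dil}. The coding side is already dealt with in the remarks after Definitions~\ref{def:derivative-term-system} and~\ref{def:deriv-term-length}: the term systems $\partial T_n$, the relations $<_{\partial T_n}$, the maps $\partial T_f$ and the functions $\supp^{\partial T}_n$ are all uniformly computable, so the sets $\partial T^0,\partial T^1,\supp^{\partial T}$ representing $\partial T$ exist in $\rca_0$; moreover Lemmas~\ref{lem:deriv-linear} and~\ref{lem:deriv-embedding} already provide that each $<_{\partial T_n}$ is a linear order and each $\partial T_f$ an order embedding. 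All inductions below will be carried out over the term length $L^{\partial T}_n$, which keeps them inside the $\Sigma^0_1$-fragment available to $\rca_0$, as explained after Definition~\ref{def:deriv-term-length}.

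First I would record the easy facts. Functoriality, i.e.~$\partial T_{\id_n}=\id_{\partial T_n}$ and $\partial T_{g\circ f}=\partial T_g\circ\partial T_f$, follows by a routine induction on the build-up of terms, using $[\partial T_{g\circ f}]^{<\omega}=[\partial T_g]^{<\omega}\circ[\partial T_f]^{<\omega}$ in the $\xi$-case. Together with Lemma~\ref{lem:deriv-embedding} this shows that $n\mapsto\partial T_n$ is a functor into linear orders. Naturality of $\supp^{\partial T}$, i.e.~$\supp^{\partial T}_l\circ\partial T_f=[f]^{<\omega}\circ\supp^{\partial T}_n$ for $f:n\rightarrow l$, is again an induction on terms: the case of $\mu_m$ is the identity $\{f(m)\}=[f]^{<\omega}(\{m\})$, and the case of $\xi\langle a,\sigma\rangle$ reduces, via the induction hypothesis, to the fact that $[f]^{<\omega}$ commutes with finite unions. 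Finally, $\mu^{\partial T}_n$ is an order embedding because by clause~(i) of Definition~\ref{def:derivative-order} we have $\mu_m<_{\partial T_n}\mu_k$ precisely when $m<k$, and it is natural because $\partial T_f(\mu_m)=\mu_{f(m)}$ by Definition~\ref{def:deriv-functor}.

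The two substantial points are the range condition and the defining equivalence of Definition~\ref{def:coded-normal-dil}; for these I would prove two auxiliary facts by induction on $L^{\partial T}_n(s)$. The first is the analogue of Lemma~\ref{lem:range-dil-support}: for any strictly increasing $g:l\rightarrow n$ and any $s\in\partial T_n$ with $\supp^{\partial T}_n(s)\subseteq\rng(g)$ there is an $s_0\in\partial T_l$ with $\partial T_g(s_0)=s$. If $s=\mu_m$ then $m\in\rng(g)$, say $m=g(m')$, and $s_0=\mu_{m'}$ works; if $s=\xi\langle a,\sigma\rangle$ one applies the induction hypothesis to each $t\in a$ (each of strictly smaller term length, by the factor $2$ in $L^{\partial T}_n$, as already used in the proof of Lemma~\ref{lem:deriv-linear}) to pull $a$ back to a set $a_0\subseteq\partial T_l$ of the same cardinality (injectivity of $\partial T_g$), and sets $s_0=\xi\langle a_0,\sigma\rangle$; here one must check that the proviso in Definition~\ref{def:derivative-term-system}~(ii) is inherited, i.e.~that $a_0=\{\mu_{m'}\}$ forces $a=\{\mu_{g(m')}\}$ and hence $\sigma\neq\mu^T_1(0)$, which holds since $s$ is a legitimate term. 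Applying this with $g=\iota_s\circ\en_s$, the embedding onto $\supp^{\partial T}_n(s)$, yields the range condition. The second auxiliary fact is the equivalence $s<_{\partial T_n}\mu_m\Leftrightarrow\supp^{\partial T}_n(s)\lef m$: for $s=\mu_k$ both sides say $k<m$; for $s=\xi\langle a,\sigma\rangle$, clause~(ii) of Definition~\ref{def:derivative-order} makes the left side equivalent to $a\lef_{\partial T_n}\mu_m$, i.e.~to $t<_{\partial T_n}\mu_m$ for every $t\in a$, which by the induction hypothesis is equivalent to $\supp^{\partial T}_n(t)\lef m$ for every $t\in a$, and hence to $\supp^{\partial T}_n(s)=\bigcup_{t\in a}\supp^{\partial T}_n(t)\lef m$. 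I expect the main obstacle to be exactly this inductive pull-back argument, where the witness $s_0$ has to be assembled term by term and one has to verify that the side condition of Definition~\ref{def:derivative-term-system}~(ii) is preserved; everything else is bookkeeping with $|\cdot|$, $[\cdot]^{<\omega}$, $\en$ and the properties from the start of Section~\ref{sect:normal-dils-so}, all within the bounded/$\Sigma^0_1$ fragment that $\rca_0$ handles.
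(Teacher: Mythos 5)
Your proposal is correct and follows essentially the same route as the paper: linearity and the embedding property are quoted from Lemmas~\ref{lem:deriv-linear} and~\ref{lem:deriv-embedding}, functoriality and naturality of $\supp^{\partial T}$ are done by induction on terms, the support condition is obtained from the same pull-back statement (your claim about $g:l\rightarrow n$ with $\supp^{\partial T}_n(s)\subseteq\rng(g)$ is exactly the paper's implication $\supp^{\partial T}_n(s)\subseteq a\rightarrow s\in\rng(\partial T_{\iota_a\circ\en_a})$), and the normality equivalence $s<_{\partial T_n}\mu_m\Leftrightarrow\supp^{\partial T}_n(s)\lef m$ is established by the same induction on the build-up of $s$. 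The paper leaves these inductions as ``straightforward''; you supply the details, including the correct verification that the proviso of Definition~\ref{def:derivative-term-system}~(ii) is inherited when pulling back a $\xi$-term.
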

\begin{proof}
 From Lemma~\ref{lem:deriv-linear} we know that $\partial T_n$ is a linear order for any number~$n$. Lemma~\ref{lem:deriv-embedding} tells us that $f\mapsto\partial T_f$ maps morphisms to morphisms. A straightforward induction over the build-up of terms establishes the functoriality of $\partial T$ and the naturality of $\supp^{\partial T}$. By another induction one can prove the implication
 \begin{equation*}
  \supp^{\partial T}_n(s)\subseteq a\rightarrow s\in\rng(\partial T_{\iota_a\circ\en_a}),
 \end{equation*}
 where $\iota_a:a\hookrightarrow n$ denotes the inclusion of a given $a\subseteq n$. For $a=\supp^{\partial T}_n(s)$ this amounts to the support condition from clause~(ii) of Definition~\ref{def:coded-prae-dilator}. We have thus established that $\partial T$ is a coded prae-dilator. The functions $\mu^{\partial T}_n:n\rightarrow\partial T_n$ clearly form a natural family of embeddings. In view of Definitions~\ref{def:derivative-order} and~\ref{def:deriv-support} a~straightforward induction on the build-up of $s$ shows
 \begin{equation*}
  s<_{\partial T_n}\mu^T_n(m)\qquad\Leftrightarrow\qquad\supp^{\partial T}_n(s)\lef m.
 \end{equation*}
 According to Definition~\ref{def:coded-normal-dil} this means that $\partial T$ is normal.
\end{proof}

Our next goal is to turn $\partial T$ into an upper derivative of $T$. According to Definition~\ref{def:upper-derivative} we need to construct a morphism $\xi^T:T\circ\partial T\Rightarrow T$ of normal prae-dilators. Concerning the notion of composition, Definitions~\ref{def:compose-dils} and~\ref{def:coded-prae-dilator-reconstruct} tell us that any element of $(T\circ\partial T)_n=D^T(\partial T_n)$ has the form $\langle a,\sigma\rangle$, where $a\subseteq\partial T_n$ is finite and $\sigma\in T_{|a|}$ satisfies $\supp^T_{|a|}(\sigma)=|a|$. In view of Definition~\ref{def:derivative-term-system} this justifies the following construction:

\begin{definition}[$\rca_0$]\label{def:deriv-xiT}
For each $n$ we define a function $\xi^T_n:(T\circ\partial T)_n\rightarrow\partial T_n$ by setting
\begin{equation*}
 \xi^T_n(\langle a,\sigma\rangle)=\begin{cases}
                                   \mu_m & \text{if $\langle a,\sigma\rangle=\langle\{\mu_m\},\mu^T_1(0)\rangle$ with $m<n$},\\
                                   \xi\langle a,\sigma\rangle & \text{if $\langle a,\sigma\rangle$ has a different form}.
                                  \end{cases}
\end{equation*}
\end{definition}

The following result is important as it implies the implication (1)$\Rightarrow$(2) from the introduction of this paper.

\begin{proposition}[$\rca_0$]\label{prop:partial-upper-deriv}
 If $T$ is a normal prae-dilator, then $(\partial T,\xi^T)$ is an upper derivative of $T$.
\end{proposition}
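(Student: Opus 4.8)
The plan is to unwind Definitions~\ref{def:upper-derivative} and~\ref{def:morphism-dils}. By Proposition~\ref{prop:deriv-normal-prae-dilator} the prae-dilator $\partial T$ is normal, and by Lemma~\ref{lem:compose-normal-dils} so is $T\circ\partial T$; hence it remains to verify that $\xi^T$ from Definition~\ref{def:deriv-xiT} is a morphism of normal prae-dilators $T\circ\partial T\Rightarrow\partial T$. This amounts to three points: (a) each $\xi^T_n\colon(T\circ\partial T)_n=D^T(\partial T_n)\to\partial T_n$ is an order embedding (which already includes checking that its values lie in $\partial T_n$); (b) the family $\xi^T$ is natural; (c) $\xi^T_n\circ\mu^{T\circ\partial T}_n=\mu^{\partial T}_n$ for every~$n$.

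The crux is point~(a). Well-definedness is immediate: an arbitrary element of $D^T(\partial T_n)$ has the form $\langle a,\sigma\rangle$ with $a\subseteq\partial T_n$ finite, $\sigma\in T_{|a|}$ and $\supp^T_{|a|}(\sigma)=|a|$, and the only pair of this shape for which $\xi\langle a,\sigma\rangle$ fails to be a term of $\partial T_n$ by clause~(ii) of Definition~\ref{def:derivative-term-system} is $\langle\{\mu_m\},\mu^T_1(0)\rangle$ with $m<n$ --- which is precisely the pair redirected to $\mu_m$. For the embedding property one compares $\xi^T_n(\langle a,\sigma\rangle)$ with $\xi^T_n(\langle b,\tau\rangle)$ in four cases, according to whether each of the two pairs is of the exceptional form $\langle\{\mu_m\},\mu^T_1(0)\rangle$ or not. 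If neither is, then $<_{\partial T_n}$ on the terms $\xi\langle a,\sigma\rangle,\xi\langle b,\tau\rangle$ is decided by clause~(ii) of Definition~\ref{def:derivative-order}, whose relevant subclause literally repeats the defining condition of $<_{D^T(\partial T_n)}$ from Definition~\ref{def:coded-prae-dilator-reconstruct} (Lemma~\ref{lem:deriv-linear} makes the linearity proviso automatic, and the maps $|\iota_a^{a\cup b}|$ in the two definitions agree since both are formed with respect to $<_{\partial T_n}$). If exactly one pair is exceptional, say $\langle b,\tau\rangle=\langle\{\mu_k\},\mu^T_1(0)\rangle=D^{\mu^T}_{\partial T_n}(\mu_k)$ by Definition~\ref{def:extend-normal-transfos}, then Proposition~\ref{prop:reconstruct-normal-dil} applied to $T$ over the order $X=\partial T_n$ gives $\langle a,\sigma\rangle<_{D^T_{\partial T_n}}D^{\mu^T}_{\partial T_n}(\mu_k)\iff a\lef_{\partial T_n}\mu_k$, matching the clause $\xi\langle a,\sigma\rangle<_{\partial T_n}\mu_k\iff a\lef_{\partial T_n}\mu_k$ of Definition~\ref{def:derivative-order}(ii); for the comparison in the other direction, i.e.\ $D^{\mu^T}_{\partial T_n}(\mu_k)<_{D^T_{\partial T_n}}\langle a,\sigma\rangle$, one uses linearity of $<_{D^T_{\partial T_n}}$ (Proposition~\ref{prop:reconstruct-class-sized-dil}) and that a non-exceptional pair cannot equal $D^{\mu^T}_{\partial T_n}(\mu_k)$ to rewrite the negation $\neg(a\lef_{\partial T_n}\mu_k)$ as $\mu_k\leqf_{\partial T_n}a$, which by clause~(i) of Definition~\ref{def:derivative-order} is equivalent to $\mu_k<_{\partial T_n}\xi\langle a,\sigma\rangle$. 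Finally, if both pairs are exceptional, say with $\mu_{m'}$ and $\mu_k$, then $D^{\mu^T}_{\partial T_n}$ being an order embedding (again Proposition~\ref{prop:reconstruct-normal-dil}) matches $<_{D^T_{\partial T_n}}$ on the two elements with $<_{\partial T_n}$ on $\mu_{m'},\mu_k$, i.e.\ with clause~(i) of Definition~\ref{def:derivative-order}.

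For naturality, fix $f\colon n\to m$; since $(T\circ\partial T)_f=D^T(\partial T_f)$ sends $\langle a,\sigma\rangle$ to $\langle[\partial T_f]^{<\omega}(a),\sigma\rangle$, I would compare $\xi^T_m(\langle[\partial T_f]^{<\omega}(a),\sigma\rangle)$ with $\partial T_f(\xi^T_n(\langle a,\sigma\rangle))$. If $\langle a,\sigma\rangle=\langle\{\mu_{m'}\},\mu^T_1(0)\rangle$ is exceptional, then $[\partial T_f]^{<\omega}(a)=\{\mu_{f(m')}\}$ by Definition~\ref{def:deriv-functor}, so both sides equal $\mu_{f(m')}$. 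If $\langle a,\sigma\rangle$ is not exceptional, then --- using that $\partial T_f$ is injective (Lemma~\ref{lem:deriv-embedding}) and that $\partial T_f(s)$ has the form $\mu_k$ only for $s=\mu_{m'}$ with $k=f(m')$ --- the pair $\langle[\partial T_f]^{<\omega}(a),\sigma\rangle$ is again not exceptional, so both sides equal $\xi\langle[\partial T_f]^{<\omega}(a),\sigma\rangle$ by Definition~\ref{def:deriv-functor}. For point~(c), $\mu^{T\circ\partial T}_n(m)=\langle\{\mu^{\partial T}_n(m)\},\mu^T_1(0)\rangle=\langle\{\mu_m\},\mu^T_1(0)\rangle$ by Definition~\ref{def:compose-normal-dils}, and the first case of Definition~\ref{def:deriv-xiT} directly gives $\xi^T_n(\mu^{T\circ\partial T}_n(m))=\mu_m=\mu^{\partial T}_n(m)$.

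I expect the main obstacle to be the second paragraph, and within it the mixed case: one must pass correctly between $\lef_{\partial T_n}$ and $\leqf_{\partial T_n}$, which is exactly where linearity of the reconstructed order $<_{D^T_{\partial T_n}}$ and the exclusion of the equality case enter. Points~(b) and~(c), as well as the non-exceptional case of~(a), are then routine verifications by unravelling definitions.
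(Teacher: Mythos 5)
Your proposal is correct and follows essentially the same route as the paper's proof: identify $\langle\{\mu_m\},\mu^T_1(0)\rangle$ with $D^{\mu^T}_{\partial T_n}(\mu_m)$, split the embedding check into the four cases according to exceptionality (using Proposition~\ref{prop:reconstruct-normal-dil} in the mixed and doubly exceptional cases and the literal match with Definition~\ref{def:coded-prae-dilator-reconstruct} otherwise), and then verify naturality and $\xi^T_n\circ\mu^{T\circ\partial T}_n=\mu^{\partial T}_n$ by unravelling Definitions~\ref{def:deriv-functor}, \ref{def:deriv-xiT} and~\ref{def:compose-normal-dils}. The only nitpick is that the passage from $\neg(a\lef_{\partial T_n}\mu_k)$ to $\mu_k\leqf_{\partial T_n}a$ rests on trichotomy of the term order, i.e.\ Lemma~\ref{lem:deriv-linear}, rather than on linearity of $<_{D^T_{\partial T_n}}$ as such, but since the latter presupposes the former this does not affect the argument.
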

\begin{proof}
 In view of Definition~\ref{def:upper-derivative} we must establish that $\xi^T:T\circ\partial T\Rightarrow\partial T$ is a morphism of normal prae-dilators, as characterized by Definition~\ref{def:morphism-dils}. Let us first show that each function $\xi^T_n:(T\circ\partial T)_n\rightarrow\partial T_n$ is an embedding. To make the results from Section~\ref{sect:normal-dils-so} applicable we observe that Definition~\ref{def:extend-normal-transfos} yields
 \begin{equation*}
  \langle\{\mu_m\},\mu^T_1(0)\rangle=D^{\mu^T}_{\partial T_n}(\mu_m).
 \end{equation*}
 To see that $s<_{D^T(\partial T_n)}t$ implies $\xi^T_n(s)<_{\partial T_n}\xi^T_n(t)$ we now distinguish cases according to the form of $s$ and $t$. First assume that we have
 \begin{equation*}
  s=\langle\{\mu_m\},\mu^T_1(0)\rangle=D^{\mu^T}_{\partial T_n}(\mu_m)<_{D^T(\partial T_n)}D^{\mu^T}_{\partial T_n}(\mu_k)=\langle\{\mu_k\},\mu^T_1(0)\rangle=t.
 \end{equation*}
  From Proposition~\ref{prop:reconstruct-normal-dil} we know that $D^{\mu^T}_{\partial T_n}$ is an embedding. Thus we indeed get
  \begin{equation*}
   \xi^T_n(s)=\mu_m<_{\partial T_n}\mu_k=\xi^T_n(t).
  \end{equation*}
  Now consider the case
  \begin{equation*}
   s=\langle a,\sigma\rangle<_{D^T(\partial T_n)}D^{\mu^T}_{\partial T_n}(\mu_k)=\langle\{\mu_k\},\mu^T_1(0)\rangle=t,
  \end{equation*}
  where $\langle a,\sigma\rangle$ is not of the form $\langle\{\mu_m\},\mu^T_1(0)\rangle$. By Proposition~\ref{prop:reconstruct-normal-dil} we get $a\lef_{\partial T_n}\mu_k$. Invoking Definition~\ref{def:derivative-order} we can conclude
  \begin{equation*}
   \xi^T_n(s)=\xi\langle a,\sigma\rangle<_{\partial T_n}\mu_k=\xi^T_n(t).
  \end{equation*}
  The case where we have $s=\langle\{\mu_m\},\mu^T_1(0)\rangle$ and $t=\langle b,\tau\rangle$ is of a different form is treated analogously (infer $\mu_m\leqf_{\partial T_n}b$ from the fact that $b\lef_{\partial T_n}\mu_m$ must fail). Finally we consider the case where we have
  \begin{equation*}
   s=\langle a,\sigma\rangle<_{D^T(\partial T_n)}\langle b,\tau\rangle=t
  \end{equation*}
  and neither $s$ nor $t$ is of the form $\langle\{\mu_m\},\mu^T_1(0)\rangle$ with $m<n$. By Definition~\ref{def:coded-prae-dilator-reconstruct} we get $T_{|\iota_a^{a\cup b}|}(\sigma)<_{T_{|a\cup b|}}T_{|\iota_b^{a\cup b}|}(\tau)$. In view of Definition~\ref{def:derivative-order} this yields
  \begin{equation*}
   \xi^T_n(s)=\xi\langle a,\sigma\rangle<_{\partial T_n}\xi\langle b,\tau\rangle=\xi^T_n(t).
  \end{equation*}
  Let us now show that $\xi^T$ is natural: Given a strictly increasing function~$f:n\rightarrow k$, we invoke Definitions~\ref{def:compose-dils} and~\ref{def:coded-prae-dilator-reconstruct} to obtain
  \begin{equation*}
   \xi^T_k\circ(T\circ\partial T)_f(\langle a,\sigma\rangle)=\xi^T_k(\langle[\partial T_f]^{<\omega}(a),\sigma\rangle).
  \end{equation*}
  First assume $\langle a,\sigma\rangle=\langle\{\mu_m\},\mu^T_1(0)\rangle$. Using Definition~\ref{def:deriv-functor} we compute
  \begin{multline*}
   \xi^T_k\circ(T\circ\partial T)_f(\langle a,\sigma\rangle)=\xi^T_k(\langle\{\mu_{f(m)}\},\mu^T_1(0)\rangle)=\mu_{f(m)}=\\
   =\partial T_f(\mu_m)=\partial T_f\circ\xi^T_n(\langle a,\sigma\rangle).
  \end{multline*}
  Now assume that $\langle a,\sigma\rangle$ does not have the form $\langle\{\mu_m\},\mu^T_1(0)\rangle$. Then $\langle[\partial T_f]^{<\omega}(a),\sigma\rangle$ does not have this form either. Again by Definition~\ref{def:deriv-functor} we get
  \begin{equation*}
   \xi^T_k\circ(T\circ\partial T)_f(\langle a,\sigma\rangle)=\xi\langle[\partial T_f]^{<\omega}(a),\sigma\rangle=\partial T_f(\xi\langle a,\sigma\rangle)=\partial T_f\circ\xi^T_n(\langle a,\sigma\rangle).
  \end{equation*}
  To conclude that $\xi^T:T\circ\partial T\Rightarrow T$ is a morphism of normal prae-dilators we must show $\xi^T\circ\mu^{T\circ\partial T}=\mu^{\partial T}$ (cf.~Definition~\ref{def:morphism-dils}). By Definition~\ref{def:compose-normal-dils} we indeed get
  \begin{equation*}
   \xi^T_n\circ\mu^{T\circ\partial T}_n(m)=\xi^T_n(\langle\{\mu^{\partial T}_n(m)\},\mu^T_1(0)\rangle)=\xi^T_n(\langle\{\mu_m\},\mu^T_1(0)\rangle)=\mu_m=\mu^{\partial T}_n(m)
  \end{equation*}
  for all numbers $m<n$.
\end{proof}

In the construction of $\partial T$ we have only added terms that were needed as values of $\xi:T\circ\partial T\Rightarrow\partial T$. The resulting minimality of $\partial T$ leads to the following:

\begin{theorem}[$\rca_0$]\label{thm:terms-derivative}
Assume that $T$ is a normal prae-dilator. Then $(\partial T,\xi^T)$ is a derivative of $T$.
\end{theorem}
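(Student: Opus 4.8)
Since Proposition~\ref{prop:partial-upper-deriv} already tells us that $(\partial T,\xi^T)$ is an upper derivative of~$T$, the remaining task is to verify the initiality condition from Definition~\ref{def:derivative}. So I would fix an arbitrary upper derivative $(S',\xi')$ of~$T$ and construct the required morphism $\nu:\partial T\Rightarrow S'$ of upper derivatives by recursion on the build-up of terms, via
\begin{equation*}
 \nu_n(\mu_m)=\mu^{S'}_n(m),\qquad\nu_n(\xi\langle a,\sigma\rangle)=\xi'_n(\langle[\nu_n]^{<\omega}(a),\sigma\rangle).
\end{equation*}
Each $r\in a$ has smaller term length than $\xi\langle a,\sigma\rangle$, so this is a legitimate recursion, exactly as in Definition~\ref{def:deriv-functor}. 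Both clauses are in fact forced: the first by the requirement $\nu\circ\mu^{\partial T}=\mu^{S'}$, the second by the requirement $\nu\circ\xi^T=\xi'\circ T(\nu)$ together with Definition~\ref{def:deriv-xiT}, since for $\langle a,\sigma\rangle$ not of the form $\langle\{\mu_m\},\mu^T_1(0)\rangle$ we have $\xi^T_n(\langle a,\sigma\rangle)=\xi\langle a,\sigma\rangle$ and $T(\nu)_n(\langle a,\sigma\rangle)=D^T(\nu_n)(\langle a,\sigma\rangle)=\langle[\nu_n]^{<\omega}(a),\sigma\rangle$, while the remaining case becomes automatic once $\xi'$ is known to be a morphism of normal prae-dilators.

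The heart of the argument is a simultaneous induction on $L^{\partial T}_n$, in the style of the proofs of Lemmas~\ref{lem:deriv-linear} and~\ref{lem:deriv-embedding}, showing that for every $s\in\partial T_n$: (a)~$\nu_n(s)\in S'_n$, which for $s=\xi\langle a,\sigma\rangle$ requires knowing that $\nu_n$ is order preserving, hence injective, on the smaller terms in~$a$, so that $|[\nu_n]^{<\omega}(a)|=|a|$ and $\langle[\nu_n]^{<\omega}(a),\sigma\rangle$ indeed lies in $(T\circ S')_n$; (b)~$\supp^{S'}_n(\nu_n(s))=\supp^{\partial T}_n(s)$, using Lemma~\ref{lem:support-mu} for $s=\mu_m$ and Lemma~\ref{lem:dilator-cartesian} applied to~$\xi'$ together with Definition~\ref{def:compose-dils} for $s=\xi\langle a,\sigma\rangle$; (c)~if $s$ is a term $\xi\langle a,\sigma\rangle$, then $\nu_n(s)\notin\rng(\mu^{S'}_n)$ --- for if $\xi'_n(\langle[\nu_n]^{<\omega}(a),\sigma\rangle)=\mu^{S'}_n(k)=\xi'_n(\mu^{T\circ S'}_n(k))$, injectivity of~$\xi'_n$ forces $\sigma=\mu^T_1(0)$ and $a=\{r\}$ with $\nu_n(r)=\mu^{S'}_n(k)$, whence the proviso in Definition~\ref{def:derivative-term-system}(ii) makes $r$ a $\xi$-term of strictly smaller length, contradicting the induction hypothesis; and (d)~$\nu_n$ is order preserving. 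For~(d) I would follow the case distinction of Definition~\ref{def:derivative-order}: the case of two $\mu$-terms uses that $\mu^{S'}_n$ is an embedding; the case of two $\xi$-terms reduces, since $\xi'_n$ is an order embedding, to an inequality in $D^T(S'_n)$ and then goes exactly as in Lemma~\ref{lem:deriv-embedding} (matching the maps $|\iota^{a\cup b}_a|$ via order preservation of $\nu_n$ on $a\cup b$); and the mixed cases combine~(b), (c), the normality of $\partial T$ (Proposition~\ref{prop:deriv-normal-prae-dilator}) and the normality of~$S'$. For instance, if $\mu_m<_{\partial T_n}\xi\langle b,\tau\rangle$ because $\mu_m\leqf_{\partial T_n}b$, then $\nu_n(\xi\langle b,\tau\rangle)\neq\mu^{S'}_n(m)$ by~(c), while $\supp^{S'}_n(\nu_n(\xi\langle b,\tau\rangle))=\supp^{\partial T}_n(\xi\langle b,\tau\rangle)\not\lef m$ by~(b) and the normality of $\partial T$, so trichotomy for the normal prae-dilator~$S'$ yields $\mu^{S'}_n(m)<_{S'_n}\nu_n(\xi\langle b,\tau\rangle)$; the case $\xi\langle a,\sigma\rangle<_{\partial T_n}\mu_k$ is analogous and does not even need~(c).

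With $\nu_n$ in place, a routine induction on terms shows that $\nu$ is natural, i.e.\ $\nu_l\circ\partial T_f=S'_f\circ\nu_n$ for every $f:n\rightarrow l$, using naturality of $\mu^{S'}$ for $\mu$-terms and naturality of $\xi'$ (together with Definitions~\ref{def:compose-dils} and~\ref{def:coded-prae-dilator-reconstruct}) for $\xi$-terms; by~(d) each $\nu_n$ is an order embedding, and $\nu_n\circ\mu^{\partial T}_n(m)=\mu^{S'}_n(m)$, so $\nu$ is a morphism of normal prae-dilators. The identity $\nu\circ\xi^T=\xi'\circ T(\nu)$ holds by the choice of the recursive clauses: for $\langle a,\sigma\rangle=\langle\{\mu_m\},\mu^T_1(0)\rangle$ both sides are $\mu^{S'}_n(m)$, using $\xi'\circ\mu^{T\circ S'}=\mu^{S'}$ and Definition~\ref{def:compose-normal-dils}, and otherwise both sides equal $\xi'_n(\langle[\nu_n]^{<\omega}(a),\sigma\rangle)$; hence $\nu$ is a morphism of upper derivatives. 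Uniqueness is another induction on terms: any morphism $\nu':\partial T\Rightarrow S'$ of upper derivatives satisfies $\nu'_n(\mu_m)=\mu^{S'}_n(m)$ because $\nu'\circ\mu^{\partial T}=\mu^{S'}$, and $\nu'_n(\xi\langle a,\sigma\rangle)=\xi'_n(T(\nu')_n(\langle a,\sigma\rangle))=\xi'_n(\langle[\nu'_n]^{<\omega}(a),\sigma\rangle)=\xi'_n(\langle[\nu_n]^{<\omega}(a),\sigma\rangle)=\nu_n(\xi\langle a,\sigma\rangle)$ by the induction hypothesis and $\nu'\circ\xi^T=\xi'\circ T(\nu')$. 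This establishes the initiality of $(\partial T,\xi^T)$, so that it is a derivative of~$T$. The main obstacle is the simultaneous induction (a)--(d): bootstrapping the recursion needs order preservation of $\nu_n$ already on subterms, the mixed cases of order preservation genuinely use the support computation and the normality of both $\partial T$ and~$S'$, and excluding that a $\xi$-term is mapped into $\rng(\mu^{S'}_n)$ --- required for the strict inequality in one mixed case --- is exactly what the proviso in Definition~\ref{def:derivative-term-system}(ii) was designed to make possible.
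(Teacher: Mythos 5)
Your proposal is correct and follows essentially the same route as the paper: the same recursive definition of $\nu$, the same simultaneous induction on term length establishing membership and order preservation (with the proviso of Definition~\ref{def:derivative-term-system}(ii) and the injectivity of $\xi'_n$ doing exactly the work the paper assigns to them in the case $\mu_m<_{\partial T_n}\xi\langle b,\tau\rangle$), and the same verifications of naturality, compatibility with $\mu$ and $\xi$, and uniqueness. The only difference is bookkeeping: you carry the support identity and the exclusion of $\xi$-terms from $\rng(\mu^{S'}_n)$ as explicit induction invariants, whereas the paper derives these facts inline via Lemma~\ref{lem:dilator-cartesian} within the relevant cases.
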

\begin{proof}
The previous proposition tells us that $(\partial T,\xi^T)$ is an upper derivative. In view of Definition~\ref{def:derivative} we assume that $S$ and $\xi':T\circ S\Rightarrow S$ form an upper derivative of $T$ as well. We must show that there is a unique morphism $\nu:\partial T\Rightarrow S$ of upper derivatives. Let us begin with existence: Note that the normality of $S$ is witnessed by a natural family of embeddings $\mu^S_n:n\rightarrow S_n$. Also recall that $(T\circ S)_n=D^T(S_n)$ consists of pairs~$\langle b,\sigma\rangle$, where $b\subseteq S_n$ is finite and $\sigma\in T_{|b|}$ satisfies $\supp^T_{|b|}(\sigma)=|b|$. For each $n$ we define $\nu_n:\partial T_n\rightarrow S_n$ by recursion over the build-up of terms, setting
\begin{align*}
\nu_n(\mu_m)&=\mu^S_n(m),\\
\nu_n(\xi\langle a,\sigma\rangle)&=\xi'_n(\langle[\nu_n]^{<\omega}(a),\sigma\rangle).
\end{align*}
It is not immediately clear that the second clause produces values in~$S_n$: To see that $\xi\langle a,\sigma\rangle$ implies $\langle[\nu_n]^{<\omega}(a),\sigma\rangle\in D^T(S_n)$ we need $|[\nu_n]^{<\omega}(a)|=|a|$, which relies on the fact that $\nu_n$ is order preserving and hence injective. This suggests to verify
 \begin{gather*}
  r\in\partial T_n\rightarrow\nu_n(r)\in S_n,\\
  s<_{\partial T_n}t\rightarrow\nu_n(s)<_{S_n}\nu_n(t)
 \end{gather*}
 by simultaneous induction on $L^{\partial T}_n(r)$ resp.~$L^{\partial T}_n(s)+L^{\partial T}_n(t)$. To establish that $\nu_n$ is order preserving one needs to consider different possibilities for the form of $s$ and~$t$. The first interesting case is
 \begin{equation*}
 s=\xi\langle a,\sigma\rangle<_{\partial T_n}\mu_m=t.
 \end{equation*}
 According to Definition~\ref{def:derivative-order} we have $a\lef_{\partial T_n}\mu_m$, so that the induction hypothesis yields $[\nu_n]^{<\omega}(a)\lef_{S_n}\mu^S_n(m)$. By Definition~\ref{def:coded-normal-dil} we get $\supp^S_n(\nu_n(r))\lef m$ for all~$r\in a$. Using Lemma~\ref{lem:dilator-cartesian} and Definition~\ref{def:compose-dils} we obtain
 \begin{equation*}
 \supp^S_n(\xi'_n(\langle[\nu_n]^{<\omega}(a),\sigma\rangle))=\supp^{T\circ S}_n(\langle[\nu_n]^{<\omega}(a),\sigma\rangle)=\bigcup_{r\in a}\supp^S_n(\nu_n(r))\lef m.
 \end{equation*}
 By the other direction of Definition~\ref{def:coded-normal-dil} this implies
 \begin{equation*}
 \nu_n(s)=\xi'_n(\langle[\nu_n]^{<\omega}(a),\sigma\rangle)<_{S_n}\mu^S_n(m)=\nu_n(t),
 \end{equation*}
 as desired. Let us next consider the case where
 \begin{equation*}
 s=\mu_m<_{\partial T_n}\xi\langle b,\tau\rangle=t
 \end{equation*}
 holds because of $\mu_m\leqf_{\partial T_n} b$. Similarly to the above one can deduce that the statements $\supp^S_n(\nu_n(t))\lef m$ and $\nu_n(t)<_{S_n}\mu^S_n(m)=\nu_n(s)$ must fail. In order to conclude $\nu_n(s)<_{S_n}\nu_n(t)$ we shall now establish $\nu_n(s)\neq\nu_n(t)$. According to Definition~\ref{def:compose-normal-dils} the normality of $T\circ S$ is witnessed by the functions
 \begin{equation*}
 m\mapsto\mu^{T\circ S}_n(m)=\langle\{\mu^S_n(m)\},\mu^T_1(0)\rangle.
 \end{equation*}
 Since $\xi'$ is a morphism of normal prae-dilators we get
 \begin{equation*}
 \nu_n(s)=\mu^S_n(m)=\xi'_n\circ\mu^{T\circ S}_n(m)=\xi'_n(\langle\{\mu^S_n(m)\},\mu^T_1(0)\rangle).
 \end{equation*}
 Invoking the injectivity of the embedding $\xi'_n$ we learn that $\nu_n(s)=\nu_n(t)$ would imply $\langle\{\nu_n(\mu_m)\},\mu^T_1(0)\rangle=\langle[\nu_n]^{<\omega}(b),\tau\rangle$. By induction hypothesis $\nu_n$ is injective on $b\cup\{\mu_m\}$. Hence $\nu_n(s)=\nu_n(t)$ would even yield $t=\xi\langle\{\mu_m\},\mu^T_1(0)\rangle$. This possibility, however, has been excluded in Definition~\ref{def:derivative-term-system}. Finally we assume that
 \begin{equation*}
 s=\xi\langle a,\sigma\rangle<_{\partial T_n}\xi\langle b,\tau\rangle=t.
 \end{equation*}
 holds because of $T_{|\iota_a^{a\cup b}|}(\sigma)<_{T_{|a\cup b|}}T_{|\iota_b^{a\cup b}|}(\tau)$. The induction hypothesis ensures that~$\nu_n$ is order preserving on $a\cup b$. As in the proof of Lemma~\ref{lem:deriv-embedding} one can show
  \begin{equation*}
  \left|\iota_{[\nu_n]^{<\omega}(a)}^{[\nu_n]^{<\omega}(a\cup b)}\right|=|\iota_a^{a\cup b}|\qquad\text{and}\qquad\left|\iota_{[\nu_n]^{<\omega}(b)}^{[\nu_n]^{<\omega}(a\cup b)}\right|=|\iota_b^{a\cup b}|.
 \end{equation*}
Invoking Definition~\ref{def:coded-prae-dilator-reconstruct} one then obtains $\langle [\nu_n]^{<\omega}(a),\sigma\rangle<_{D^T(S_n)}\langle [\nu_n]^{<\omega}(b),\tau\rangle$. Since $\xi'_n$ is an embedding of $(T\circ S)_n=D^T(S_n)$ into $S_n$ this implies
 \begin{equation*}
 \nu_n(s)=\xi'_n(\langle [\nu_n]^{<\omega}(a),\sigma\rangle)<_{S_n}\xi'_n(\langle [\nu_n]^{<\omega}(b),\tau\rangle)=\nu_n(t).
 \end{equation*}
 So far we have established that each function $\nu_n$ is an embedding of $\partial T_n$ into $S_n$. To conclude that these embeddings form a morphism of prae-dilators we must show that they are natural: Given a strictly increasing function $f:n\rightarrow k$, we establish $\nu_k\circ\partial T_f(s)=S_f\circ\nu_n(s)$ by induction on the build-up of $s$. In the case of $s=\mu_m$ we invoke the naturality of $\mu^S$ to get
 \begin{equation*}
 \nu_k\circ\partial T_f(\mu_m)=\nu_k(\mu_{f(m)})=\mu^S_k(f(m))=S_f(\mu^S_n(m))=S_f\circ\nu_n(m).
 \end{equation*}
 Let us now establish the induction step for $s=\xi\langle a,\sigma\rangle$. In view of Definitions~\ref{def:compose-dils} and~\ref{def:coded-prae-dilator-reconstruct} the induction hypothesis yields
 \begin{equation*}
 (T\circ S)_f(\langle[\nu_n]^{<\omega}(a),\sigma\rangle)=\langle[S_f]^{<\omega}\circ[\nu_n]^{<\omega}(a),\sigma\rangle=\langle[\nu_k]^{<\omega}\circ[\partial T_f]^{<\omega}(a),\sigma\rangle.
 \end{equation*}
 Together with the naturality of $\xi':T\circ S\Rightarrow S$ we get
 \begin{multline*}
 \nu_k\circ\partial T_f(\xi\langle a,\sigma\rangle)=\nu_k(\xi\langle[\partial T_f]^{<\omega}(a),\sigma\rangle)=\xi'_k(\langle[\nu_k]^{<\omega}\circ[\partial T_f]^{<\omega}(a),\sigma\rangle)=\\
 =\xi'_k((T\circ S)_f(\langle[\nu_n]^{<\omega}(a),\sigma\rangle))=S_f(\xi'_n(\langle[\nu_n]^{<\omega}(a),\sigma\rangle))=S_f\circ\nu_n(\xi\langle a,\sigma\rangle),
 \end{multline*}
 as required. Next we observe
 \begin{equation*}
 \nu_n\circ\mu^{\partial T}_n(m)=\nu_n(\mu_m)=\mu^S_n(m),
 \end{equation*}
 which shows that $\nu$ is a morphism of normal prae-dilators. To conclude that we have a morphism of upper derivatives we need to establish $\nu\circ\xi^T=\xi'\circ T(\nu)$. First observe that Definitions~\ref{def:comp-morphs} and~\ref{def:coded-prae-dilator-reconstruct} yield
 \begin{equation*}
 T(\nu)_n(\langle a,\sigma\rangle)=D^T(\nu_n)(\langle a,\sigma\rangle)=\langle[\nu_n]^{<\omega}(a),\sigma\rangle.
 \end{equation*}
 If $\langle a,\sigma\rangle\in(T\circ\partial T)_n$ is not of the form $\langle\{\mu_m\},\mu^T_1(0)\rangle$, then we obtain
 \begin{equation*}
 \nu_n\circ\xi^T_n(\langle a,\sigma\rangle)=\nu_n(\xi\langle a,\sigma\rangle)=\xi'_n(\langle[\nu_n]^{<\omega}(a),\sigma\rangle)=\xi'_n\circ T(\nu)_n(\langle a,\sigma\rangle).
 \end{equation*}
 In the remaining case Definition~\ref{def:deriv-xiT} yields
 \begin{equation*}
 \nu_n\circ\xi^T_n(\langle\{\mu_m\},\mu^T_1(0)\rangle)=\nu_n(\mu_m)=\mu^S_n(m).
 \end{equation*}
 Invoking Definition~\ref{def:compose-normal-dils} and the fact that $\xi':T\circ S\Rightarrow S$ is a morphism of normal prae-dilators we also get
 \begin{alignat*}{3}
 \xi'_n\circ T(\nu)_n(\langle\{\mu_m\},\mu^T_1(0)\rangle)&=\xi'_n(\langle\{\nu_n(\mu_m)\},\mu^T_1(0)\rangle)&&={}\\
 {}&=\xi'_n(\langle\{\mu^S_n(m)\},\mu^T_1(0)\rangle)&&=\xi'_n\circ\mu^{T\circ S}_n(m)=\mu^S_n(m).
 \end{alignat*}
 To complete the proof we must show that $\nu$ is unique: Given an arbitrary morphism $\nu':\partial T\Rightarrow S$ of upper derivatives, we establish $\nu'(s)=\nu(s)$ by induction on the build-up of $s$. In the case of $s=\mu_m$ we invoke Definition~\ref{def:deriv-support} and the assumption that $\nu'$ is a morphism of normal prae-dilators to get
 \begin{equation*}
 \nu'_n(\mu_m)=\nu'_n\circ\mu^{\partial T}_n(m)=\mu^S_n(m)=\nu_n(\mu_m).
 \end{equation*}
Given a term $s=\xi\langle a,\sigma\rangle$, we observe that the induction hypothesis implies
\begin{equation*}
T(\nu')_n(\langle a,\sigma\rangle)=D^T(\nu'_n)(\langle a,\sigma\rangle)=\langle[\nu'_n]^{<\omega}(a),\sigma\rangle=\langle[\nu_n]^{<\omega}(a),\sigma\rangle.
\end{equation*}
Together with the assumption that $\nu'$ is a morphism of upper derivatives we obtain
\begin{equation*}
\nu'_n(\xi\langle a,\sigma\rangle)=\nu'_n\circ\xi^T_n(\langle a,\sigma\rangle)=\xi'_n\circ T(\nu')_n(\langle a,\sigma\rangle)=\xi'_n(\langle[\nu_n]^{<\omega}(a),\sigma\rangle)=\nu_n(\xi\langle a,\sigma\rangle),
\end{equation*}
as required.
\end{proof}

We have described a construction that yields a derivative $\partial T$ of a given normal prae-dilator~$T$. Since derivatives are essentially unique, the construction of $\partial T$ can be exploited to prove general properties of derivatives. The following result establishes some of the assumptions from Theorem~\ref{thm:equalizer-to-derivative}. The remaining assumption, which states that $X\mapsto D^S_X$ preserves well-foundedness, will be considered in the next section (in view of Corollary~\ref{cor:deriv-implies-BI} this will require a stronger base theory).

\begin{theorem}[$\rca_0$]\label{thm:partial-derivative}
Consider a normal prae-dilator $T$. If $(S,\xi)$ is a derivative of $T$, then $\xi:T\circ S\Rightarrow S$ is a natural isomorphism. Furthermore
\begin{equation*}
   \begin{tikzcd}
  n\ar{r}{\mu^S_n} &[2em] S_n\arrow[r,shift left,"\id_{S_n}"]\arrow[r,shift right,swap,"\xi_n\circ D^{\mu^T}_{S_n}"]&[5em] S_n
 \end{tikzcd}
 \end{equation*}
is an equalizer diagram for every number~$n$.
\end{theorem}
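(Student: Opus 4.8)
The strategy is to reduce everything to the concrete derivative $(\partial T,\xi^T)$ built from the term system. By Theorem~\ref{thm:terms-derivative} this pair is a derivative of $T$, so --- as observed after Definition~\ref{def:derivative} --- there is an isomorphism $\nu:\partial T\Rightarrow S$ of upper derivatives: apply the initiality of $(\partial T,\xi^T)$ to obtain a morphism $\partial T\Rightarrow S$ of upper derivatives and the initiality of $(S,\xi)$ to obtain a morphism $S\Rightarrow\partial T$; both composites are morphisms of upper derivatives from a derivative to itself, hence equal to the identity by the uniqueness clause in Definition~\ref{def:derivative}. In particular every $\nu_n$ is an order isomorphism and we have $\nu\circ\mu^{\partial T}=\mu^S$ as well as $\nu\circ\xi^T=\xi\circ T(\nu)$. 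It thus suffices to prove the two assertions for $(\partial T,\xi^T)$ and to transport them along $\nu$.

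For the first assertion I would check that each $\xi^T_n:(T\circ\partial T)_n\rightarrow\partial T_n$ is surjective; together with the proof of Proposition~\ref{prop:partial-upper-deriv}, which shows $\xi^T_n$ is an embedding, this makes $\xi^T$ a natural isomorphism. By Definitions~\ref{def:compose-dils} and~\ref{def:coded-prae-dilator-reconstruct} an element of $(T\circ\partial T)_n=D^T(\partial T_n)$ is precisely a pair $\langle a,\sigma\rangle$ with $a\subseteq\partial T_n$ finite, $\sigma\in T_{|a|}$ and $\supp^T_{|a|}(\sigma)=|a|$; this is exactly the data admitted in clause~(ii) of Definition~\ref{def:derivative-term-system}, and the proviso there guarantees that a term $\xi\langle a,\sigma\rangle\in\partial T_n$ always has $\langle a,\sigma\rangle\neq\langle\{\mu_m\},\mu^T_1(0)\rangle$, so that $\xi^T_n(\langle a,\sigma\rangle)=\xi\langle a,\sigma\rangle$. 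Since also $\mu_m=\xi^T_n(\langle\{\mu_m\},\mu^T_1(0)\rangle)$ for each $m<n$, every term of $\partial T_n$ lies in the image. Passing to $S$: because $D^T$ is a functor, $T(\nu)_n=D^T(\nu_n)$ is an isomorphism, and from $\nu_n\circ\xi^T_n=\xi_n\circ T(\nu)_n$ we get that $\xi_n$ is an isomorphism, i.e.~that $\xi$ is a natural isomorphism.

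For the equalizer assertion I would identify the set on which $\id_{\partial T_n}$ and $\xi^T_n\circ D^{\mu^T}_{\partial T_n}$ agree. By Definition~\ref{def:extend-normal-transfos} we have $D^{\mu^T}_{\partial T_n}(\sigma)=\langle\{\sigma\},\mu^T_1(0)\rangle$ for $\sigma\in\partial T_n$. If $\sigma=\mu_m$ with $m<n$, then $\xi^T_n(\langle\{\mu_m\},\mu^T_1(0)\rangle)=\mu_m=\sigma$; if $\sigma$ is not of this form, then $\xi^T_n(\langle\{\sigma\},\mu^T_1(0)\rangle)=\xi\langle\{\sigma\},\mu^T_1(0)\rangle$, which is distinct from $\sigma$ since a term is never a proper subterm of itself (for instance $L^{\partial T}_n(\xi\langle\{\sigma\},\mu^T_1(0)\rangle)>L^{\partial T}_n(\sigma)$ by Definition~\ref{def:deriv-term-length}). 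Hence the agreement set is exactly $\{\mu_m\mid m<n\}=\rng(\mu^{\partial T}_n)$. As the equalizer of a parallel pair in the category of linear orders is the inclusion of the subset on which the maps agree, carrying the induced order, and $\mu^{\partial T}_n$ is an order embedding with that range, the displayed diagram is an equalizer for $\partial T$. Finally the naturality of $D^{\mu^T}$ (Proposition~\ref{prop:reconstruct-normal-dil}) gives $D^T(\nu_n)\circ D^{\mu^T}_{\partial T_n}=D^{\mu^T}_{S_n}\circ\nu_n$, so $\nu_n\circ(\xi^T_n\circ D^{\mu^T}_{\partial T_n})=(\xi_n\circ D^{\mu^T}_{S_n})\circ\nu_n$; combined with $\nu_n\circ\mu^{\partial T}_n=\mu^S_n$ this is an isomorphism of the two equalizer diagrams, so the diagram for $S$ is an equalizer as well. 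The only non-formal point in the whole argument is the use of the proviso in Definition~\ref{def:derivative-term-system}(ii) together with well-foundedness of terms to pin down the agreement set exactly; everything else --- surjectivity of $\xi^T_n$ and the transport along $\nu$ --- is bookkeeping.
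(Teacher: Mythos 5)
Your proposal is correct and follows essentially the same route as the paper: it invokes the essential uniqueness of derivatives to obtain an isomorphism of upper derivatives between $(S,\xi)$ and $(\partial T,\xi^T)$, proves surjectivity of $\xi^T_n$ directly from Definitions~\ref{def:derivative-term-system} and~\ref{def:deriv-xiT}, and pins down the fixed points of $\xi^T_n\circ D^{\mu^T}_{\partial T_n}$ via the proviso in clause~(ii) and the length function, exactly as in the paper's argument. The only difference is presentational: you transport the whole equalizer diagram along $\nu$ at once (using naturality of $D^{\mu^T}$ and $\nu\circ\mu^{\partial T}=\mu^S$), whereas the paper chases a single element $s\in S_n$ through $\nu_n$; these amount to the same computation.
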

\begin{proof}
 The definition of derivative ensures that $\xi$ is a natural transformation. To conclude that it is a natural isomorphism we show that $\xi_n:(T\circ S)_n\rightarrow S_n$ is surjective for each~$n$. Since both $(S,\xi)$ and $(\partial T,\xi^T)$ are derivatives of $T$, there is an isomorphism $\nu:S\Rightarrow\partial T$ of upper derivatives (cf.~the remark after Definition~\ref{def:derivative}). In view of Definitions~\ref{def:morph-upper-derivs} and~\ref{def:comp-morphs} we have
 \begin{equation*}
  \nu_n\circ\xi_n=\xi^T_n\circ T(\nu)_n=\xi^T_n\circ D^T(\nu_n).
 \end{equation*}
 Now $\nu_n$ is bijective, and it is straightforward to infer that the same holds for $D^T(\nu_n)$. So it suffices to show that $\xi^T_n$ is surjective. Aiming at the latter, we first observe that Definitions~\ref{def:deriv-support} and~\ref{def:morphism-dils} yield
 \begin{equation*}
  \mu_m=\mu^{\partial T}_n(m)=\xi^T_n\circ\mu^{T\circ\partial T}_n(m)\in\rng(\xi^T_n).
 \end{equation*}
 It remains to consider an element $\xi\langle a,\sigma\rangle\in \partial T_n$. In view of Definitions~\ref{def:derivative-term-system},~\ref{def:coded-prae-dilator-reconstruct} and~\ref{def:compose-dils} we have $\langle a,\sigma\rangle\in D^T(\partial T_n)=(T\circ\partial T)_n$. Thus we get
 \begin{equation*}
 \xi\langle a,\sigma\rangle=\xi^T_n(\langle a,\sigma\rangle)\in\rng(\xi^T_n),
 \end{equation*}
 which completes the proof that $\xi$ is a natural isomorphism. After the statement of Theorem~\ref{thm:equalizer-to-derivative} above we have observed that the given equalizer diagram is automatically commutative. To establish that $\mu^S_n$ is an equalizer of $\xi_n\circ D^{\mu^T}_{S_n}$ and the identity we must show that
 \begin{equation*}
  \xi_n\circ D^{\mu^T}_{S_n}(s)=s\quad\Rightarrow\quad s\in\rng(\mu^S_n)
 \end{equation*}
 holds for any element $s\in S_n$. To reduce the claim to the special case with $(\partial T,\xi^T)$ at the place of $(S,\xi)$ we apply $\nu_n$ to both sides of the antecedent. Using the naturality of~$D^{\mu^T}$, which is provided by Lemma~\ref{prop:reconstruct-normal-dil}, this yields
 \begin{equation*}
  \nu_n(s)=\nu_n\circ\xi_n\circ D^{\mu^T}_{S_n}(s)=\xi^T_n\circ D^T(\nu_n)\circ D^{\mu^T}_{S_n}(s)=\xi^T_n\circ D^{\mu^T}_{\partial T_n}\circ\nu_n(s).
 \end{equation*}
 Assuming the special case of the desired implication, we obtain $\nu_n(s)\in\rng(\mu^{\partial T}_n)$, say $\nu_n(s)=\mu^{\partial T}_n(m)$. Since $\nu$ is a morphism of normal prae-dilators, this implies
 \begin{equation*}
  \nu_n\circ\mu^S_n(m)=\mu^{\partial T}_n(m)=\nu_n(s).
 \end{equation*}
 Invoking the injectivity of $\nu_n$ we see $s=\mu^S_n(m)\in\rng(\mu^S_n)$, which is the conclusion of the general case. It remains to establish the special case for $\partial T$. Aiming at the contrapositive of the desired implication, let us assume that $s\in\partial T_n$ is not of the form $\mu^S_n(m)=\mu_m$. Then Definitions~\ref{def:extend-normal-transfos} and~\ref{def:deriv-xiT} yield
 \begin{equation*}
  \xi^T_n\circ D^{\mu^T}_{\partial T_n}(s)=\xi^T_n(\langle\{s\},\mu^T_1(0)\rangle)=\xi\langle\{s\},\mu^T_1(0)\rangle.
 \end{equation*}
 The term on the right cannot be equal to $s$, which it contains as a proper subterm (one can also appeal to the fact that $s$ is shorter in the sense of Definition~\ref{def:deriv-term-length}).
\end{proof}

To conclude this section we show that the conditions from the previous theorem do not suffice to characterize derivatives on the categorical level:

\begin{example}\label{ex:equalizers-without-deriv}
 Define a normal dilator $T$ by setting $T_n=\{0,\dots,n-1\}$, $T_f=f$, $\supp^T_n(m)=\{m\}$ and $\mu^T_n(m)=m$. Furthermore, consider the sets
 \begin{equation*}
  S_n=\mathbb Z+n=\{\hat p\,|\,p\in\mathbb Z\}\cup\{m\,|\,0\leq m<n\}
 \end{equation*}
 with the expected ordering (i.\,e.~such that $\hat p<_{S_n}\hat q<_{S_n}m<_{S_n}k$ holds for all $p<q$ from~$\mathbb Z$ and all $m<k$ from $\{0,\dots,n-1\}$). To turn $S$ into a prae-dilator we set
 \begin{align*}
  S_f(\sigma)&=\begin{cases}
               f(m) & \text{if $\sigma=m\in\{0,\dots,n-1\}$,}\\
               \sigma & \text{if $\sigma=\hat p$ with $p\in\mathbb Z$,}
              \end{cases}\\
  \supp^S_n(\sigma)&=\begin{cases}
                     \{m\} & \text{if $\sigma=m\in\{0,\dots,n-1\}$,}\\
                     \emptyset & \text{if $\sigma=\hat p$ with $p\in\mathbb Z$.}
                    \end{cases}
 \end{align*}
 Let us point out that $S$ is not a dilator, since $D^S_n\cong S_n$ is ill-founded (cf.~Lemma~\ref{lem:class-sized-restrict}). Be that as it may, we obtain a normal prae-dilator by setting
 \begin{equation*}
  \mu^S_n(m)=m\in\{0,\dots,n-1\}\subseteq S_n.
 \end{equation*}
 Since all supports with respect to $T$ are singletons we have
 \begin{equation*}
  (T\circ S)_n=D^T(S_n)=\{\langle\{\sigma\},0\rangle\,|\,\sigma\in S_n\}.
 \end{equation*}
 Thus we can define $\xi:T\circ S\Rightarrow S$ by setting
 \begin{equation*}
  \xi_n(\langle\{\sigma\},0\rangle)=\begin{cases}
                                   m & \text{if $\sigma=m\in\{0,\dots,n-1\}$,}\\
                                   \widehat{p+1} & \text{if $\sigma=\hat p$ with $p\in\mathbb Z$.}
                                  \end{cases}
 \end{equation*}
 One can check that $(S,\xi)$ is an upper derivative of $T$. It is easy to see that~$\xi$ is an isomorphism, as $p\mapsto p+1$ is an automorphism of $\mathbb Z$. Furthermore, the diagram from Theorem~\ref{thm:partial-derivative} defines an equalizer: Assume that we have
 \begin{equation*}
  \sigma=\xi_n\circ D^{\mu^T}_{S_n}(\sigma)=\xi_n(\langle\{\sigma\},\mu^T_1(0)\rangle)=\xi_n(\langle\{\sigma\},0\rangle).
 \end{equation*}
 In view of $p\neq p+1$ we cannot have $\sigma=\hat p$ with $p\in\mathbb Z$. Thus we must have $\sigma=m$ for some number $m\in\{0,\dots,n-1\}$. It follows that
 \begin{equation*}
  \sigma=m=\mu^S_n(m)
 \end{equation*}
 lies in the range of $\mu^S_n$, as required for the equalizer condition. Thus $S$ and $\xi$ satisfy the conclusion of the previous theorem. Nevertheless they do not form a derivative of $T$. Otherwise we would get a morphism $S\Rightarrow\partial T$ of upper derivatives. This is impossible since $S_n=\mathbb Z+n$ is infinite while $\partial T_n$ is finite: In view of $T_1=\{\mu^T_1(0)\}$ Definition~\ref{def:derivative-term-system} yields $\partial T_n=\{\mu_m\,|\,0\leq m<n\}$.
\end{example}

\section{From $\Pi^1_1$-bar induction to preservation of well-foundedness}\label{sect:bi-deriv-wf}

In this section we use $\Pi^1_1$-bar induction to prove the following: If~$T$ is a normal dilator, then $X\mapsto D^{\partial T}_X$ preserves well-foundedness, so that $\partial T$ is a normal dilator as well. This establishes the implication (3)$\Rightarrow$(1) from the introduction. Together with the results of the previous sections we learn that (1), (2) and~(3) are equivalent over $\aca_0$. Invoking Theorems~\ref{thm:equalizer-to-derivative} and~\ref{thm:partial-derivative} we will also be able to conclude the following: If $(S,\xi)$ is a derivative of a normal dilator $T$, then $\alpha\mapsto\otp(D^S_\alpha)$ is the derivative (in the usual sense) of the normal function $\alpha\mapsto\otp(D^T_\alpha)$.

The construction from the previous section yields a derivative~$\partial T$ of a given normal prae-dilator~$T$. To assess whether $\partial T$ is a dilator we must consider the orders $D^{\partial T}_X$ (cf.~Definitions~\ref{def:coded-prae-dilator-reconstruct} and~\ref{def:coded-dilator}). These will be approximated as follows:

\begin{definition}[$\rca_0$]\label{def:partial-up-to-x}
Consider a normal prae-dilator $T$, as well as a linear order $X=(X,<_X)$. We set
\begin{equation*}
\partial T^x_X=\{\langle a,s\rangle\in D^{\partial T}_X\,|\,a\lef_X x\}
\end{equation*}
for any element $x\in X$.
\end{definition}

To distinguish the expressions $\partial T_X^x$ and $\partial T_n$ (cf.~Definition~\ref{def:derivative-term-system}) it suffices to observe that the latter has no superscript (note that we have $\partial T_n^m\subseteq D^{\partial T}_n$ rather than $\partial T_n^m\subseteq\partial T_n$ in case $X=n=\{0,\dots,n-1\}$). We will argue by induction on~$x$ to show that the suborders $\partial T^x_X\subseteq D^{\partial T}_X$ are well-founded. Assuming that $X$ is non-empty and has no maximal element, we clearly have
\begin{equation*}
D^{\partial T}_X=\bigcup_{x\in X}\partial T^x_X.
\end{equation*}
In general, the union (or direct limit) of compatible well-orders does not need to be well-founded itself. On the other hand it is straightforward to see that an order is well-founded if it is the union of well-founded initial segments. In the present situation we can combine Propositions~\ref{prop:deriv-normal-prae-dilator} and~\ref{prop:reconstruct-normal-dil} to get the following:

\begin{lemma}[$\rca_0$]\label{lem:partialTx-initial}
Consider a normal prae-dilator $T$ and a linear order $X$. For any $x\in X$ we have
\begin{equation*}
 \partial T^x_X=D^{\partial T}_X\!\restriction\!D^{\mu^{\partial T}}_X(x)=\{\sigma\in D^{\partial T}_X\,|\,\sigma<_{D^{\partial T}_X}D^{\mu^{\partial T}}_X(x)\}.
\end{equation*}
In particular $\partial T^x_X$ is an initial segment of $D^{\partial T}_X$.
\end{lemma}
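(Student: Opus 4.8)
The plan is to obtain the lemma by a direct appeal to results already established for arbitrary normal prae-dilators, so that essentially no new work is required. By Proposition~\ref{prop:deriv-normal-prae-dilator} we know that $\partial T=(\partial T,\supp^{\partial T},\mu^{\partial T})$ is itself a normal prae-dilator. Hence Proposition~\ref{prop:reconstruct-normal-dil}, applied with $\partial T$ in place of $T$, is available in $\rca_0$: it tells us that $D^{\mu^{\partial T}}_X:X\rightarrow D^{\partial T}_X$ is an order embedding and, crucially, that
\begin{equation*}
 \langle a,s\rangle<_{D^{\partial T}_X}D^{\mu^{\partial T}}_X(x)\quad\Longleftrightarrow\quad a\lef_X x
\end{equation*}
holds for every order $X$, every $x\in X$ and every element $\langle a,s\rangle\in D^{\partial T}_X$.

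Given this equivalence, I would simply recall that, by Definition~\ref{def:partial-up-to-x}, the set $\partial T^x_X$ consists of exactly those $\langle a,s\rangle\in D^{\partial T}_X$ with $a\lef_X x$. Combining the two statements yields
\begin{equation*}
 \partial T^x_X=\{\langle a,s\rangle\in D^{\partial T}_X\,|\,\langle a,s\rangle<_{D^{\partial T}_X}D^{\mu^{\partial T}}_X(x)\}=D^{\partial T}_X\!\restriction\!D^{\mu^{\partial T}}_X(x),
\end{equation*}
which is precisely the asserted equality (the rightmost expression in the statement of the lemma being our abbreviation for the displayed set).

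For the final clause I would only note that in any linear order the collection of elements strictly below a fixed element is a (proper) initial segment; since $D^{\partial T}_X$ is a linear order by Proposition~\ref{prop:reconstruct-class-sized-dil}(i), the set $D^{\partial T}_X\!\restriction\!D^{\mu^{\partial T}}_X(x)$ is an initial segment of $D^{\partial T}_X$, as claimed. There is no genuine obstacle here: the entire content of the lemma is carried by Proposition~\ref{prop:reconstruct-normal-dil}, and the only thing that must be checked is that its hypotheses apply to $\partial T$, which is exactly Proposition~\ref{prop:deriv-normal-prae-dilator}. The one point deserving a word of care is purely notational---the elements of $D^{\partial T}_X$ are pairs $\langle a,s\rangle$ whose second component $s$ is a term of the system $\partial T_{|a|}$ from Definition~\ref{def:derivative-term-system} rather than a plain ordinal or number---but this plays no role, since Proposition~\ref{prop:reconstruct-normal-dil} is stated for an arbitrary normal prae-dilator and makes no assumption on the shape of its elements.
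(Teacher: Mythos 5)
Your proposal is correct and follows exactly the route the paper intends: the lemma is stated there as an immediate combination of Proposition~\ref{prop:deriv-normal-prae-dilator} (so that $\partial T$ is a normal prae-dilator) and Proposition~\ref{prop:reconstruct-normal-dil} applied to $\partial T$, which together with Definition~\ref{def:partial-up-to-x} gives the asserted equality and the initial-segment claim just as you argue.
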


The assumption that $T$ and hence $\partial T$ is normal is crucial for the previous lemma and for many of the following results (cf.~the remarks before Lemma~\ref{lem:range-dil-support}, as well as the discussion of Aczel's construction at the beginning of Section~\ref{sect:constructin-derivative}). Assuming that $\partial T^y_X$ is well-founded for every $y<_X x$, the lemma allows us to conclude that $\bigcup_{y<_Xx}\partial T_X^y$ is well-founded. To complete the induction step one needs to deduce the well-foundedness of $\partial T^x_X$. For this purpose we approximate~$\partial T^x_X$ by distinguishing terms of different height (cf.~Definition~\ref{def:derivative-term-system}):

\begin{definition}[$\rca_0$]\label{def:approximations-height}
Let $T$ be a normal prae-dilator. We define a family of functions $\hth^{\partial T}_n:\partial T_n\rightarrow\mathbb N$ by induction over the build-up of terms, setting
\begin{align*}
\hth^{\partial T}_n(\mu_m)&=0,\\
\hth^{\partial T}_n(\xi\langle a, \sigma\rangle)&=\begin{cases}
                                                   \hth^{\partial T}_n(s)+1 & \text{if $s$ is the $<_{\partial T_n}$-maximal element of $a$},\\
                                                   1 & \text{if $a=\emptyset$}.
                                                  \end{cases}
\end{align*}
Given an order $X$ and an element $x\in X$, we put
\begin{equation*}
\partial T^{x,k}_X=\bigcup_{y<_X x}\partial T^y_X\cup\{\langle a,s\rangle\in\partial T^x_X\,|\,\hth^{\partial T}_{|a|}(s)\leq k\}
\end{equation*}
for every number~$k$.
\end{definition}

According to Definition~\ref{def:deriv-functor} and Lemma~\ref{lem:deriv-embedding}, any strictly increasing function $f:n\rightarrow k$ yields an embedding $\partial T_f:\partial T_n\rightarrow\partial T_k$. We will need to know that these embeddings respect our height functions:

\begin{lemma}[$\rca_0$]\label{lem:heights-functorial}
Consider a normal prae-dilator $T$ and a strictly increasing function $f:n\rightarrow k$. We have
 \begin{equation*}
  \hth^{\partial T}_k(\partial T_f(s))=\hth^{\partial T}_n(s)
 \end{equation*}
for any element $s\in\partial T_n$.
\end{lemma}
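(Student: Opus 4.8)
The plan is to prove the statement by induction on the build-up of the term $s\in\partial T_n$, mirroring the inductive clauses of Definition~\ref{def:approximations-height} that define $\hth^{\partial T}$, together with Definition~\ref{def:deriv-functor} which defines $\partial T_f$. Since the height function is defined by recursion over the term structure and the functor action $\partial T_f$ is also defined term-by-term, an induction on $L^{\partial T}_n(s)$ (or equivalently on the build-up of $s$) is the natural framework. As usual, the G\"odel-number bound $\ulcorner s\urcorner\le L^{\partial T}_n(s)$ ensures that the relevant quantifiers stay bounded, so that $\rca_0$ suffices.

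First I would treat the base case $s=\mu_m$ with $m<n$. Here $\partial T_f(\mu_m)=\mu_{f(m)}$ by Definition~\ref{def:deriv-functor}, and $\hth^{\partial T}_k(\mu_{f(m)})=0=\hth^{\partial T}_n(\mu_m)$ directly from Definition~\ref{def:approximations-height}. For the inductive step, let $s=\xi\langle a,\sigma\rangle$, so that $\partial T_f(s)=\xi\langle[\partial T_f]^{<\omega}(a),\sigma\rangle$. The key observation is that $\partial T_f$ is an order embedding (Lemma~\ref{lem:deriv-embedding}), hence order preserving on the finite set $a$. Consequently $[\partial T_f]^{<\omega}(a)=\emptyset$ if and only if $a=\emptyset$, and when $a\neq\emptyset$, if $t$ is the $<_{\partial T_n}$-maximal element of $a$ then $\partial T_f(t)$ is the $<_{\partial T_k}$-maximal element of $[\partial T_f]^{<\omega}(a)$. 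In the empty case we get $\hth^{\partial T}_k(\partial T_f(s))=1=\hth^{\partial T}_n(s)$ immediately. In the non-empty case, Definition~\ref{def:approximations-height} gives $\hth^{\partial T}_k(\partial T_f(s))=\hth^{\partial T}_k(\partial T_f(t))+1$, and the induction hypothesis applied to $t\in a$ (note $2\cdot L^{\partial T}_n(t)<L^{\partial T}_n(s)$, so $t$ is strictly shorter) yields $\hth^{\partial T}_k(\partial T_f(t))=\hth^{\partial T}_n(t)$. Combining, $\hth^{\partial T}_k(\partial T_f(s))=\hth^{\partial T}_n(t)+1=\hth^{\partial T}_n(s)$, as desired.

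I expect no serious obstacle here: the only point that needs a moment of care is the justification that $\partial T_f$ carries the $<_{\partial T_n}$-maximal element of $a$ to the $<_{\partial T_k}$-maximal element of its image, which is an immediate consequence of $\partial T_f$ being an order embedding (Lemma~\ref{lem:deriv-embedding}). One should also remember to record that the recursion on term length is legitimate in $\rca_0$, exactly as in the proofs of Lemmas~\ref{lem:deriv-linear} and~\ref{lem:deriv-embedding}, because the relevant quantifiers are bounded by $L^{\partial T}_n$. The whole argument is a short structural induction and can be written in a few lines.
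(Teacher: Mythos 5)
Your proof is correct and follows essentially the same route as the paper: a structural induction on $s$, with the base case $\mu_m$ immediate and the step for $\xi\langle a,\sigma\rangle$ resting on the observation (via Lemma~\ref{lem:deriv-embedding}) that $\partial T_f$ sends the $<_{\partial T_n}$-maximal element of $a$ to the $<_{\partial T_k}$-maximal element of $[\partial T_f]^{<\omega}(a)$. The paper's proof is exactly this argument, stated more briefly.
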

\begin{proof}
The claim can be verified by a straightforward induction on the build-up of~$s$. Concerning the case $s=\xi\langle a,\sigma\rangle$, we point out that $\partial T_f(s')$ is $<_{\partial T_k}$-maximal in $[\partial T_f]^{<\omega}(a)$ if $s'$ is $<_{\partial T_n}$-maximal in $a$.
\end{proof}

Yet again, it will be crucial that Definition~\ref{def:approximations-height} provides an approximation by initial segments. To show that this is the case we need a partial converse to Lemma~\ref{lem:supports-order-normal}:

\begin{lemma}[$\rca_0$]\label{lem:normal-height-ineq}
If $T$ is a normal prae-dilator, then we have
\begin{equation*}
\supp^{\partial T}_n(s)\leqf\supp^{\partial T}_n(t)\text{ and }\hth^{\partial T}_n(s)<\hth^{\partial T}_n(t)\quad\Rightarrow\quad s<_{\partial T_n} t
\end{equation*}
for any number $n$ and arbitrary elements $s,t\in\partial T_n$.
\end{lemma}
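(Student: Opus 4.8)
The plan is to prove the implication by induction on $L^{\partial T}_n(s)+L^{\partial T}_n(t)$, in the same style as the proofs of Lemmas~\ref{lem:deriv-linear} and~\ref{lem:deriv-embedding} (so that, via $\ulcorner\cdot\urcorner\leq L^{\partial T}_n$, the quantifiers over $s,t$ stay bounded and the induction is available in $\rca_0$). Throughout I would use that $\partial T$ is itself a normal prae-dilator by Proposition~\ref{prop:deriv-normal-prae-dilator}: this yields the equivalence $s'<_{\partial T_n}\mu_m\Leftrightarrow\supp^{\partial T}_n(s')\lef m$ (Definition~\ref{def:coded-normal-dil} applied to $\partial T$, recalling $\mu^{\partial T}_n(m)=\mu_m$), and it lets me apply Lemma~\ref{lem:supports-order-normal} to $\partial T$, so that $r\leq_{\partial T_n}r'$ implies $\supp^{\partial T}_n(r)\leqf\supp^{\partial T}_n(r')$. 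Assuming $\supp^{\partial T}_n(s)\leqf\supp^{\partial T}_n(t)$ and $\hth^{\partial T}_n(s)<\hth^{\partial T}_n(t)$, I first note that the terms of height $0$ are exactly the $\mu_m$, so the height inequality forces $t=\xi\langle b,\tau\rangle$.

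Next I would dispatch the case $s=\mu_m$. Here $b\neq\emptyset$ (otherwise $\supp^{\partial T}_n(t)=\emptyset$ while $\supp^{\partial T}_n(s)=\{m\}$, contradicting the $\leqf$-hypothesis), and from $\{m\}\leqf\bigcup_{r\in b}\supp^{\partial T}_n(r)$ I obtain some $r\in b$ whose support contains a number $\geq m$, hence $\supp^{\partial T}_n(r)\not\lef m$, hence $\neg(r<_{\partial T_n}\mu_m)$ by the equivalence above, hence $\mu_m\leq_{\partial T_n}r$ by trichotomy (Lemma~\ref{lem:deriv-linear}). Then $\mu_m\leqf_{\partial T_n}b$, and Definition~\ref{def:derivative-order}(i) gives $s<_{\partial T_n}t$.

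The substantial case is $s=\xi\langle a,\sigma\rangle$, where $\hth^{\partial T}_n(s)\geq 1$ forces $\hth^{\partial T}_n(t)\geq 2$ and thus $b\neq\emptyset$; write $t^\ast$ for the $<_{\partial T_n}$-maximum of $b$, so $\hth^{\partial T}_n(t)=\hth^{\partial T}_n(t^\ast)+1$. The key reduction I would establish is that \emph{every element of $a$ is $<_{\partial T_n}t^\ast$}. This is vacuous when $a=\emptyset$; when $a\neq\emptyset$, let $s^\ast$ be the $<_{\partial T_n}$-maximum of $a$, so $\hth^{\partial T}_n(s)=\hth^{\partial T}_n(s^\ast)+1$ and hence $\hth^{\partial T}_n(s^\ast)<\hth^{\partial T}_n(t^\ast)$. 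Since the elements of $a$ are all $\leq_{\partial T_n}s^\ast$ and those of $b$ all $\leq_{\partial T_n}t^\ast$, Lemma~\ref{lem:supports-order-normal} (for $\partial T$) and transitivity of $\leqf$ give $\supp^{\partial T}_n(s^\ast)\leqf\supp^{\partial T}_n(s)\leqf\supp^{\partial T}_n(t)\leqf\supp^{\partial T}_n(t^\ast)$. As $s^\ast\in a$ and $t^\ast\in b$ have strictly smaller term length, the induction hypothesis yields $s^\ast<_{\partial T_n}t^\ast$, and maximality of $s^\ast$ in $a$ gives the claim.

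Finally I would translate the reduction into the required inequality. Put $c=a\cup b$ and let $q<|c|$ be the position of $t^\ast$ in $c$ with respect to $<_{\partial T_n}$, which is well-defined since $<_{\partial T_n}$ is linear on $c$ (Lemma~\ref{lem:deriv-linear}). Because $\xi\langle a,\sigma\rangle$ and $\xi\langle b,\tau\rangle$ are terms, Definition~\ref{def:derivative-term-system} gives $\supp^T_{|a|}(\sigma)=|a|$ and $\supp^T_{|b|}(\tau)=|b|$, so by naturality of $\supp^T$ the support of $T_{|\iota_a^c|}(\sigma)$ in $T_{|c|}$ is the set of positions in $c$ of the elements of $a$, and that of $T_{|\iota_b^c|}(\tau)$ is the set of positions of the elements of $b$. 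By the reduction the first set lies strictly below $q$, so $\supp^T_{|c|}(T_{|\iota_a^c|}(\sigma))\lef q$, whence Definition~\ref{def:coded-normal-dil} (normality of $T$) gives $T_{|\iota_a^c|}(\sigma)<_{T_{|c|}}\mu^T_{|c|}(q)$; and since $q$ itself lies in the second set, $\supp^T_{|c|}(T_{|\iota_b^c|}(\tau))\not\lef q$, so $\mu^T_{|c|}(q)\leq_{T_{|c|}}T_{|\iota_b^c|}(\tau)$. Combining, $T_{|\iota_a^c|}(\sigma)<_{T_{|c|}}T_{|\iota_b^c|}(\tau)$, which by Definition~\ref{def:derivative-order}(ii) is exactly $s<_{\partial T_n}t$. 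I expect this last translation to be the main obstacle and the only point where the full force of the normality of $T$ enters: one must pass from the combinatorial relation $s^\ast<_{\partial T_n}t^\ast$ between maximal subterms to a relation inside $T_{|c|}$, and the device that makes this work is sandwiching $T_{|\iota_a^c|}(\sigma)$ and $T_{|\iota_b^c|}(\tau)$ around the generator $\mu^T_{|c|}(q)$ indexed by the position $q$ of $t^\ast$.
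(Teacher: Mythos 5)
Your proof is correct and follows essentially the same route as the paper: induction on $L^{\partial T}_n(s)+L^{\partial T}_n(t)$, the same handling of the case $s=\mu_m$ (and the case $t=\mu_k$ is indeed vacuous), and the same key reduction of the main case to $s^\ast<_{\partial T_n}t^\ast$ for the maximal elements of $a$ and $b$, using Lemma~\ref{lem:supports-order-normal} applied to $\partial T$ together with the induction hypothesis. The only (harmless) difference is the endgame: where you sandwich $T_{|\iota_a^{a\cup b}|}(\sigma)$ and $T_{|\iota_b^{a\cup b}|}(\tau)$ around $\mu^T_{|a\cup b|}(q)$ directly, the paper instead refutes $b\leqf_{\partial T_n}a$ and reuses the corresponding step from the proof of Lemma~\ref{lem:deriv-linear} (Lemma~\ref{lem:supports-order-normal} for $T$ plus trichotomy), which amounts to the same use of normality.
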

\begin{proof}
We establish the claim by induction on $L^{\partial T}_n(s)+L^{\partial T}_n(t)$, relying on the length function from Definition~\ref{def:deriv-term-length}. To prove the induction step we distinguish cases according to the form of $s$ and $t$. In any case we assume $\supp^{\partial T}_n(s)\leqf\supp^{\partial T}_n(t)$ and~$\hth^{\partial T}_n(s)<\hth^{\partial T}_n(t)$. Let us first consider terms $s=\mu_m$ and~$t=\xi\langle b,\tau\rangle$. In this case we need neither the induction hypothesis nor the assumption about heights: In view of Definition~\ref{def:deriv-support} we have $\{m\}\leqf\supp^{\partial T}_n(t)$, so that $\supp^{\partial T}_n(t)\lef m$ must fail. Invoking Definition~\ref{def:coded-normal-dil} in conjunction with Proposition~\ref{prop:deriv-normal-prae-dilator} we obtain
\begin{equation*}
s=\mu_m=\mu^{\partial T}_n(m)\leq_{\partial T_n}t.
\end{equation*}
Since $s$ and $t$ are different terms we can conclude $s<_{\partial T_n}t$. Now consider $s=\xi\langle a,\sigma\rangle$ and $t=\mu_k$. Definition~\ref{def:derivative-order} tells us that $s<_{\partial T_n}t$ is equivalent to $a\lef_{\partial T_n}t$. The latter is trivial if $a$ is empty. Otherwise we consider the maximal element $s'\in a$. In view of Definition~\ref{def:deriv-support} we get $\supp^{\partial T}_n(s')\leqf\supp^{\partial T}_n(t)$. Clearly we also have $\hth^{\partial T}_n(s')<\hth^{\partial T}_n(t)$ and~$L^{\partial T}_n(s')<L^{\partial T}_n(s)$. Thus we obtain $s'<_{\partial T_n}t$ by induction hypothesis. Since $s'\in a$ is maximal this establishes $a\lef_{\partial T_n}t$, as needed. Finally we consider $s=\xi\langle a,\sigma\rangle$ and $t=\xi\langle b,\tau\rangle$. In the proof of Lemma~\ref{lem:deriv-linear} we have seen that $s<_{\partial T_n}t$ holds if $b\leqf_{\partial T_n} a$ fails. In order to refute $b\leqf_{\partial T_n} a$ we observe that $\hth^{\partial T}_n(s)<\hth^{\partial T}_n(t)$ implies $b\neq\emptyset$. Let $t'\in b$ be maximal with respect to~$<_{\partial T_n}$. To complete the proof it suffices to establish $a\lef_{\partial T_n}t'$. Yet again this is trivial if $a$ is empty. Otherwise the claim reduces to $s'<_{\partial T_n}t'$, where $s'\in a$ is maximal. The maximality of $t'$ and Lemma~\ref{lem:supports-order-normal} ensure that $\supp^{\partial T}_n(r)\leqf\supp^{\partial T}_n(t')$ holds for all $r\in b$. In view of Definition~\ref{def:deriv-support} this yields
\begin{equation*}
 \supp^{\partial T}_n(s')\leqf\supp^{\partial T}_n(s)\leqf\supp^{\partial T}_n(t)\leqf\supp^{\partial T}_n(t').
\end{equation*}
As we also have $\hth^{\partial T}_n(s')<\hth^{\partial T}_n(t')$, the induction hypothesis yields $s'<_{\partial T_n} t'$.
\end{proof}

For our approximations of $\partial T^x_X$ we get the following:

\begin{proposition}[$\rca_0$]\label{prop:partial-xk-initial}
 Consider a normal prae-dilator $T$, an order $X$ and an element $x\in X$. For any number $k$ the order $\partial T^{x,k}_X$ is an initial segment of $\partial T^x_X$.
\end{proposition}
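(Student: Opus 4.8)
The plan is to unravel "initial segment'', then reduce an inequality in $D^{\partial T}_X$ to an inequality at some finite stage $\partial T_n$, where the partial converse of Lemma~\ref{lem:supports-order-normal}, namely Lemma~\ref{lem:normal-height-ineq}, can be applied. Fix $\sigma\in\partial T^{x,k}_X$ and $\tau=\langle a,s\rangle\in\partial T^x_X$ with $\tau<_{D^{\partial T}_X}\sigma$; we must show $\tau\in\partial T^{x,k}_X$. If $a\lef_X y$ for some $y<_X x$ then $\tau\in\partial T^y_X\subseteq\partial T^{x,k}_X$ and we are done, so assume no such $y$ exists; it then suffices to prove $\hth^{\partial T}_{|a|}(s)\leq k$. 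If $\sigma\in\bigcup_{y<_Xx}\partial T^y_X$, then Lemma~\ref{lem:partialTx-initial} (each $\partial T^y_X$ is an initial segment of $D^{\partial T}_X$) together with $\tau<_{D^{\partial T}_X}\sigma$ would force $\tau$ into that union, contradicting our assumption; so we may assume $\sigma=\langle b,t\rangle\in\partial T^x_X$ with $\hth^{\partial T}_{|b|}(t)\leq k$ and $b\lef_X x$.

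Now suppose, towards a contradiction, that $\hth^{\partial T}_{|a|}(s)>k$. Put $c=a\cup b$ and $n=|c|$, and set $s^*=\partial T_{|\iota^c_a|}(s)$ and $t^*=\partial T_{|\iota^c_b|}(t)$ in $\partial T_n$, where $\iota^c_a\colon a\hookrightarrow c$ and $\iota^c_b\colon b\hookrightarrow c$ are the inclusions. By Definition~\ref{def:coded-prae-dilator-reconstruct} the hypothesis $\tau<_{D^{\partial T}_X}\sigma$ says precisely $s^*<_{\partial T_n}t^*$. By Lemma~\ref{lem:heights-functorial} we get $\hth^{\partial T}_n(s^*)=\hth^{\partial T}_{|a|}(s)>k\geq\hth^{\partial T}_{|b|}(t)=\hth^{\partial T}_n(t^*)$, so $\hth^{\partial T}_n(t^*)<\hth^{\partial T}_n(s^*)$. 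Since $\langle a,s\rangle,\langle b,t\rangle\in D^{\partial T}_X$ we have $\supp^{\partial T}_{|a|}(s)=|a|$ and $\supp^{\partial T}_{|b|}(t)=|b|$, so the naturality of $\supp^{\partial T}$ (from Proposition~\ref{prop:deriv-normal-prae-dilator}) yields $\supp^{\partial T}_n(s^*)=\rng(|\iota^c_a|)$ and $\supp^{\partial T}_n(t^*)=\rng(|\iota^c_b|)$; transporting along the order isomorphism $\en_c\colon n\to c$, these sets correspond to $a$ and $b$ viewed as finite subsets of $X$. I would then split into two cases. If $\supp^{\partial T}_n(t^*)\leqf\supp^{\partial T}_n(s^*)$, Lemma~\ref{lem:normal-height-ineq} applied to the pair $(t^*,s^*)$ gives $t^*<_{\partial T_n}s^*$, contradicting $s^*<_{\partial T_n}t^*$. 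If $\supp^{\partial T}_n(t^*)\not\leqf\supp^{\partial T}_n(s^*)$, then $b\not\leqf_X a$, so there is a $z\in b$ with $a\lef_X z$; as $z\in b$ and $b\lef_X x$ we have $z<_X x$, whence $\tau=\langle a,s\rangle\in\partial T^z_X$, again contradicting our standing assumption. Either way we have a contradiction, so $\hth^{\partial T}_{|a|}(s)\leq k$ and $\tau\in\partial T^{x,k}_X$, as desired.

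I do not expect a genuine obstacle here: the one point requiring care is the support bookkeeping --- remembering that $\langle a,s\rangle\in D^{\partial T}_X$ pins the $\partial T$-support of $s$ to the full set $|a|$, so that applying a transition map turns the support into the set of positions of $a$ inside $c$, and that the order isomorphism $\en_c$ converts the combinatorial relation $\leqf$ between these position sets into $\leqf_X$ between $a$ and $b$. Everything else is unwinding of definitions. The whole argument is a finite case analysis that invokes only the already-established Lemmas~\ref{lem:partialTx-initial}, \ref{lem:heights-functorial} and~\ref{lem:normal-height-ineq} and introduces no new induction, so it goes through in~$\rca_0$.
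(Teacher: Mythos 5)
Your proposal is correct and follows essentially the same route as the paper's proof: both reduce the comparison to a single finite level $\partial T_{|a\cup b|}$, use the naturality of $\supp^{\partial T}$ and Lemma~\ref{lem:heights-functorial} to transport supports and heights, and then invoke Lemma~\ref{lem:normal-height-ineq} (with Lemma~\ref{lem:partialTx-initial} handling the $\bigcup_{y<_Xx}\partial T^y_X$ part). Your case distinction on whether $\supp^{\partial T}_n(t^*)\leqf\supp^{\partial T}_n(s^*)$ holds is just a reorganisation of the paper's step that derives the support domination from ``$b\lef_X x$ and $a\lef_X y$ fails for all $y<_Xx$'' before applying that lemma in contrapositive form.
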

\begin{proof}
 Given $\langle a,s\rangle\in\partial T^{x,k}_X$ and $\langle b,t\rangle\in\partial T^x_X$, we must show that $\langle b,t\rangle\leq_{D^{\partial T}_X}\langle a,s\rangle$ implies $\langle b,t\rangle\in\partial T^{x,k}_X$. If we have $\langle a,s\rangle\in\partial T^y_X$ for some $y<_X x$, then we can conclude by Lemma~\ref{lem:partialTx-initial}. So we may assume $\hth^{\partial T}_{|a|}(s)\leq k$. Aiming at the contra\-positive of the desired implication, let us assume that $\langle b,t\rangle\in\partial T^{x,k}_X$ fails. Then we have $\hth^{\partial T}_{|b|}(t)>k$, and $b\lef_X y$ must fail for all $y<_X x$. In view of $a\lef_X x$ we get
 \begin{equation*}
  a\leqf_X b\qquad\text{and}\qquad\hth^{\partial T}_{|a|}(s)<\hth^{\partial T}_{|b|}(t).
 \end{equation*}
 To complete the proof of the contra\-positive we must show $\langle a,s\rangle<_{D^{\partial T}_X}\langle b,t\rangle$. In view of Definition~\ref{def:coded-prae-dilator-reconstruct} this amounts to
 \begin{equation*}
  \partial T_{|\iota_a^{a\cup b}|}(s)<_{\partial T_{|a\cup b|}}\partial T_{|\iota_a^{a\cup b}|}(t).
 \end{equation*}
 In order to show this inequality it suffices to establish the assumptions of Lemma~\ref{lem:normal-height-ineq}, which we shall do in the rest of the proof. Recall that $\supp^{\partial T}$ is natural, that we have $\en_{a\cup b}\circ|\iota_a^{a\cup b}|=\iota_a^{a\cup b}\circ\en_a$ (see the beginning of Section~\ref{sect:normal-dils-so}), and that $\langle a,s\rangle\in D^{\partial T}_X$ requires $\supp^{\partial T}_{|a|}(s)=|a|$ (see Definition~\ref{def:coded-prae-dilator-reconstruct}). Combining these facts we obtain
 \begin{multline*}
  [\en_{a\cup b}]^{<\omega}(\supp^{\partial T}_{|a\cup b|}(\partial T_{|\iota_a^{a\cup b}|}(s)))=[\en_{a\cup b}]^{<\omega}\circ[|\iota_a^{a\cup b}|]^{<\omega}(\supp^{\partial T}_{|a|}(s))=\\
  =[\iota_a^{a\cup b}]^{<\omega}\circ[\en_a]^{<\omega}(|a|)=a.
 \end{multline*}
 In the same way one can establish
 \begin{equation*}
  [\en_{a\cup b}]^{<\omega}(\supp^{\partial T}_{|a\cup b|}(\partial T_{|\iota_b^{a\cup b}|}(t)))=b.
 \end{equation*}
 Above we have shown $a\leqf_X b$. Since $\en_{a\cup b}$ is order preserving we can now conclude
 \begin{equation*}
  \supp^{\partial T}_{|a\cup b|}(\partial T_{|\iota_a^{a\cup b}|}(s))\leqf\supp^{\partial T}_{|a\cup b|}(\partial T_{|\iota_b^{a\cup b}|}(t)),
 \end{equation*}
 which is the first of the assumptions needed for Lemma~\ref{lem:normal-height-ineq}. Above we have also seen $\hth^{\partial T}_{|a|}(s)<\hth^{\partial T}_{|b|}(t)$. Together with Lemma~\ref{lem:heights-functorial} we now get
 \begin{equation*}
  \hth^{\partial T}_{|a\cup b|}(\partial T_{|\iota_a^{a\cup b}|}(s))<\hth^{\partial T}_{|a\cup b|}(\partial T_{|\iota_b^{a\cup b}|}(t)).
 \end{equation*}
 This establishes the second assumption of Lemma~\ref{lem:normal-height-ineq} and completes the proof.
\end{proof}

The crucial induction step relies on the assumption that $T$ is a dilator:

\begin{proposition}[$\rca_0$]\label{prop:wf-height-progressive}
 Consider a normal dilator $T$, a linear order $X$, an element $x\in X$, and a number $k$. If $\partial T^{x,k}_X$ is well-founded, then so is $\partial T^{x,k+1}_X$. 
\end{proposition}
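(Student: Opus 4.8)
The plan is to reduce the well-foundedness of $\partial T^{x,k+1}_X$ to that of $D^T_W$ for the well-order $W:=\partial T^{x,k}_X$, using that $T$ is a coded dilator and that, by Theorem~\ref{thm:partial-derivative} together with Proposition~\ref{prop:compose-dils-rca}, the composite $\operatorname{ev}_X:=D^{\xi^T}_X\circ\zeta^{T,\partial T}_X\colon D^T(D^{\partial T}_X)\to D^{\partial T}_X$ is an order isomorphism (so that $D^{\partial T}_X$ is an internal $T$-fixed point). First I would invoke Proposition~\ref{prop:partial-xk-initial} to see that $W$ is an initial segment of $\partial T^{x,k+1}_X$. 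Hence any infinite $<_{D^{\partial T}_X}$-descending sequence in $\partial T^{x,k+1}_X$ is, from some point on, either contained in $W$ — contradicting the hypothesis that $W$ is well-founded — or contained in $\partial T^{x,k+1}_X\setminus W$. It therefore suffices to exhibit an order embedding of the suborder $\partial T^{x,k+1}_X\setminus W$ into $D^T_W$, since $W$ is a well-founded linear order with field in $\mathbb N$ and $T$ is a coded dilator, so $D^T_W$ is well-founded.

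Since $\hth^{\partial T}_{|a|}(\mu_m)=0$, every element of $\partial T^{x,k+1}_X\setminus W$ has the form $\langle a,\xi\langle b,\sigma\rangle\rangle$ with $a\lef_X x$, $\supp^{\partial T}_{|a|}(\xi\langle b,\sigma\rangle)=|a|$, $b\subseteq\partial T_{|a|}$, $\sigma\in T_{|b|}$ with $\supp^T_{|b|}(\sigma)=|b|$, subject to $\hth^{\partial T}_{|a|}(\xi\langle b,\sigma\rangle)=k+1$ and to the failure of $a\lef_X y$ for every $y<_X x$ (the element does not already lie in a lower stage). For $r\in b$ let $a_r:=\en_a[\supp^{\partial T}_{|a|}(r)]\subseteq a$, and let $r_0\in\partial T_{|a_r|}$ be the term with $r=\partial T_{\iota\circ\en}(r_0)$ furnished by the support condition of Definition~\ref{def:coded-prae-dilator}, so that $\langle a_r,r_0\rangle\in D^{\partial T}_X$. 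The key claim is that $\langle a_r,r_0\rangle\in W$ for each $r\in b$. By Lemma~\ref{lem:heights-functorial} we have $\hth^{\partial T}_{|a_r|}(r_0)=\hth^{\partial T}_{|a|}(r)$, and $a_r\subseteq a\lef_X x$ gives $\langle a_r,r_0\rangle\in\partial T^x_X$; so if $\hth^{\partial T}_{|a|}(r)\le k$ the claim is immediate. Otherwise $b\neq\emptyset$ and its $<_{\partial T_{|a|}}$-maximal element $r^*$ has $\hth^{\partial T}_{|a|}(r^*)=k<\hth^{\partial T}_{|a|}(r)$, whence $r\neq r^*$ and $r<_{\partial T_{|a|}}r^*$; were $\supp^{\partial T}_{|a|}(r^*)\leqf\supp^{\partial T}_{|a|}(r)$, Lemma~\ref{lem:normal-height-ineq} (applicable since $\partial T$ is normal by Proposition~\ref{prop:deriv-normal-prae-dilator}) would force $r^*<_{\partial T_{|a|}}r$, a contradiction. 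So there is an $m\in\supp^{\partial T}_{|a|}(r^*)$ with $\supp^{\partial T}_{|a|}(r)\lef m$; setting $y:=\en_a(m)\in a$ we obtain $a_r\lef_X y$ and $y<_X x$ (because $a\lef_X x$), so $\langle a_r,r_0\rangle\in\partial T^y_X\subseteq W$. This proves the claim.

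Finally I would define $\Phi$ on $\partial T^{x,k+1}_X\setminus W$ by $\Phi(\langle a,\xi\langle b,\sigma\rangle\rangle)=\langle\{\langle a_r,r_0\rangle\mid r\in b\},\sigma\rangle$. The reconstruction $r\mapsto\langle a_r,r_0\rangle$ is injective, so this pair lies in $D^T(D^{\partial T}_X)$, and by the claim its first component is a finite subset of $W$; hence, using Lemma~\ref{lem:range-dil-support}, $\Phi$ takes values in (the image of) $D^T_W$. Unravelling the formula for $\zeta^{T,\partial T}_X$ from Proposition~\ref{prop:compose-dils-rca} — together with $\bigcup_{r\in b}a_r=\en_a[\supp^{\partial T}_{|a|}(\xi\langle b,\sigma\rangle)]=a$ — and the clause of Definition~\ref{def:deriv-xiT} for terms $\langle b,\sigma\rangle$ not of the excluded form $\langle\{\mu_m\},\mu^T_1(0)\rangle$, one checks that $\operatorname{ev}_X\circ\Phi=\operatorname{id}$ on $\partial T^{x,k+1}_X\setminus W$. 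Thus $\Phi$ coincides with the restriction of the order isomorphism $\operatorname{ev}_X^{-1}$ and is in particular an order embedding; postcomposing with the embedding $D^T_W\hookrightarrow D^T(D^{\partial T}_X)$ inverts, $\Phi$ embeds $\partial T^{x,k+1}_X\setminus W$ into $D^T_W$, which finishes the argument.

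I expect the main obstacle to be the key claim, and within it the second case: one must recognise that a subterm $r$ of $\xi\langle b,\sigma\rangle$ whose height exceeds $k$ is necessarily ``supported strictly below $x$'', so that its reconstruction falls into a stage $\partial T^y_X$ with $y<_X x$ rather than into the forbidden region. This is precisely the point at which normality of $T$ — channelled through Lemma~\ref{lem:normal-height-ineq} — is essential; everything else is bookkeeping with the isomorphism $\operatorname{ev}_X$.
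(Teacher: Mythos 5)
Your proposal is correct, and it rests on the same two devices as the paper's proof: the isomorphism $D^{\xi^T}_X\circ\zeta^{T,\partial T}_X\colon D^T(D^{\partial T}_X)\rightarrow D^{\partial T}_X$ (available in $\rca_0$ via the proof of Theorem~\ref{thm:partial-derivative}) and the fact that $T$, being a dilator, makes $D^T$ applied to the well-order $\partial T^{x,k}_X$ well-founded. The execution, however, runs in the opposite direction and at a finer granularity. The paper embeds all of $\partial T^{x,k+1}_X$ into $D^T(\partial T^{x,k}_X)$ by taking an arbitrary $\sigma=\langle\{\langle a_1,s_1\rangle,\dots,\langle a_n,s_n\rangle\},\tau\rangle$ whose image lies in $\partial T^{x,k+1}_X$ and checking only the maximal component $\langle a_n,s_n\rangle$: since $\partial T^{x,k}_X$ is an initial segment (Proposition~\ref{prop:partial-xk-initial} together with Lemma~\ref{lem:partialTx-initial}), membership of $\sigma$ in $D^T(\partial T^{x,k}_X)$ reduces to that single component, whose height is strictly below that of the resulting $\xi$-term by the very definition of $\hth^{\partial T}$; the ``lower stage'' case is absorbed by the observation $a_n\subseteq c$, and no further use of Lemma~\ref{lem:normal-height-ineq} is needed inside the proof. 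You instead split off the complement $\partial T^{x,k+1}_X\setminus\partial T^{x,k}_X$ (legitimate, by the initial-segment property) and construct the preimage of each of its elements explicitly by decomposing $\xi\langle b,\sigma\rangle$ along supports; this forces you to verify that every reconstructed component $\langle a_r,r_0\rangle$ lies in $\partial T^{x,k}_X$, and for non-maximal $r\in b$ of height $>k$ you need the extra argument via Lemma~\ref{lem:normal-height-ineq} showing that $\supp^{\partial T}_{|a|}(r)$ is bounded strictly below some $m\in\supp^{\partial T}_{|a|}(r^*)$, so that the component drops into a stage $\partial T^y_X$ with $y<_Xx$. That step is correct (and is indeed where normality enters), so your route goes through; it simply costs a bit more than the paper's, which gets the same conclusion by controlling only the maximal component. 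If you wanted to streamline your version, note that since $\partial T^{x,k}_X$ is downward closed in $D^{\partial T}_X$ it would again suffice to check the maximal reconstructed component, whose height is exactly $k$.
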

\begin{proof}
 Assume that $\partial T^{x,k}_X$ is a well-order. As $T$ is a dilator it follows that $D^T(\partial T^{x,k}_X)$ is a well-order as well. In order to conclude that $\partial T^{x,k+1}_X$ is well-founded it suffices to show that this order can be embedded into~$D^T(\partial T^{x,k}_X)$. First observe that
 \begin{equation*}
  D^T(\partial T^{x,k}_X)=\{\langle a,\sigma\rangle\in D^T(D^{\partial T}_X)\,|\,a\subseteq\partial T^{x,k}_X\}
 \end{equation*}
 is a suborder of $D^T(D^{\partial T}_X)$ (apply Lemma~\ref{lem:range-dil-support} to the inclusion map $\partial T^{x,k}_X\hookrightarrow D^{\partial T}_X$). Also recall that $\partial T$ comes with a natural transformation $\xi^T:T\circ\partial T\Rightarrow\partial T$, which is an isomorphism by the proof of Theorem~\ref{thm:partial-derivative}. In view of Definition~\ref{def:morphism-dils} and Proposition~\ref{prop:compose-dils-rca} we obtain an isomorphism
 \begin{equation*}
  D^{\xi^T}_X\circ\zeta^{T,\partial T}_X:D^T(D^{\partial T}_X)\rightarrow D^{\partial T}_X.
 \end{equation*}
 It suffices to show that $\partial T^{x,k+1}_X$ is contained in the image of $D^T(\partial T^{x,k}_X)$ under this isomorphism, which is equivalent to the assertion that
 \begin{equation*}
 D^{\xi^T}_X\circ\zeta^{T,\partial T}_X(\sigma)\in\partial T^{x,k+1}_X\quad\Rightarrow\quad\sigma\in D^T(\partial T^{x,k}_X)
 \end{equation*}
 holds for any element $\sigma\in D^T(D^{\partial T}_X)$. To establish this fact we write
 \begin{equation*}
 \sigma=\langle\{\langle a_1,s_1\rangle,\dots,\langle a_n,s_n\rangle\},\tau\rangle,
 \end{equation*}
 such that the pairs $\langle a_j,s_j\rangle$ are displayed in increasing order. If we have $n=0$, then $\sigma\in D^T(\partial T^{x,k}_X)$ is immediate. Thus we assume $n>0$ for the rest of the proof. Under this assumption, Proposition~\ref{prop:partial-xk-initial} and Lemma~\ref{lem:partialTx-initial} imply that $\sigma\in D^T(\partial T^{x,k}_X)$ is equivalent to $\langle a_n,s_n\rangle\in\partial T^{x,k}_X$. By Proposition~\ref{prop:compose-dils-rca} and Definition~\ref{def:morphism-dils} we have
 \begin{equation*}
 D^{\xi^T}_X\circ\zeta^{T,\partial T}_X(\sigma)=\langle c,\xi^T_{|c|}(\langle\{\partial T_{|\iota_1|}(s_1),\dots,\partial T_{|\iota_n|}(s_n)\},\tau\rangle)\rangle,
 \end{equation*}
 where $\iota_j:a_j\hookrightarrow a_1\cup\dots\cup a_n=:c$ are the inclusions. In view of $a_n\subseteq c$ we learn that $D^{\xi^T}_X\circ\zeta^{T,\partial T}_X(\sigma)\in\partial T^y_X$ implies $\langle a_n,s_n\rangle\in\partial T^y_X$, for any $y\in X$. To complete the proof it suffices to establish the implication
 \begin{equation*}
  \hth^{\partial T}_{|c|}(\xi^T_{|c|}(\langle\{\partial T_{|\iota_1|}(s_1),\dots,\partial T_{|\iota_n|}(s_n)\},\tau\rangle))\leq k+1\quad\Rightarrow\quad\hth^{\partial T}_{|a_n|}(s_n)\leq k.
 \end{equation*}
 In view of Definition~\ref{def:deriv-xiT} we distinguish two cases: First assume that we have
 \begin{equation*}
  \xi^T_{|c|}(\langle\{\partial T_{|\iota_1|}(s_1),\dots,\partial T_{|\iota_n|}(s_n)\},\tau\rangle)=\xi\langle\{\partial T_{|\iota_1|}(s_1),\dots,\partial T_{|\iota_n|}(s_n)\},\tau\rangle.
 \end{equation*}
 Invoking Definition~\ref{def:coded-prae-dilator-reconstruct} we see that the map $\langle a_j,s_j\rangle\mapsto\partial T_{|\iota_j|}(s_j)$ is order preserving. Thus the values $\partial T_{|\iota_j|}(s_j)$ are displayed in increasing order as well. By Lemma~\ref{lem:heights-functorial} and our definition of heights we get
 \begin{equation*}
 \hth^{\partial T}_{|a_n|}(s_n)=\hth^{\partial T}_{|c|}(\partial T_{|\iota_n|}(s_n))<\hth^{\partial T}_{|c|}(\xi^T_{|c|}(\langle\{\partial T_{|\iota_1|}(s_1),\dots,\partial T_{|\iota_n|}(s_n)\},\tau\rangle)),
 \end{equation*} 
 which yields the desired implication. Now assume that we have
 \begin{equation*}
  \xi^T_{|c|}(\langle\{\partial T_{|\iota_1|}(s_1),\dots,\partial T_{|\iota_n|}(s_n)\},\tau\rangle)=\mu_m
 \end{equation*}
 for some $m<|c|$. This can only happen if we have $\partial T_{|\iota_n|}(s_n)=\mu_m$. We then get
 \begin{equation*}
  \hth^{\partial T}_{|a_n|}(s_n)=\hth^{\partial T}_{|c|}(\partial T_{|\iota_n|}(s_n))=0\leq k,
 \end{equation*}
 which is the conclusion of the desired implication.
\end{proof}

To deduce the main result of this section we extend the base theory by the principle of bar induction for $\Pi^1_1$-formulas (abbreviated $\mathbf{\Pi^1_1}\textbf{-BI}$).

\begin{theorem}[$\rca_0+\mathbf{\Pi^1_1}\textbf{-BI}$]\label{thm:BI-yields-deriv}
If $T$ is a normal dilator, then so is $\partial T$.
\end{theorem}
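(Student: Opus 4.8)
The plan is to prove that $D^{\partial T}_X$ is well-founded for an arbitrary well-order $X$ by $\Pi^1_1$-bar induction along $X$. First I would reduce to the case where $X$ is non-empty and has no greatest element: otherwise replace $X$ by $X+\mathbb N$ (a copy of $\mathbb N$ placed above all of $X$), which is again a well-order; the inclusion $X\hookrightarrow X+\mathbb N$ is an order embedding, so functoriality of $D^{\partial T}$ (Proposition~\ref{prop:reconstruct-class-sized-dil}, applied to the normal prae-dilator $\partial T$) yields an embedding $D^{\partial T}_X\to D^{\partial T}_{X+\mathbb N}$, and well-foundedness is inherited by suborders. For such $X$ we have $D^{\partial T}_X=\bigcup_{x\in X}\partial T^x_X$, as noted after Definition~\ref{def:partial-up-to-x}, and each $\partial T^x_X$ is an initial segment of $D^{\partial T}_X$ by Lemma~\ref{lem:partialTx-initial}. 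Since a linear order is well-founded as soon as it is the union of well-founded initial segments, it suffices to prove that $\partial T^x_X$ is well-founded for every $x\in X$.

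I would establish this by $\Pi^1_1$-bar induction applied to the $\Pi^1_1$-formula $\varphi(x)$ (with $T$ and $X$ as parameters) expressing ``$\partial T^x_X$ is well-founded''. For the progressivity step, fix $x\in X$ and assume $\varphi(y)$ for all $y<_Xx$. Then $L:=\bigcup_{y<_Xx}\partial T^y_X$ is a union of well-founded initial segments, hence well-founded. Because every term of $\partial T_n$ has a finite height (Definition~\ref{def:approximations-height}), we have $\partial T^x_X=\bigcup_{k\in\mathbb N}\partial T^{x,k}_X$, and Proposition~\ref{prop:partial-xk-initial} tells us that each $\partial T^{x,k}_X$ is an initial segment of $\partial T^x_X$; so, once more by the union principle, it is enough to show by induction on $k$ that $\partial T^{x,k}_X$ is well-founded. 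For $k=0$, any element of $\partial T^{x,0}_X$ lying outside $L$ is a pair $\langle a,s\rangle\in\partial T^x_X$ with $\hth^{\partial T}_{|a|}(s)=0$, so $s=\mu_m$; the support condition $\supp^{\partial T}_{|a|}(\mu_m)=\{m\}=|a|$ then forces $|a|=1$ and $m=0$, whence $\langle a,s\rangle=D^{\mu^{\partial T}}_X(x_0)$ for the unique $x_0\in a$, with $x_0<_Xx$. Since $D^{\mu^{\partial T}}_X$ is order preserving, $D^{\mu^{\partial T}}_X(x_0)$ already belongs to $\partial T^y_X\subseteq L$ whenever $x_0<_Xy<_Xx$ for some $y$; hence $\partial T^{x,0}_X\setminus L$ consists of at most the single element $D^{\mu^{\partial T}}_X(x_0)$ for $x_0=\max(X\!\restriction\! x)$, provided that this maximum exists. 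Adjoining at most one element to the well-order $L$ leaves it well-founded, so $\partial T^{x,0}_X$ is well-founded. The passage from $k$ to $k+1$ is exactly Proposition~\ref{prop:wf-height-progressive}, which is the point where the hypothesis that $T$ is a \emph{dilator} (not merely a normal prae-dilator) is used. This completes the induction on $k$ and hence the proof that $\varphi(x)$ holds.

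By $\Pi^1_1$-bar induction we conclude $\forall_{x\in X}\varphi(x)$, so that $D^{\partial T}_X=\bigcup_{x\in X}\partial T^x_X$ is well-founded; undoing the reduction of the first paragraph, $D^{\partial T}_X$ is well-founded for every well-order $X$, and therefore $\partial T$ is a coded dilator. Since $\partial T$ is a normal prae-dilator by Proposition~\ref{prop:deriv-normal-prae-dilator}, it is a normal dilator, as asserted. The genuinely delicate bookkeeping in this argument is checking that the suborders $\partial T^x_X$, $\partial T^{x,k}_X$ and $L$ really are initial segments, so that the ``union of well-founded initial segments'' principle applies twice and the inductive step is legitimately reduced to Proposition~\ref{prop:wf-height-progressive}; the substantive mathematical content — preservation of each height level when $T$ is a dilator — has already been isolated there, so that the main work of the theorem is this assembly together with the correct application of $\Pi^1_1$-bar induction.
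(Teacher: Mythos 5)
Your proposal is correct and follows essentially the same route as the paper's proof: the reduction to an order without greatest element, the two nested decompositions into initial segments $\partial T^x_X$ and $\partial T^{x,k}_X$, $\Pi^1_1$-bar induction on $x$ with a side induction on $k$ whose step is Proposition~\ref{prop:wf-height-progressive}, and the same analysis of height-zero terms at the base, where your uniform observation that at most one new element $\langle\{\max(X\!\restriction\!x)\},\mu_0\rangle$ lies outside $\bigcup_{y<_Xx}\partial T^y_X$ merely streamlines the paper's limit-or-zero versus successor case split. The only point worth making explicit is that the side induction on $k$ concerns a $\Pi^1_1$-statement and is therefore itself an instance of bar induction (along $\omega$) rather than plain $\rca_0$-induction, as the paper notes.
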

\begin{proof}
From Proposition~\ref{prop:deriv-normal-prae-dilator} we know that $\partial T$ is a normal prae-dilator. In view of Definition~\ref{def:coded-dilator} it remains to establish that $D^{\partial T}_X$ is well-founded for any well-order~$X$. It suffices to consider the case where $X$ is a limit order, i.\,e.~a non-empty order without a maximal element: If $X$ itself does not have this property, then we replace it by the order~$X+\omega$, in which the initial segment $X$ is followed by a copy of the natural numbers. By Proposition~\ref{prop:reconstruct-class-sized-dil} the inclusion $X\hookrightarrow X+\omega$ yields an embedding of $D^{\partial T}_X$ into $D^{\partial T}_{X+\omega}$, so that the former order is well-founded if the latter~is. For the rest of this proof we assume that $X$ is a well-founded limit order. In view of Definition~\ref{def:partial-up-to-x} we then have
\begin{equation*}
D^{\partial T}_X=\bigcup_{x\in X}\partial T_X^x.
\end{equation*}
Let us argue that $D^{\partial T}_X$ is well-founded if $\partial T_X^x$ is well-founded for every $x\in X$: To find a minimal element of a non-empty set $Y\subseteq D^{\partial T}_X$, pick an element $x\in X$ such that $Y\cap\partial T_X^x$ is non-empty. The well-foundedness of $\partial T_X^x$ provides a minimal element $\sigma\in Y\cap\partial T_X^x$. From Lemma~\ref{lem:partialTx-initial} we know that $\partial T_X^x$ is an initial segment of~$D^{\partial T}_X$. It follows that $\sigma$ is minimal in the entire set $Y$, as required. Invoking the principle of $\Pi^1_1$-bar induction, we shall now establish the well-foundedness of $\partial T_X^x$ by induction on $x\in X$. In the induction step we argue that Definition~\ref{def:approximations-height} and Proposition~\ref{prop:partial-xk-initial} allow us to write
\begin{equation*}
\partial T^x_X=\bigcup_{k\in\mathbb N}\partial T^{x,k}_X
\end{equation*}
as a union of initial segments. Once again it follows that $\partial T^x_X$ is well-founded if $\partial T^{x,k}_X$ is well-founded for every number $k$. We argue by side induction on $k$ to show that the latter is the case. Note that induction over the natural numbers is available as a particular instance of bar induction (alternatively one could combine the main and side induction into a single induction over $X\times\omega$). The side induction step is provided by Proposition~\ref{prop:wf-height-progressive}. To complete the proof it is thus enough to establish the base of the side induction. As a preparation we consider an element $\langle a,s\rangle\in\partial T_X^x$ with~$\hth^{\partial T}_{|a|}(s)=0$. In view of Definition~\ref{def:approximations-height} we must have $s=\mu_m$ for some number~$m<|a|$. Together with Definitions~\ref{def:coded-prae-dilator-reconstruct} and~\ref{def:deriv-support} we obtain
\begin{equation*}
|a|=\supp^{\partial T}_{|a|}(\mu_m)=\{m\}.
\end{equation*}
This forces $m=0$ and $|a|=1$, say $a=\{z\}$ with $z\in X$. Altogether we get
\begin{equation*}
\langle a,s\rangle=\langle\{z\},\mu_0\rangle,
\end{equation*}
where $\langle a,s\rangle\in\partial T_X^x$ ensures $z<_X x$. Let us now distinguish two cases: First assume that $x\in X$ is a limit or zero (i.\,e.~for every $z<_X x$ there is a $y<_X x$ with $z<_X y$). Then Definition~\ref{def:approximations-height} yields
\begin{equation*}
\partial T^{x,0}_X=\bigcup_{y<_X x}\partial T^y_X.
\end{equation*}
By Lemma~\ref{lem:partialTx-initial} this is a union of initial segments. The main induction hypothesis ensures that $\partial T^y_X$ is well-founded for every $y<_X x$. It follows that $\partial T^{x,0}_X$ is well-founded, as required. Now assume that $x$ is the successor of an element~$z\in X$, so that $y<_X x$ is equivalent to $y\leq_X z$. In view of the above we obtain
\begin{equation*}
\partial T^{x,0}_X=\partial T^z_X\cup\{\langle\{z\},\mu_0\rangle\}.
\end{equation*}
Once again the main induction hypothesis tells us that $\partial T^z_X$ is well-founded. Since $\partial T^{x,0}_X$ is a finite extension of this order, it must be well-founded itself. We have thus established the base of the side induction, which completes the proof.
\end{proof}

To shed further light on the previous proof we point out that we have
\begin{equation*}
\partial T^z_X\cup\{\langle\{z\},\mu_0\rangle\}=\{\sigma\in D^{\partial T}_X\,|\,\sigma\leq_{D^{\partial T}_X} D^{\mu^{\partial T}}_X(z)\},
\end{equation*}
due to Definitions~\ref{def:extend-normal-transfos} and~\ref{def:deriv-support} as well as Lemma~\ref{lem:partialTx-initial}. Together with the conclusions of the previous sections we obtain the main result of this paper:

\begin{theorem}[$\aca_0$]\label{thm:main-result}
The following are equivalent:
\begin{enumerate}[label=(\arabic*)]
\item If $T$ is a normal dilator, then $D^{\partial T}_X$ is well-founded for any well-order~$X$.
\item Any normal dilator $T$ has an upper derivative $(S,\xi)$ such that $X\mapsto D^S_X$ preserves well-foundedness (which means that $S$ is again a normal dilator).
\item The principle of $\Pi^1_1$-bar induction is valid.
\end{enumerate}
\end{theorem}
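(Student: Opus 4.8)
The plan is to assemble the theorem from results established in Sections~\ref{sect:normal-dils-so}--\ref{sect:bi-deriv-wf}, arguing over $\aca_0$ and closing the cycle $(1)\Rightarrow(2)\Rightarrow(3)\Rightarrow(1)$. No new construction is needed; each implication reduces to a single earlier statement, so the only genuine content is bookkeeping about which base theory each step uses.

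For $(1)\Rightarrow(2)$ I would invoke the term-system construction of the derivative from Section~\ref{sect:constructin-derivative}. By Proposition~\ref{prop:deriv-normal-prae-dilator} and Theorem~\ref{thm:terms-derivative}, for any normal dilator $T$ the pair $(\partial T,\xi^T)$ is a derivative, and in particular an upper derivative, of $T$, with $\partial T$ a normal prae-dilator. Under assumption~(1) the order $D^{\partial T}_X$ is well-founded for every well-order $X$, so $\partial T$ is a coded dilator in the sense of Definition~\ref{def:coded-dilator}; hence $(\partial T,\xi^T)$ witnesses~(2). This direction already goes through in $\rca_0$.

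The implication $(2)\Rightarrow(3)$ is exactly Corollary~\ref{cor:deriv-implies-BI}: for each $\Pi^1_1$-formula $\varphi(x)$ it is provable in $\aca_0$ that, if every normal dilator has an upper derivative that is again a dilator, then $\varphi$ satisfies bar induction along every well-order. Ranging over all $\Pi^1_1$-formulas gives the principle of $\Pi^1_1$-bar induction. This is the one direction that genuinely uses $\aca_0$, through the characteristic property of the Kleene--Brouwer order exploited in Corollary~\ref{prop:H-0-captures-prog} and Corollary~\ref{cor:F-dil}; this is why $\aca_0$ is the appropriate common base theory for the stated equivalence.

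Finally, $(3)\Rightarrow(1)$ is Theorem~\ref{thm:BI-yields-deriv}, proved over $\rca_0+\mathbf{\Pi^1_1}\textbf{-BI}$, which asserts that $\partial T$ is a dilator whenever $T$ is --- equivalently that $D^{\partial T}_X$ is well-founded for every well-order $X$. Since $\aca_0$ extends $\rca_0$ and assumption~(3) is precisely the principle $\mathbf{\Pi^1_1}\textbf{-BI}$, this delivers~(1) over the base theory $\aca_0$. I do not expect any remaining obstacle at this point: the substantive work has been carried out in the finitistic reconstruction of $\partial T$ in Section~\ref{sect:constructin-derivative} and in the transfinite and side inductions underlying Theorem~\ref{thm:BI-yields-deriv}, so the proof of Theorem~\ref{thm:main-result} consists of citing these results and tracking the base theories.
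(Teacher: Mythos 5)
Your proposal is correct and matches the paper's own proof: the paper likewise derives (1)$\Rightarrow$(2) from the construction of $(\partial T,\xi^T)$ in Section~\ref{sect:constructin-derivative} (Proposition~\ref{prop:partial-upper-deriv}, with Theorem~\ref{thm:terms-derivative} giving a genuine derivative), obtains (2)$\Rightarrow$(3) from Corollary~\ref{cor:deriv-implies-BI}, and (3)$\Rightarrow$(1) from Theorem~\ref{thm:BI-yields-deriv}. Your remarks on which steps need $\aca_0$ versus $\rca_0$ also agree with the paper's accounting.
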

Note that statements~(1) and~(2) are each expressed by a single formula, relying on the formalization of dilators in Section~\ref{sect:normal-dils-so}. To express~(3) by a single formula one uses a truth definition for $\Pi^1_1$-sentences.
\begin{proof}
The implication (1)$\Rightarrow$(2) follows from Proposition~\ref{prop:partial-upper-deriv}, which asserts that $(\partial T,\xi^T)$ is an upper derivative of $T$ (in fact we have a derivative, by Theorem~\ref{thm:terms-derivative}). By Corollary~\ref{cor:deriv-implies-BI} we get (2)$\Rightarrow$(3) (note that the proof uses arithmetical comprehension, via the Kleene normal form theorem and the characteristic property of the Kleene-Brouwer order). The implication (3)$\Rightarrow$(1) holds by Theorem~\ref{thm:BI-yields-deriv}.
\end{proof}

As in the previous section, results about $\partial T$ transfer to arbitrary derivatives:

\begin{corollary}[$\rca_0+\mathbf{\Pi^1_1}\textbf{-BI}$]\label{cor:deriv-is-dilator}
Consider a normal dilator~$T$. If $(S,\xi)$ is a derivative of $T$, then $X\mapsto D^S_X$ preserves well-foundedness.
\end{corollary}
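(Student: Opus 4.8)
The plan is to exploit the essential uniqueness of derivatives, together with the fact that the concrete derivative $\partial T$ built in Section~\ref{sect:constructin-derivative} is already known to be a dilator in the present base theory. First I would recall that, by Theorem~\ref{thm:terms-derivative}, the pair $(\partial T,\xi^T)$ is a derivative of $T$, and that Theorem~\ref{thm:BI-yields-deriv} — which is available over $\rca_0+\mathbf{\Pi^1_1}\textbf{-BI}$ — asserts that $\partial T$ is in fact a normal dilator, so that $D^{\partial T}_X$ is well-founded for every well-order~$X$.

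Next, since $(S,\xi)$ is by assumption also a derivative of $T$, the remark following Definition~\ref{def:derivative} (used already in the proof of Theorem~\ref{thm:partial-derivative}) provides an isomorphism $\nu\colon S\Rightarrow\partial T$ of upper derivatives; in particular each component $\nu_n\colon S_n\to\partial T_n$ is a bijective order embedding. By Lemma~\ref{lem:morph-dilators-extend} the maps $D^\nu_X\colon D^S_X\to D^{\partial T}_X$ form a natural transformation between the class-sized prae-dilators $D^S$ and $D^{\partial T}$, and as such each $D^\nu_X$ is an order embedding. (One may note that $D^\nu_X$ is even an isomorphism, with inverse $D^{\nu^{-1}}_X$ for the inverse morphism $\nu^{-1}\colon\partial T\Rightarrow S$ of normal prae-dilators; but only the embedding is needed below.)

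Finally, fix an arbitrary well-order~$X$. By the first step $D^{\partial T}_X$ is well-founded, and $D^S_X$ embeds into it via $D^\nu_X$, so $D^S_X$ is well-founded as well. Since $X$ was arbitrary, $X\mapsto D^S_X$ preserves well-foundedness, which is the assertion. The only points requiring a little care are the verification that $\nu^{-1}$ is again a morphism of normal prae-dilators and that $D^{\nu^{-1}}$ is inverse to $D^\nu$ (should one wish to invoke the isomorphism rather than just the embedding); both are immediate by unravelling the definitions in Section~\ref{sect:normal-dils-so}. I therefore do not expect a genuine obstacle in this corollary: essentially all of its content is the reduction to the already-established Theorem~\ref{thm:BI-yields-deriv}, via the uniqueness of derivatives.
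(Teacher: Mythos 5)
Your proposal is correct and follows essentially the same route as the paper: obtain a morphism $\nu:S\Rightarrow\partial T$ of upper derivatives, extend it via Lemma~\ref{lem:morph-dilators-extend} to embeddings $D^\nu_X:D^S_X\rightarrow D^{\partial T}_X$, and conclude from Theorem~\ref{thm:BI-yields-deriv} that $D^S_X$ is well-founded for every well-order~$X$. The only cosmetic difference is that the paper gets $\nu$ directly from the initiality of $(S,\xi)$ together with Proposition~\ref{prop:partial-upper-deriv} (so it never needs the isomorphism or Theorem~\ref{thm:terms-derivative}), exactly the simplification you already note when you observe that the embedding alone suffices.
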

\begin{proof}
By Definition~\ref{def:derivative} and Proposition~\ref{prop:partial-upper-deriv} there is a morphism $\nu:S\Rightarrow\partial T$ of upper derivatives. According to Lemma~\ref{lem:morph-dilators-extend} we get an embedding
\begin{equation*}
D^\nu_X:D^S_X\rightarrow D^{\partial T}_X
\end{equation*}
for each linear order~$X$. Together with Theorem~\ref{thm:BI-yields-deriv} it follows that $D^S_X$ is well-founded whenever $X$ is a well-order.
\end{proof}

Working in a sufficiently strong set theory, one can deduce the following unconditional version of Theorem~\ref{thm:equalizer-to-derivative}. This result provides further justification for our categorical definition of derivatives:

\begin{corollary}\label{cor:deriv-dil-to-fct}
Let $T$ be a normal dilator. If $(S,\xi)$ is a derivative of $T$, then the function $\alpha\mapsto\otp(D^S_\alpha)$ is the derivative of the normal function $\alpha\mapsto\otp(D^T_\alpha)$.
\end{corollary}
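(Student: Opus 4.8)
The plan is to assemble this corollary from the three results that immediately precede it; the entire content lies in Theorems~\ref{thm:equalizer-to-derivative} and~\ref{thm:partial-derivative} together with Corollary~\ref{cor:deriv-is-dilator}, so what remains is essentially bookkeeping. First I would recall that, by Definition~\ref{def:derivative}, a derivative $(S,\xi)$ of $T$ is in particular an upper derivative of~$T$. Hence Theorem~\ref{thm:equalizer-to-derivative} becomes applicable as soon as its three hypotheses are verified: that each $\xi_n$ is surjective (so that $\xi$ is an isomorphism), that the equalizer diagram displayed there is indeed an equalizer for every~$n$, and that $X\mapsto D^S_X$ preserves well-foundedness.

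The first two hypotheses are supplied directly by Theorem~\ref{thm:partial-derivative}: since $(S,\xi)$ is a derivative of the normal prae-dilator~$T$, that theorem tells us that $\xi:T\circ S\Rightarrow S$ is a natural isomorphism and that the relevant diagram is an equalizer diagram for every number~$n$. For the third hypothesis I would invoke Corollary~\ref{cor:deriv-is-dilator}, which states that $X\mapsto D^S_X$ preserves well-foundedness whenever $(S,\xi)$ is a derivative of the normal dilator~$T$. That corollary was established over $\rca_0+\mathbf{\Pi^1_1}\textbf{-BI}$; since we are working in a sufficiently strong set theory, the principle of $\Pi^1_1$-bar induction (equivalently, $\Sigma^1_1$-dependent choice) is available, so the corollary applies and $S$ is again a normal dilator.

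With all three hypotheses in hand, Theorem~\ref{thm:equalizer-to-derivative} yields that $\alpha\mapsto\otp(D^S_\alpha)$ is the derivative of the normal function $\alpha\mapsto\otp(D^T_\alpha)$, which is the claim. The only genuine point of care is the one just mentioned, namely that the ambient set theory proves $\Pi^1_1$-bar induction; but this holds for all the standard set theories, so there is no real obstacle beyond citing the earlier results in the right order.
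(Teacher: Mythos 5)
Your proposal is correct and follows exactly the paper's own route: Theorem~\ref{thm:partial-derivative} supplies the isomorphism and equalizer hypotheses, Corollary~\ref{cor:deriv-is-dilator} (available since the ambient set theory proves $\Pi^1_1$-bar induction) supplies preservation of well-foundedness, and Theorem~\ref{thm:equalizer-to-derivative} then gives the conclusion. Nothing to add.
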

\begin{proof}
According to Theorem~\ref{thm:partial-derivative} and Corollary~\ref{cor:deriv-is-dilator} the assumptions of Theorem~\ref{thm:equalizer-to-derivative} are satisfied whenever $(S,\xi)$ is a derivative of $T$.
\end{proof}

\bibliographystyle{amsplain}
\bibliography{Derivatives_RM}

\end{document}